\newtheorem{thm}{Theorem}[section]
\newtheorem{theorem}[thm]{Theorem}
\newtheorem{corollary}[thm]{Corollary}
\newtheorem{lemma}[thm]{Lemma}
\newtheorem{proposition}[thm]{Proposition}
\newenvironment{remark}
  {\pushQED{\qed}\remarkx}
  {\popQED\endremarkx}
\newtheorem{example}[thm]{Example}
\theoremstyle{remark}
\def\beq{ \begin{equation}}
\def\eeq{  \end{equation} }
\def\bes{\begin{equation*}}
\def\ees{\end{equation*}}
\def\kappap{\kappa^\prime}
\def\kk{\kappa\times \kappa}
\def\N{{\mathbb N}}
\def\eps{\epsilon}
\def\cO{{\mathcal O}}
\def\tA{{\widetilde A}}
\def\tX{{\widetilde X}}
\def\tv{{\widetilde v}}
\def\cU{{\mathcal U}}
\def\SS{\mathbb{S}}
\def\Smatng{\SS_n(\RR^g)}
\def\cC{\mathcal C}
\def\cD{\mathcal D}
\def\cM{\mathcal M}
\def\cH{\mathcal H}
\def\cK{\mathcal K}
\def\cL{\mathcal L}
\def\cQ{\mathcal Q}
\def\cS{\mathcal S}
\def\cI{\mathcal I}
\def\cP{\mathcal P}
\def\ccP{\mathcal P}
\def\cR{\mathcal R}
\def\cT{\mathcal T}
\def\cW{\mathcal W}
\def\cY{\mathcal Y}
\def\cN{\mathcal N}
\def\hcO{\widehat{\cO}}
\def\AD0{ \hA_D(\hX), \hX}
\def\tg{{\widetilde g}}
\def\smatn{\mathbb S_n} %(\mathbb R)}
\def\smatng{\mathbb S_n(\mathbb R^g)}
\def\smatnk{\mathbb S_{n\kappa}} %(\mathbb R)}
\def\smatntg{\mathbb S_n(\mathbb R^{\tg})}
\def\hA{{ \widehat A}}
\def\hX{{ \widehat X}}
\def\hv{{ \widehat v}}
\def\fP{\mathfrak{P}}
\def\gm{\mathfrak{m}}
\def\ga{\mathfrak{a}}
\def\gt{\mathfrak{t}}
\def\dddd{\kappa \times \kappa}
\def\cV{\mathcal{V}}
\def\allmat{\mathbb S(\mathbb R^{\fg})}
\def\allmatn{\mathbb S_n(\mathbb R^{\fg})}
\def\allmatvk{\allmatn \times \RR^{n\kappa}}
\def\allmatv{\allmatn\times\RR^n}
\def\star{T} %% for transpose of matrices
\def\st{T}  %% for involution on polynomials
\def\bep{ \begin{proof} }
\def\eep{ \end{proof}}
\def\ms{\medskip}
\def\bs{\bigskip}
\def\fg{\mathfrak{g}}
\def\msC{K}
\def\hbDp{\partial \widehat{\fP}_p}
\def\wtilde{\widetilde}
\def\ben{\begin{enumerate}}
\def\een{\end{enumerate}}
\def\nk{n\kappa}
\def\xxx{ \partial\widehat{\fP}_p  \cap    \hcO  }
\def\xxxx{\partial \fP_p \cap \cO}
\def\td{\tilde{d}}
\def\cPk{{\cP^{\kappa\times\kappa}}}
\def\chp{\cH_+^{\lambda,\delta}}
\def\ckp{\cK^{\lambda,\delta}}
\def\cCp{\cC_p}
\newcommand{\df}[1]{{\bf{#1}}{\index{#1}}}
\newcommand{\RR}{{\mathbb R}}
\newcommand{\ZZ}{{\mathfrak{Z}}}
\newcommand{\QQ}{\mathfrak{Q}}
\def\tsU{\widetilde{\mathscr U}}
\def\range{\operatorname{range}}
\def\IktV{\wtilde{V}^\kappa_{\degg}}
\def\fns{\cS}
\def\fnsS{\cS}
\def\msUalt{\widetilde{\mathfrak{U}}}
\def\msU{\mathfrak{U}}
\def\MM{\mathfrak{M}}
\def\row{\operatorname{row}}
\def\crep{\bold{c}}
\def\ff{\mathfrak{f}}
\def\fL{\gm\ga\gt}
\def\hZZ{\widehat{\ZZ}}
\def\ft{\mathfrak{t}}
\def\kxk{{\kappa \times \kappa}}
\def\degg{\operatorname{deg}}
\def\Yk{Y^\kappa_{\degg}}
\def\Kk{K^\kappa_{\degg}}
\def\ff{{\tt{f}}}
\def\Vk{V^{\kappa}_{\degg}}
\def\QQ{Q}
\newcommand{\wt}[1]{\widetilde{#1}}
\numberwithin{equation}{section}
\title[nc polys with convex slices]{Non-commutative polynomials with convex level slices}
\author[H. Dym]{Harry Dym}
\address{Harry Dym, Department of Mathemtics \\
  Weizmann Institute of Science\\
Rehovot, 7610001 Israel}
\email{harry.dym@weizmann.ac.il}
\author[J.W. Helton]{J. William Helton${}^1$}
\address{J. William Helton, Department of Mathematics\\
  University of California \\
  San Diego}
\email{helton@math.ucsd.edu}
\thanks{${}^1$Research supported by the NSF grant
DMS 1201498, and the Ford Motor Co.}
\author[S. McCullough]{Scott McCullough${}^2$}
\address{Scott McCullough, Department of Mathematics\\
  University of Florida\\ Gainesville %\\
   % Box 118105\\
   %  Gainesville, FL 32611-8105\\
   %  USA
   }
   \email{sam@math.ufl.edu}
\thanks{${}^2$Research supported by the NSF grant DMS-1361501}
\subjclass[2010]{47A20, 46L07, 13J30 (Primary); 60E05, 33B15, 90C22 (Secondary)}
\begin{document}

\maketitle

%\today

%bdhmDATE.tex

%\tableofcontents
%\bigskip

\begin{abstract}
  The  structure of
   symmetric polynomials $p(a,x)$ in freely noncommuting symmetric variables $a=(a_1,\dots,a_\tg)$ and $x=(x_1,\dots,x_g)$
    such that the set \index{$\fP_p^A$} of matrix tuples
  $$ \fP_p^A := \{X : \ p(A,X) \mbox{ is positive  definite}\}
  $$
   is convex for large sets of $A$ is studied. Under fairly general hypotheses it is shown that such polynomials have degree at most two in $x$ and must have special structure. A matrix-valued version of the main result of \cite{BM}
   on quasiconvex noncommutative  polynomials is obtained as a special case of the main result.
\end{abstract}

\section{Introduction}

The main result of this article, Theorem \ref{thm:main}, is a characterization of symmetric polynomials $p(a,x)=p(a_1,\dots,a_{\tg},x_1,\dots,x_{g})$ in freely noncommuting variables such that the
evaluations
$$
p(A,X)=p(A_1, \dots, A_\tg, X_1,\dots,X_g)
$$
on symmetric matrices (of the same size)
belong to the set
\[
  \fP_p^A = \{X=(X_1,\dots,X_g): p(A,X) \mbox{ is $\succ 0$} \}
\]
is nonempty and convex for a large set of tuples of matrices $A=(A_1,\dots,A_\tg)$.
  Under fairly general hypotheses (most of which rule out obvious pathologies) we show that such  polynomials have degree at most two in $x$ and have special structure.

Here, and below, the notation  $M\succ 0$ (resp., $M\succeq 0$) for $M$, a (square) symmetric matrix with real entries means that $u^TMu>0$ (resp., $u^TMu\ge 0$) for all nonzero real vectors $u$.  A symmetric $\kappa\times \kappa$ matrix-valued free polynomial $q$ is \df{quasiconvex} if for each $n$ and  symmetric $n\kappa\times n\kappa$ matrix $M$ the set of $g$-tuples $X$ of $n\times n$ symmetric matrices such that $M-q(X)\succ 0$ is convex.

Theorem \ref{thm:main} is illustrated by two corollaries having much simpler statements.
\smallskip

  \noindent
(1)  Theorem \ref{thm:quasiC} characterizes quasiconvex matrix free polynomials $q$ satisfying some mild additional hypotheses.
\smallskip

  \noindent
(2) Theorem \ref{thm:alinScalar} concerns
 a scalar polynomial $p$ of the form $p(a,x)=ar(x)+r(x)a -2 f(x)$ for a (scalar)  free polynomial $r$ and a (scalar) symmetric free polynomial
 $f$ with $r(0)\ne 0$ and $f(0)=0$.
 If for a ``large" set of matrices $A$ the set $ \fP_p^A $ is convex,
 then $ p$ has a simple form that is of degree 2 in $x$.
 Indeed, in this setting the case $r=1$ reduces to the case
 where $f$ is a scalar quasiconvex polynomial.

 In the remainder of this introduction  the background for and statements of
  Theorem \ref{thm:quasiC}  and  \ref{thm:alinScalar}  are introduced.  It closes with a brief discussion of the motivations coming from systems engineering
  and operator theory
   (Section \ref{sec:motivate}) and a guide to the remainder of the article (Section \ref{sec:readers guide}).

The statement of Theorems \ref{thm:main}
and the prerequisite definitions are the subject of Section \ref{sec:general}.

\subsection{Free Polynomials}
  \label{subsec:ncpolys}
   Fix positive integers $g$ and $\tg$ and let
   $\fg = \tg+g$. Let
   $\ccP$ \index{$\ccP$} denote the real free algebra
   of polynomials in the
   freely noncommuting symmetric variables
   $x=(x_1,\dots,x_g)$ and $a=(a_1,\dots,a_{\tg})$. The
   elements of $\ccP$ are called
   \df{free polynomials}, \df{nc polynomials}
   or often just \df{polynomials} in
   $(a_1,\dots,a_{\tg}, x_1,\dots,x_g).$ % \index{free polynomial}
       Thus, a free polynomial    $p$ is a finite linear combination,
 \begin{equation*}
  %\label{eq:poly}
    p=\sum p_w  w,
 \end{equation*}
   of words $w$ in
   $(a_1,\dots,a_{\tg}, x_1,\dots,x_g)$ with coefficients
 $p_w\in\mathbb{R}$.

   There is a natural involution ${}^{\st}$ on
   $\ccP$  given by
 \begin{equation*}
  %\label{eq:genericp}
   p^{\st}=\sum p_w w^{\st},
 \end{equation*}
  where, for a word
   $w=z_{j_1}z_{j_2}\cdots z_{j_n}$
   we have
    $w^{\st} = z_{j_n}
   \cdots z_{j_2}z_{j_1},$ (since $x_j=x_j^T$ and $a_i=a_i^T$).

   A polynomial $p$ is symmetric if
   it is invariant with respect to the involution, i.e., if $p=p^T$.
     In particular, $x_j^{\st}=x_j$ and for this reason
   the variables $x_j$ are sometimes referred to as \df{symmetric free  variables}. \index{symmetric variables}
   The condition $a_j=a_j^T$ was imposed above to ease the exposition. It is less essential and will be violated on occasion.

\subsection{Evaluations}
 \label{sec:free-eval}
  Given a positive integer $n$,
  let $\mathbb{S}_n$ denote the set of real symmetric $n\times n$ matrices
  and let $\smatng$ denote the set of $g$-tuples \index{$\smatng$}
  $X=(X_1,\ldots,X_g)$ of real symmetric $n\times n$ matrices.
 The set of all pairs $(A,X)$ with $A \in \mathbb{S}_n(\RR^{\tg})$ and
  $X\in  \smatng$,  will be denoted
  $\allmatn$

  Let $M_n$ denote the $n\times n$ matrices with real entries.
  A pair $(A,X)  \in\allmatn$ determines a mapping
  $e_{(A,X)}:\mathcal P \to M_n$  by evaluation
  (actually this mapping is a representation of the algebra $\cP$).
    Indeed, by
  linearity, $e_{(A,X)}$ is determined by its action on words where
  $e_{(A,X)}(\emptyset)=I_n$ and, for
   a nonempty word $w$ in $(a,x)$
   and $(A,X)\in\allmatn$, the evaluation
   $e_{(A,X)}(w) $ is the same word in $(A,X)$.
  It is natural to write $p(A,X)$ instead of the more
   formal $e_{(A,X)}(p)$.

  Note that $p(A,X)$ respects the involution in the sense  that $p^{\st}(A,X)=p(A,X)^{\star}$.  In particular, if $p$ is symmetric,  then so is $p(A,X)$.
 It is well known that, taken together,  evaluations determine faithful representations of free polynomials.

\subsection{Matrix-Valued Polynomials}
A free $\kappa \times\kappa$ matrix-valued  polynomial $p(a, x)$
is a linear combination of words in the freely noncommuting
   variables  $a=(a_1,\dots,a_\tg)$ and  $x=(x_1,\dots,x_g)$ with coefficients
  from $M_\kappa$.
Let $\ccP^{\kappa\times \kappap}$ denote \index{$\ccP^{\kappa\times \kappap}$}
the $\kappa\times \kappap$ matrices with entries
  from $\ccP$.

 For   $p\in \ccP^{\kappa\times\kappap}$, evaluation   at $(A,X) \in \allmatn$ is defined  entrywise,  leading to a $\kappa\times\kappap$ block matrix $p(A,X),$ with entries from $M_n$.
  Up to unitary equivalence,
  evaluation at $X$ is conveniently described in terms of the  tensor
  product of matrices  by writing $p$ as  a finite linear combination\footnote{In keeping with our usual usage, we write $\sum C_w\, w$ and not $\sum C_w\otimes w$.}
 \begin{equation}
  \label{pww}
   p%=\sum_{w} C_w w
   =\sum_{w}C_w\,  w,
\end{equation}
   with  coefficients
   $C_w\in\RR^{\kappa\times\kappa^\prime}$  and observing that
 \[
   p(A, X)=\sum C_w \otimes w(A,X),
 \]
  where $w(A,X)=e_{A,X}(w)$.

 The involution ${}^{\st}$ naturally extends to
  $\ccP^{\dddd}$ by
 \[
    p^{\st} = \sum_{w} C_w^{\star} \, w^{\st},
 \]
  for $p$ given by equation \eqref{pww}.  The matrix polynomial $p$ is symmetric if $p^T=p$. It turns
out that $p$ is symmetric if and only if $p(X)^T=p^T(X)$ for every $n$ and every tuple $X\in\smatng$.
\index{symmetric matrix polynomial}

   A simple method of constructing new matrix-valued
  polynomials from old ones is by  direct sums.
   \index{direct sum}
  For instance, if
  $p_j\in\ccP^{\kappa_j\times \kappa_j}$ for $j=1,2$, then
 \[
   p_1\oplus p_2=\begin{bmatrix} p_1 & 0\\0 & p_2 \end{bmatrix}
    \in \ccP^{(\kappa_1+\kappa_2)\times (\kappa_1+\kappa_2)}.
 \]

\subsection{Highest degree terms}\label{subsec:hdt}
A matrix-valued nc polynomial $p(a,x)=\sum_{w\in\cW}C_w w(a,x)$ of degree $d$ in $x$ can be expressed as a sum
$$
p=\sum_{j=0}^d p^j(a,x)
$$
of nc matrix-valued polynomials $p^j(a,x)$ that are homogeneous of degree $j$ in $x$. Moreover,
(see subsection \ref{sec:nckron}), every homogeneous nc polynomial $p^j(a,x)$ of degree $j$ in $x$ can be expressed in the form
\begin{equation}
\label{eq:defsffj}
p^j(a,x)=
\varphi_p^j(a) \ff_j(a,x),
\end{equation}
where
$$
\varphi_p^j(a)=\begin{bmatrix}\varphi_{j 1}(a)&\cdots&\varphi_{j,s_j}(a)\end{bmatrix}
$$
is a block row matrix that depends only upon the coefficients $C_w$ of the words $w$ in the polynomial $p^j$ and the variables $a_1,\ldots,a_{\widetilde{g}}$ and $\ff_j(a,x)$ is an $s_j\times 1$ vector polynomial in $a$ and $x$ that is homogeneous of degree $j$ in $x$ of the form,
\[
  \ff_j(a,x) =  \textup{col}(x_1 \ff_{1,1,j}, \, x_1\ff_{1,2,j},\dots, \, x_1\ff_{1,k_1,j},\, x_2 \ff_{2,1,j},
    \dots, \, x_g \ff_{g,k_g,j})
\]
and each $\ff_{i,k,j}=\ff_{i,k,j}(a,x)$.
%
%[In terms of future notation, $g_j(a,x)=(x\otimes b)_j$ and $s_j=(k_bg)^j$.]
%
We shall say that the {\bf highest degree terms of $p$ majorize at $A$} if
\begin{equation}
\label{eq:apr25a17}
\textup{range}\,\varphi_p^j(A)\subseteq\textup{range}\,\varphi_p^d(A)\quad\textrm{for $j=1,\ldots,d-1$.}
\end{equation}

   The condition \eqref{eq:apr25a17} 
   is essential
    in Theorem \ref{thm:dec29a13}, which is
  a key ingredient in the proof of our main result.

\begin{remark}\rm
 \label{rem:special majorize}
The condition { \eqref{eq:apr25a17}} 
is automatically met are the following special cases.
\begin{enumerate}
\item  If $p$ (matrix-valued) is homogeneous in $x$,
  then, as is immediate from the definitions,  the highest degree terms of $p$ majorize at each $A$.
 \item \label{it:sm2} If $\kappa=1$ (i.e., the polynomial $p$ is scalar),  $p$ is of degree $d$ in $x$  and there is at least one term in $p$ of degree $d$ in $x$ that begins with an $x_j$ on the left, then at least one of the $\varphi_{di}(A)$
is equal to $cI_n$ with $c\ne 0$ when $A\in\SS_n$.
\end{enumerate}
\end{remark}

\subsection{Free quasiconvex polynomials have degree two}
\label{sec:matbm}

One consequence of  our main result (Theorem \ref{thm:main})  is the following matrix version of the result of \cite{BM}.

\begin{theorem}
%\label{thm:aug30a15}
\label{thm:quasiC}
 Suppose that $f=f(x)=f(x_1,\ldots,x_g)$ is a $\kappa\times\kappa$  nc matrix polynomial such that:  \begin{enumerate}
\item[\rm(1)] The highest degree terms of $f$ majorize.
\item[\rm(2)]  $f(0)=0$.
\item[\rm(3)] There exists a positive number $\gamma$ such that
 for each positive integer $n$ and each   $M\in \SS_{\kappa n}(\RR)$
 that meets the constraints $ M\prec \gamma I_{\kappa n}$
  the set
 $\{X\in\SS_n(\RR^g):\, f(X)\prec M\}$ is a proper convex subset of
  $\SS_n(\mathbb R^g)$.
  \end{enumerate}
 Then there  exists a  symmetric polynomial
  $\ell\in \ccP^{\dddd}$ that is affine  linear in $x$,
        and   a finite number $n$ of
    matrices of  free polynomials
        $s_j\in\ccP^{1\times \kappa}$  each linear (and hence homogeneous of degree one) in $x$
    such that
\begin{equation*}
%  \label{eq:wow3}
   f(x)= \ell( x) + \sum_{j=1}^n s_j( x)^T s_j( x).
   \end{equation*}

\end{theorem}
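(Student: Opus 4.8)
The plan is to derive Theorem~\ref{thm:quasiC} as a special case of the main result, Theorem~\ref{thm:main}, by rephrasing the constraint $f(X)\prec M$ in the form $p(A,X)\succ 0$ for a suitable auxiliary polynomial $p$ with a single extra matrix variable. Concretely, introduce one auxiliary symmetric variable $a=a_1$ (so $\tg=1$) and set
\[
  p(a,x) := a\otimes I_\kappa - f(x) \in \ccP^{\kxk},
\]
interpreting the evaluation at $(A,X)$ with $A\in\SS_n$ as $A\otimes I_\kappa - f(X)$, which lives in $\SS_{\kappa n}$. Then $\fP_p^A = \{X: A\otimes I_\kappa - f(X)\succ 0\}$, and hypothesis~(3) says precisely that $\fP_p^A$ is a proper convex subset of $\smatng$ for every $A=cI_n$ with $c<\gamma$; more generally for $M\prec\gamma I_{\kappa n}$ we get convexity of the level sets. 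So the ``large set of $A$'' required by Theorem~\ref{thm:main} is available (it contains, at least, all scalar multiples $cI_n$ with $c<\gamma$, and this is the kind of Zariski-type large family the main theorem is built to exploit). The highest-degree-in-$x$ part of $p$ is $-$(the highest-degree-in-$x$ part of $f$), and $\varphi_p^j(A)$ differs from $\varphi_f^j$ only by sign and tensoring, so condition~(1) of Theorem~\ref{thm:quasiC}---that the highest degree terms of $f$ majorize---translates directly into the majorization hypothesis \eqref{eq:apr25a17} needed by Theorem~\ref{thm:main}; and $f(0)=0$ gives the normalization at $x=0$.

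Next I would feed $p$ into Theorem~\ref{thm:main} and read off its conclusion. The main theorem says that under these hypotheses $p$ has degree at most two in $x$ and a special structure; unravelling that structure for our particular $p = a\otimes I_\kappa - f(x)$ forces $f$ itself to have degree at most two in $x$, and the degree-two part must be a sum of hermitian squares of polynomials that are linear in $x$. More precisely, writing $f = f^{(0)} + f^{(1)} + f^{(2)}$ in its homogeneous-in-$x$ decomposition, the structure theorem should identify $f^{(2)}(x)$ with $\sum_{j=1}^n s_j(x)^T s_j(x)$ for some $s_j\in\ccP^{1\times\kappa}$ linear in $x$ (the ``middle matrix'' produced by Theorem~\ref{thm:main} must be positive semidefinite because the level sets are \emph{convex} rather than concave---this is the usual second-derivative/Hessian sign argument, and the border-vector representation \eqref{eq:defsffj} of the homogeneous parts is exactly what packages the quadratic-in-$x$ piece as such a sum of squares). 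The remaining part $\ell(x) := f^{(0)} + f^{(1)}(x)$ is by construction symmetric and affine linear in $x$, and since there are no $a$'s actually appearing in $f$, $\ell\in\ccP^{\kxk}$ as claimed. This yields $f(x) = \ell(x) + \sum_{j=1}^n s_j(x)^T s_j(x)$.

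A few technical points need care. First, one must check that the version of Theorem~\ref{thm:main} we invoke genuinely applies to \emph{matrix-valued} $p$ and that the ``large set of $A$'' hypothesis there is satisfied by the family $\{cI_n : c<\gamma, n\in\N\}$ arising from hypothesis~(3); this is where I would be most careful, since Theorem~\ref{thm:main}'s hypotheses are tuned to a specific notion of largeness and I need hypothesis~(3)'s per-$n$, per-$M$ convexity to supply it. Second, properness of $\fP_p^A$ (nonempty and not all of $\smatng$) must be tracked: nonemptiness for $c$ slightly below $\gamma$ follows because $f(0)=0$ so $X=0\in\fP_p^{cI}$ when $c>0$, and properness is hypothesis~(3) itself. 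Third, the passage from ``$p$ has degree $\le 2$ in $x$'' to ``$f$ has degree $\le 2$'' is immediate since $p$ and $-f$ have the same $x$-dependence, but one should double-check that no cancellation between the $a\otimes I$ term and $f$ can occur (it cannot, as $a$ does not appear in $f$). The main obstacle, as flagged, is verifying the largeness hypothesis of Theorem~\ref{thm:main}; once that is in hand, the rest is a matter of transcribing the structural conclusion and repackaging the quadratic part via \eqref{eq:defsffj} as a sum of hermitian squares, using the convexity (rather than mere midpoint-convexity or concavity) to pin down the sign.
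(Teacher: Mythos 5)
Your proposal sets $p(a,x)=a\otimes I_\kappa-f(x)$ with a single auxiliary symmetric variable $a$. For $\kappa=1$ that is exactly the $A-f(X)$ set-up of \cite{BM} discussed in the paper, and it works. For $\kappa>1$, however, this choice breaks the full rank hypothesis (b) of Theorem~\ref{thm:main}: the $a$-directional derivative is
\[
 p_a(A,X)[E,0]=E\otimes I_\kappa,
\]
whose image in $\SS_{n\kappa}$ is only the $\binom{n+1}{2}$-dimensional subspace of matrices of the form $E\otimes I_\kappa$ with $E\in\SS_n$, never all of the $\binom{n\kappa+1}{2}$-dimensional space $\SS_{n\kappa}$. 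Adding the $x$-derivative $-f_x(X)[H]$ does not rescue this in general; indeed the paper's note right after the definition of full rank records the dimensional obstruction $\fg(n^2+n)\ge(n\kappa)^2+n\kappa$, which already rules out $\tg=1$ once $\kappa$ is large relative to $g$. The same thinness of $\{A\otimes I_\kappa:A\in\SS_n\}$ also undermines the $\cC_p$-domination hypothesis (e): with your $p$, the boundary equation forces $v$ to lie in the top eigenspace of $f(X)$ for the specific $A=\lambda_{\max}(f(X))I_n$ (or similar), rather than letting $v$ range over arbitrary nonzero vectors, so the implication ``$q(X)v=0$ on $\xxx$ forces $q=0$'' is no longer clear. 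You flagged ``largeness'' as the danger point, but that hypothesis is actually fine (hypothesis (3) gives convexity for all $A\prec\gamma I_n$, not just $cI_n$); the real failures are (b) and (e).

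The paper's own proof replaces your single variable by $\tg=(\kappa^2+\kappa)/2$ free entries $a_{ij}$ (with $a_{ij}=a_{ji}^T$) assembled into the generic symmetric $\kappa\times\kappa$ matrix polynomial $M(a)$, and takes $p(a,x)=M(a)-f(x)$. This one change fixes both problems at once: $p_a(A,X)[E,0]=M(E)$ is onto $\SS_{\kappa n}$, so every boundary point is automatically full rank, and since $M(A)$ can be \emph{any} element of $\SS_{n\kappa}$, one can choose $A$ with $M(A)=f(X)$ so that $p(A,X)=0$ and hence $q(X)v=0$ for \emph{all} $v$, which kills $q$ on an open set of $X$ and then (by Remark~\ref{prop:r=0}) identically. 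The price is that the $a$-variables are now mixed symmetric/non-symmetric, which is why the paper invokes the variant of Theorem~\ref{thm:main} described in Remark~\ref{rem:allvars}, and why a final argument is needed to show that the output polynomials $s_j(a,x)$ and $\ell(a,x)$ are actually independent of $a$. Your reduction would need this same enlargement of the auxiliary variable to go through when $\kappa>1$.
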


 When $\kappa =1$, Theorem \ref{thm:quasiC} is a variant of the main theorem of \cite{BM}. It turns out  that the justification of  Theorem \ref{thm:quasiC}  for the case $\kappa>1$ is surprisingly involved.

Theorem \ref{thm:main} also applies to the class of free symmetric scalar polynomials of the form $p(a,x)=r(x)a+ar(x)-f(x)$ where $r$ and $f$ are polynomials in one free variable ($g=\tg=1$) subject to some additional constraints.  Given $\eps>0$, let $B_\eps$ denote the sequence $(B_\eps(n))_{n=1}^\infty,$ where  $B_\eps(n) =\{X\in \SS_n : \|X\|<\eps\}$, the open $\epsilon$ ball (centered at $0$) in $\SS_n$.

\begin{theorem}
\label{thm:alinScalar}
If $p(a,x)=r(x)a+ar(x)-2f(x)$, where
$r$ and $f$ are scalar polynomials in a single variable $x$ of degree $k$ and $d$ respectively such that  \begin{enumerate}[\rm(i)]
 \item  $r(0)=1$ and $f(0)=0$.
 \item  $fr^{-1}$ is not constant.
 \item  There exists an $\eps>0$ such that for each $n$ and each $A\in \smatng$ the set
$$
\fP_p^A \cap B_\eps =  \{ X\in\SS : \, p(A,X) \succ 0\ and\ \Vert X\Vert<\eps\}
$$
is convex.
 \end{enumerate}
 Then there  exists a  symmetric polynomial  $\ell\in \ccP$ affine  linear in $x$  and a finite number $n$  free polynomials  $s_j\in\ccP$  each linear (and hence homogeneous) in $x$  and independent of $a$ such that
 \begin{equation*}
  %\label{eq:wowScalar}
   p(a,x)= \ell(a,x) - \sum_{j=1}^n s_j(x)^T  s_j(x).
 \end{equation*}
  In particular,  $d$ is at most two.
\end{theorem}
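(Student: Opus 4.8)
The plan is to derive Theorem~\ref{thm:alinScalar} from the main result, Theorem~\ref{thm:main}: one checks that a polynomial $p(a,x)=r(x)a+ar(x)-2f(x)$ obeying (i)--(iii) meets the hypotheses of Theorem~\ref{thm:main}, and then specializes its conclusion to the present setting $g=\tg=1$ with $p$ affine linear in $a$. To start, $p$ is a \emph{scalar} ($\kappa=1$) symmetric polynomial (symmetric since $x=x^{T}$ and $a=a^{T}$, whence $r(x)^{T}=r(x)$ and $f(x)^{T}=f(x)$). Evaluating gives $p(A,0)=r(0)A+Ar(0)-2f(0)=2A$ and $p(0,x)=-2f(x)$. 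Writing $r=\sum_i r_i x^{i}$ and $f=\sum_i f_i x^{i}$, the words $x^{i}a$, $ax^{i}$, $x^{i}$ are pairwise distinct, so there is no cancellation among the top terms and $\deg_x p=e:=\max\{\deg r,\deg f\}$; moreover $e\ge 1$, since $e=0$ would force $r\equiv 1$ and $f\equiv 0$, making $fr^{-1}\equiv 0$ constant, contrary to (ii). (When $r=1$ one has $\fP_p^{A}=\{X:f(X)\prec A\}$, a quasiconvexity-type set, and the assertion is a local form of the $\kappa=1$ case of Theorem~\ref{thm:quasiC}; general $r$ is handled uniformly through Theorem~\ref{thm:main}.)

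The essential structural hypothesis to verify is that the highest degree terms of $p$ majorize at every $A$, i.e.\ that \eqref{eq:apr25a17} holds at every $A$. The degree-$e$-in-$x$ part of $p$ equals $r_e\,x^{e}a+r_e\,ax^{e}-2f_e\,x^{e}$ with $(r_e,f_e)\neq(0,0)$, so $p$ has a term of degree $e$ in $x$ that begins with $x_1$ on the left: the word $x_1^{e}a_1$ (coefficient $r_e\neq0$) when $\deg r\ge\deg f$, and the word $x_1^{e}$ (coefficient $-2f_e\neq0$) when $\deg f>\deg r$. Hence, by part~\eqref{it:sm2} of Remark~\ref{rem:special majorize}, some block $\varphi_{ei}(A)$ equals $cI_n$ with $c\neq0$ for every $A\in\SS_n$, so $\range\varphi_p^{e}(A)=\RR^{n}\supseteq\range\varphi_p^{j}(A)$ for $j=1,\dots,e-1$; this is exactly \eqref{eq:apr25a17}.

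Next one matches the remaining, more benign, hypotheses of Theorem~\ref{thm:main}, locating the roles of (i)--(iii). Hypothesis (iii) provides precisely the local convexity of the slices $\fP_p^{A}\cap B_\eps$. Hypothesis (i) makes these slices substantial over the ``large'' set $\{A\succ0\}$: since $p(A,0)=2A$, we have $0\in\fP_p^{A}$ whenever $A\succ0$, and since $r(0)=1$ the matrix $r(X)$ is invertible for $X$ near $0$, so $\fP_p^{A}\cap B_\eps$ has nonempty interior there; any properness requirement is routine (e.g.\ $\fP_p^{0}\cap B_\eps$ omits the origin). Hypothesis (ii) is the non-degeneracy condition that places $p$ in the non-exceptional case of Theorem~\ref{thm:main}: if $fr^{-1}=c$ were constant, then $f=cr$ and $p=r(x)(a-c)+(a-c)r(x)$, a degenerate family for which the $x$-degree is not controlled, so it must be excluded.

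With all hypotheses of Theorem~\ref{thm:main} in force, that theorem returns the structural description of $p$; specialized to $\tg=g=1$ and to $p$ of degree one in $a$, it yields $p(a,x)=\ell(a,x)-\sum_{j=1}^{n}s_j(x)^{T}s_j(x)$ with $\ell$ affine linear in $x$ and each $s_j$ linear (hence homogeneous of degree one) in $x$, and a short supplementary argument (the subtracted term $\sum_j s_j^{T}s_j$ is a sum of squares, and $p$ is affine in $a$, so any $a$-dependent part of $a$-degree $\ge 2$ would have to coincide with that of $\ell$; scaling $a\mapsto ta$ and letting $t\to\pm\infty$ then forces the relevant coefficients to vanish) shows the $s_j$ may be taken free of $a$. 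Setting $a=0$ gives $-2f(x)=\ell(0,x)-\sum_{j=1}^{n}s_j(x)^{T}s_j(x)$, whose right-hand side has degree at most two in $x$, so $d=\deg f\le2$. Here the routine parts are the evaluation identities, the non-cancellation of top terms, and the majorization check via Remark~\ref{rem:special majorize}; the main obstacle is pinning down which non-degeneracy and ``large set of $A$'' clauses of Theorem~\ref{thm:main} are supplied by (i)--(iii), and verifying that the general conclusion of Theorem~\ref{thm:main} collapses, for this single-$a$-variable, affine-in-$a$ family, to the clean sum-of-squares form with the $s_j$ independent of $a$.
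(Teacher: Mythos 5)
Your overall strategy — verify the hypotheses of Theorem~\ref{thm:main} and then specialize its conclusion — is exactly the paper's. Your majorization check via Remark~\ref{rem:special majorize}\eqref{it:sm2} is correct and matches the paper's item (d). However, there is a genuine gap: you dismiss the remaining hypotheses of Theorem~\ref{thm:main} as ``more benign'' and ``routine,'' but in fact verifying them constitutes the bulk of the paper's proof, and two of them are not addressed at all. Specifically, hypothesis~(b) of Theorem~\ref{thm:main} (density of full rank points) requires computing $p_{a,x}(A,X)[E,0]=r(X)E+Er(X)$ and observing that this Lyapunov operator is onto $\SS_n$ precisely when $\sigma(r(X))\cap\sigma(-r(X))=\emptyset$, a condition that holds after arbitrarily small perturbation since $r\neq0$. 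Hypothesis~(e), the $\cC_p$-domination condition, is the hardest part: a polynomial in the chip span has the form $q(a,x)=\varphi(x)a+\psi(x)$, and one must show $q=0$ from its vanishing on $\partial\widehat\fP_p\cap\widehat\cO$. The paper does this via the substitution $A=B+r(X)^{-1}f(X)$ (which reduces $p$ to $r(X)B+Br(X)$), first killing the $\psi$ part by setting $B=0$, then exhibiting explicit $2\times2$ triples $(B,X,v)$ satisfying the boundary constraints for which $\varphi(X)\begin{bmatrix}Bv & jBv\end{bmatrix}$ is forced to vanish with $\begin{bmatrix}Bv& jBv\end{bmatrix}$ invertible, whence $\varphi=0$. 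None of this appears in your sketch.

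You also miscast the role of hypothesis~(ii). It is not a non-degeneracy clause to be waved away; it is precisely what verifies hypothesis~(c) of Theorem~\ref{thm:main}, that $\pi_1(\partial\fP_p(n)\cap\cO(n))$ contains an open set. Because $r(0)=1$ and $fr^{-1}$ is not constant, one finds an interval $(u,v)\subset(-\eps,\eps)$ on which $r>0$ and $f^{o}=fr^{-1}$ is strictly monotonic; diagonal pairs $X=\textup{diag}(x_j)$, $A=\textup{diag}(f^{o}(x_j))$ with $x_j\in(u,v)$ then satisfy $p(A,X)=0$, and unitary invariance shows every $A$ with spectrum in $(f^{o}(u),f^{o}(v))$ lies in the projection — an open set. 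Your remark that $f=cr$ would make ``the $x$-degree not controlled'' does not engage with how the hypothesis is actually used. In short, the proposal has the correct skeleton but leaves the substantive verifications (b), (c), (e) unproved; these are the content of the result.
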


\subsection{Related results and remarks}
 \label{sec:motivate}

  Theorem \ref{thm:main} falls within the rapidly developing  fields of
  free analysis and free semialgebraic geometry.
    %------------
   %# iumj-not

%-------------

Free analytic
  functions, arise naturally in a number of contexts,
  including free probability \cite{VoicI,Voic} and multivariate systems theory.
  The book \cite{DKV-VV} provides an unparalleled introduction to the theory of free analytic functions. Important recent results can be found in \cite{AMglobal} for instance while \cite{AMoka} develops a beautiful analog of the Oka extension theorem and related results of a free several complex variables flavor. The papers \cite{AKV13,AKVimplicit,AMimplicit} establish free implicit and inverse function theorems. It turns out there are serious topological subtleties. The free implicit function theorem in \cite{Pascoe-implicit} has implications for the free version of the Jacobian conjecture.  The theory of Pick and matrix monotone function is extended to the free setting in \cite{PTD} and the article \cite{CPTD} exposes fascinating connections to representation theory.  A good introduction to many of these topics can be found the survey article \cite{AMaspects}.  Two related lines of development of the theory of free functions influenced by single and multivariable operator theory are the series of papers by Muhly and Solel  and that of Popescu for which \cite{MS,MS13} and \cite{Pop,Pop11,Pop13,Pop15} are but just a few of the  references. For a survey article see \cite{MS11}.

 Free rational functions and skew fields have a long history going back to at least \cite{Shu}. A more recent development is their appearance in multivariate systems theory    \cite{BGM,BGtH}. In this context,  for a given rational function, the existence of linear fractional representations whose formal singularities and actual singularities agree is an issue \cite{KVV09,HM14}.

  Free semialgebraic geometry is, by analogy to the
  commutative case, the study of free polynomial inequalities.  An algebraic formula (certificate) equivalent to the validity of a polynomial inequality is known as a positivstellensatz.
  A small sample of  free Positivstellensatze include  \cite{C} \cite{KS1} \cite{KS2}. See also the references therein.

% End not for IUMJ
%------------------------------

  Convex inequalities (inequalities with convex solution sets) represent an important subclass of these areas and are the subject of {\it convex real algebraic geometry}, an emerging subfield of real algebraic geometry  \cite{BPT}.  The results in this paper lie within the free analog of convex algebraic geometry. Its motivation comes from systems theory  and control engineering
 as well as  from the theory
  of operator spaces and systems and matrix convexity. See, for examples,
  \cite{EW,FP,KPTT}.

%\subsubsection{The Riccati equation}

To give a taste of the engineering connection consider
 the free polynomial
  $$   p(a,x)=-x bb^T x + a^T x + x a +c,  \qquad c=c^T , \, x=x^T $$
 and the corresponding Riccati inequality
  $$   0 \preceq -X BB^T X + A^T X + X A +C,  \qquad C=C^T , \, X=X^T,$$
  which is  ubiquitous in systems engineering. Here
  the matrices
  $A,B,C$ can have any compatible dimension.

  Note the Riccati inequality is
   {\it dimension free} in the sense that its form does not change
   with the dimension of the matrices.
   This independence of dimension property is typical of systems problems described purely by a signal flow diagram and whose signals are in $L^2$ and whose
   performance is of mean square type, cf. \cite{OHMP, HKMfrg}. Since convexity is a highly desired property in system engineering it is unfortunate that
   for such dimension free systems problems, convexity is obtainable only with  the, a priori more restrictive,
   {\it linear matrix inequalities} as the results here
   %,  and in  \cite{HHLM, HM} for further examples,
 strongly suggest.

 This article is a natural successor to \cite{DHM07a,DHM07b,DGHM09,DHMill,BM}.  Here the question is what can be said of the polynomial if there are sufficiently many convex level slices.  The article \cite{HM} characterizes the convex level sets of  polynomials $p(x)$ in the free variables $x=(x_1,\dots,x_g).$ The article \cite{HHLM} characterizes polynomials $p(a,x)$ satisfying various convexity conditions in $a$ and $x$ separately.

\subsection{Readers guide}
 \label{sec:readers guide}
  The remainder of this article is organized as follows.  The central result, Theorem \ref{thm:main},  appears near the end of Section \ref{sec:general} after the needed preliminaries are developed. Theorems \ref{thm:quasiC} and \ref{thm:alinScalar}  are shown to be a consequence of Theorem \ref{thm:main} in  Sections \ref{sec:matrixBM} and \ref{sec:ex} respectively.  The paper then turns, in the next nine sections,  to proving Theorem \ref{thm:main}.  This analysis hinges on the principle that the Hessian restricted to the tangent space, a type of free second fundamental form, is negative definite. The needed machinery is developed in Sections \ref{sec:boundary} and \ref{sec:DSLI}.  Section \ref{sec:identities} reviews a number of Kronecker product identities for the convenience of the reader. The Hessian of a symmetric free polynomial has a {\it middle matrix-border vector} representation described in Sections  \ref{sec:mmhess}, \ref{sec:polycon} and
\ref{sec:CHSY}.  Positivity of the middle matrix of $p$ forces $p$ to have degree two, a fact proved in Section \ref{sec:posM};    Section \ref{sec:moreposM} provides sufficient conditions for positivity of the middle matrix.  The proof of Theorem \ref{thm:main} culminates in Section \ref{sec:proofmain}.  In brief, sufficient negativity of the free second fundamental form forces positivity of the middle matrix.  Appendix \ref{appendix:r=0} contains the proof of
Remark \ref{prop:r=0}.  Several additional appendices provide examples illustrating the various objects and conditions appearing through the paper.
The authors thank S. Balasubramanian for reading and commenting on an early draft of this article.

\section{The general theorem}
\label{sec:general}

The main result of this article, Theorem \ref{thm:main}, is formulated in this section.
Its statement requires a number of definitions that we now introduce
and illustrate with examples.

\subsection{Directional derivatives and the Hessian}
 \label{sec:DDH}
 Given $p\in \cP^{\dddd}$
 we shall compute directional derivatives of $p$ in the $x$ variable
 assuming the $a$ variable is fixed.
 While these are partial directional derivatives with respect to $x$,
 we shall abuse terminology and leave out the word partial. If  $h=(h_1,\dots,h_g)$ is another
 set of freely noncommuting variables and  $t\in\RR$,
\begin{equation}
 \label{eq:def-dervs}
   p(a,x+th)= \sum_{j=0}^d p_j(a,x)[0,h] t^j,
\end{equation}
 where $p_j(a,x)[0,h]$ are polynomials in the freely noncommuting
 variables
 $$(a,x,h)=(a_1,\dots, a_\tg, x_1,\dots,x_g,h_1,\dots,h_g).$$
 The notation
 indicates the different role that these variables play.
  Observe that $p_j(a,x)[0,h]$ is homogeneous of degree $j$ in $h$.

  The polynomial $p_1(a,x)[0,h]$ is the \df{directional derivative}
  or simply the \df{derivative}  of  $p$
  (in the direction $h$) and is denoted \df{$p_x(a, x)[0,h]$};
  the polynomial
\[
  p_{xx}(a,x)[0,h]=2p_2(a,x)[0,h]
\]
  is the \df{Hessian} of $p$. \index{$p_{xx}$}
 When there is no ambiguity, we  shall write $p_x(a,x)[h]$ and $p_{xx}(a,x)[h]$ to simplify the typography.

 The (partial) directional derivative  $p_a(a,x)[e,0]$
and the full directional  derivative
$p^\prime(a,x)[e,h]$ are defined analogously in the  spirit of \eqref{eq:def-dervs}
using $p(a+te,x)$ and  $p(a+ t e, x +t h)$ respectively.

\subsection{Full rank conditions}
Let $p$ be a symmetric $\kk$-valued free polynomial.
A pair $(A, X) \in\allmatn$ is called a \df{full rank point} for $p$ if, for each positive integer $n$,  the map
\[
 (E,H)\in\allmatn\longrightarrow p^\prime(A,X)[E,H]\in  \SS_{n\kappa} %\sR_{n\kappa}
\]
 is onto $\mathbb S_{n\kappa}$.    Note that the full rank condition places a constraint on the  relative sizes of $\fg$ and $\kappa$. In particular, the inequality
  $\fg(n^2+n) \ge (n\kappa)^2+n\kappa$ is a necessary condition for the existence of full rank points.
 \index{$\cL\cC_w$} \index{chip set} \index{ left chip set} \index{right chip set} \index{$\cR\cC_w$}

\subsection{Chip sets}
\label{sec:chip}
  For $1\le j\le g$, the  \df{right chip set} $\cR\cC_w^j$
  of a word $w$ in the variables $(a,x)$
  is the set of words $v$ such that there exists a word $u$ (empty or not) such that
\[
  w= ux_jv.
\]
 The  left chip set is defined analogously. Thus, for example, if $w=ax_2x_1x_2a$, then
 $$
 \cR\cC_w^1=\{x_2a\} \quad\textrm{and}\quad \cR\cC_w^2=\{a, x_1x_2a\},
 $$
 whereas,
 $$
 \cL\cC_w^1=\{ax_2\} \quad\textrm{and}\quad \cL\cC_w^2=\{a, ax_2x_1\}.
 $$
 Notice that if $w=w^T$  and the variables are symmetric, as in the present case, then  the words in $\cL\cC_w^j$ are the transposes of the words in $\cR\cC_w^j$.

For a given $j$, the right chip set
  $\cR\cC_p^j$ of a polynomial
  $p$ is the union of the right chip sets $\cR\cC_w^j$ of the words $w$ appearing in $p$ (with nonzero coefficients).
  In particular,
  for a given polynomial $p$, the partial of $p$ with respect to $x$ has the form
\begin{equation}
\label{eq:aug13y14}
  p_x =  \sum_j \sum_{u} \sum_v C_{u,v,j} \, uh_jv
\end{equation}
 where $C_{u,v,j}$ is a matrix (of appropriate size) and $u$ and $v$ are from the left
  and and right chip sets   of $p$ respectively. Similarly the  Hessian of $p$ takes the form,
\begin{equation}
\label{eq:aug13z14}
  p_{xx} = \sum C_{u,v,j,\ell} \, u h_j w h_\ell v,
\end{equation}
 where $u$ and $v$ are from the left and right chip sets of $p$ respectively.
 Let
 \begin{align*}
 \cC_p^j&\quad\textrm{denote the  chip space of polynomials in the words of $\cR\cC_p^j$}\\
 \cR\cC_p&\quad\textrm{denote the union of the sets $\cR\cC_p^j$ and}\\
  \cC_p&\quad\textrm{denote
  the \df{ chip space} of polynomials in the words of $\cR\cC_p$}.
\end{align*}
\index{$\cC_p$} \index{$\cR\cC_p$} \index{$\cL\cC_p$}

\begin{example}\rm
\label{ex:sep17a13}
Let $p(a,x)=x_2^2ax_1+x_1ax_2^2+a^2$. Then
$$
\cR{\cC}_p^1=\{1,ax_2^2\},\quad \cR{\cC}_p^2=\{1,x_2,ax_1,x_2ax_1\}
$$
and
$$
\cR\cC_p=\{1,x_2,ax_1,ax_2^2,x_2ax_1\}.
$$
Correspondingly,
$$
p_x(a,x)[0,h]=h_2x_2ax_1+x_2h_2ax_1+x_2^2ah_1+h_1ax_2^2+x_1ah_2x_2+x_1ax_2h_2
$$
is of the form \eqref{eq:aug13y14}, whereas
\begin{align*}
p_{xx}&(a,x)[h]\\ &=2\{h_2^2ax_1+h_2x_2ah_1+x_2h_2ah_1 +h_1ah_2x_2+h_1ax_2h_2+x_1ah_2^2\}\\
&=2\begin{bmatrix}h_1&h_2&x_2h_2&x_1ah_2\end{bmatrix}\,\begin{bmatrix}0&ax_2&a&0\\
x_2a&0&0&1\\ a&0&0&0\\ 0&1&0&0\end{bmatrix}\,
\begin{bmatrix}h_1\\ h_2\\ h_2x_2\\ h_2ax_1\end{bmatrix}
\end{align*}
is of the form  \eqref{eq:aug13z14}.

The polynomial $p(a,x)-p(a,0)$ can also be expressed as a linear combination of terms in
the right chip set $\cC_p$ with polynomial coefficients. In fact,
$p(a,x)-p(a,0)$
can be recovered from the formula for $p_x(a,x)[0,h]$ by deleting those terms in which an $h_j$ is preceded by an $x_i$ and then replacing $h_j$ by $x_j$ in the remaining terms. Thus, in the
case at hand,
$$
p(a,x)-p(a,0)=p(a,x)-a^2=x_1(ax_2^2)+x_2(x_2ax_1).
$$
\qed
\end{example}

\begin{remark}\rm
If $p(a,x) = a - p(x)$, then
the $a$ term will not appear in $p_x(a,x)[0,h]$ or in $p_{xx}(a,x)[0,h].$ Hence $\cC_p^i$
will only consist of polynomials in the $x$ variables.
\end{remark}

\subsection{Free sets, positivity sets  and set domination}

 Let $p$ be a free $\kk$-valued symmetric polynomial. For positive integers $n$, let \index{$\fP_p$}
\[
 \fP_p(n) =\{(A,X)\in\allmatn: p(A,X)\succ 0\}.
\]
 The \df{positivity set} of $p$ is a the sequence $\fP_p= (\fP_p(n))_{n=1}^\infty$.
 The set $\fP_p$ naturally earns the moniker of \df{free open (basic) semialgebraic set}.

 Given a positive integer $n$ and $\tg$-tuple $A\in\SS_n(\RR^{\tg}),$
let
\begin{equation}
 \label{def:fPpA}
 \fP_p^A =\{X\in\smatng: p(A,X)\succ 0\}
\end{equation}
We call $\fP^A_p$ the  \df{$A$-cross section of $\fP_p$}. Letting $n$ denote the size of $A$, it is a subset of $\smatng$.

Let
\[
 \partial \fP_p =\{(A,X)\in\allmat: p(A,X)\succeq 0 \quad\mbox{and $\textup{ker}\,p(A,X)\ne\{0\}$} \}
\]
and
\[
 \partial \fP_p^A =\{X\in\SS(\RR^g): p(A,X)\succeq 0 \quad\mbox{and $\textup{ker}\,p(A,X)\ne\{0\}$} \}.
\]
We call these sets the \df{algebraic boundaries} of $\fP_p$ and $\fP_p^A$.

 The sequence $\hbDp=(\hbDp(n))_{n=1}^\infty$, where
\begin{equation*}
 % \label{def:hbDp}
 \begin{split}
  \hbDp(n) &= \{(A,X,v)\in\allmatn\times\RR^{\kappa n} : \\ &p(A,X)v=0, \, v\ne 0\, \textrm{and}\, p(A,X)\succeq 0\}
  \end{split}
\end{equation*}
 is the \df{detailed algebraic boundary} of the positivity set $\fP_p.$

\subsubsection{Free sets}
  Positivity sets are special cases of free sets.
  Typically, constructions in this article  are parametrized over all $n$. Thus,
  sequences $S=(S(n))_{n=1}^\infty$ where, for each $n$, the set $S(n)$ is a subset
  of  $\SS_n(\RR^\ell)$ for an appropriate choice of $\ell$
  figure prominently and are called \df{graded sets}.  Such a sequence is a \df{free set}
  if it is closed under direct sums
  and (simultaneous) real unitary conjugation. This last
  condition means, if $U$ is an $n\times n$ real unitary
  matrix and $(A,X)\in S(n)$ (resp. $(A,X,v)\in S(n)$), then $(U^TAU,U^TXU)\in S(n)$ (resp.
  $(U^T AU, U^TXU,  U v)\in S(n)$) too, where
  \begin{equation*}
  \begin{split}
  &U^T(A_1,\ldots,A_{\tg})U:=(U^TA_1U,\ldots,U^TA_{\tg}U)\quad\textrm{and}\\
 & U^T(X_1,\ldots,X_{g})U:=(U^TX_1U,\ldots,U^TX_{g}U).
 \end{split}
 \end{equation*}
  A graded set $S$ is a \df{open} if each $S(n)$ is open. A graded set $S$ is  \df{nonempty} if   $S(n)\ne\emptyset$ for at least one positive integer $n$.

The  notation for projections of the graded sets  $S \subset(\allmatvk)_{n=1}^\infty$  (resp. $S\subset \allmat$)
 \index{$\pi_1$}
\begin{align*}
\pi_1(S)&=\{A:\, (A,X,v)\in S\} \quad (\mbox{resp. } (A,X)\in S)\\
\end{align*}
will be useful.

\subsubsection{Dominating sets}
 A graded set  $\Omega \subset (\allmatvk)_{n=1}^\infty$ is said to \df{$\cC_p$-dominate} a  free set $\cS\subset (\allmatvk)_{n=1}^\infty$ if  $q\in\cC_p^{1\times\kappa}$ and
 $q(\tA,\tX)\tv=0$  for all $(\tA,\tX,\tv) \in \Omega$ implies $q(A,X)v=0 $  for all $(A,X,v) \in \cS$. Note $\Omega$  is not required to be a subset of  $\cS$.

\begin{example}\rm (Domination and Majorization)
\label{ex:sep17a15}
Let
\[
p(a,x)=(1+x^k)a+a(1+x^k)+x^d
\]
 Thus $p$ is a polynomial of the type appearing in Theorem \ref{thm:alinScalar}.   Observe that
$$
\cR\cC_p=\{1,x,\ldots,x^{\ell-1}, a, xa,\ldots, x^{k-1}a\}\quad \textrm{with $\ell=\max\{k,d\}$}
$$
and a polynomial $q(x,a)$ in the linear span of the terms in the  chip set $\cC_p$ of $p$ has the form
$$
q(a,x)=\sum_{j=0}^{\ell-1}\alpha_j x^j+\sum_{j=0}^{k-1}\beta_j x^j a\quad \textrm{with $\ell=\max\{k,d\}$}.
$$
Because $\kappa=1$ ($p$ is scalar polynomial) and either $x^ka$ or $x^d$ is a highest degree word, the highest degree terms majorize at each $A$ by Remark \ref{rem:special majorize}.

Let $\Omega_n$ denote the set of $(A,X,v)\in\allmatn\times\RR^n$ such that
\begin{enumerate}
\item[\rm(1)] $A$ and $X$ are real $n\times n$ diagonal matrices and $v\in\RR^n$.
\item[\rm(2)]  $I_n+X^k$ is invertible.
\item[\rm(3)] $A=-\frac{1}{2}(I_n+X^k)^{-1}X^d$.
\end{enumerate}
We will  show that $\Omega= (\Omega_n)_{n=1}^\infty$ is $\cC_p$ dominating for the free set $\cS=(\cS_n)_{n=1}^\infty$ defined by
$$
\cS_n:=\{(A,X,v)\in\allmatn\times\RR^n: p(A,X)v=0\}.
$$
If $(A,X,v)\in\Omega_n$,
then $AX=XA$ and hence $p(A,X)=0$ if and only if
$$
 A=-\frac{1}{2}(I_n+X^k)^{-1}X^d.
$$
For this choice of $A$,
\begin{align*}
 q(A,X)&=\sum_{j=0}^{\ell-1}\alpha_j X^j +\sum_{j=0}^{k-1}\beta_j X^j A\\
 &=\sum_{j=0}^{\ell-1}\alpha_j X^j -\frac{1}{2}\sum_{j=0}^{k-1}\beta_j X^j (I_n+X^k)^{-1}X^d\\
 &=\frac{1}{2}(I_n+X^k)^{-1}\left\{2(I_n+X^k)\sum_{j=0}^{\ell-1}\alpha_j X^j -\sum_{j=0}^{k-1}\beta_j X^j X^d
 \right\}.
 \end{align*}
 Now let  $n=\ell+k$ and $X=\textup{diag}\{\mu_1,\ldots,\mu_{\ell+k}\}$ with $0<\mu_1<\cdots<\mu_{\ell+k}$ and suppose first that $k\le d$ so that $\ell=d$. Then
 $q(A,X)=0$ if and only if
 $$
 \begin{bmatrix}1&\mu_1&\cdots&\mu_1^{d+k-1}\\ \vdots & & &\vdots\\
 1&\mu_{d+k}&\cdots&\mu_{d+k}^{d+k-1}\end{bmatrix}\left\{\begin{bmatrix}2\alpha_0\\ \vdots \\ 2\alpha_{d-1}\\ -\beta_0\\ \vdots\\ -\beta_{k-1}\end{bmatrix}+\begin{bmatrix} 0_{k\times 1}\\ 2\alpha_0\\ \vdots\\ 2\alpha_{d-1}\end{bmatrix}\right\}=0.
 $$
 Since the (Vandermonde) matrix on the left is invertible, it is readily seen that $q(A,X)=0$ if and only if
 $\alpha_j=0$ for $j=0,\ldots,d-1$ and $\beta_j=0$ for $j=0,\ldots,k-1$;  i.e., if and only if $q$ is the zero polynomial.

 The proof for $k>d$, i.e., for $\ell =k$, leads to the constraint
  $$
 \begin{bmatrix}1&\mu_1&\cdots&\mu_1^{2k-1}\\ \vdots & & &\vdots\\
 1&\mu_{2k}&\cdots&\mu_{2k}^{2k-1}\end{bmatrix}\left\{\begin{bmatrix}2\alpha_0\\ \vdots \\ 2\alpha_{k-1}\\ 2\alpha_0\\ \vdots\\ 2\alpha_{k-1} \end{bmatrix}+\begin{bmatrix} 0_{d\times 1}\\
  -\beta_0\\ \vdots\\ -\beta_{k-1}\\ 0_{(k-d)\times 1}
 \end{bmatrix}\right\}=0,
 $$
 which implies that $q(A,X)=0$ if and only if
 $\alpha_j=\beta_j=0$ for $j=0,\ldots,k-1$;  i.e., if and only if $q$ is the zero polynomial.
\qed
\end{example}

\begin{example}\rm
 With $\widetilde{g}=1,$ suppose  $p$ is a symmetric polynomial of the form
 $p(A,X)=A-f(X)$ as  considered in
  \cite{BM}. Note that  $p_a(A, X)[E,0]=E$ clearly maps
  $\smatn$ onto $\smatn$ independent of $X$.
  Thus, in this setting every pair $(A,X)$ is a full rank point for $p$.

  In this case, the class ${\cC}_p$
 consists of polynomials that do not depend on $A$. In \cite{BM} it is also shown that
  for every $X\in\smatng$ there exists a matrix $A\succ 0$ such that
  $p(A,X)\succeq 0$ and $\det p(A,X)=0$.   Consequently, a polynomial $q\in\cC_p$
 that vanishes on $\hbDp$  
   is singular for every $X\in\mathbb{S}_n(\RR^g)$  and hence,
 as is well known (see e.g.,  \cite{BM}), $q=0$, i.e.,
 $\hbDp$ is $\cC_p$ dominating for $\allmat$.
   \qed
\end{example}

\subsection{Statement of the main result}
\label{sec:mainRes}
 Suppose $p\in\cP^{\dddd}$ is a $\kappa \times \kappa$  symmetric matrix
  polynomial in $\tg+g$ free variables of  degree $\widetilde{d}$ in $a$ and degree $d$ in $x$  and  $\cO\subset \allmat$ is an
 free open semialgebraic set.  Let
$$
\hcO(n)=\{(A,X,v)\in \allmatvk :(A,X) \in \cO(n)
 \ \textrm{and}\ v\ne 0\},
$$
i.e.,
\begin{equation*}
 %\label{eq:sep1b15}
\hcO=(\hcO(n))_{n=1}^\infty=\{(A,X,v):\,(A,X)\in\cO\quad\textrm{and}\quad v\ne 0\},
\end{equation*}
and, in keeping with the notation $\fP_p^A =\{X\in\smatng: (A,X)\in\fP_p\}$ that was introduced in \eqref{def:fPpA}, let
\[
\cO^A =\{X: (A,X)\in\cO\}
\]
denote the \df{$A$-cross section of $\cO$}. \index{$\cO^A$}

The sets
 \begin{equation*}
 %\label{eq:aug18a15}
  \partial \fP_p \cap \cO =\{(A,X)\in\cO:\,  p(A,X)\succeq 0 \ \textrm{and}\  \textup{ker}\,p(A,X)\ne \{0\}\}
\end{equation*}
and
$$
\fP_p^A \cap \cO^A=\{X:(A,X)\in\cO\quad\textrm{and}\quad p(A,X)\succ 0\}
$$
 play a prominent role in the formulation of the main theorem, which we are now ready to state.

\begin{theorem}
\label{thm:main}
   Suppose  $p=p(a,x)=p(a_1,\ldots,a_{\widetilde{g}},x_1,\ldots,x_g)$  is a $\kappa \times \kappa$  symmetric matrix
   of polynomials in $\tg+g$ free variables of  degree   $\widetilde{d}$ in $a$ and degree $d$ in $x$,
    and  $\cO\subset \allmat$ is a free open semialgebraic set. If
  \begin{enumerate}[\rm(a)]
   \itemsep=5pt
   \item \label{it:theta1} for each $A\in\pi_1(\partial \fP_p \cap \cO)$   the set
 $\fP_p^A \cap \cO^A$
    is  convex;
   \item  \label{it:fullrank}
         the set of tuples $(A,X) \in \partial \fP_p\cap \cO$ that are full rank for $p$ are dense in $\partial \fP_p\cap \cO$;
   \item \label{it:theta2}  there exists an $N\ge \sum_{j=0}^{\tilde{d}} \tilde{g}^j$ such that $\pi_1(\partial \fP_p(N)\cap \cO(N))$  contains an open set \footnote{Unless otherwise indicated, open means nonempty open.};
   \item \label{it:theta3}  for each $A\in\pi_1(\partial \fP_p\cap \cO)$   the highest degree terms of $p$ majorize at $A$; and
   \item \label{it:YYY}  $\hcO \cap \partial \widehat{\fP}_p$ is a ${\cC}_p$ dominating set for $\hcO$,
  \end{enumerate}
\smallskip
  then  there exists
\begin{enumerate}[\rm(i)]
 \item  a  symmetric polynomial   $\ell\in \ccP^{\dddd}$ affine  linear in $x$;
 \item a positive integer $\rho$ and a
   a matrix free polynomial $R(a) \in \ccP^{\rho\times\rho}$ that is  positive semidefinite on $\pi_1(\xxxx)$;
 \item  a  matrix free polynomial       $S\in\ccP^{\rho\times \kappa}$     linear in $x$   such that
\begin{equation}
  \label{eq:wow}
   p(a, x)= \ell(a, x) - S(a,x)^T R(a) S(a, x).
\end{equation}
\end{enumerate}
   Thus, $p$ is a weighted sum of squares of  terms linear in $x$
   plus an affine  linear term  in $x$.  Moreover, if $p$ does not contain any terms of the form $c\tau(a)^T x_k \omega(a) x_j \sigma(a)$, for words $\tau,\omega,\sigma$ in $a$ with $\omega$ not empty and a nonzero $c\in\mathbb R$, then there is a choice of  $R$ that is independent of $a$ (so that $R\in \mathbb R^{\rho\times \rho}$) and positive semidefinite.
\end{theorem}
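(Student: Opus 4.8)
The plan is to reduce the convexity hypothesis (a) to an infinitesimal statement about the Hessian of $p$ along the algebraic boundary $\partial\fP_p\cap\cO$, then to upgrade that to semidefiniteness of the \emph{middle matrix} of the Hessian, and finally to read off from the middle matrix both the bound $d\le 2$ and the factored form \eqref{eq:wow}. First I would localize at the boundary: fix a full rank point $(A,X)\in\partial\fP_p\cap\cO$, of which there are a dense supply by (b), and a nonzero $v\in\ker p(A,X)$. Since $\cO$ is open and $\fP_p^A\cap\cO^A$ is convex by (a), the second-order criterion for convexity of a free sublevel set, developed in Sections~\ref{sec:boundary} and \ref{sec:DSLI}, gives that the free second fundamental form is nonpositive there: $v^{T}p_{xx}(A,X)[H]\,v\le 0$ for every $H\in\smatng$ tangent to $\partial\fP_p^A$ at $X$, i.e.\ with $p_x(A,X)[H]v=0$. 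The full rank hypothesis is exactly what makes the tangency constraint cut out a subspace of the expected codimension, so that this really is the correct infinitesimal form of convexity.

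Next I would pass to the middle matrix. Using the middle matrix--border vector representation of the Hessian from Sections~\ref{sec:mmhess}, \ref{sec:polycon} and \ref{sec:CHSY}, write $p_{xx}(a,x)[h]=V(a,x)[h]^{T}\,Z(a,x)\,V(a,x)[h]$, where $V[h]$ is the border vector whose entries are $h_j$ times chip words of $p$ and $Z(a,x)$ is the middle matrix. Evaluated at $(A,X)$ and paired with $v$, the previous step says $Z(A,X)$ is negative semidefinite on the span of the vectors $V(A,X)[H]v$ with $H$ tangent. The heart of the argument is to strip away both the tangency restriction and the dependence on the particular base point. Hypothesis (c) furnishes, on a level $N\ge\sum_{j=0}^{\tilde d}\tg^{j}$, an open set of tuples in $\pi_1(\partial\fP_p(N)\cap\cO(N))$, and a dimension / algebraic-independence count — this is where the lower bound on $N$ is used — then forces the relevant border vectors to span the whole ambient space; hypothesis (d), that the highest degree terms of $p$ majorize at $A$, controls how the lower-degree part of $p$ couples to its top part so that no cancellation hides the sign of $Z$ (this is where Theorem~\ref{thm:dec29a13} enters); and hypothesis (e), the $\cC_p$-domination of $\hcO$ by $\hcO\cap\hbD$, propagates the vanishing of the test polynomials in $\cC_p$ from the dominating set to all of $\hcO$, so that negativity obtained on a thin set becomes negativity everywhere. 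Assembling these ingredients — precisely the sufficient condition for positivity of the middle matrix isolated in Section~\ref{sec:moreposM} and carried out in Section~\ref{sec:proofmain} — yields that $-Z(a,x)$ is positive semidefinite as a matrix polynomial on $\pi_1(\xxxx)$.

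Finally I would extract the structure. By the algebraic argument of Section~\ref{sec:posM}, semidefiniteness of the middle matrix forces $p$ to have degree at most two in $x$. Write $p=p^{0}(a)+p^{1}(a,x)+p^{2}(a,x)$ with $p^{j}$ homogeneous of degree $j$ in $x$, and set $\ell(a,x)=p^{0}(a)+p^{1}(a,x)$, which is symmetric and affine linear in $x$, giving (i). Since $\deg_x p\le 2$, the Hessian $p_{xx}(a)[h]$ is independent of $x$, so $Z=Z(a)$ is a matrix polynomial in $a$ alone with $-Z(a)$ positive semidefinite on $\pi_1(\xxxx)$; taking $R(a)=\tfrac12(-Z(a))$ (or a polynomial factorization thereof) gives (ii), while the border vector $V(a)[x]$, each of whose entries is an $x$-variable times a word in $a$, furnishes $S\in\ccP^{\rho\times\kappa}$ linear in $x$, giving (iii), and the identity $p^{2}(a,x)=\tfrac12 p_{xx}(a)[x]=-S(a,x)^{T}R(a)S(a,x)$ is exactly \eqref{eq:wow}. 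For the last sentence: the words $\tau(a)^{T}x_k\omega(a)x_j\sigma(a)$ with $\omega$ nonempty are precisely the degree-two-in-$x$ terms of $p$ that place a nonconstant $a$-word strictly between the two $x$-occurrences, hence the only source of genuine $a$-dependence inside the middle matrix; when $p$ contains none of them the words in $V(a)[x]$ and the entries of $Z(a)$ arrange so that $Z(a)$ is a constant matrix, whence $R$ may be taken in $\RR^{\rho\times\rho}$ and positive semidefinite.

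I expect the middle paragraph to be the main obstacle: upgrading ``the Hessian is nonpositive on the tangent space at each full rank boundary point in $\cO$'' to ``the middle matrix is globally negative semidefinite'' is where all three of hypotheses (c)--(e) are consumed, and the presence of the $a$ variables — which makes the middle matrix a nonconstant polynomial, in contrast to the constant middle matrix of the quasiconvex case of \cite{BM} — together with $\kappa>1$ is what forces the extended machinery occupying the bulk of the paper.
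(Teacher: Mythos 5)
Your overall architecture matches the paper's: convexity at the boundary $\Rightarrow$ nonpositivity of the clamped Hessian $\Rightarrow$ semidefiniteness of the middle matrix $\Rightarrow$ degree bound and factored form. But there are two concrete gaps in the middle that need to be addressed, because they are the technical heart of the argument rather than bookkeeping.

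First, the infinitesimal statement you extract from convexity is stronger than what can actually be proved. You claim $\langle p_{xx}(A,X)[H]v,v\rangle\le 0$ for \emph{every} $H$ in the clamped tangent space $\cT(A,X,v)$. What Proposition~\ref{prop:maintool} actually gives is nonpositivity only on a subspace $\cH\subset\cT(A,X,v)$ of codimension at most one; the missing direction comes from the separating linear functional $\Lambda$ used in the proof, and there is no reason the full tangent space should be nonpositive. Moreover, to invoke that proposition at all one needs $\ker p(A,X)$ to be one-dimensional, which is not automatic and requires the implicit-function-theorem perturbation of Proposition~\ref{lem:generic}; this is the actual role of the full-rank hypothesis (b), not the ``expected codimension'' role you assign it. This distinction matters quantitatively, since the rest of the argument tracks the positive-signature bound $e_+^n\le 1$, not $e_+^n=0$.

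Second, and more seriously, your proposal omits the mechanism by which ``positive signature bounded by a fixed number'' is upgraded to ``positive signature zero.'' Having bounded the positive signature of $\ZZ(\wt B,\wt Y)$ via the relaxed Hessian (Lemma~\ref{thm:signature-clamped-relaxed}) and the CHSY codimension count (Theorem~\ref{thm:chsymatrix}), the paper's key step is that this bound $\gamma$ is independent of the size $n$ and of the direct-sum multiplicity $m$. One then applies the bound to an $m$-fold direct sum $(I_m\otimes\hA, I_m\otimes\hX, \dots)$; since $\mu_+$ is additive under direct sums, $m\,\mu_+(\ZZ(\hA,0))\le\gamma$ for all $m$, forcing $\mu_+(\ZZ(\hA,0))=0$. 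The $\cC_p$-domination hypothesis (e) and Lemma~\ref{lem:ind1} are consumed precisely here, to guarantee that the chip-set vectors are linearly independent at the amplified dominating point so that the CHSY count applies. Without this amplification argument, nonpositivity on a thin set does not become global semidefiniteness. Relatedly, your attribution of the lower bound on $N$ to a border-vector spanning count is off: in the paper, $N\ge\sum_{j=0}^{\td}\tg^j$ is used (via Remark~\ref{prop:r=0}) inside Lemma~\ref{lem:aug6a15} to ensure that the $a$-polynomial block $\ZZ_{0,d-2}(a)$ vanishing on an open subset of $\SS_N(\RR^{\tg})$ forces it to vanish identically, which is what rules out degree $d>2$.

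The final extraction paragraph is essentially right, as is the remark about terms $\tau(a)^Tx_k\omega(a)x_j\sigma(a)$: the middle block $R^p(a)$ of Lemma~\ref{lem:aug6a15} has entries $-r_{\sigma,\tau,j,k}(a)$, and these are constants precisely when no such word with nonempty $\omega$ occurs.
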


\begin{remark}\rm
\label{rem:main}
The proof of Theorem \ref{thm:main} occupies  Sections \ref{sec:boundary} through \ref{sec:proofmain}. It explicitly  constructs $S$ and $R.$
 We don't know the extent to which $S$ and $R$ are unique.

As a slightly weaker, but perhaps more palatable,  conclusion, $R(A)\succeq 0$ for each $n$ and $A\in\smatntg$ such that $\emptyset \ne \fP^A_p \ne \smatng$. Indeed, for any such $A$ there is an $X$ such that $(A,X)\in \xxxx$.
\end{remark}

\begin{remark}\rm
 \label{prop:r=0}
 Suppose $r$ is a free polynomial of degree $\td$ in $\tg$ variables. It is well known that if $r$ vanishes on an open set $\cU\subset \SS_N(\mathbb R^\tg)$ for sufficiently large $N$, then $r=0$.
 A very conservative choice of $N$ is  $N\ge k_b(\tg,\td)$
 where $k_b$
   is the number of words of length at most $\td$ in $\tg$ variables:
    \index{$k_b$}
\begin{equation}
 \label{def:kb}
   k_b=k_b(\tg,\td):= \sum_{j=0}^{\td}  \tg^j.
\end{equation}
%IUMJ do not remove this comment. Fix sentence below for short v. long versions.
A proof is provided in (arXiv) Appendix \ref{appendix:r=0}, since
we could not find this statement
for symmetric variables.
This remark explains the choice of $N$ in Theorem \ref{thm:main} item \eqref{it:theta2}. % and an open set $\cU \subset \SS_N(\mathbb R^\tg)$ on which $r$ vanishes, then $r=0$.
% {#scott ## DELETE REF TO APP for IUMJ do not remove this comment}  A proof appears in appendix \ref{appendix:r=0}.
\end{remark}

\begin{remark}\rm
\label{rem:allvars}
In stating Theorem \ref{thm:main} the variables $a_i$ and $x_i$
and matrices  $A_i $ and $X_i$
are assumed symmetric, a choice made to simplify the presentation.   We see no obstruction to Theorem \ref{thm:main} holding for
mixtures of symmetric and arbitrary variables and  matrices.
In early papers, e.g.  \cite{CHSY}, various classes of variables were treated, greatly
encumbering  the presentation. It was found that %
the proofs in other cases are essentially simplifications
of the proofs in the symmetric case. Subsequent papers, including this one,  typically just present the case
of symmetric variables with remarks elucidating the situation for other choices of variables.

In the proof of Theorem \ref{thm:quasiC}  it is convenient to use a version of Theorem \ref{thm:main} in which the $a$ variables are mixed,  i.e., $a=(a,b)$ where $a=(a_1,\dots,a_h)$ are symmetric variables and $b=(b_1,\dots,b_\ell)$ are non-symmetric variables. The $x$ variables and $X$ matrices, on the other hand, are all symmetric. In this setting
Theorem \ref{thm:main} is true with little modification in its proof,
since generally $A$ is fixed and  the proof uses directional derivatives in $x$ but not in $a$. Essentially all complication surrounds myriad properties of these derivatives. Indeed, it is only necessary to make the obvious changes to the statement of Theorem \ref{thm:main}. For instance, the full rank condition in this case asks that the mapping
\[
\SS_n(\mathbb R^h)\times M_n(\mathbb R^\ell) \times \smatng \ni  (E,H) \mapsto p^\prime(A,X)[E,H]\in\SS_{n\kappa}
\]
 is onto.  Here $M_n(\mathbb R^\ell)$ is the set of $\ell$-tuples of $n\times n$ matrices with real entries.
\end{remark}

\section{The proof of Theorem \ref{thm:quasiC}}
\label{sec:matrixBM}
In this section we show how to obtain Theorem \ref{thm:quasiC} from a variant of Theorem \ref{thm:main} (see Remark \ref{rem:allvars}).
This  also provides an opportunity to become familiar with the conditions of Theorem \ref{thm:main} before encountering its rather long proof.

Let $M(a)$ denote the matrix
$$
M(a)=\begin{bmatrix}a_{11}&\cdots&a_{1\kappa}\\ \vdots& &\vdots\\
a_{\kappa1}&\cdots&a_{\kappa\kappa}\end{bmatrix}\quad
$$
with $\tg=(\kappa^2+\kappa)/2$ free noncommutative entries $a_{ij}$ subject to $a_{ij}= a_{ji}^T$.  Thus, there are $\ell:=\frac{\kappa (\kappa-1)}{2}$ non-symmetric variables $a_{ij}$ with $i<j$ and $\kappa$ symmetric variables $a_{jj}.$  We view $a_{ii}$ as the first $\kappa$ of the $a$-variables and $a_{ij}$ with $i<j$ as the last $\ell$ of the $a$-variables.
Thus, for example, if $A=(A_1,\ldots,A_\kappa,B_1,\ldots ,B_\ell)$ and $\kappa=3$, then $\ell=3$ and
$$
M(A)=\begin{bmatrix}A_1&B_1^T&B_2^T\\B_1&A_2&B_3^T\\B_2&B_3&A_3\end{bmatrix}.
$$
Set
$$p(a,x):=M(a)  -f(x).$$
Then
\begin{equation*}
%\label{eq:aug21f15}
\fP_p =\{(A,X):  M(A)   -f(X)\succ 0 \}
\end{equation*}
\begin{equation*}
%\label{eq:aug21g15}
\begin{split}
\partial \widehat \fP_p &=\{(A,X,v):
  M(A)   - f(X)  \succeq 0, \\
&[M(A)  - f(X)]v=0\ \textrm{and} \ v \ne 0\}.
\end{split}
\end{equation*}

 The set
$$
   \cO:= \{ (A,X) : \ M(A) \prec \gamma I \}
$$
is a nonempty  free open semialgebraic set. Moreover, with this choice of $\cO$,
\begin{equation*}
 %\label{eq:aug21b15}
\begin{split}
\xxx=\{(A,X,v):&
   M(A)\prec \gamma I, \,
M(A)  -f(X)\succeq 0, \\
&[M(A)  -f(X)]v=0 \,  \textrm{and}\, v\ne 0\}.
\end{split}
\end{equation*}

It suffices to verify the hypotheses Theorem \ref{thm:main}  are met and this we do,  one by one.

\bigskip

\begin{enumerate}[(a)]
\itemsep=18pt
\item The set  $\fP_p^A=\fP_p^A\cap \cO^A$  is  convex for each $A\in\pi_1(\xxxx)$ by hypothesis.
\item The set of tuples $(A,X)\in \partial \fP_p \cap \cO$ that are full rank for $p$ are dense in $\fP_p\cap \cO$.
Indeed, each $(A,X)\in \xxxx$ is full rank for $p$ since  %
 $p_{a,x}(A,X)[E,0]= M(E)$ and $M$ maps $\SS_n(\mathbb R^\kappa)\times M_n(\mathbb R^\ell)$  onto $\SS_{\kappa n}$.
\item For each positive integer $N$, the set  $\pi_1(\partial \fP_p(N) \cap \cO(N))$ contains an open set.
Fix $N$.  For a tuple  $A\in \SS_N(\mathbb R^\kappa)\times M_N(\mathbb R^\ell)$, $M(A)\in\SS_{N\kappa}$  and all elements of $\SS_{N\kappa}$ have this form.
 Fix $A$ such that $\gamma I_{\kappa n} \succ M(A)\succ 0$.  There exists an $X$ such that $M(A)-f(X)\not\succ 0$ by hypothesis. On the other hand, $M(A)-f(0)\succ 0$,  since $f(0)=0$.  Hence, there exists a $0<t \le 1$ and a vector $v\ne 0$ such that $M(A)-f(tX)\succeq 0$ and $(M(A)-f(tX))v=0$. Hence $\pi_1(\partial \fP_p(N)\cap \cO(N))$ contains the  set of $A$ satisfying $\gamma I_{\kappa n}\succ M(A)\succ 0$.
\item
  The highest degree terms of $f$ majorize by hypothesis. %
\item
$\xxx$ is a $\cC_p$ dominating set for $\hcO$. Since elements of ${\cC}_f^{1\times \kappa}$ depend only on $x$ it (more than) suffices to show, if $q(X)v=0$  for $(A,X,v)\in\xxx$,
 then $q = 0$.

Suppose $q(X)v=0$ for all $(X,A,v)\in \xxx$.  Fix $n\ge k_b(d,g) = \sum_{j=0}^d g^j$. Suppose  $X\in \SS_n(\RR^g)$ and  $f(X)\prec \gamma I$. Choose $A\in \SS_n(\mathbb R^\kappa)\times M_n(\mathbb R^\ell)$ with $M(A)=f(X)$. It follows that $(A,X,v)\in \xxx$ for all $v\ne 0$. Hence $q(X)v=0$ for all such $v$ and hence $q(X)=0$ for all $X$ such that $f(X)\prec \gamma I$. Since this is an open set of $X$ and $n$ is sufficiently large, $q=0$ by
Remark \ref{prop:r=0}.
\end{enumerate}

All hypotheses of Theorem \ref{thm:main} are verified. Moreover, $p$ contains no terms of the form $c \tau(a)x_k \omega(a) x_j \sigma(a)$ for words $\tau,\omega,\sigma$ in $a$ with $\omega$ not empty and a nonzero $c\in\mathbb R$. Thus, by Theorem \ref{thm:main},
 $p=M(a)-f(x)$ has the form \eqref{eq:wow} with $R(a)=R$ a constant positive semidefinite matrix.   Since $R$ has a unique positive semidefinite square root that can be absorbed into the factors $s_j(a,x)$, it can thus be assumed that $R$ is the identity and hence
\[
 f(x)= -\ell(a,x)+ \sum s_j(a,x)^T s_j(a,x).
\]
 On the other hand, $f$ does not depend upon $a.$ Hence none of the $s_j(a,x)$ can depend upon $a$ as otherwise the highest degree terms in $a$ can not cancel.  Thus $s_j(a,x)=s_j(x)$. Finally, it now follows that $\ell(a,x)=\ell(x)$ also does not depend upon $a$.   We note there is a more direct argument using Lemma \ref{lem:aug6a15}.

%\end{proof}

\section{The proof of Theorem \ref{thm:alinScalar}}
\label{sec:ex}
In this section we show how to obtain Theorem \ref{thm:alinScalar} from Theorem \ref{thm:main}
Sample computations for the concrete special case where  $r(x)=1+x^k$ and $2f(x)=-x^d$ appear in Example \ref{ex:sep17a15}.

%\begin{proof}[Proof of Theorem \ref{thm:alinScalar}]
Define the  free open semialgebraic set
$$
\cO:= \{(A,X) \in \SS(\RR^2):\, X\in B_\eps\}
$$
and observe
\begin{equation*}
%\label{eq:aug21z15}
  \fP_p^A=\{X \in\SS:\,   r(X)A+Ar(X)-2f(X) \succ 0 \}
\end{equation*}
and, for each positive integer $n$,
\begin{equation*}
 %\label{eq:aug21y15}
\begin{split}
 \partial \widehat \fP_p(n)& =\{(A,X,v)\in\allmatv:
  r(X)A+Ar(X)-2f(X) \succeq 0, \\
 &[r(X)A+Ar(X)-2f(X) ]v=0\ \textrm{and} \ v \ne 0\}.
\end{split}
\end{equation*}

Next,  we shall check that the hypotheses of Theorem \ref{thm:main} are met, one by one.

\begin{enumerate}
\item[\rm(a)]  For each $A\in\pi_1(\xxxx)$
   the set  $\fP_p^A\cap{\cO}^A$ is convex by assumption.

\ms

\item[\rm(b)]
 $\{(A,X)\in \xxxx: p_{a,x}(A,X)[E,H]\ \textrm{maps} \ (E,H)\in\SS_n(\RR^{\fg}) \mbox{ onto }\SS_{n}\}$ is dense in $\pi_1(\xxxx)$.

 Let $(A,X)\in\allmat$ be given. It is easily checked that
$$
p_{a,x}(A,X)[E,0]=   r(X) E + E r(X)
$$
and hence $(A,X)$ is a full rank point for $p$ if the map
$$
E \to r(X) E + E r(X)
$$
is invertible, i.e., if
  $\sigma(r(X)) \cap \sigma(-r(X))=\emptyset,$
where $\sigma(\pm r(X))$ denotes the set of eigenvalues of $\pm r(X)$.
But this can be achieved by arbitrarily small perturbations of $X$, since $r(x)$ is a nonzero polynomial.
 The desired conclusion follows.

\bs

\item[\rm(c)]  To prove for each positive integer $n$ the set $\pi_1(\partial \fP_p(n)\cap\cO(n))$ contains an open set, choose an interval $(u,v)\subset (-\eps,\eps)$ on which $r$ is positive and  $f^o=fr^{-1}$ is strictly monotonic. (This is possible because $r(0)=1$  and $fr^{-1}$ is not constant.) For simplicity, suppose $f^o$ is increasing on this interval and thus $f^o((u,v))=(f^o(u), f^o(v)).$ Given points  $f^o(u)<a_1\le a_2 \le \dots \le a_n < f^o(v),$ there are uniquely determined points $u<x_1\le x_2 \le \dots \le x_n <v$ such that $f^o(x_j)=a_j$. Let $X$ denote the diagonal matrix with entries $x_j$ and $A$ the diagonal matrix with entries $a_j=f^o(x_j)$. Then $X\in B_\eps$ and $p(A,X)=0$.  Thus, $A\in \pi_1(\partial \fP_p(n)\cap\cO(n))$.  Moreover, $p(U^TAU,U^TXU)=0$ for every real unitary matrix $U$.  Therefore, $\pi_1(\partial \fP_p(n)\cap\cO(n))$ contains every $A \in\SS_n$ whose spectrum lies in the interval $(f^o(u),f^o(v))$ and hence contains an open subset of $\SS_n$.

\ms

\item[\rm(d)] By Remark \ref{rem:special majorize}\eqref{it:sm2}, the condition the  highest degree terms of $p$ majorize is satisfied.

\ms

\item[\rm(e)]
If $q$ lies in the span of the right chip set of $p$, then $q$ has the form
$$
 q(a,x)=\varphi(x)a+\psi(x),
$$
where $\varphi(x)$ and $\psi(x)$ are polynomials in $x$ alone of degrees at most $k-1$ and
 $\max\{k-1,\,d-1\}$, respectively. To verify the domination condition of item \eqref{it:YYY} of Theorem \ref{thm:main}, it suffices  to show,
if  $q(A,X)v=0$ for each positive integer $n$ and triple $(A,X,v)\in \mathbb{S}_n\times B_\eps(n) \times\RR^n$  such that $v\ne 0$,  $p(A,X)v=0$ and $p(A,X)\succeq 0$, then $q=0$.

\ms

Let $X$ be a diagonal matrix such that $r(X)$ is invertible.
The substitution  $A=B +  \ r(X)^{-1}f(X)$ into $p$ and $q$ gives
\[
  p(A,X)=r(X)A+Ar(X)-2f(X) =r(X)B+Br(X)
\]
and
\[
\begin{split}
  q(A,X)&=\varphi(X)A+\psi(X)\\ &=\varphi(X)B+\psi(X) + \  \varphi(X)r(X)^{-1}f(X),
\end{split}
\]
respectively.  Since $r(X)$ is invertible if $\Vert X\Vert<\varepsilon$ and $\varepsilon$ is a small enough positive number, it suffices to show that, if
\begin{equation}
\label{eq:aug14b12}
\big(\varphi(X)B+\psi(X) + \ \varphi(X)r(X)^{-1}f(X)\big)v=0,
\end{equation}
 for every triple $(B,X,v)$  with $B$ real symmetric, and $X$ real diagonal  that satisfies the constraints
\begin{equation}
\label{eq:aug27a15}
 \| X \|<\eps,\  r(X)B+Br(X)\succeq 0\ \textrm{and}\  (r(X)B+Br(X))v=0,
\end{equation}
then
\begin{equation}
\label{eq:may2a17}
\varphi(X)B+\psi(X) + \ \varphi(X)r(X)^{-1}f(X)=0.
\end{equation}

 The constraints in \eqref{eq:may2a17} are met for every $v$ if $B=0$ and $\Vert X\Vert<\varepsilon$. and hence in this case \eqref{eq:aug14b12} implies that
$$
r(X)\psi(X)+\varphi(X)f(X)=0.
$$
 Thus, \eqref{eq:aug14b12} reduces to $\varphi(X)Bv=0$ and \eqref{eq:may2a17} reduces to $\varphi(X)B=0$.

 Next, since $r(x)$ is a non constant polynomial with $r(0)=1$, we may  choose a pair of real numbers $x$ and $y$ such that  $|x|<\eps$, $r(x)>0$, $|y|<\eps$, $r(y)>0$
and $r(x)\ne r(y)$. Now let
\[
X:= \begin{bmatrix} x & 0\\ 0& y \end{bmatrix}
\quad \textrm{and}\quad
B:= \begin{bmatrix} a & b\\  b & c \end{bmatrix},
\]
where $a,b,c$ are real numbers. Then $\|X\|<\eps$ and
\begin{equation*}
 %\label{eq:aug27b15B}
r(X)B+Br(X) = \begin{bmatrix}
2 a r(x) & b (r(x) + r(y) ) \\
b (r(x) + r(y) ) & 2c  r(y)
\end{bmatrix}.
\end{equation*}

Now choose $a>0$, $c>0$,
$$
b=2\frac{\sqrt{acr(x)r(y)}}{r(x)+r(y)},\quad t=-\sqrt{\frac{ar(x)}{cr(y)}}\quad\textrm{and}\quad v=\begin{bmatrix}1\\ t\end{bmatrix}.
$$
Then, $\varphi(X)Bv=0$ for a set of triples $(B,X,v)$ of the requisite form that meet the constraints in
 \eqref{eq:aug27a15}.
  Since $X$ is a diagonal matrix, it is also easily seen that
the constraints in \eqref{eq:aug27a15} are met
if $B$ and $v$ are replaced by $jBj$ and $jv$, where $j$ is the signature matrix $j=\begin{bmatrix}1&0\\0&-1\end{bmatrix}$. Thus, $$\varphi(X)jBj^2v=\varphi(X)jBv=0.$$ Consequently,
$$
\varphi(X)\,\begin{bmatrix}Bv&jBv\end{bmatrix}=\varphi(X)\,\begin{bmatrix}a+bt&a+bt\\ b+ct&-(b+ct)\end{bmatrix}=0.
$$
Since
$$
a+bt=a\,\frac{r(y)-r(x)}{r(x)+r(y)}\ne 0\quad\textrm{and}\quad b+ct=tc\frac{r(x)-r(y)}{r(x)+r(y)}\ne 0
$$
the matrix $\begin{bmatrix}Bv&jBv\end{bmatrix}$ is invertible. Thus, $\varphi(X)=0$, for all $x$ in an open interval. Hence $\varphi=0$.
\end{enumerate}

Since all hypotheses of Theorem \ref{thm:main} are verified, $p$ is of the form of \eqref{eq:wow}.
Moreover, as
  $p$ does not contain terms of the form $c \tau(a) x_k \omega(a) x_j \sigma(a)$ for  words $\tau, \omega,\sigma$ in $a$ with $\omega$ not empty and a nonzero $c\in\mathbb R$,   Theorem \ref{thm:main} guarantees the existence of a  representation \eqref{eq:wow} with $R\in\mathbb R^{\rho\times \rho}$  a positive semidefinite matrix that does not depend upon $a$. Since $R$ has a unique positive semidefinite square root that can be absorbed into the factors $s_j(a,x)$,
we may assume that $R$ is the identity and follow essentially the same argument as at the end of the proof of Theorem \ref{thm:quasiC} (see the end of Section \ref{sec:matrixBM}) to conclude that $s_j(a,x)=s_j(x)$ does not depend upon $a$.

\section{Boundaries and tangent spaces}
\label{sec:boundary}
  Given $(A, X, v)  \in \allmatvk$, let
\begin{equation*}
 %\label{eq:clamped-tangent}
  \cT(A,X,v)= \{H\in\smatng : p_x(A, X)[0,H]v=0 \} .
\end{equation*}
  In the case $(A,X,v)\in \partial \fP_p$,
  the  subspace $\cT(A, X,v)$  is the
 \df{clamped tangent plane}  at $(A, X,v)$ in the terminology of \cite{DHM07a}. \index{$\cT(A,X,v)$}

 If $p(A,X)v=0$ and  $H\in\cT(A, X,v)$, then, as follows easily from the general formula,
 \begin{equation*}
  %\label{eq:aug12d14}
 \begin{split}
 p(A,X+tH)=&p(A,X)+tp_x(A,X)[0,H]+\frac{t^2}{2!}p_{xx}(A,X)[0,H]\\ &+\frac{t^3}{3!}p_{xxx}(A,X)[0,H]+\cdots\, ,
 \end{split}
 \end{equation*}
\[
 \langle p(A, X+tH)v,v\rangle =  \frac{1}{2}
       t^2 \langle p_{xx}(A, X)[0,H]v,v\rangle
    +t^3 e(t),
\]
 for some polynomial $e(t)$. This identity provides a link between
 convexity and negativity  of the Hessian of $p$, much as in the commutative  case.

\begin{proposition}
 \label{prop:maintool}
  Suppose   $p\in\cP^{\kappa\times\kappa}$ is  symmetric,  $(A,X,v)\in\allmatvk$ and   $v\ne 0$. If
 \begin{enumerate}[\rm(i)]
    %  \item  $p(A,0)\succ 0$;
    \item \label{it:in boundary1}  $p(A,X)\succeq 0$;
    \item \label{it:in boundary2}  $p(A,X)v=0$;
   \item  \label{it:dim one}the dimension of the kernel of
 $p(A, X)$ is one;
 \item \label{it:convex} there is an open subset $\mathscr W$ of $\smatng$ containing  $X$ such that the  open set $\fP_p^A\cap \mathscr W$ is convex;
 \end{enumerate}
  then there exists a subspace $\cH$ of $\cT(A,X,v)$ of codimension at most one  (in $\cT(A,X,v)$)
  such that
 \begin{equation}
  \label{eq:poshess}
    \langle p_{xx}(A, X)[0,H]v,v\rangle \le 0 \quad \text{for}\ H\in\cH.
 \end{equation}
\end{proposition}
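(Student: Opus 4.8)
The plan is to derive the Hessian inequality from the convexity of the cross section $\fP_p^A \cap \mathscr W$ by a second-order Taylor analysis along straight lines in the direction of clamped tangent vectors $H \in \cT(A,X,v)$. Write $f_H(t) := \langle p(A,X+tH)v,v\rangle$. By the Taylor expansion displayed just before the proposition, since $p(A,X)v=0$ and $p_x(A,X)[0,H]v=0$, we have $f_H(t) = \tfrac12 t^2 \langle p_{xx}(A,X)[0,H]v,v\rangle + t^3 e_H(t)$ for a polynomial $e_H$. So $f_H(0)=f_H'(0)=0$ and $f_H''(0) = \langle p_{xx}(A,X)[0,H]v,v\rangle$; the goal is to show $f_H''(0)\le 0$ for all $H$ in a codimension-$\le 1$ subspace of $\cT(A,X,v)$.

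The key idea is a supporting-hyperplane argument for the convex set $\cC := \fP_p^A \cap \mathscr W$. Since $X \in \partial\fP_p^A$ (from $p(A,X)\succeq 0$, $p(A,X)v=0$, $v\ne 0$) and $\cC$ is an open convex set with $X$ on its boundary (relative to $\mathscr W$, with $\mathscr W$ open and $X\in\mathscr W$), there is a supporting hyperplane: a nonzero linear functional $\Lambda$ on $\smatng$ with $\Lambda(Y-X)< 0$ for all $Y\in\cC$ (i.e. $\cC$ lies in an open half-space through $X$). Now I would consider the subspace $\cK := \{H \in \smatng : \Lambda(H)=0\}$ — a hyperplane in $\smatng$ — and set $\cH := \cT(A,X,v)\cap\cK$, which has codimension at most one in $\cT(A,X,v)$. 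For $H\in\cH$, both $X+tH$ and $X-tH$ are points on which $\Lambda$ vanishes relative to $X$, so for small $t\ne 0$ neither can lie in $\cC$; i.e. $p(A,X\pm tH)\not\succ 0$ for all small $t\ne 0$. Combined with $p(A,X)\succeq 0$ and the one-dimensionality of the kernel, a continuity/perturbation argument forces $f_H(t) = \langle p(A,X+tH)v,v\rangle \le 0$ for all small $t$: if $f_H(t_0)>0$ for some small $t_0$, then since $v$ spans the kernel and $p(A,X)$ is positive on the orthogonal complement, $p(A,X+t_0 H)$ would be positive definite for $t_0$ small (using that the other eigenvalues stay bounded away from $0$), contradicting $X+t_0H\notin\cC$. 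Having $f_H(t)\le 0$ near $t=0$ with $f_H(0)=f_H'(0)=0$ gives $f_H''(0)\le 0$, which is exactly \eqref{eq:poshess}.

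A cleaner way to organize the continuity step: fix $H\in\cH$ and pick an orthonormal basis $v=v_1, v_2,\dots,v_{\kappa n}$ of $\RR^{\kappa n}$ with $v_2,\dots,v_{\kappa n}$ spanning $v^\perp$. Since $p(A,X)\succeq 0$ with kernel exactly $\RR v$, the compression of $p(A,X)$ to $v^\perp$ is positive definite, so by continuity there is $\delta>0$ and $c>0$ such that the compression of $p(A,X+tH)$ to $v^\perp$ is $\succ c I$ whenever $|t|<\delta$. For such $t$, if additionally $\langle p(A,X+tH)v,v\rangle > 0$, then a Schur-complement argument shows $p(A,X+tH)\succ 0$, i.e. $X+tH\in\fP_p^A$, and (shrinking $\delta$ so $X+tH\in\mathscr W$) $X+tH\in\cC$ — contradicting $\Lambda(X+tH - X)=0$ and the strict supporting inequality $\Lambda(Y-X)<0$ on $\cC$. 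Hence $f_H(t)\le 0$ for $|t|<\delta$, and therefore $f_H''(0) = \langle p_{xx}(A,X)[0,H]v,v\rangle \le 0$.

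The main obstacle, and the step requiring the most care, is the passage from ``$X+tH\notin\cC$ for small $t$'' to ``$f_H(t)\le 0$ for small $t$'': one must rule out the possibility that $p(A,X+tH)$ fails to be positive definite only because of an eigenvalue other than the one near $0$ associated with $v$. This is precisely where hypotheses \eqref{it:in boundary1} (so the relevant eigenvalue starts at $0$ from the correct side) and \eqref{it:dim one} (so exactly one eigenvalue is at $0$, the others bounded away) are used, via the Schur-complement/compression argument above. The rest — existence of the supporting hyperplane for an open convex set with a boundary point, and the codimension bookkeeping for $\cH = \cT(A,X,v)\cap\ker\Lambda$ — is standard convex geometry and linear algebra.
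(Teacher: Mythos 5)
Your overall strategy — a separating functional $\Lambda$ for the open convex set $\fP_p^A \cap \mathscr W$, the choice $\cH = \cT(A,X,v)\cap\ker\Lambda$, and a block decomposition of $p(A,X+tH)$ in a basis aligned with $v$ and $v^\perp$ — is essentially the paper's. But the perturbation step has a genuine gap. You assert that if the compression of $p(A,X+tH)$ to $v^\perp$ is $\succ cI$ and $f_H(t)=\langle p(A,X+tH)v,v\rangle>0$, then $p(A,X+tH)\succ 0$. That implication is false. Writing
\[
p(A,X+tH)=U\begin{bmatrix}Q(t)&g(t)\\ g(t)^T&f(t)\end{bmatrix}U^T
\]
with $U$ orthogonal and last column proportional to $v$ (so $f(t)=\|v\|^{-2}f_H(t)$ and $g(t)$ is the coupling block), positive definiteness requires the Schur complement $f(t)-g(t)^TQ(t)^{-1}g(t)>0$, and $f(t)>0$ alone does not give this — the off-diagonal block $g(t)$ must be controlled. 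The clamped-tangent condition $p_x(A,X)[0,H]v=0$, which in your write-up is used only to kill the linear term of the scalar expansion of $f_H$, is precisely what controls $g$: it forces $g$ (not just $f$) to vanish to second order, so $f(t)=t^2\gamma(t)$, $g(t)=t^2\beta(t)$.

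Even with that repair, the intermediate claim you draw — $f_H(t)\le 0$ for all small $t$ — is not what follows and need not hold: one can have $\gamma(0)=0$ while $\gamma(t)>0$ for small $t\ne 0$ and the Schur complement $t^2\bigl[\gamma(t)-t^2\beta(t)^TQ(t)^{-1}\beta(t)\bigr]$ stays $\le 0$, in which case $f_H(t)>0$ off $t=0$ even though $p(A,X+tH)\not\succ 0$. The correct conclusion from $p(A,X+tH)\not\succ 0$ together with $Q(t)\succ 0$ is that the Schur complement is $\le 0$, i.e.\ $\gamma(t)\le t^2\beta(t)^TQ(t)^{-1}\beta(t)$ for small $t\ne 0$; letting $t\to 0$ gives $\gamma(0)\le 0$, and since $\langle p_{xx}(A,X)[0,H]v,v\rangle=2\|v\|^2\gamma(0)$, that is exactly \eqref{eq:poshess}. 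The separation argument and the codimension count for $\cH$ are fine and match the paper's proof.
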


\begin{remark}\rm
  Note that the codimension of
  $\cT(A,X,v)$ in $\smatng$ is at most $n\kappa$.
  Hence the inequality of Equation \eqref{eq:poshess} holds
  on a subspace of $\smatng$ of codimension at most $n\kappa+1$.

  Unlike a related argument in \cite{DHM07a},
  the proof here does not rely on choosing a curve lying
  in the boundary of a convex set, thus eliminating the need
  for a corresponding smoothness hypothesis.
  \end{remark}

\begin{proof}
 If $C$ is an open convex subset of $\mathbb R^N$  and $y\notin C$, then there is a vector $w\in\RR^N$ such that
\[
   \langle y,w\rangle   > \langle x,w\rangle
\]
  for all $x\in C$.  This separation result, applied  to the tuple $X$ lying outside  the open convex set
 $\fP_p^A\cap \mathscr W$ (see item \eqref{it:convex}), guarantees the existence of a linear functional
 $\Lambda:\smatng \to \mathbb R$  such that $\Lambda(Z)<1$
 for $Z\in \fP_p^A\cap \mathscr W$ and $\Lambda(X)=1$.   Thus, as
 $$
 \textup{dim}\,\cT=\textup{dim\, ker}\,\Lambda\vert_{\cT}+\textup{dim\,range} \,\Lambda\vert_{\cT}
 $$
 and
  the dimension of the range of $\Lambda$ is one,
  the subspace
\[
  \cH=\{H \in \cT(A, X,v) : \Lambda(H)=0 \}
\]
  has codimension   one in $\cT(A, X,v)$.

  Fix $H\in\cH$ and define $F:\mathbb R\to \smatnk$ by
  $F(t)=p(A, X+tH)$.  Thus, $F$ is a matrix-valued polynomial in
  the real variable $t$.
  Let $U$ be an orthogonal matrix in $\RR^{\nk \times \nk}$ with its last
  column proportional to $v$ and write
  $$
    F(t)=U\begin{bmatrix} Q(t) & g(t) \\ g(t)^T & f(t) \end{bmatrix}U^T,
$$
where $Q(t)\in\SS_{\nk-1}$, $g(t)\in\RR^{\nk-1}$  and $f(t)\in\RR$ are polynomials of degree
  at most $d$ in $t$. Thus,
  $$
  F(t)v=U\begin{bmatrix}g(t)\\f(t)\end{bmatrix}\Vert v\Vert
  $$
  and  the  assumption
  $p(A, X)v=0$  of item \eqref{it:in boundary2} implies that $f$ and $g$ vanish at $0$.
  The supplementary assumption that $H\in\cT(A, X,v)$ implies that
  $$
  F^\prime(0)v=U\begin{bmatrix} Q^\prime(0) & g^\prime(0) \\ g^\prime(0)^T & f^\prime(0) \end{bmatrix}U^Tv=
 U\begin{bmatrix} g^\prime(0) \\ f^\prime(0)
 \end{bmatrix}\Vert v\Vert=0,
    $$
 i.e.,  $f$ and $g$ vanish to second order at $0$.
  Therefore, there are polynomials $\beta$ and $\gamma$ such that
  $g(t)=t^2\beta(t)$ and $f(t)=t^2\gamma(t)$.
Moreover,
\[
   \langle p_{xx}(A, X)[0,H]v,v\rangle=\langle F^{\prime\prime}(0)v,v\rangle= \Vert v\Vert^2f^{\prime\prime}(0)
  =2\Vert v\Vert^2
  \gamma(0).
\]
  Thus, to complete the proof of the proposition it suffices to
  use the choice of $\Lambda$ (and thus the convexity
  hypothesis on $\fP_p^A\cap \mathscr W$) and the
  assumption on the dimension of the kernel
  of $F(0)=p(A,X)$  to show that $\gamma(0)\le 0$.
  Indeed, since the kernel of $F(0)$ has dimension one and $F(0)\succeq 0$ by item \eqref{it:in boundary1}, it follows that
  $Q(0) \succ 0$. Therefore,
  $Q(t) \succ 0$ for $|t|$ sufficiently small.
  On the other hand,  since $H\in\cH$,
  $$
  \Lambda(X + tH) =\Lambda(X)+t\Lambda(H)=
  \Lambda(X) = 1
  $$ for all $t$.  Thus,  $X + tH \not \in \fP_p^A\cap \mathscr W$. %
  On the other hand, for  $|t|$ sufficiently small,   $X+tH\in \mathscr W$ and hence $X+tH\not\in \fP_p^A$. Thus
  for $|t|$ sufficiently small both
  $F(t) = p(A, X + tH) \not \succ 0$ and  $Q(t)\succ 0$.
  \footnote{If $\fP_p^A$ is replaced by $\cD^A_p$, \index{$\cD^A_p$} the component of $0$ of $\fP_p^A$, the failure
  of positivity of $F(t)$ would  not guarantee that $X+tH\not\in \cD_p(A)$.}
   Hence, for such $t$, the Schur complement of $Q$ is nonpositive; i.e.,
\[
  t^2[\gamma(t) - t^2 \beta^T(t)Q^{-1}(t)\beta(t)] \le 0.
\]
Therefore, \[
 \gamma(t) \le t^2 \beta^T(t)Q^{-1}(t)\beta(t)\quad\textrm{for $t \in (0, \epsilon)$}
\]
 and hence $\gamma(0)\le 0$.
\end{proof}

%----------------------------------------------------------------------------

\subsection{Kernel of Dimension 1}
The main result of this subsection, Proposition  \ref{lem:generic},  extends the applicability
 of Proposition \ref{prop:maintool}.
A full rank assumption, similar to condition (b) in  Theorem \ref{thm:main},  is imposed   to justify an application of the implicit function theorem.

First  observe  that if $(A,X)\in\allmatn$,  then the matrix $p(A,X)$ belongs to
$\SS_{n\kappa}$ and can be identified as an
$s\times 1$ vector with $s=(m^2+m)/2$ and $m=n\kappa$.
 Likewise by identifying $\allmatn$ with $\mathbb R^{r}$ for $r=\mathfrak{g} (n^2+n)/2$,
  the mapping
\[
 \allmatn \ni (A,X) \mapsto p(A,X)
\]
 can be identified with a mapping $f:\mathbb R^r\to\mathbb R^s$,
$$
f(y)=\begin{bmatrix}f_1(y_1,\ldots,y_r)\\ \vdots \\ f_s(y_1,\ldots,y_r)\end{bmatrix}
$$
  Moreover, the Jacobian matrix
$$
J_f(y)=\begin{bmatrix}\frac{\partial f_1}{\partial y_1}(y)&\cdots&\frac{\partial f_1}{\partial y_r}(y)\\
\vdots & &\vdots\\ \frac{\partial f_s}{\partial y_1}(y)&\cdots&\frac{\partial f_s}{\partial y_r}(y)
\end{bmatrix}
$$
can be identified with
$$
\begin{bmatrix} p_{a,x}^\prime(Y)(G_1)&\cdots&p_{a,x}^\prime(Y)(G_r)\end{bmatrix}
$$
for an appropriate choice of points $G_j\in\allmatn$, for $j=1,\ldots,r$. Thus, the statement that
$$
p_{a,x}(A,X)[E,H] \quad \textrm{maps $Y=(E,H)\in\allmatn$ onto  $\SS_{n\kappa}$}
$$
is equivalent to the statement that the rank of the Jacobian matrix is equal to  the dimension of $\SS_{n\kappa}$.

\begin{proposition}
 \label{lem:generic}
  Suppose $(A^o,X^o,v)\in \allmatvk$, $v\not =0$, and
  $p(a,x)$ is a symmetric $\kappa\times\kappa$ matrix polynomial in the noncommuting variables
  $a$ and $x$. If
  \begin{enumerate}[\rm(1)]
\item  $p(A^o, X^o)v=0$ and
 \item $p'(A^o, X^o)[E,H]$ maps $(E ,H)\in\mathbb{S}_n(\RR^\mathfrak{g})$ onto $ \SS_{n\kappa}$
(i.e., $(A^o,X^o)$ is a full rank point for $p$),
\end{enumerate}
then, for each $\varepsilon >0,$ there is a full rank point $(A, X) \in\allmatn$
  such that $\|A-A^o\|<\varepsilon$,  $\|X - X^o\|<\varepsilon$    and the kernel of $p(A, X)$  is spanned by $v$.

  Furthermore, if $p(A^o,X^o)\succeq 0,$ then $(A,X,v)$ can be chosen so that
  $p(A,X)\succeq 0$ too.
\end{proposition}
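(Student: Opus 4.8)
The plan is to realize $p(A,X)$, for $(A,X)$ ranging over a small neighbourhood of $(A^o,X^o)$, as an \emph{arbitrary} small perturbation of $M_0:=p(A^o,X^o)\in\SS_{n\kappa}$, and then to build the perturbation by hand so that its kernel collapses to $\Span\{v\}$ while retaining positive semidefiniteness in the ``furthermore'' case. The full rank hypothesis enters precisely to guarantee that every nearby matrix in $\SS_{n\kappa}$ is of the form $p(A,X)$.

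First I would set up the map $\Phi\colon\allmatn\to\SS_{n\kappa}$, $\Phi(A,X)=p(A,X)$. As explained in the paragraph preceding the proposition, its derivative at $(A^o,X^o)$ is the linear map $(E,H)\mapsto p'(A^o,X^o)[E,H]$, which by hypothesis (2) is onto $\SS_{n\kappa}$; hence $\Phi$ is a submersion at $(A^o,X^o)$. Splitting $\allmatn$ as the kernel of $d\Phi_{(A^o,X^o)}$ plus a complementary subspace $W$ on which $d\Phi_{(A^o,X^o)}$ restricts to an isomorphism, the implicit function theorem applied to $\Phi$ on the affine slice $(A^o,X^o)+W$ produces open neighbourhoods $U\ni(A^o,X^o)$ in $\allmatn$ and $V\ni M_0$ in $\SS_{n\kappa}$ with $V\subseteq\Phi(U)$. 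Since surjectivity of a linear map is an open condition on its matrix and $(A,X)\mapsto d\Phi_{(A,X)}$ depends polynomially on $(A,X)$, the set of full rank points is open; intersecting, I may shrink $U$ so that, in addition, every point of $U$ is a full rank point for $p$ and $U$ is contained in the product of the $\varepsilon$-balls about $A^o$ and $X^o$.

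Next I would construct the target matrix $M\in V$. Choose an orthonormal basis of $\RR^{n\kappa}$ whose last vector is $v/\|v\|$; since $M_0v=0$ and $M_0=M_0^T$, in this basis $M_0=Q_0\oplus 0$ with $Q_0\in\SS_{n\kappa-1}$. The invertible matrices are open and dense in $\SS_{n\kappa-1}$, so there is an invertible $Q$ with $Q\oplus 0\in V$; and in the case $M_0\succeq 0$ (so $Q_0\succeq 0$) I would instead take $Q=Q_0+\delta I$ with $\delta>0$ small, which is positive definite and still satisfies $Q\oplus 0\in V$. Put $M:=Q\oplus 0$. Then $Mv=0$, $\ker M=\Span\{v\}$ is one-dimensional, and $M\succeq 0$ whenever $M_0\succeq 0$. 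Finally, pick $(A,X)\in U$ with $p(A,X)=\Phi(A,X)=M$: by construction $\|A-A^o\|<\varepsilon$, $\|X-X^o\|<\varepsilon$, $(A,X)$ is a full rank point, $\ker p(A,X)=\ker M=\Span\{v\}$, and $p(A,X)=M\succeq 0$ in the semidefinite case, which is exactly the assertion.

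The only substantive point is the step turning surjectivity of the derivative (hypothesis (2)) into the open-mapping property of $\Phi$ near $(A^o,X^o)$, i.e.\ the submersion form of the implicit function theorem; the two supporting facts — openness of the full rank condition and the elementary linear algebra producing $M$ — are routine, as is the bookkeeping of intersecting the three open sets to keep all of $\Phi(U)\supseteq V$, full rank on $U$, and $U$ inside the $\varepsilon$-balls simultaneously.
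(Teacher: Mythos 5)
Your proof is correct, and in spirit it is the paper's argument: the full rank hypothesis makes $p$ a submersion at $(A^o,X^o)$, the implicit function theorem then supplies a local right inverse, and one finishes with the elementary linear algebra producing a nearby symmetric matrix whose kernel is spanned by $v$ (and which stays positive semidefinite when $M_0$ is). The one notable difference is in which map you submerse. You work directly with $\Phi(A,X)=p(A,X)$ into $\SS_{n\kappa}$, so that the local right inverse hits the prescribed target $M=Q\oplus 0$ on the nose; from $p(A,X)=M$ with $Q$ invertible, $\ker p(A,X)=\Span\{v\}$ and the ``furthermore'' assertion are immediate. The paper applies the implicit function theorem instead to $f(A,X,C)=[p(A,X)-C]v$, whose right inverse $g$ guarantees only $p(g(C))v=Cv$, i.e.\ that $v\in\ker p(g(C))$; by itself this does not yield $p(g(C))=C$, so the asserted implication ``$Q$ invertible $\Rightarrow \dim\ker p(g(C))=1$'' requires a further observation (for instance, that $g$ may be chosen so that in fact $p(g(C))=C$) which the paper leaves implicit. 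Your formulation closes that small gap at no cost, since the full rank hypothesis already gives surjectivity of $d\Phi$ onto all of $\SS_{n\kappa}$, not merely of its $v$-compression onto $\RR^{n\kappa}$.
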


\begin{proof}
 For $A,X\in\SS_n(\RR^{\fg})$ and  $C\in\SS_{n\kappa}$, let
\[ f(A,X,C):= [p(A,X)-C]v \quad\textrm{and}\quad C^o=p(A^o,X^o).
\]
Then  $f(A^o,X^o,C^o)=0$ and,
as the derivative of $f$ with respect to the variables $A$ and $X$ has full rank, the implicit function
theorem implies
that there is a neighborhood $\cN$ of $C^\circ$
  and a neighborhood $\cN^\prime$ of $(A^\circ,X^\circ)$  and a continuous mapping
  $g:\cN\to \cN^\prime$ such that $f(g(C),C)=0$. Let $U\in\RR^{n\kappa\times n\kappa}$ be a unitary matrix with its last column proportional to $v.$ Thus,
\[
  C^o=U\begin{bmatrix} Q^o&\beta^o\\ (\beta^o)^T&\gamma^o\end{bmatrix}U^T\quad\textrm{with
  $\beta^o\in\RR^{n\kappa-1}$ and $\gamma^o\in\RR$}.
\]
  As
  $$
  C^ov=U\begin{bmatrix}\beta^o\\ \gamma^o\end{bmatrix}\Vert v\Vert=0,
$$
it follows that $\beta^o=0$ and $\gamma^o=0$ and hence that
$$
C^o=U\begin{bmatrix} Q^o&0\\ 0&0\end{bmatrix}U^T.
$$
Choose $C\in\cN$ with
$$
C=U\begin{bmatrix} Q&0\\ 0&0\end{bmatrix}U^T,\quad \textrm{$Q=Q^T$  invertible and
$\Vert Q-Q^o\Vert$ small}.
$$
If $(A,X)=g(C)$, then
$$
p(A,X)v=Cv=U\begin{bmatrix} Q&0\\ 0&0\end{bmatrix}U^Tv=U\begin{bmatrix} Q&0\\ 0&0\end{bmatrix}\begin{bmatrix}0_{(n\kappa-1)\times 1}\\ 1\end{bmatrix}=0
$$
and, since $Q$ is invertible,  the dimension of the kernel of $p(A,X)$ is equal to one.

If $p(A^o,X^o)\succeq 0$, then  $Q^o \succeq 0$  and $Q=Q^T$ may be chosen positive definite.
\end{proof}

%-------------------

\subsection{The Hessian on a Tangent Plane vs. the Relaxed Hessian}
 \label{sec:curvature}
 Our main tool for analyzing the curvature of noncommutative real varieties
 is a variant of the Hessian for symmetric $\kk$-valued  polynomials $p$ of
 degree $d$ in $g$ noncommuting variables.
The curvature of an nc variety  $\cV(p)$ is defined in terms of the Hessian of $p$
compressed to tangent planes, for each dimension $n$.
Since this compression of the Hessian is awkward to work with directly,
we introduce a quadratic polynomial $F(a,x)[h]$ (defined
for all $H\in \Smatng$, not just for $H \in \cT(A,X,v)$)
called the relaxed Hessian. This  approach is taken in \cite{DHMill} and
used in \cite{BM};\S 3 of
\cite{DHMill} should be referred to
for motivation and more details; see especially Example 3.1.

Given a symmetric nc matrix polynomial $p\in\cP^{\kappa\times\kappa}$  with right chip sets $\cR\cP_p^j$, $j=1,\ldots,g$, let $\msUalt(a,x)[h]$ denote the column vector with entries $h_j w(a,x)$ for $1\le j\le g$ and $w\in \cR\cP_p^j$.
The \df{relaxed Hessian} of $p$  is defined to be the polynomial\footnote{The tensor product of the matrix $I$ with the vector (column matrix) nc polynomial is defined either abstractly or concretely by the Kronecker product just as for the tensor product of matrices. The Kronecker product of two matrix nc polynomials requires more care as discussed later in Subsection \ref{sec:identities}.}
\begin{equation}
 \label{eq:defrelax}
 \begin{split}
   p^{\prime\prime}_{\lambda, \delta}(a,x)[h]:&=
      p_{xx}(a,x)[h]+ \delta   I_\kappa\otimes  \msUalt(a,x)[h]^T\msUalt(a,x)[h] \\
      &   +\lambda \,  p_x(a,x)[h]^T p_x(a,x)[h].
      \end{split}
\end{equation}

  Suppose $(A,X) \in \allmatn$ and $v\in \mathbb R^{\nk}$.
  We say that the {\bf  relaxed Hessian is positive}
  \index{relaxed Hessian, positive} at $(A,X,v)$ if
  for each $\delta>0$ there is a $\lambda_\delta>0$ so that
  for all $\lambda>\lambda_\delta$
$$
   0\le
    \langle p^{\prime\prime}_{\lambda, \delta}(A,X)[h]v,v\rangle
$$
 for all $H\in\smatng$.
 Correspondingly we say that the {\bf  relaxed Hessian is negative}
 \index{relaxed Hessian, negative} at
$(A, X,v)$ if
 for each $\delta<0$ there is a $\lambda_\delta<0$ so that
 for all $\lambda \le \lambda_\delta$,
$$
   0\le
    -\langle p^{\prime\prime}_{\lambda, \delta}(A,X)[h]v,v\rangle
$$
 for all $H\in\smatng$.
 Given a sequence $\cS=( \cS_n)_{n=1}^\infty$, with
$\cS_n\subseteq(\allmatn\times \mathbb R^{\nk})$,
 we say that the
{\bf relaxed Hessian is positive (resp., negative) on $\cS$}
 \index{relaxed Hessian, positive on $S$} if
 it is positive (resp., negative) at each $(A, X,v)\in \cS$.

 Suppose $f(a,x)[h]$ is a $\kk$-valued free symmetric polynomial in the $\tg+2g$ symmetric variables
  $a$, $x $ and $h$  of degree $s$ in $x$
  and homogeneous of
  degree two  in $h$.
  Given a subspace $\mathcal H$ of
  $\allmatn$, let
\begin{equation}
\label{def:epmn}
e_\pm^n(A,X,v;f,\mathcal H)
\end{equation}
denote the  maximum dimension of a strictly positive/negative subspace  of $\mathcal H$  with respect to the quadratic form
 \begin{equation*}
  %\label{eq:quadratic-form}
     \mathcal H \ni H \mapsto \langle f(A, X)[h]v,v \rangle.
 \end{equation*}
   Here  strictly positive (resp., negative) subspace $\mathcal H$ means
$$\langle f(A, X)[h]v,v\rangle >0\ (\textrm{resp.}\ <0)
   \ \textrm{for}\  H \in \mathcal H, \ H\ne 0.$$
    \index{$e_\pm^n$}

 Lemma \ref{thm:signature-clamped-relaxed} below provides a link
 between the signature of the clamped
 second fundamental form (i.e., the Hessian compressed to the
 clamped tangent space $\cT$)  and that of the
 relaxed  Hessian (a quadratic form on all of $\allmatn$).

\begin{lemma}
 \label{thm:signature-clamped-relaxed}
   Suppose $p$ is a $\kappa\times\kappa$
 symmetric nc matrix polynomial of degree $\widetilde{d}$ in $a$, degree $d$ in $x$
    and $(A,X,v)\in\allmatvk$. Then
  there exists an $\varepsilon<0$
    such that for each $\delta\in [\varepsilon,0)$  there exists a
$\lambda_\delta <0$ so that for every $\lambda\le \lambda_\delta$,
\begin{equation*}
 %\label{eq:oct7a14}
    e_+^n(A, X,v;p^{\prime\prime}_{\lambda,\delta}, \Smatng) =
    e_+^n(A,X,v;p_{xx}, \cT(A,X,v) ).
 \end{equation*}

 Similarly,  there exists an $\varepsilon>0$
    such that for each $\delta\in (0,\varepsilon]$  there exists a
$\lambda_\delta >0$ so that for every $\lambda\ge \lambda_\delta$,
\begin{equation*}
 %\label{eq:nov1a14}
    e_-^n(A, X,v;p^{\prime\prime}_{\lambda,\delta}, \Smatng) =
    e_-^n(A,X,v;p_{xx}, \cT(A,X,v) ).
 \end{equation*}
\end{lemma}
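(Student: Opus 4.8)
The plan is to translate the statement into one about three real quadratic forms on the finite–dimensional space $V:=\Smatng$, and then to combine a restriction argument with a Schur–complement argument. First note that the two assertions are equivalent: replacing $p$ by $-p$ leaves the clamped tangent plane $\cT(A,X,v)$, the border vector $\msUalt$, and the form coming from $p_x$ unchanged, sends $p_{xx}$ to $-p_{xx}$, and sends $p^{\prime\prime}_{\lambda,\delta}$ of $p$ to $-\big(p^{\prime\prime}_{-\lambda,-\delta}\text{ of }-p\big)$; since $e_-(f)=e_+(-f)$, the $e_-$ statement (with $\delta,\lambda>0$) follows from the $e_+$ statement (with $\delta,\lambda<0$), so I would prove only the latter. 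Fixing $(A,X,v)$ and an inner product on $V$, set $Q(H):=\langle p_{xx}(A,X)[h]v,v\rangle$, $L(H):=\|p_x(A,X)[h]v\|^2=\langle p_x(A,X)[h]^Tp_x(A,X)[h]v,v\rangle$, and $D(H):=\langle\big(I_\kappa\otimes\msUalt(A,X)[h]^T\msUalt(A,X)[h]\big)v,v\rangle$, so that the relaxed Hessian \eqref{eq:defrelax} evaluated at $(A,X,v)$ is the quadratic form $R_{\lambda,\delta}:=Q+\delta D+\lambda L$ on $V$. Then $L\succeq 0$ with zero set exactly $\cT:=\cT(A,X,v)$, and $D\succeq 0$; and comparing the chip–set expansions \eqref{eq:aug13y14}, \eqref{eq:aug13z14} of $p_x$ and $p_{xx}$ with the definition of $\msUalt$ (whose entries $h_jw$ run over $w\in\cR\cC_p^j$) shows that $D(H)=0$ forces $(I_\kappa\otimes H_jw(A,X))v=0$ for every such $j,w$, whence $\{D=0\}\subseteq\cT$ and in fact $Q$, $D$ and $L$ all vanish identically on $\{D=0\}$. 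Write $e:=e_+^n(A,X,v;p_{xx},\cT)=e_+(Q|_\cT)$; the goal is $e_+(R_{\lambda,\delta},V)=e$ for $\delta$ slightly negative and $\lambda$ sufficiently negative.

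For the inequality $e_+(R_{\lambda,\delta},V)\ge e$: on $\cT$ we have $L=0$, so $R_{\lambda,\delta}|_\cT=(Q+\delta D)|_\cT$. Choose an $e$–dimensional subspace $W_+\subseteq\cT$ with $Q|_{W_+}\succeq cI$, $c>0$; since $D$ is bounded on the unit sphere, taking $\varepsilon<0$ with $|\varepsilon|$ small forces $(Q+\delta D)|_{W_+}\succ 0$ for all $\delta\in[\varepsilon,0)$. Thus $e_+((Q+\delta D)|_\cT)\ge e$ (and $\le e$ automatically, since $\delta D\preceq 0$), and therefore $e_+(R_{\lambda,\delta},V)\ge e_+(R_{\lambda,\delta}|_\cT)=e$ for every $\lambda$.

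For the reverse inequality one exploits the $\lambda L$ penalty. Decompose $V=\cT\oplus\cT^\perp$; then $L=0\oplus L_{22}$ with $L_{22}\succ 0$, so the $\cT^\perp$–block $E(\lambda):=(Q+\delta D)|_{\cT^\perp}+\lambda L_{22}$ of $R_{\lambda,\delta}$ is negative definite once $\lambda$ is negative enough (a threshold depending only on $p_{xx}$, $D$, $L$, $\delta$). Haynsworth inertia additivity then gives $e_+(R_{\lambda,\delta},V)=e_+(S(\lambda))$, where $S(\lambda):=(Q+\delta D)|_\cT-C\,E(\lambda)^{-1}C^T$ is the Schur complement on $\cT$ and $C$ is the $\cT$–to–$\cT^\perp$ block of $Q+\delta D$. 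As $\lambda\to-\infty$ the matrix $-E(\lambda)^{-1}$ is positive definite and decreases monotonically to $0$ in the Loewner order, so $S(\lambda)=(Q+\delta D)|_\cT+P(\lambda)$ with $0\preceq P(\lambda)\downarrow 0$; hence $\lambda\mapsto e_+(S(\lambda))$ is non–increasing, stays $\ge e$ by the previous paragraph, and is eventually equal to some $\ell\ge e$.

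The whole difficulty is in showing $\ell=e$, i.e.\ that the vanishing positive–semidefinite perturbation $P(\lambda)$ produces no spurious positive direction along the radical of $(Q+\delta D)|_\cT$; this is exactly what the term $\delta D$ is for. Because $\{D=0\}\subseteq\cT$, on that radical the perturbation $P(\lambda)=C\,(-E(\lambda)^{-1})\,C^T$ is governed by the action of $C$ there, and the chip–set structure of $Q+\delta D$ (its rightmost factors being the right–chip pieces annihilated on $\{D=0\}$) forces $P(\lambda)$ to vanish on the radical; equivalently, $(Q+\delta D)|_\cT$ is nondegenerate for all but finitely many $\delta$, and one shrinks $\varepsilon$ to discard those. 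In either case $e_+(S(\lambda))=e_+((Q+\delta D)|_\cT)=e$ for all $\lambda\le\lambda_\delta$, which is the claim. I expect this last step — tracking the residual degeneracy transverse to $\cT$ and verifying that the chip–set penalty removes it — to be the main obstacle, and it is where the matrix coefficients ($\kappa>1$) and the frozen $a$–variable must be carried through an argument modelled on \cite{DHMill} and its use in \cite{BM}; the rest is routine once the dictionary between $p^{\prime\prime}_{\lambda,\delta}$ and the triple $Q,D,L$ is in place.
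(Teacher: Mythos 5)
Your translation into the triple of quadratic forms $Q,L,D$ on $V=\Smatng$ and the observations $\{L=0\}=\cT$, $\{D=0\}\subseteq\cT$, $\ker D\subseteq\ker Q\cap\ker L$ (the content of the paper's Lemma~\ref{lem:I=0}), as well as the reduction of the $e_-$ claim to the $e_+$ claim via $p\mapsto -p$, all match the paper. The easy inequality $e_+(R_{\lambda,\delta},V)\ge e$ is also fine. The problem is in the last paragraph, which is the actual content of the lemma, and it contains a false claim.

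You write that $P(\lambda)$ vanishes on ``the radical,'' and then say this is ``equivalently'' that $(Q+\delta D)|_{\cT}$ is nondegenerate for all but finitely many $\delta$. The two are not equivalent, and the second claim is false whenever $\{D=0\}\ne\{0\}$: since (by Lemma~\ref{lem:I=0}) $Q$ and $D$ both vanish identically as bilinear forms on the pair $(\{D=0\},V)$, the subspace $\{D=0\}\subseteq\cT$ sits in the radical of $(Q+\delta D)|_{\cT}$ for \emph{every} $\delta$, so $(Q+\delta D)|_{\cT}$ is structurally degenerate and no amount of shrinking $\varepsilon$ can fix that. Your Schur complement on the $\cT^\perp$ block therefore converges, as $\lambda\to-\infty$, to a singular matrix from above in the Loewner order, which is exactly the situation in which $\mu_+$ can jump up, and your argument does not rule this out.

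What is missing is the paper's first move in Lemma~\ref{lem:sep1a14}: restrict to $\ker(G)^\perp=\{D=0\}^\perp$ (harmless, since $E,F,G$ all vanish on $\{D=0\}$). After that restriction $G\succ 0$, so the $\cT$-block $E_{11}+\delta G_{11}$ is \emph{invertible} for $\delta\in[\varepsilon,0)$ with $|\varepsilon|$ small (and $\mu_+(E_{11}+\delta G_{11})=\mu_+(E_{11})$); no genericity in $\delta$ is needed. Equivalently in your framework: split $\cT=\{D=0\}\oplus\cT'$, observe that both $C^T$ and $M=(Q+\delta D)|_{\cT}$ vanish on $\{D=0\}$ so $S(\lambda)$ block-diagonalizes as $0\oplus\bigl(M|_{\cT'}+P(\lambda)|_{\cT'}\bigr)$, and then use $D|_{\cT'}\succ 0$ to get invertibility of $M|_{\cT'}$ for small $\delta<0$ and hence continuity of $\mu_+$ at $\lambda=-\infty$. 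Once that is in place your Schur complement on $\cT^\perp$ does work; but as written the proposal does not establish its own key step. It is also worth noting that the paper instead Schur-complements on the $\cT$ block (after the restriction), which makes the $\lambda\to-\infty$ limit trivial (the $\cT^\perp$ Schur complement becomes negative definite outright since $L_{22}\succ 0$), avoiding the monotone-limit argument entirely.
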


\begin{remark}\rm
  The lemma does not require $p(A,X)v=0$ or even $p(A,X)\succeq 0$.
\end{remark}

 The proof of Lemma \ref{thm:signature-clamped-relaxed} is postponed in favor of  two preliminary lemmas.

\begin{lemma}
 \label{lem:I=0}
     Suppose $p$ is a $\kappa\times\kappa$
 symmetric nc matrix polynomial of degree $\widetilde{d}$ in $a$, degree $d$ in $x$
    and $(A,X,v)\in\allmatvk$.  If $H\in\smatng$ and
\begin{equation}
\label{eq:nov6a16}
  \left\{I_\kappa \otimes (\msUalt(A,X)[H]^T \msUalt(A,X)[H])\right\}v=0,
\end{equation}
 then $p_{xx}(A,X)[h]v=0$ and $p_x(A,X)[h]v=0$.
\end{lemma}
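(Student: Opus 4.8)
The plan is to use the positive semidefiniteness of $\msUalt(A,X)[H]^{T}\msUalt(A,X)[H]$ to upgrade the single hypothesis \eqref{eq:nov6a16} into the family of vanishing relations $H_{j}\,w(A,X)\,v_{i}=0$ for every $1\le j\le g$, every $w\in\cR\cC_{p}^{j}$, and every block $v_{i}$ of $v$, and then to observe that each monomial of $p_{x}(a,x)[h]$ and of $p_{xx}(a,x)[h]$ ends in a factor of the form $h_{j}\,w$ with $w\in\cR\cC_{p}^{j}$, so that the evaluations $p_{x}(A,X)[H]v$ and $p_{xx}(A,X)[H]v$ collapse term by term.

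First I would record the bookkeeping. The words of $p$ in $a$ alone contribute nothing to the $x$-derivatives, so $p_{x}$ has the form \eqref{eq:aug13y14},
\[ p_{x}(a,x)[h]=\sum_{j}\sum_{u}\sum_{w}C_{u,w,j}\,u(a,x)\,h_{j}\,w(a,x), \]
and $p_{xx}$ has the form \eqref{eq:aug13z14},
\[ p_{xx}(a,x)[h]=\sum C_{u,w^{\prime},w,j,\ell}\,u(a,x)\,h_{j}\,w^{\prime}(a,x)\,h_{\ell}\,w(a,x); \]
moreover, in each monomial the rightmost factor $h_{\ell}w$ (respectively $h_{j}w$) satisfies $w\in\cR\cC_{p}^{\ell}$ (respectively $w\in\cR\cC_{p}^{j}$). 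This is just the observation that if a word $\omega$ appearing in $p$ has an $x_{\ell}$ in some position, then the suffix of $\omega$ after that position lies in $\cR\cC_{\omega}^{\ell}\subseteq\cR\cC_{p}^{\ell}$; hence every such rightmost factor $h_{\ell}w$ is one of the entries of the column $\msUalt(a,x)[h]$.

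Next I would unwind \eqref{eq:nov6a16}. Evaluating, $M:=\msUalt(A,X)[H]^{T}\msUalt(A,X)[H]=\sum_{j}\sum_{w\in\cR\cC_{p}^{j}}\bigl(H_{j}w(A,X)\bigr)^{T}\bigl(H_{j}w(A,X)\bigr)\succeq 0$. Decomposing $v=\sum_{i=1}^{\kappa}e_{i}\otimes v_{i}$ with $e_{1},\dots,e_{\kappa}$ the standard basis of $\RR^{\kappa}$ and $v_{i}\in\RR^{n}$, the hypothesis $(I_{\kappa}\otimes M)v=0$ gives $Mv_{i}=0$ for each $i$, hence $0=v_{i}^{T}Mv_{i}=\sum_{j,w}\|H_{j}w(A,X)v_{i}\|^{2}$, and therefore $H_{j}w(A,X)v_{i}=0$ for all $j$, all $w\in\cR\cC_{p}^{j}$, and all $i$; equivalently $\bigl(I_{\kappa}\otimes(H_{j}w(A,X))\bigr)v=0$ for all $j$ and $w\in\cR\cC_{p}^{j}$. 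Then, using $C\otimes(PQ)=(C\otimes P)(I_{\kappa}\otimes Q)$ together with the two displays above, every monomial of $p_{x}(A,X)[H]$ applied to $v$ factors through some $\bigl(I_{\kappa}\otimes(H_{j}w(A,X))\bigr)v=0$, so $p_{x}(A,X)[H]v=0$; likewise, factoring each monomial of $p_{xx}$ as $\bigl(u(A,X)H_{j}w^{\prime}(A,X)\bigr)\cdot\bigl(H_{\ell}w(A,X)\bigr)$ with $w\in\cR\cC_{p}^{\ell}$, one obtains $p_{xx}(A,X)[H]v=0$. The only step that needs genuine care is the chip-set bookkeeping — confirming that the rightmost factor of each monomial of $p_{x}$ and $p_{xx}$ lands, with matching index, among the entries of $\msUalt(a,x)[h]$; the rest is positivity of a Gram matrix and routine Kronecker-product algebra.
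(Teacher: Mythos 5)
Your argument is correct and is essentially the paper's own: both decompose $v=\oplus_i v_i$, extract the vanishing relations $H_j w(A,X)v_i=0$ for $w\in\cR\cC_p^j$ from \eqref{eq:nov6a16}, and then observe that every monomial of $p_x$ and $p_{xx}$ has a rightmost factor $h_j w$ with $w$ in the right chip set, so the evaluations collapse term by term. The only difference is that you make explicit the Gram-matrix positivity step (that $(I_\kappa\otimes M)v=0$ with $M\succeq 0$ forces each $\|H_j w(A,X)v_i\|=0$), which the paper leaves implicit.
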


\begin{proof}
 Write $v=\oplus_{j=1}^\kappa v_j$ with $v_j\in\mathbb R^n$. Since
$$
 I_\kappa\otimes (\msUalt(A,X)[H]^T \msUalt(A,X)[H])
    =(I_\kappa \otimes \msUalt(A,X)[H])^T(I_\kappa \otimes \msUalt(A,X)[H]),
 $$
 the assumption that $(I_\kappa \otimes \{\msUalt(A,X)[H]\big)^T  (\msUalt(A,X)[H]\big)\}v=0,$
 the hypothesis \eqref{eq:nov6a16} implies that
 \begin{equation}
 \label{eq:Hwv0}
 H_j w(A,X)v_k=0 \mbox{ for each $j,$  $w\in \cC_p^j$
  and $1\le k\le \kappa$. }
  \end{equation}
 By the definition of the right chip sets for $p$, each word in $p-p(0)$
 is of the form $ux_j w$, for some $j,$ some word $w\in \cC_p^j$ and some
 word $u$.
 Correspondingly,
 each word in  the polynomial $p_{x}$
 is of the form $u h_j w$
 and each word in  the polynomial $p_{xx}$
 is of the form $v h_j w$
 where $v$ is a polynomial in $a$,  $x$ and $h$.
 Hence each term in $p_{x}(A,X)[h]v$
 has the form  $u(A,X) H_j w(A,X)v_k$
 which, by \eqref{eq:Hwv0} is $0$, implying $p_{x}(A,X)[h]v$.
 Likewise for $p_{xx}$.
 \end{proof}

Let $\mu_+(R)$ (resp., $\mu_-(R)$) denote the number of positive (resp., negative) eigenvalues of a real symmetric matrix $R$. \index{$\mu_+$}

\begin{lemma}
\label{lem:sep1a14}
Let $E,\,F,\,G\in\mathbb{S}_n$
 be given and  let $P$ denote the
 orthogonal projection of $\RR^n$ onto $\ker(F)$, the kernel of $F$.
If
\begin{enumerate}[\rm(i)]
 \item  $F\neq 0$;
 \item  $F\succeq 0$, $G\succeq 0$;  and
 \item  $\ker(G) \subseteq \ker(F) \cap \ker(E),$
\end{enumerate}
then there exists a number  $\varepsilon<0$ such that for each  $\delta\in [\varepsilon,0),$
there exists a $\lambda_\delta$   such that for each  $\lambda \le \lambda_\delta$,
\begin{equation}
\label{eq:oct5a14}
\mu_+(E+\lambda F+\delta G)=\mu_+(PEP).
\end{equation}

 An analogous result holds for $\mu_-$.
\end{lemma}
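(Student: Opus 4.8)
The plan is to reduce to a two-dimensional picture on an appropriate quotient space and then handle the three symmetric matrices $E,F,G$ one eigenvector at a time. First I would pass to $\ker(G)^\perp = \range(G)$. By hypothesis (iii), $\ker(G) \subseteq \ker(F)\cap\ker(E)$, so all three of $E,F,G$ leave $\ker(G)$ and its orthogonal complement invariant and vanish on $\ker(G)$; hence $\mu_\pm$ of any linear combination of $E,F,G$ is unchanged by restricting everything to $\range(G)$, and likewise $PEP$ restricts there. So without loss of generality we may assume $G \succ 0$ on the ambient space (after this reduction), while still $F\succeq 0$, $F\neq 0$.

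Next I would diagonalize $F$ (or rather simultaneously block-decompose according to $\ker(F)$ and $\range(F)$). Write $\RR^n = \ker(F)\oplus\range(F)$; here $P$ is the projection onto $\ker(F)$. In block form $F = \begin{bmatrix} 0 & 0 \\ 0 & F_1\end{bmatrix}$ with $F_1 \succ 0$, and $E = \begin{bmatrix} PEP & E_{12}\\ E_{12}^T & E_{22}\end{bmatrix}$, $G = \begin{bmatrix} G_{11} & G_{12}\\ G_{12}^T & G_{22}\end{bmatrix}$ with $G_{11}\succ 0$ (since $G\succ 0$ after the reduction, every principal submatrix, in particular the $\ker(F)$-block, is positive definite). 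Now $E + \lambda F + \delta G = \begin{bmatrix} PEP + \delta G_{11} & E_{12}+\delta G_{12}\\ (\cdot)^T & E_{22}+\lambda F_1 + \delta G_{22}\end{bmatrix}$. For $\delta < 0$ of small enough absolute value, $PEP + \delta G_{11}$ has the same inertia as $PEP$ on the part where $PEP$ is nonsingular; the genuinely delicate point is the kernel of $PEP$, where the $\delta G_{11}$ term makes that block strictly negative definite. Then for $\lambda\le\lambda_\delta$ sufficiently negative, $E_{22}+\lambda F_1 + \delta G_{22}\prec 0$ (since $F_1\succ 0$), and a Schur-complement computation shows the full matrix has $\mu_+$ equal to $\mu_+$ of the $(1,1)$-block's Schur complement in the $(2,2)$-block direction. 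Concretely: $\mu_+(E+\lambda F + \delta G) = \mu_+\big((PEP+\delta G_{11}) - (E_{12}+\delta G_{12})(E_{22}+\lambda F_1+\delta G_{22})^{-1}(E_{12}+\delta G_{12})^T\big)$, and as $\lambda\to-\infty$ the inverse of the $(2,2)$-block tends to $0$, so the Schur complement tends to $PEP + \delta G_{11}$, whose $\mu_+$ equals $\mu_+(PEP)$ once $|\delta|$ is small enough (the negative shift $\delta G_{11}$ only perturbs the strictly positive eigenvalues of $PEP$, which survive, and pushes the zero eigenvalues strictly negative).

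Carrying this out cleanly, the order of steps is: (1) reduce to $G\succ 0$ via hypothesis (iii); (2) block-decompose along $\ker(F)\oplus\range(F)$; (3) for fixed small $\delta<0$, observe $PEP+\delta G_{11}$ has $\mu_+ = \mu_+(PEP)$ by a standard eigenvalue-perturbation argument (choose $\varepsilon<0$ so that $|\delta|\,\|G_{11}\|$ is smaller than the least positive eigenvalue of $PEP$); (4) for this $\delta$, choose $\lambda_\delta$ negative enough that $E_{22}+\lambda F_1+\delta G_{22}\prec 0$ for all $\lambda\le\lambda_\delta$, then also shrink $\lambda_\delta$ further (if needed) so the Schur complement's $\mu_+$ has stabilized — in fact once the $(2,2)$-block is negative definite, $\mu_+$ of the whole matrix equals $\mu_+$ of the Schur complement onto the $(1,1)$-block, and continuity plus the fact that the perturbation $(E_{12}+\delta G_{12})(E_{22}+\lambda F_1+\delta G_{22})^{-1}(\cdot)^T \to 0$ as $\lambda\to-\infty$ gives the result; (5) the $\mu_-$ statement follows by applying the $\mu_+$ result to $-E,F,G$ with the roles of $\delta,\lambda$ signs flipped, or is proved identically with inequalities reversed.

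The main obstacle I anticipate is the bookkeeping at step (4): making precise that ``$\mu_+$ of the block matrix equals $\mu_+$ of the Schur complement plus $\mu_+$ of the (negative definite, hence $0$) $(2,2)$-block'' requires the $(2,2)$-block to be invertible, which is why one first drives it to be negative definite via $\lambda$, and then one must check that the residual dependence on $\lambda$ in the Schur complement does not change $\mu_+$ — this is where I would invoke that the Schur complement depends continuously on $\lambda$ on the region $\lambda \le \lambda_\delta$, converges to $PEP + \delta G_{11}$, and that $\mu_+$ is locally constant near a matrix with no zero eigenvalues. Since $PEP + \delta G_{11}$ has no zero eigenvalues (its would-be kernel from $PEP$ has been pushed strictly negative by $\delta G_{11}$), there is a whole neighborhood on which $\mu_+$ is constant, and one shrinks $\lambda_\delta$ to land the Schur complement in that neighborhood for all $\lambda\le\lambda_\delta$. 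Everything else is routine linear algebra.
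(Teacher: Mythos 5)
Your proposal is correct and follows essentially the same path as the paper: reduce to $G\succ 0$ via hypothesis (iii), block-decompose along $\ker F\oplus\range F$, use $G_{11}\succ 0$ to fix the inertia of the $(1,1)$-block for small $\delta<0$, and use $F_{22}\succ 0$ to drive the $(2,2)$-block contribution to zero as $\lambda\to-\infty$. The only cosmetic difference is the Schur-complement pivot: you pivot on the $(2,2)$-block and close with a continuity/stability-of-$\mu_+$ argument near the nonsingular matrix $E_{11}+\delta G_{11}$, whereas the paper pivots on $E_{11}+\delta G_{11}$ itself (invertible for small $|\delta|$), so that $\mu_+$ of the resulting Schur complement $E_{22}+\lambda F_{22}+\delta G_{22}-(\cdot)$ is immediately seen to vanish once $\lambda$ is sufficiently negative, sidestepping the continuity step.
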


\begin{proof}
 By restricting to the orthogonal complement of $\ker(G)$, we assume, without loss of generality, that
  $G\succ 0$.

Suppose first that
$$
\textup{dim}\,\ker(F)=k\quad\textrm{and}\quad k\ge 1.
$$
Then there exists a real unitary matrix
$$
U=\begin{bmatrix} U_1&U_2\end{bmatrix}\quad \textrm{with $U_1\in\RR^{n\times k}$ and $U_2\in\RR^{n\times(n-k)}$}
$$
such that
$$
FU_1=0\quad\textrm{and}\quad U_2^TFU_2\succ 0.
$$
Let
$$
E_{ij}=U_i^TEU_j,\quad F_{ij}=U_i^TFU_j\quad\textrm{and}\quad G_{ij}=U_i^TGU_j
$$
for $i,j=1,2$ and note that
$$
F_{11}=0_{k\times k},\quad F_{12}=0_{k\times(n-k)}\ \textrm{and}\ F_{21}=0_{(n-k)\times k}.
$$
 Thus,
\begin{align*}
\mu_+(E+\lambda F+\delta G)&=\mu_+(U^T[E+\lambda F+\delta G]U)\\
&=\mu_+(Q_{\lambda,\delta})
\end{align*}
with
$$
Q_{\lambda,\delta}=\begin{bmatrix}
E_{11}+\delta G_{11}&E_{12}+\delta G_{12}\\
E_{21}+\delta G_{21}&E_{22}+\lambda F_{22}+\delta G_{22}\end{bmatrix}.
$$
Since $G_{11}\succ 0$, the additive perturbation $\delta G_{11}$ with $\delta<0$  shifts the eigenvalues of $E_{11}$ to the left. However, if  $\delta\in[\varepsilon,0)$ and $\vert\varepsilon\vert$ is sufficiently small, then the positive eigenvalues of $E_{11}$ will stay positive, whereas, the  nonpositive  eigenvalues of $E_{11}$ become negative. Consequently,
$$
\mu_+(E_{11}+\delta G_{11})=\mu_+(E_{11})
\quad \textrm{and\quad $E_{11}+\delta G_{11}$\quad is invertible if $\delta\in[\varepsilon,0)$}.
$$
Thus, for such $\varepsilon$ and $\delta$,
$$
\mu_+(Q_{\lambda,\delta})=\mu_+(E_{11}+\delta G_{11})+
\mu_+(S_{\lambda,\delta})=\mu_+(E_{11})+
\mu_+(S_{\lambda,\delta}),
$$
where $S_{\lambda,\delta}$ denotes the Schur complement of $E_{11}+\delta G_{11}$ in $Q_{\lambda,\delta}$, i.e.,
$$
S_{\lambda,\delta}=E_{22}+\lambda F_{22}+\delta G_{22}
-(E_{12}+\delta G_{12})^T(E_{11}+\delta G_{11})^{-1}(E_{12}+\delta G_{12}).
$$
Therefore, since $F_{22}\succ 0$, there exists a number $\lambda_\delta<0$ such that $$
\mu_+(S_{\lambda,\delta})=0\quad\textrm{\quad for $\lambda\le\lambda_\delta$}.
$$
Thus, to this point we have established the equality
\begin{equation*}
 %\label{eq:oct5b14}
\mu_+(E+\lambda F+\delta G)=\mu_+(U_1^T EU_1) \quad\textrm{when $\delta\in[\varepsilon,0)$,   $\lambda\le\lambda_\delta$}
\end{equation*}
and $k\ge 1$. To complete the proof in this case, note first that
$$
P=U_1U_1^T
$$
and hence, by  a well known theorem of Sylvester,
$$
\mu_+(PEP)=\mu_+(U_1U_1^TEU_1U_1^T)\le\mu_+(U_1^TEU_1)
$$
and, as $U_1^TU_1=I_k$,
$$
\mu_+(U_1^TEU_1)=\mu_+(U_1^TU_1U_1^TEU_1U_1^TU_1)\le
\mu_+(U_1U_1^TEU_1U_1^T)=\mu_+(PEP).
$$
The proof of \eqref{eq:oct5a14} is now complete when $k\ge 1$, i.e., when $\ker(F)\ne 0$.

However, if $\ker(F)=0$, then $P=0_{n\times n}$ and $F\succ 0$. Therefore, $\mu_+(PEP)=0$ and it is readily checked that for any $\delta\le 0$ there exists a $\lambda_\delta$ such that the equality in  \eqref{eq:oct5a14} holds for $\lambda\le \lambda_\delta$.
\end{proof}

 The proof of Lemma \ref{thm:signature-clamped-relaxed} employs a
 bilinear variant of the Hessian  that we now introduce.
  Let $p_{xx}(A,X)[h][0,K]$
  denote the matrix obtained by differentiating
  $p_x(A,X)[h]$ in the direction $(0,K)$ for
  $K\in\Smatng$; i.e.,
$$
  p^{\prime\prime}(A,X)[h][0,K]= \lim_{t\to 0} \frac{1}{t} \big ( p_x(A,X+tK)[h]
- p_x(A,X)[h]\big ).
$$
  In particular,
\begin{equation*}
 %\label{eq:EHH}
  p_{xx}(A,X)[h]=p_{xx}(A,X)[h][h]
\end{equation*}
 and
\begin{equation}
 \label{eq:EHK}
p_{xx}(A,X)[h][0,K]=p_{xx}(A,X)[0,K][h]\,.
\end{equation}

\begin{proof}[Proof of Lemma \ref{thm:signature-clamped-relaxed}]
 Again, we shall only consider  the case of positive signatures, since the   case of negative signature is similar
  (and may be obtained by considering $-p$ in place of $p$).
 Recall   that $g$  denotes the number of noncommutative
 symmetric  variables $x$ and $d$ the degree of $p$ in $x$.
 Let $\mathcal H=\Smatng$
endowed with the  Hilbert Schmidt norm:
\begin{equation*}
 %\label{eq:jan7a9}
\langle H, K\rangle_{\cH}=\textup{trace}\,K^TH=\sum_{j=1}^g\textup{trace}\,
K_jH_j\,.
\end{equation*}
By the Reisz representation theorem
any continuous symmetric bilinear form
 $\cQ: \mathcal H\times \mathcal H \to \mathbb R$
can be represented as
\[
\cQ(H,K)=     \langle R H,K \rangle_{\cH}
\]
where $R$ is a bounded  selfadjoint operator on $\cH$.

The mapping $\mathscr B:\mathcal H\times \mathcal H \to \mathbb R$
defined by
$$
  \mathscr B(H,K)=\langle p^{\prime\prime}(X)[H][K]v,v \rangle_{\RR^{\nk}}
$$
   is bi-linear and by Equation \eqref{eq:EHK} it is
   symmetric.
Thus, there is a bounded  selfadjoint
   operator $E$ on $\mathcal H$ so that
\begin{equation*}
 %\label{eq:dec30b7}
   \langle E H, K\rangle_{\mathcal H} =
    \mathscr B(H,K).
\end{equation*}

    Similarly, there are bounded linear selfadjoint operators $F$ and
$G$ on $\cH$ so that
\begin{equation*}
 \begin{split}
    \langle F H,K \rangle_{\cH} = &
\langle p_x(A,X)[h]v,p_x(A,X)[0,K]v \rangle_{\RR^{\nk}} \\
    \langle G H,K \rangle_{\cH} = & %
       \sum_{j,w\in \cR \cC_p^j} \langle H_j w(A,X)v, K_j w(A,X)v\rangle \, .
 \end{split}
\end{equation*}
  In particular, $\langle G H,H\rangle =  \langle I_\kappa\otimes\msUalt(A,X)[H]v,
  I_\kappa\otimes\msUalt(A,X)[H]v\rangle.$
Thus,
$$
\langle p^{\prime\prime}_{\lambda, \delta}(A,X)[H]v,v\rangle_{\RR^{\nk}}=
\langle(E+\lambda F +\delta G)H, H\rangle_{\cH}
$$
 for  $H\in\smatng$.

Note that $H\in\ker(G)$ if and only if $H_j w(A,X)v=0$ for all $j$ and $w\in\cC_p^j$.
 Thus, by Lemma \ref{lem:I=0}, $\ker(G)\subset \ker(F)\cap \ker(E)$.
  Since the kernel of $F$ is exactly $\cT(A,X,v)$,
  an application of Lemma \ref{lem:sep1a14}  completes the proof.
\end{proof}

%-----------------------------------------------------

\section{Direct sums and linear independence}
\label{sec:DSLI}
Recall that $p(a,x)$ is a $\kk$ matrix-valued symmetric polynomial of degree $\widetilde{d}$ in $a$ and $d$ in $x$ and that $\cC_p$ is the  chip space of $p$. Let
  $\cC_p^{\kappa^\prime\times \kappa}$ denote the $\kappa^\prime\times \kappa$
  matrices with entries from $\cC_p$.

\subsection{Dominating points}
\label{sec:dom} \index{dominating point}
 A point $(\hA,\hX,\hv)\in\allmatvk$ is a
  \df{$\cC_p$-dominating point} for a  free set
  $\fnsS\subset (\allmatvk)_{n=1}^\infty$,
  if the set $W=  \{ (\hA,\hX,\hv)  \}$ is
   {$\cC_p$-dominating } for $\fnsS$, i.e.,
 if  $q\in\cC_p^{1\times\kappa}$ and  $q(\hA,\hX)\hv=0$  implies
 $q(A,X)v=0 $  for all $(A,X,v) \in \fnsS$.
  As before, note that $(\hA,\hX,\hv)$  is not required to be in $\fnsS$.

\begin{lemma}
 \label{lem:indDom}
   Suppose $p\in\cPk$ is symmetric nc matrix polynomial
    and $(\hA, \hX,\hv)$ is a $\cC_p$-dominating point for a free set $\fnsS\subset (\allmatvk)_{n=1}^\infty$.
   If $(B,Y,w)$ is a point in $\fnsS$ that is sufficiently
   close to $(\hA, \hX,\hv)$,  then it
  is also a $\cC_p$-dominating point for $\fnsS$.
\end{lemma}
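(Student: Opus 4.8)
The plan is to recast $\cC_p$-domination as an inclusion of subspaces of the finite-dimensional real vector space $V := \cC_p^{1\times\kappa}$, and then invoke semicontinuity of rank. For a triple $(A,X,v)$ of matrix size $n$, the assignment $q\mapsto q(A,X)v$ is a linear map $V\to\RR^{n}$; write $\cN(A,X,v)$ for its kernel, and set $\cN_{\cS} := \bigcap_{(A,X,v)\in\cS}\cN(A,X,v)\subseteq V$. Unwinding the definition in Subsection \ref{sec:dom}, a point $(A',X',v')$ is a $\cC_p$-dominating point for $\cS$ exactly when $\cN(A',X',v')\subseteq\cN_{\cS}$; and if $(A',X',v')$ happens to lie in $\cS$, then automatically $\cN_{\cS}\subseteq\cN(A',X',v')$, so in that case being a $\cC_p$-dominating point is equivalent to the equality $\cN(A',X',v') = \cN_{\cS}$.

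The only analytic ingredient is the following: for a fixed matrix size $n$, the entries of the matrix representing $q\mapsto q(A,X)v$ in fixed bases are polynomials in the entries of $(A,X)$ and linear in the entries of $v$, so the rank of this map is a lower semicontinuous function of $(A,X,v)$, and hence $(A,X,v)\mapsto\dim\cN(A,X,v)$ is upper semicontinuous on the (fixed) Euclidean space of triples of size $n$. In particular, letting $\hat n$ be the size of $(\hA,\hX,\hv)$, there is a neighborhood $\cU$ of $(\hA,\hX,\hv)$, among triples of size $\hat n$, on which $\dim\cN(A,X,v)\le\dim\cN(\hA,\hX,\hv)$.

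Now suppose $(B,Y,w)\in\cS$ lies in $\cU$ — this is what "sufficiently close" means. I would chain three facts: $\cN(\hA,\hX,\hv)\subseteq\cN_{\cS}$ (the hypothesis that $(\hA,\hX,\hv)$ dominates $\cS$), $\cN_{\cS}\subseteq\cN(B,Y,w)$ (since $(B,Y,w)\in\cS$), and $\dim\cN(B,Y,w)\le\dim\cN(\hA,\hX,\hv)$ (upper semicontinuity). Together these force
\[
\dim\cN(\hA,\hX,\hv)\;\le\;\dim\cN_{\cS}\;\le\;\dim\cN(B,Y,w)\;\le\;\dim\cN(\hA,\hX,\hv),
\]
so all four dimensions agree; since $\cN_{\cS}\subseteq\cN(B,Y,w)$ and these are subspaces of $V$ of the same finite dimension, $\cN(B,Y,w)=\cN_{\cS}$. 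By the first paragraph this says precisely that $(B,Y,w)$ is a $\cC_p$-dominating point for $\cS$.

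I do not expect a real obstacle here: once the reformulation in terms of the subspaces $\cN(\cdot)$ and $\cN_{\cS}$ is in place, the proof is a two-line dimension count. The only points needing care are bookkeeping: that "close" must be read among triples of the same matrix size $\hat n$, so that the semicontinuity statement, which lives on a fixed finite-dimensional space, applies; and that $\cN_{\cS}$ is finite-dimensional (being a subspace of $V=\cC_p^{1\times\kappa}$), which is what makes the chain of inequalities collapse to equalities. Note that the free-set hypothesis on $\cS$ plays no role beyond fixing the ambient setting; only the membership $(B,Y,w)\in\cS$ is used.
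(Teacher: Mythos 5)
Your argument is correct and rests on the same underlying idea as the paper's proof: the domination condition is stable under small perturbations because $\cC_p^{1\times\kappa}$ is finite-dimensional and the evaluation map $q\mapsto q(A,X)v$ depends continuously on $(A,X,v)$ at each fixed matrix size. The difference is only presentational: the paper fixes a complement $\mathcal{B}$ of $\fnsS^o=\cN_{\cS}$, proves injectivity of $q\mapsto q(\hA,\hX)\hv$ on $\mathcal{B}$, notes that injectivity is an open condition, and then decomposes an arbitrary $q=q_1+q_2$ to conclude; you sidestep the choice of complement and the decomposition by working directly with the kernels $\cN(\cdot)$ and invoking upper semicontinuity of $\dim\ker$, which collapses the three inclusions into equalities by a dimension count. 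Your version is marginally cleaner, but it is not a genuinely different route.
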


\begin{proof}
  Let
$$
 \fnsS^o=  \{ r \in \cC_p^{1\times \kappa}:\  r(A,X)v=0, \mbox{ for all } (A,X,v) \in\fnsS  \}.
$$
 Choose a subspace $\mathcal{B}$  of $\cC_p^{1\times \kappa}$ that is complementary to $\fnsS^o$, i.e.,
  $$ \cC_p^{1\times\kappa} = \mathcal{B}  \ \dot{+} \ \fnsS^o,$$
  (where the symbol $\dot{+}$ means that $\cC_p^{1\times\kappa} = \mathcal{B}   + \fnsS^o$
   and  $\mathcal{B}  \cap\fnsS^o=\{0\}$) and  let $(\hA,\hX,\hv)$
   be a ${\cC}_p$-dominating point for $\fnsS$.
It is readily seen that  the linear mapping  $\mathcal{B}\to \mathbb R^n$ defined by
\[
  q \mapsto q(\hA,\hX)\hv
\]
  is one-one, because if $q$ and $q_0$ both belong to $\mathcal{B}$ and $q(\hA,\hX)\hv=q_0(\hA,\hX)\hv$,
  then $q-q_0\in\mathcal{B}  \cap\fnsS^o=\{0\}$.

   Hence, if $(B,Y,w)\in\fnsS$ is sufficiently close to $(\hA,\hX,\hv)$, then  the mapping
\begin{equation}
\label{eq:aug21a14}
 \mathcal{B} \ni q \mapsto q(B,Y)w
\end{equation}
 is also one-one.

 If $q\in{\cC}_p^{1\times\kappa}$, then $q=q_1+q_2$ with
 $q_1\in\fnsS^o$ and $q_2\in\mathcal{B} $. Thus,
 $$
 q(B,Y)w=q_1(B,Y)w+q_2(B,Y)w=q_2(B,Y)w,
 $$
 since $(B,Y,w)\in\fnsS$. Consequently,
 $$
 q(B,Y)w=0\Longrightarrow q_2(B,Y)w=0\Longrightarrow q_2=0,
 $$
 since the mapping \eqref{eq:aug21a14} is one-one from   $\mathcal{B}\to \mathbb R^n$.
 Thus,
 $$
 q(A,X)v=q_1(A,X)v=0\quad\textrm{for all
 $(A,X,v)\in\fnsS$},
 $$
 since $q_1\in\fnsS^\circ$. Therefore,
 $(B,Y,w)$ is a $\cC_p$ dominating point for $\fnsS$.
 \end{proof}

\begin{lemma}
 \label{lem:ind1}
   Let $p\in\cPk$ be a symmetric nc matrix polynomial of degree $\wtilde{d}$ in $a$ and degree $d$ in $x$ and let $\fns=(\fns(k))_{k=1}^\infty$  be a free nonempty set such that
\begin{equation*}
%\label{eq:sep30a16}
\fns(n)\subseteq  \{(A,X,v)\in\allmatvk: p(A,X)v=0\},
\end{equation*}
  then  for every  positive integer $N$ there
   exists an integer $n \ge N$ and  a triple
   $(\widehat{A},\widehat{X},\widehat{v})\in \fns(n)$ that is
  a $\cC_p$ dominating point  for $\fns.$
\end{lemma}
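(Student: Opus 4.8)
The plan is to use a dimension-counting argument combined with the direct sum closure of the free set $\fns$, exploiting that the chip space $\cC_p^{1\times\kappa}$ is a fixed finite-dimensional vector space. Set
\[
\fns^o = \{ r\in\cC_p^{1\times\kappa} : r(A,X)v=0 \ \textrm{for all}\ (A,X,v)\in\fns \},
\]
the common null space, and for a single triple $(A,X,v)$ let $\fns^o_{(A,X,v)} = \{ r\in\cC_p^{1\times\kappa} : r(A,X)v=0\}$, a subspace of $\cC_p^{1\times\kappa}$ containing $\fns^o$. A triple $(\widehat A,\widehat X,\widehat v)\in\fns(n)$ is a $\cC_p$-dominating point for $\fns$ precisely when $\fns^o_{(\widehat A,\widehat X,\widehat v)} = \fns^o$; since these are finite-dimensional spaces, it is equivalent to ask that the \emph{dimension} $\dim \fns^o_{(\widehat A,\widehat X,\widehat v)}$ be minimal over all triples in $\fns$, i.e. equal to $\dim\fns^o$.

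First I would observe that, because $\fns^o$ is the intersection of all the subspaces $\fns^o_{(A,X,v)}$ as $(A,X,v)$ ranges over $\fns$ (over all dimensions), and because $\cC_p^{1\times\kappa}$ is finite-dimensional, there is a \emph{finite} collection of triples $(A^{(1)},X^{(1)},v^{(1)}),\dots,(A^{(m)},X^{(m)},v^{(m)})$, with $(A^{(i)},X^{(i)},v^{(i)})\in\fns(k_i)$, whose subspaces already intersect down to $\fns^o$:
\[
\bigcap_{i=1}^m \fns^o_{(A^{(i)},X^{(i)},v^{(i)})} = \fns^o .
\]
Indeed one builds this list greedily: start with any triple, and as long as the current intersection strictly contains $\fns^o$, pick a triple whose subspace does not contain that intersection, strictly decreasing the dimension at each step; the process terminates since dimensions are nonnegative integers.

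Next I would form the direct sum. Put $n = \max(N, k_1+\dots+k_m)$ — more precisely, use closure of $\fns$ under direct sums to produce a triple
\[
(\widehat A,\widehat X,\widehat v) = \big(A^{(1)},X^{(1)},v^{(1)}\big)\oplus\cdots\oplus\big(A^{(m)},X^{(m)},v^{(m)}\big)\oplus (A',X',v')
\]
in $\fns(n')$ with $n'\ge N$, where the last summand $(A',X',v')$ is any element of $\fns$ (which is nonempty) padded in to make $n'\ge N$ — or simply repeat one of the summands enough times. By the block-diagonal structure of evaluation, for $r\in\cC_p^{1\times\kappa}$ we have $r(\widehat A,\widehat X)\widehat v = \bigoplus_i r(A^{(i)},X^{(i)})v^{(i)}$ together with the extra block, so $r(\widehat A,\widehat X)\widehat v = 0$ if and only if $r$ annihilates every summand; in particular $\fns^o_{(\widehat A,\widehat X,\widehat v)} \subseteq \bigcap_{i=1}^m \fns^o_{(A^{(i)},X^{(i)},v^{(i)})} = \fns^o$. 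The reverse inclusion $\fns^o\subseteq \fns^o_{(\widehat A,\widehat X,\widehat v)}$ holds since $(\widehat A,\widehat X,\widehat v)\in\fns$. Hence $\fns^o_{(\widehat A,\widehat X,\widehat v)} = \fns^o$, which is exactly the statement that $(\widehat A,\widehat X,\widehat v)$ is a $\cC_p$-dominating point for $\fns$, and $n'\ge N$ as required.

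The one point deserving care — the main (minor) obstacle — is the bookkeeping in the direct-sum step: one must check that the evaluation of a chip-space element on a direct sum is the direct sum of the evaluations (this is just that evaluation is an algebra homomorphism and $\fns$ is a free set, so closed under $\oplus$), and that the dimension $n'$ can be arranged to be at least $N$ without destroying the intersection property, which is immediate since appending further summands from $\fns$ can only shrink (never enlarge) the annihilator subspace, and it is already equal to $\fns^o$, so it stays there. The finiteness of $m$ is guaranteed purely by $\dim\cC_p^{1\times\kappa}<\infty$, so no quantitative bound on $n$ is needed or claimed.
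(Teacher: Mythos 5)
Your proof is correct and essentially mirrors the paper's own argument: both reduce to finiteness of $\dim\cC_p^{1\times\kappa}$ to extract finitely many triples whose annihilator subspaces already intersect down to $\cI(\fns)$ (your $\fns^o$), then direct-sum those triples (padding or repeating summands to reach size $\ge N$) and use the block-diagonal action to conclude. The greedy dimension-decrease argument you spell out is simply an explicit justification of the finite-intersection step the paper states without proof.
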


\begin{proof}
For a given $(A,X,v)$ in $ \fns$, define
\[
  \cI(A, X,v)=\{q \in \cC_p^{1\times \kappa} : q(A, X)v=0\}
\]
and
\[
 \cI(\fns)= \bigcap \{\cI(A, X,v): (A, X,v)\in
 \fns
 \}.
\]
Thus, $\cI(\fns)$ consists of all polynomials in ${\cC}_p^{1\times \kappa}$ that vanish on $\fns$.
%-------------------------

Since $\cI(A, X,v)$ is a subspace of the
finite dimensional vector
  space $\cC_p^{1\times\kappa}$,  there is a $t \in \N$
  and triples
  $( A^j , X^j, v^j) \in \fns(n_j)$
  for $1\le j\le t$, such that
  \[ \cI(\fns)=
  \bigcap_{j=1}^t\{\cI(A^j, X^j,v^j):
  (A^j, X^j, v^j) \in  \fns  \}.
\]
  Thus, if $q\in  \cC_p^{1\times\kappa}$
  and $q(A^j, X^j)v^j=0$ for $j=1,\ldots,t$,
  then $q \in \cI(\fns)$ and hence  $q=0$ on  $\fns$, i.e., if
  $$
  \begin{bmatrix} q_{11}(A^j,X^j)v^j&\cdots&q_{1\kappa}(A^j,X^j)v^j\end{bmatrix}=0\quad
  \textrm{for} \ j=1,\ldots,t,
  $$
  then
  $$
  \begin{bmatrix} q_{11}(A,X)v&\cdots&q_{1\kappa}(A,X)v\end{bmatrix}=0\quad
  \textrm{for every point} \ (A,X,v)\in\fns.
  $$

Let
$$
A^\prime=\textup{diag}\{A^1,\ldots,A^t\}, \
X^\prime=\textup{diag}\{X^1,\ldots,X^t\} \ \textrm{and}\
v^\prime=\textup{col}(v^1,\ldots,v^t).
$$
 Then $A^\prime\in\cS_{n^\prime}(\RR^{\tg})$, $X^\prime\in\cS_{n^\prime}(\RR^{g})$ and $v^\prime\in\RR^{n^\prime\kappa}$,
where $n^\prime = n_1 + n_2 + \cdots + n_t$, and
$q(A^\prime,X^\prime)v^\prime=0$ if and only if
$q(A^j,X^j)v^j=0\ \textrm{for}\ j=1,\ldots,t.$
Thus, if $q\in  \cC_p^{1\times\kappa}$
  and $q(A^\prime, X^\prime)v^\prime=0$,
  then $q=0$ on $\fns$. If $n^\prime\ge N$,
  then the construction stops here.
  If not, then  choose a positive integer $k$ such that
  $n=kn^\prime \ge N$ and consider the $k$-fold direct sums
$$
\widehat{A}=\textup{diag}\{A^\prime,\ldots,A^\prime\}, \
\widehat{X}=\textup{diag}\{X^\prime,\ldots,X^\prime\}\quad\textrm{and}\ \widehat{v}=\textup{col}\{
v^\prime,\ldots,v^\prime\}.
$$
The triple $(\hA,\hX,\hv)$ is a ${\cC}_p$-dominating point for $\fns$: if $q\in  \cC_p^{1\times\kappa}$
  and $q(\widehat{A}, \widehat{X})\widehat{v}=0$,
  then $q=0$ on $\fns$.
\end{proof}

%-------------------------------------------------

\section{The middle matrix representation  and its properties}
\label{sec:mm}
In this section we  develop a representation for
nc polynomials $q(a,x,h)$ in the nc variables $a=(a_1,\ldots,a_{\widetilde{g}})$, $x=(x_1,\ldots,x_g)$ and
$h=(h_1,\ldots,h_g)$ that are homogeneous of degree two in $h$ with particular attention given to the Hessian $p_{xx}(a,x)[h]$ of an nc polynomial $p(a,x)$ and two of its relatives.

\subsection{Definition of the middle matrix}
We begin with the case of scalar-valued polynomials, before turning to the matrix case with its additional bookkeeping overhead.

A \df{middle matrix representation} or \df{border vector-middle matrix representation} of a scalar nc polynomial $q(a,x,h)$ that is  homogeneous of degree two in $h$  is a representation of the form
\begin{equation}
\label{eq:mm}
q(a,x,h)=\sum_{i,j=0}^\ell B_i(a,x)[h]^TM_{ij}(a,x) B_j(a,x)[h]
\end{equation}
in which $B_j(a,x)[h]$ is a column  vector with entries of the form
$h_k w(a,x)$, where $w(a,x)$ is a word in $a,x$ that is
homogeneous of degree $j$ in $x$ and  $M_{ij}(a,x)$ is a  matrix polynomial in
$a$  and $x$ (and not $h$). A middle matrix representation for $q(a,x,h)=p_{xx}(a,x)[h]$, the Hessian of
the polynomial $p(a,x)=x_2^2ax_1+x_1ax_2^2+a^2,$ appears in Example \ref{ex:sep17a13}.
There $\ell=1$, $g=2$. The block entries (by degree in $x$) of the border vector
are  $B_0^T = \begin{bmatrix} h_1 & h_2 \end{bmatrix}$,
$B_1^T=\begin{bmatrix} x_2h_2 & x_1 a h_2\end{bmatrix}$, and
\[
 M_{00}(a,x) =\begin{bmatrix} 0 & ax_2\\ x_2 a & 0 \end{bmatrix}, \ \ M_{01}=M_{10} =\begin{bmatrix} a & 0\\0&1\end{bmatrix}, \ \ M_{11}=
 \begin{bmatrix}0 & 0\\0&0\end{bmatrix}.
\]

The middle matrix construction extends naturally to the case of matrix polynomials.
If $q$ is a $\kxk $ matrix valued polynomial, its \df{middle matrix representation}
has the more general but similar form
\[
q(a,x,h)
=\sum_{i.j=0}^{\ell}(I_\kappa\otimes B_i(a,x)[h]^T)
\MM_{ij}(a,x)
(I_\kappa\otimes B_j(a,x)[h]).
\]
Note that the middle matrix for  $q=C_w\, w(a,x,h)$ is simply $C_w\otimes M$, where $M$ is the middle matrix for $w$.

\subsubsection{Uniqueness of the middle matrix}
The middle matrix   depends upon $q$, but once the \df{border vector}  $B$ %
is fixed (so a choice of list of monomials, sorted by degree in $x$, is made), the \df{middle matrix} $M=(M_{ij})$ (resp. $\MM$)
is uniquely determined, justifying the terminology {\it the middle matrix}.  For instance, for a word $w=w(a,x,h)$ that is homogeneous of degree two in $h$,
\begin{equation}
\label{eq:mmlite}
w= w_L(a,x) h_i  \; c w_M(a,x)\;  h_j w_R(a,x),
\end{equation}
the border vector for any middle matrix representation of $w$ must include the words
$h_i w_L(a,x)^T$ and $h_j w_R(a,x)$.

To illustrate the extent of the uniqueness of the middle matrix and its  dependence on the border vector, consider
middle matrix representations for the (scalar polynomial) word $w$ of equation \eqref{eq:mmlite}.
Using a  border vector $B$ with just the two words $h_i w_L(a,x)^T$ and $h_j w_R(a,x)$,
the corresponding middle matrix { representation is
$$
w=\begin{bmatrix}w_L(a,x)h_i&w_R(a,x)^Th_j\end{bmatrix}\begin{bmatrix}0&1\\0&0\end{bmatrix}
\begin{bmatrix}h_iw_L(a,x)^T\\ h_jw_R(a,x)\end{bmatrix}.
$$
If the degrees of $w_L(a,x)$ and $w_R(a,x)$ in $x$ are the same, then there is there is a choice of order (a permutation) in constructing this $B$.
Of course, one could have chosen a border vector $B$ with more words. But then the middle matrix would have more zeros to ensure that the superfluous words are not counted, e.g.,
$$
w=\begin{bmatrix}w_L(a,x)h_i&w_R(a,x)^Th_j&*\end{bmatrix}\begin{bmatrix}0&1&0\\0&0&0\\0&0&0\end{bmatrix}
\begin{bmatrix}h_iw_L(a,x)^T\\ h_jw_R(a,x)\\ *\end{bmatrix}.
$$

\subsection{Middle matrix representations for Hessians}
Our ultimate goal is to describe the middle matrix representation
for the relaxed Hessian in sufficiently fine detail to make it a
powerful tool.  Throughout the remainder of this section $d$ and $\tilde{d}$ are fixed positive integers and our polynomials are assumed
to have degree at most $d$ in $x$ and $\tilde{d}$ in $a$.
Given a word $w=w(a,x)$,  let  $Z^w(a,x)$ \index{$Z^w$} denote the middle matrix of its
Hessian $w_{xx}$
\begin{equation}
\label{eq:Zhatw}
 w_{xx}(a,x) = \sum_{i,j=0}^{d-2} V_i(a,x)[h]^T \,
  Z_{ij}^w(a,x) V_j(a,x)[h],
\end{equation}
 based upon the \df{full border vector}
\begin{equation}
\label{eq:defV}
V(a,x)[h]= \begin{bmatrix} V_0 \\ \vdots \\ V_{d-2}\end{bmatrix},
\end{equation}
 where the $V_j$ lists all words $h_kf(a,x)$, for  $f(a,x)$ of the form
\begin{equation}
\label{eq:defgiantf}
 f= u_0(a)x_{i_1} u_1(a)x_{i_2}\cdots u_{j-1}(a) x_{i_j} u_j(a),
\end{equation}
 and the $u_j$ are words of length at most $\tilde{d}$.
This choice of border vector works over all such choices of $w$.

Given   $C_w\in\RR^{\kappa\times\kappa}$,
\begin{equation*}
%\label{eq:tensor}
\begin{split}
   C_w\, & w_{xx}(a,x)[h] \\ %
 & = \sum_{i,j=0}^{d-2} (I_\kappa \otimes V_i(a,x)[h]^T)\,
 (C_w\otimes Z_{ij}^w(a,x))\,(I_{\kappa}\otimes V_j(a,x)[h]).
\end{split}
\end{equation*}
In particular, for $p=C_w\,  w$, the entries of its middle matrix based on $V$ are $\ZZ_{ij} = C_w\otimes Z_{ij}^w$.
(We use $\ZZ_{ij}(a,x)$ \index{$\ZZ$} to denote the ($\kappa\times\kappa$ block) entries of the middle matrix for matrix-valued polynomials
and $Z_{ij}(a,x)$ \index{$Z$} in the case of scalar-valued polynomials.)

Given a polynomial $p$ expressed as in equation \eqref{pww}, its Hessian has the middle matrix representation %
\begin{equation}
 \label{eq:defpxx}
\begin{split}
 &p_{xx}(a,x)[h] \\
   &=\sum_{i,j=0}^{d-2} (I_\kappa \otimes V_i(a,x)[h]^T)\,
 \big (\sum_{w} C_w\otimes Z_{ij}^w(a,x)\big )\,(I_{\kappa}\otimes V_j(a,x)[h]).
\end{split}
\end{equation}
Thus the middle matrix $\ZZ$ of the Hessian of $p$ based on the border vector $V$ has (block) entries $\ZZ_{ij} = \sum_w C_w\otimes Z_{ij}^w$.

\subsection{Middle matrices for the modified Hessian}
\label{sec:mid-mod-hess}
The \df{modified Hessian} of $p$ is, by definition,
\begin{equation*}
%\label{eq:defmodified}
   p_{xx}(a,x)[h] + \lambda p_x(a,x)[h]^T p_x(a,x)[h].
\end{equation*}
The middle matrix of $p_x(a,x)[h]^T p_x(a,x)[h]$ is obtained by expressing the derivative $p_x(a,x)[h]$
in terms of the \df{extended full border vector} \index{$\wtilde{V}$}
\begin{equation}
\label{eq:deftV}
 \wtilde{V}(a,x)[h]= \begin{bmatrix} V_0(a,x)[h] \\ \vdots \\ V_{d-1}(a,x)[h]\end{bmatrix}.
\end{equation}
If $p=\sum_{w\in\cW} C_w\otimes w(a,x)$ is a sum of words with coefficients $C_w\in\RR^{\kappa\times\kappa}$, then
$$
p_x(a,x)[h]=\sum_{j=0}^{d-1} \Phi_j(a,x)   \IktV(a,x)[h]),
$$
where
\[
 \Phi(a,x)= \begin{bmatrix} \Phi_0(a,x) & \dots & \Phi_{d-1}(a,x)\end{bmatrix},
\]
is  a block row matrix with $\kappa$ rows and
\begin{equation}
\label{eq:may9a17}
\IktV(a,x)[h]= \begin{bmatrix}I_\kappa\otimes  V_0(a,x)[h] \\ \vdots \\I_\kappa\otimes  V_{d-1}(a,x)[h]\end{bmatrix}.
\end{equation}
This notation yields the formula
\begin{equation}
\label{eq:gradsquare}
p_x(a,x)[h]^T p_x(a,x)[h]=
\IktV(a,x)[h]^T \big ( \Phi(a,x)^T\Phi(a,x)\big ) \IktV(a,x)[h]
\end{equation}
and thus the middle matrix, in block form,  is $(\Phi_k(a,x)\Phi_j(a,x))_{j,k}$.

Putting things together gives the middle matrix representation
\begin{equation}
\label{def:modHess}
\begin{split}
 p_{xx}(a,x)[h] &+ \lambda p_x(a,x)[h]^T p_x(a,x)[h] \\
   = & \sum_{i,j=0}^{d-1} (I_\kappa\otimes V_i(a,x)[h])^T (\ZZ_\lambda)_{ij}(a,x) (I_\kappa \otimes V_j(a,x)[h]),
\end{split}
\end{equation}
where
\[
 (\ZZ_{\lambda})_{ij}(a,x) = \begin{cases} \ZZ_{ij}(a,x) + \lambda\, \Phi_i(a,x)^T \Phi_j(a,x)  \ \  & i,j\le d-2 \\
 \lambda\,   \Phi_i(a,x) ^T\Phi_j \ \ & \mbox{otherwise}\ \ \end{cases}
\]
and the  $\ZZ_{ij}(a,x)$  are the block entries of the middle matrix of the Hessian of $p$ with respect to the full border vector $\wtilde{V}$.  As with the Hessian, the middle matrix for the modified Hessian is uniquely determined by the choice of border vector and is symmetric if $p$ is.
\index{$\ZZ_\lambda$}

Finally, the \df{middle matrix for the relaxed Hessian}, denoted $\ZZ_{\lambda,\delta}$ (the notational conflict between $\ZZ_{ij}$, the $(i,j)$ block entry of the middle matrix $\ZZ$ of the Hessian of $p$,  and $\ZZ_{\lambda,\delta}$ should cause no confusion) is obtained from the middle matrix for the relaxed Hessian by simply adding $\delta I$,
\[
 \ZZ_{\lambda,\delta} = \ZZ_{\lambda} +\delta I.
\]

\subsection{The reduced border vector}
\label{sec:reduce}
The middle matrix for the modified and relaxed Hessians based upon the full border vector $\wtilde{V}$ has rows and columns of zeros corresponding to words that do not appear in the right chip set of $p$. Let $\msU_j(a,x)[h]$ denote the (column) vector of  words $h_k f(a,x)$ for $1\le k\le g$ and $f$ in the right chip set of $p$ of degree $j$ in $x$ and let
\begin{equation*}
% \label{eq:deffrakU}
   \msUalt(a,x)[h] =\begin{bmatrix} \msU_0 \\ \vdots \\ \msU_{d-1} \end{bmatrix}.
\end{equation*}
We refer to $\msUalt$ also as the  \df{reduced border vector}.\footnote{Really it is {\it a} reduced border vector as any two reduced border vectors are related by a permutation. The vector obtained by excluding the words of degree $d-1$ in $x$ is also called the reduced border vector.} (Compare with equation \eqref{eq:defrelax}.)  Indeed, $\msUalt$ includes only those words needed to construct a middle matrix-border vector representation for the modified and relaxed Hessians of $p$.  The middle matrix, still denoted $\ZZ$, for the reduced border vector is obtained from the middle matrix associated to $V$ by removing rows and columns of zeros - and applying a permutation if needed.  For instance, the border vector for the border vector middle matrix representation appearing in Example \eqref{ex:sep17a13} is the reduced border vector based on the right chip set of $p(a,x)=x_2^2ax_1+x_1ax_2^2+a^2.$

If the aim is to construct the most parsimonious middle matrix representation for the Hessian of $p$, then one is led to use a border vector built from the \df{secondary right chip set} of  $p$ i.e., the set of words
$v(a,x)$ that appear to the right of at least two $x$'s in a word
 \[
  w(a,x) = w_L(a,x)x_j w_M(a,x)x_\ell v(a,x).
 \]
 that appears in $p$. Thus, for example, the secondary right chip set of the word $w(a,x)=a_1x_1^2a_2x)2x_1a_1$ is the set of words $\{a_1,x_1a_1,a_2x_2x_1a_1\}$.
 As a mnemonic, the chip set is associated to the first derivative and the secondary chip set the second derivative.

The proof of the following proposition is immediate from the preceding discussion.

\begin{proposition}
 \label{prop:mminpractice}
  If $\ZZ,$ $\ZZ_\lambda$, $\ZZ_{\lambda,\delta}$  and $\hZZ$, $\hZZ_\lambda,$ $\hZZ_{\lambda,\delta}$ are the  middle matrices for the Hessian, modified Hessian and relaxed Hessian  of $p$ based upon two different border vectors  and if $(A,X)\in\allmat$, then
  $\ZZ(A,X)$ and $\hZZ(A,X)$ (resp.,    $\ZZ_{\lambda}(A,X)$ and $\hZZ_{\lambda}(A,X)$; $\ZZ_{\lambda,\delta}(A,X)$ and $\hZZ_{\lambda,\delta}(A,X)$) have the same number of (strictly) positive and (strictly) negative eigenvalues.  Thus,  $\ZZ(A,X)$ (resp. $\ZZ_\lambda(A,X)$, $\ZZ_{\lambda,\delta}(A,X)$)  is positive semidefinite if and only if $\hZZ(A,X)$ (respectively $\hZZ_\lambda(A,X)$, $\hZZ_{\lambda,\delta}(A,X)$) is.
\end{proposition}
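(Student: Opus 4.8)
The plan is to reduce the whole statement to one structural fact established in Subsection~\ref{sec:mm}, namely that once a border vector is fixed the middle matrix is uniquely determined, and then to invoke Sylvester's law of inertia. It suffices to prove the following general claim: if $q(a,x,h)$ is any nc (matrix) polynomial homogeneous of degree two in $h$, and $B^{(1)}$, $B^{(2)}$ are two admissible border vectors for $q$ (lists of distinct monomials $h_k w(a,x)$ that support a middle matrix representation of $q$), with associated middle matrices $\MM^{(1)}$ and $\MM^{(2)}$, then for every $(A,X)\in\allmat$ the matrices $\MM^{(1)}(A,X)$ and $\MM^{(2)}(A,X)$ have the same number of strictly positive and strictly negative eigenvalues. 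Applying this to $q=p_{xx}(a,x)[h]$, to the modified Hessian $p_{xx}(a,x)[h]+\lambda\,p_x(a,x)[h]^Tp_x(a,x)[h]$, and to the relaxed Hessian $p^{\prime\prime}_{\lambda,\delta}(a,x)[h]$ yields the three assertions about $\ZZ$ vs.\ $\hZZ$, $\ZZ_\lambda$ vs.\ $\hZZ_\lambda$, and $\ZZ_{\lambda,\delta}$ vs.\ $\hZZ_{\lambda,\delta}$ respectively, and in particular positive semidefiniteness of one is equivalent to that of the other.

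First I would record the precise uniqueness statement behind the claim. Expanding $\sum (I_\kappa\otimes B_i[h]^T)\,\MM_{ij}(a,x)\,(I_\kappa\otimes B_j[h])$ produces terms of the form $w(a,x)^T h_k\,\nu(a,x)\,h_\ell\,w'(a,x)$, where $w$, $w'$ come from the border vector and $\nu$ ranges over words occurring in an entry of $\MM_{ij}$; since $w,w',\nu$ are words in $a,x$ alone, while $h_k$ and $h_\ell$ are the \emph{only} two occurrences of $h$ in such a word, the data $(w,k,\nu,\ell,w')$ is recoverable from the word, so distinct data give distinct words. Hence the middle matrix is uniquely determined by $q$ and the border vector, exactly as asserted in Subsection~\ref{sec:mm}; in particular, enlarging a border vector by superfluous monomials appends only rows and columns of zeros to the middle matrix.

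Next I would compare $B^{(1)}$ and $B^{(2)}$ through a common enlargement. Let $C$ be the border vector listing every monomial that occurs in $B^{(1)}$ or in $B^{(2)}$ (once each); then $C$ is admissible for $q$, and by uniqueness the middle matrix $\MM^C$ of $q$ with respect to $C$ equals $\MM^{(1)}$ padded by zero rows and columns on $C\setminus B^{(1)}$, and \emph{also} equals $\MM^{(2)}$ padded by zero rows and columns on $C\setminus B^{(2)}$. The first description shows $\MM^C$ vanishes on all rows and columns indexed by $B^{(2)}\setminus B^{(1)}$, the second that it vanishes on all rows and columns indexed by $B^{(1)}\setminus B^{(2)}$; therefore $\MM^C$ is supported on the block indexed by $B^{(1)}\cap B^{(2)}$, and I write $N(a,x)$ for its restriction to that block. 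Consequently each of $\MM^{(1)}$ and $\MM^{(2)}$ equals, up to a permutation of the index set (a congruence by a permutation matrix), the matrix $N$ bordered by zero rows and columns. Evaluating at $(A,X)$, Sylvester's law of inertia gives that $\MM^{(1)}(A,X)$ and $\MM^{(2)}(A,X)$ have the same inertia as $N(A,X)$, hence the same number of positive and of negative eigenvalues; the semidefiniteness equivalence follows immediately.

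I do not expect a genuine obstacle here — this is the ``immediate from the preceding discussion'' that the proposition advertises. The only points needing care are bookkeeping: verifying that equality of the two zero-padded forms really confines the support of $\MM^C$ to $B^{(1)}\cap B^{(2)}$ (not merely to $B^{(1)}$ or to $B^{(2)}$ separately), keeping track of the permutations relating the orderings of the three border vectors, and checking that the relaxed Hessian is covered by the same formalism — its extra term $\delta\,I_\kappa\otimes\msUalt(a,x)[h]^T\msUalt(a,x)[h]$ has middle matrix $\delta I$ with respect to the reduced border vector $\msUalt$ and $\delta I$ bordered by zeros with respect to any larger admissible border vector, which is exactly the pattern exploited above.
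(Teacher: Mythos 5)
Your proof is correct and takes essentially the same route the paper intends when it says the proposition is ``immediate from the preceding discussion'': two admissible border vectors differ only by rows and columns of zeros and a permutation, and such congruences preserve inertia. The only genuine difference is cosmetic — the paper implicitly compares each border vector to the canonical one (full or reduced), while you compare through the union $C=B^{(1)}\cup B^{(2)}$ and observe its middle matrix is supported on $B^{(1)}\cap B^{(2)}$; both arrangements amount to the same zero-padding argument. One small point worth making explicit, and which you do flag at the end: for the relaxed Hessian the recipe $\ZZ_{\lambda,\delta}=\ZZ_\lambda+\delta I$ given just before this proposition is literally valid only when the border vector equals $\msUalt$; for a larger border vector the $\delta$ term sits only on the $\msUalt$ block (your ``$\delta I$ bordered by zeros''), and this is the reading under which the inertia claim actually holds. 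Your treatment is therefore the consistent one, and your verification that the common enlargement confines the support to $B^{(1)}\cap B^{(2)}$ is a clean way to make the bookkeeping precise.
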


\begin{remark}\rm
\label{rem:anymiddleinstorm}
In view of Proposition \ref{prop:mminpractice},  we often do not make a notational distinction between choices of the middle matrix based on different choices of border vector. In what follows, typically we prove results first for the full border vector(s) where the bookkeeping is more easily automated and then establish the result for other choices, most notably the reduced border vector(s).
\end{remark}

%# iumj - do we keep this example in the iumj version?

\subsection{An example of a middle matrix representation for the modified Hessian}
%\begin{example}\rm
%\label{ex:sep4b14}
Suppose that $C\in\RR^{\kappa\times\kappa}$ and let
$$
p(a,x)=C\, a_1x_1a_2x_2^2+C^T\, x_2^2a_2x_1a_1.
$$
Its right chip set is given by
$$
\cR\cC_p^1=\{a_1,a_2x_2^2\}\quad\textrm{and}\quad \cR\cC_p^2=\{1,x_2,a_2x_1a_1,x_2a_2x_1a_1\}.
$$
and thus in this case
\[
\begin{split}
\msUalt(a,x)[h]=&\textup{col}
(h_1a_1,h_2,h_2x_2,h_2a_2x_1a_1,h_1a_2x_2^2,h_2x_2a_2x_1a_1),\\
 \msU(a,x)[h]=&\textup{col}
(h_1a_1,h_2,h_2x_2,h_2a_2x_1a_1),
\end{split}
\]
are  the reduced border vectors.

By direct computation,
\begin{align*}
p_x(a,x)[h]&=C\otimes\{a_1h_1a_2x_2^2+a_1x_1a_2h_2x_2+a_1x_1a_2x_2h_2\}\\
&+C^T\otimes\{h_2x_2a_2x_1a_1+x_2h_2a_2x_1a_1+x_2^2a_2h_1a_1\}\\
&=(C\otimes a_1)(I_\kappa\otimes h_1a_2x_2^2)+(C\otimes a_1x_1a_2)(I_\kappa\otimes h_2x_2)\\ &+(C\otimes a_1x_1a_2x_2)(I_\kappa\otimes h_2)+(C^T\otimes 1)(I_\kappa\otimes h_2x_2a_2x_1a_1)\\ &+(C^T\otimes x_2)((I_\kappa\otimes h_2a_2x_2a_1)+(C^T\otimes x_2^2a_2)(I_\kappa\otimes h_1a_1)
\end{align*}
which is of the form
$$
p_x(a,x)[h]=\begin{bmatrix}G_1(a,x)&\cdots&G_6(a,x) \end{bmatrix}
\, (I_\kappa\otimes\msUalt(a,x)[h]).
$$
Correspondingly $p_x(A,X)[h]$  can be written as
\begin{equation}\label{eq:Phipx}
\begin{split}
& \ \ \ \  p_x(A,X)[h]\\
&=\begin{bmatrix}I_\kappa\otimes I_n&I_\kappa\otimes A_1&I_\kappa\otimes X_2&I_\kappa\otimes A_1X_1A_2&I_\kappa\otimes X_2^2A_2&I_\kappa\otimes A_1X_1A_2X_2\end{bmatrix}\\
&\begin{bmatrix}0&0&0&0&0&C^T\otimes I_n\\ 0&0&0&0&C\otimes I_n&0\\ 0&0&0&C^T\otimes I_n&0&0\\  0&0&C\otimes I_n&0&0&0\\
0&C^T\otimes I_n&0&0&0&0\\ C\otimes I_n&0&0&0&0&0\end{bmatrix}
\begin{bmatrix}I_\kappa\otimes H_2\\I_\kappa\otimes  H_1A_1\\  I_\kappa\otimes H_2X_2\\ I_\kappa\otimes H_2A_2X_1A_1\\
I_\kappa\otimes H_1A_2X_2^2\\ I_\kappa\otimes H_2X_2A_2X_1A_1\end{bmatrix}
\end{split}
\end{equation}
and
\begin{equation}\label{eq:anM}
\begin{split} & \ \ \ \ \ p_{xx}(A,X)[h]=
2\begin{bmatrix}I_\kappa\otimes H_2&I_\kappa\otimes A_1H_1
&I_\kappa\otimes X_2H_2&I_\kappa\otimes A_1X_1A_2H_2\end{bmatrix}\\
&\begin{bmatrix}0&C^T\otimes X_2A_2&0&C^T\otimes I_n\\C\otimes A_2X_2&0&C\otimes A_2&0\\0&C^T\otimes A_2&0&0\\
C\otimes I_n
&0&0&0\end{bmatrix}
\begin{bmatrix}I_\kappa\otimes H_2\\ I_\kappa\otimes H_1A_1\\ I_\kappa\otimes H_2X_2\\ I_\kappa\otimes H_2A_2X_1A_1\end{bmatrix}.
\end{split}
\end{equation}
 Thus the  middle matrix for the Hessian of $p$ (based on the reduced border vector)
is the square matrix in formula \eqref{eq:anM}. Likewise, the  middle matrix  for the modified Hessian of $p$ corresponding to $\msUalt$ is
\[
\begin{bmatrix}0&C^T\otimes X_2A_2&0&C^T\otimes I_n&0&0\\C\otimes A_2X_2&0&C\otimes A_2
&0&0&0\\0&C^T\otimes A_2&0&0&0&0\\C\otimes I_n
&0&0&0&0&0\\0&0&0&0&0&0\\ 0&0&0&0&0&0\end{bmatrix} + \lambda \Phi^T \Phi,
\]
where $\Phi$ is the product of the   first two matrices on the right hand side of formula\eqref{eq:Phipx}.
\qed
%\end{example}

\subsection{A Structure theorem for the middle matrix of the Hessian}
In this subsection, we state our main result describing the middle matrix for the Hessian and modified Hessian needed for the proof of Theorem \ref{thm:main}.

\begin{theorem}
\label{thm:dec29a13}
Suppose $p$ is a  $\kappa\times\kappa$ matrix polynomial %
and let $\ZZ$ and $\ZZ_\lambda$ denote the middle matrices of the Hessian of $p$ and the modified Hessian of $p$  with respect to (the same) fixed choice of border vector. Given   $(A,X)\in\SS_n(\RR^{\fg})$,  there exists an invertible matrix $S$ such that for all $\lambda\in \RR$,
\[
\ZZ_{\lambda}(A,X)=\lambda \alpha+S^T\beta S
\]
where
\[
\alpha=\begin{bmatrix}U(I-\Pi_{\cR_{W^TW}})U^T&0\\0&0\end{bmatrix} \quad \textrm{and}\quad \beta=\begin{bmatrix}\ZZ(A,0)&0\\0&\lambda WW^T\end{bmatrix}
\]
and $\Pi_{\cR{W^TW}}$ is the projection onto the range of $W^TW$.
In particular, if the highest degree terms of $p$ majorize at $A$, then range of $U^T$ is contained in the range of $W^T$ and therefore $\ZZ_\lambda(A,X)$ is similar to
\[
\begin{bmatrix}\ZZ(A,0)&0\\0&\lambda WW^T\end{bmatrix}.
\]
\end{theorem}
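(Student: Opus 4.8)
The plan is to express the Hessian and modified Hessian middle matrices in terms of the homogeneous components of $p$ in $x$, then diagonalize by a single similarity that simultaneously handles all $\lambda$. First, decompose $p = \sum_{j=0}^d p^j$ into parts homogeneous of degree $j$ in $x$, and recall from subsection \ref{subsec:hdt} that each $p^j(a,x) = \varphi_p^j(a)\ff_j(a,x)$. The derivative $p_x(a,x)[h]$ and the Hessian $p_{xx}(a,x)[h]$ are built from the chip-set data of $p$; when evaluated at $(A,X)$ the coefficient arrays in the middle matrix representation \eqref{def:modHess} organize into the block row $\Phi(a,x)$ of \eqref{eq:gradsquare} and the Hessian middle matrix $\ZZ(A,X)$. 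The key structural observation is that $\Phi(A,X)$ factors as $\Phi(A,X) = U^T W$ for suitable matrices $U$ (depending only on $A$, coming from the coefficient data $\varphi_p^j$) and $W$ (depending on $(A,X)$, recording how the border-vector words build up from those of degree zero in $x$ as $x$ is substituted). I would set this factorization up carefully: $W$ is essentially the matrix implementing the linear map that sends the reduced border vector of degree zero in $x$ into the full extended border vector $\IktV(A,X)[h]$, so $\IktV(A,X)[h] = W (I_\kappa \otimes \msU_0(A)[h])$ up to reindexing, and $U$ collects the coefficients so that $p_x(A,X)[h] = U^T W (I_\kappa\otimes\msU_0(A)[h])$ after substitution.

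Next, I would observe that the Hessian middle matrix $\ZZ(A,X)$, when the border vector is written in the $W$-adapted basis, takes the form $\ZZ(A,X) = S_0^T \ZZ(A,0) S_0$ for an invertible $S_0$ built from $W$ and its complement — because $\ZZ(A,X)$ is obtained from $\ZZ(A,0)$ (the "constant in $X$" part, which is the genuinely second-order-in-the-coefficients piece) by conjugation with the substitution matrix. This is the analog for the Hessian of the factorization $p_x(A,X)[h] = U^T W(\cdots)$: differentiating twice and substituting $X$ amounts to a congruence of the degree-$0$ middle matrix. Then \eqref{def:modHess} gives $\ZZ_\lambda(A,X) = \ZZ(A,X) + \lambda\, \Phi(A,X)^T\Phi(A,X) = S_0^T\ZZ(A,0)S_0 + \lambda\, W^T U U^T W$. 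The task is now purely linear algebra: exhibit an invertible $S$ and the stated $\alpha$, $\beta$. Writing $\Pi_{\cR_{W^TW}}$ for the projection onto $\range(W^TW) = \range(W^T)$ and using a block decomposition adapted to $\range(W^T)$ and its orthogonal complement, one checks that $\lambda\, W^T UU^T W$ contributes $\lambda WW^T$ on the $\range(W^T)$ block while the correction term $\lambda\alpha = \lambda U(I-\Pi_{\cR_{W^TW}})U^T$ (after appropriate identification of coordinates) absorbs exactly the part of $UU^T$ not seen through $W$; a Schur-complement / completing-the-square computation then produces $S$ with $\ZZ_\lambda(A,X) = \lambda\alpha + S^T\beta S$ where $\beta = \ZZ(A,0)\oplus \lambda WW^T$.

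For the final clause: the majorization hypothesis \eqref{eq:apr25a17} says $\range\varphi_p^j(A)\subseteq\range\varphi_p^d(A)$ for all $j<d$. Translating this through the factorization, it says exactly that $\range U^T \subseteq \range W^T$ — the coefficient data $U$ of the lower-degree derivative terms is subordinate to the span of the border words $W$ captures — because $W^T$ is built from the words carrying the top-degree information. When $\range U^T\subseteq\range W^T$ we have $(I - \Pi_{\cR_{W^TW}})U^T = 0$, so $\alpha = 0$ and $\ZZ_\lambda(A,X) = S^T\beta S$ is similar (via congruence by the invertible $S$, which for symmetric matrices is what "similar to" means here in the sense of having the same inertia, cf. Proposition \ref{prop:mminpractice}) to $\ZZ(A,0)\oplus\lambda WW^T$, as claimed.

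I expect the main obstacle to be making the factorization $\Phi(A,X) = U^T W$ and the congruence $\ZZ(A,X) = S_0^T\ZZ(A,0)S_0$ precise and correctly indexed: the bookkeeping between the full border vector $V$, the extended full border vector $\wtilde V$, the reduced border vector $\msUalt$, and the tensor factors $I_\kappa\otimes(\cdot)$ is delicate, and one must be careful that $W$ (the substitution matrix) genuinely does not depend on $\lambda$ and that $U$ genuinely does not depend on $X$. Once that separation of variables is cleanly in place, the remaining steps — the Schur-complement identity producing $\alpha$, $\beta$, $S$, and the reading-off of majorization as $\range U^T\subseteq\range W^T$ — are routine linear algebra.
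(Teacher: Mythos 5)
Your overall plan has the right shape --- decompose $p$ by degree in $x$, use the polynomial congruence $\ZZ(a,x)\sim\ZZ(a,0)$, and finish with a Schur-complement/completing-the-square argument --- but the central step is set up incorrectly, and this is not a bookkeeping issue.

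The ``key structural observation'' you propose, namely that $\Phi(A,X)$ \emph{factors} as a product $\Phi(A,X)=U^TW$, is not what is needed and does not match the block structure the theorem calls for. What is actually required is that $\Phi$ \emph{splits} as a block row $\begin{bmatrix}U^T & W^T\end{bmatrix}$, with the block partition of the extended full border vector $\IktV$ into its first $d-1$ blocks $V_0,\dots,V_{d-2}$ and the last block $V_{d-1}$. Concretely, writing $p_x(a,x)[h]=\sum_j \varphi_p^j(a)(\ff_j)_x(a,x)[h]=\begin{bmatrix}\varphi_p^1(a)&\cdots&\varphi_p^d(a)\end{bmatrix}L(a,x)\wtilde V(a,x)[h]$ and using the block lower-triangular form of $L$, one obtains $p_x=\begin{bmatrix}u^T&w^T\end{bmatrix}\wtilde V$ with $w^T=\varphi_p^d(a)$ and $u^T=\begin{bmatrix}\varphi_p^1&\cdots&\varphi_p^{d-1}\end{bmatrix}M_{11}(a,x)+\varphi_p^d(a)M_{21}(a,x)$. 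It is this split that yields $\Phi^T\Phi=\begin{bmatrix}uu^T&uw^T\\wu^T&ww^T\end{bmatrix}$ and hence the displayed form $\ZZ_\lambda(A,X)=\begin{bmatrix}\ZZ(A,X)&0\\0&0\end{bmatrix}+\lambda\begin{bmatrix}UU^T&UW^T\\WU^T&WW^T\end{bmatrix}$, to which Lemma~\ref{lem:jul28a15} applies. A product factorization $\Phi=U^TW$ gives $\Phi^T\Phi=W^TUU^TW$ with no such block structure, and you cannot then produce the $\alpha,\beta,S$ of the theorem by ``routine linear algebra.''

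Two further, related errors. First, you have the dependence of $U$ and $W$ on $x$ reversed: it is $W$ (coming from the \emph{top-degree} coefficient $\varphi_p^d(a)$) that depends only on $a$, while $U$ picks up $x$-dependence through $M_{11}(a,x)$ and $M_{21}(a,x)$. This is exactly what makes the range-inclusion test work: since $M_{11}$ is invertible, $\range U^T\subseteq\range W^T$ if and only if $\range\varphi_p^j(A)\subseteq\range\varphi_p^d(A)$ for $j<d$, which is the majorization hypothesis~\eqref{eq:apr25a17}. Second, the congruence $\ZZ(A,X)=S_0^T\ZZ(A,0)S_0$ is indeed true, but $S_0$ is not ``built from $W$ and its complement''; it is the polynomial congruence $Y(a,x)$ of Theorem~\ref{lem:aug2a15} (constructed in Theorem~\ref{thm:ZAiscZa} as a matrix square root of $I+N$ for the nilpotent $N=\msC(a,x)-I$), which has nothing to do with $W$. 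In short, you correctly anticipate the two ingredients (polynomial congruence of middle matrices and a Schur-complement identity), but the bridge between them --- the derivation of the $2\times 2$ block decomposition with the correct $U$ and $W$ --- is missing, and the construction you offer in its place would not deliver it.
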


\begin{proof}
The proof of this theorem occupies Subsection \ref{sec:proof-dec29a13}.
\end{proof}

Since it is of independent interest, illustrates both the structure of the middle matrix a and provides the opportunity to introduce notations used in the remainder of this article, we conclude this section with the statement of the following scalar version of a theorem from Section \ref{sec:proofs}. It is a variation on Theorem \ref{lem:aug2a15}. To state the result, let $k_b$ denote  the number of words in $a$ of length at most $\tilde{d}$,  $\ft_j$ is the number of terms (words) in $V_j$ (namely $g$ times the number of words of the form    \eqref{eq:defgiantf}), and  $\ft$ is the number of words of the form   \eqref{eq:defgiantf} when $j=d$.   Thus,
\begin{equation}
\label{eq:aug20b13}
k_b=1+\tg+\cdots +\tg^{\td}, \quad \ft_j =(k_bg)^{j+1}, \quad \ft=k_b\ft_{d-1}.
\end{equation}
We note that $\ft_{j}\ft_{\ell}=\ft_{j+\ell+1}$.

Let $K_j = \mathfrak{c} \otimes I_{\ft_j},$ where $\mathfrak{c}$ denotes the column vector $(x_i u(a))$ %
parametrized over $1\le i\le g$ and $u\in\cU$, the collection of words in $a$ of length at most $\tilde{d}.$  Thus the size of $K_j$ is $\ft_{t_{j+1}}\times \ft_j$.

\begin{equation}
\label{eq:oct3b13}
\msC(a,x)=
\left[\begin{array}{ccccc}
I_{\ft_0}&0&\cdots&0&0\\
-K_0&I_{\ft_1}&\cdots&0&0\\
0&-K_1&\cdots&0&0\\
\vdots&\vdots& &\vdots&\vdots\\
0&0&\cdots&I_{\ft_{d-3}}&0\\
0&0&\cdots&-K_{d-3}&I_{\ft_{d-2}}\end{array}
\right],
\end{equation}

\begin{theorem}
\label{thm:oct1a13}
Let $\ell=d-2$. The middle matrix of the Hessian  $p_{xx}(a,x)[h] =\sum_{i,j=0}^{d-2}V_i(a,x)[h]^TZ_{ij}(a,x)V_j(a,x)[h]$ of $p$  is of the form
\begin{align*}
Z(a,x)& =
\left[\begin{array}{ccccc}
Z_{00}(a,x)&Z_{01}(a,x)&\cdots&  Z_{0,\ell-1}(a,x)&Z_{0\ell}(a,0)\\
Z_{10}(a,x)&Z_{11}(a,x)&\cdots&Z_{1,\ell-1}(a,0)&0\\
\vdots && & &\vdots\\
Z_{\ell-1,0}(a,x)&Z_{\ell-1,1}(a,0)&\cdots &0 & 0\\
Z_{\ell 0}(a,0)&0&\cdots &0&0
\end{array}
\right]\\ \\
&=Z(a,0)\msC(a,x)^{-1}.
\end{align*}
\end{theorem}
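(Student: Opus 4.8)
The statement has two parts: the displayed \emph{staircase} shape of the middle matrix $Z(a,x)$ --- its $(i,j)$ block vanishes for $i+j>\ell$ and is free of $x$ for $i+j=\ell$ --- and the factorization $Z(a,x)=Z(a,0)\,\msC(a,x)^{-1}$. The plan is to deduce the shape from a degree count, establish one ``shift'' recursion between consecutive block columns of $Z$, and then read off the factorization.

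\textbf{Step 1 (degree bounds and the staircase shape).} Since $p$ has degree at most $d$ in $x$, the Hessian $p_{xx}(a,x)[h]$ has degree at most $d-2=\ell$ in $x$. Split $p_{xx}$, each $Z_{ij}$, and $p$ into their $x$-homogeneous parts and write $Z_{ij}=\sum_{m\ge0}Z_{ij}^{(m)}$ with $Z_{ij}^{(m)}$ homogeneous of degree $m$ in $x$. Because $V_i(a,x)[h]$ is homogeneous of degree $i$ in $x$, the degree-$e$ part of $p_{xx}$ equals $\sum_{i,j}V_i^{T}Z_{ij}^{(e-i-j)}V_j$, which by uniqueness of the middle matrix for a fixed border vector (Subsection~\ref{sec:mm}) \emph{is} the middle-matrix representation of that degree-$e$ part. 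For $e>\ell$ this part is the zero polynomial, so $Z_{ij}^{(m)}=0$ whenever $m+i+j>\ell$; that is, $\deg_x Z_{ij}\le\ell-i-j$. In particular $Z_{ij}=0$ for $i+j>\ell$ and $Z_{ij}(a,x)=Z_{ij}(a,0)$ for $i+j=\ell$, which is exactly the displayed shape.

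\textbf{Step 2 (the shift recursion).} The claim is that for $0\le j\le\ell-1$,
\[
 Z_{ij}(a,x)=Z_{ij}(a,0)+Z_{i,j+1}(a,x)\,K_j(a,x),
\]
the case $j=\ell$ being the equality $Z_{i\ell}(a,x)=Z_{i\ell}(a,0)$ already supplied by Step~1. As $Z_{ij}(a,0)$ is the $x$-free part of $Z_{ij}$, it suffices to identify the part of $Z_{ij}$ of positive $x$-degree with $Z_{i,j+1}K_j$. For this I would track how monomials of $p_{xx}$ arise from $p$: a word $w=w_L\,x\,w_M\,x\,w_R$ of $p$, with the $p$-th and $q$-th of its $x$'s ($p<q$) replaced by $h$'s, produces (up to the factor $2$) the monomial $w_L\,h\,w_M\,h\,w_R$, which the border-vector decomposition assigns to block $(i,j)$ with $i=\deg_x w_L$, $j=\deg_x w_R$, contributing the monomial $w_M$ to the entry of $Z_{ij}$ indexed by $(w_L^{T}\!,\,h\,w_R)$; here $\deg_x w_M=q-p-1$. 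A contribution of positive $x$-degree is one with $q\ge p+2$; writing $w_M=w_M'\,x\,t$ where the last factor $x\,t$ is the last $x$ of $w_M$ followed by the ensuing word $t$ in $a$ (necessarily of length $\le\td$), moving the second replaced variable from the $q$-th to the $(q-1)$-th $x$ gives a bijection between these contributions and \emph{all} contributions to $Z_{i,j+1}$: the new right word is $t\,x\,w_R$ (degree $j+1$), the new body is $w_M'$, and $w_M=w_M'\cdot(x\,t)$ with $x\,t$ an entry of $\mathfrak c$. Summing over contributions, and using the natural identification $V_{j+1}\cong\mathfrak c\otimes V_j$ under which this bijection is realised, the positive-degree part of $Z_{ij}$ becomes right multiplication of $Z_{i,j+1}$ by $\mathfrak c\otimes I_{\ft_j}=K_j$.

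\textbf{Step 3 (assembly).} The relations of Step~2 say precisely that $Z(a,x)\,\msC(a,x)=Z(a,0)$, since $\msC(a,x)$ is block lower bidiagonal with $I_{\ft_j}$ on the diagonal and $-K_j$ on the subdiagonal. Finally $\msC(a,x)$ is invertible, being block lower triangular with identity diagonal blocks, so $Z(a,x)=Z(a,0)\,\msC(a,x)^{-1}$. The real work is Step~2: fixing the orderings inside the border vectors $V_j$ so that ``slide the second $h$ one $x$-block to the left'' is implemented \emph{verbatim} by right multiplication by $K_j=\mathfrak c\otimes I_{\ft_j}$, and checking the bijection is onto every contribution to $Z_{i,j+1}$. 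This is the scalar case of the bookkeeping behind Theorem~\ref{lem:aug2a15} of Section~\ref{sec:proofs}; Steps~1 and~3 are formal.
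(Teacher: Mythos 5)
Your proposal follows the paper's route in outline: establish the one--step shift recursion $Z_{ij}=Z_{ij}(a,0)+Z_{i,j+1}K_j$ for $0\le j\le\ell-1$ together with the top--degree boundary $Z_{i\ell}(a,x)=Z_{i\ell}(a,0)$, read off $Z(a,x)\,\msC(a,x)=Z(a,0)$ from the bidiagonal shape of $\msC$, and invert $\msC$.

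Two remarks on where you diverge. Step~1 is a genuinely cleaner argument than what the paper records: the paper obtains the staircase shape as a byproduct of the explicit Kronecker--product formula for $Z_{ij}$ in Proposition~\ref{prop:aug22a13} (homogeneous case) plus the ``Beyond homogeneous'' decomposition, whereas your derivation of $\deg_x Z_{ij}\le\ell-i-j$ from homogeneity of the $V_i$ and uniqueness of the middle matrix for a fixed border vector is shorter and handles general $p$ at once. For Step~2 you correctly identify the crux and honestly defer the bookkeeping, but two things should be said. First, invoking Theorem~\ref{lem:aug2a15} for that bookkeeping is somewhat circular: Theorem~\ref{lem:aug2a15} is proved via Theorem~\ref{thm:ZAiscZa}, whose proof in turn cites the very Theorem~\ref{thm:oct1a13} you are proving; the logically prior source is Proposition~\ref{prop:aug22a13} together with the Kronecker identities of Subsections~\ref{sec:identities}--\ref{sec:mmhess}, in particular equation~\eqref{eq:aug23a13}. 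Second, ``the natural identification $V_{j+1}\cong\mathfrak c\otimes V_j$'' is not the obvious tensor one. As Kronecker columns, $V_{j+1}=(h\otimes b)\otimes(x\otimes b)\otimes(x\otimes b)_j$ while $\mathfrak c\otimes V_j=(x\otimes b)\otimes(h\otimes b)\otimes(x\otimes b)_j$; under the flat indexing built into $K_j=(x\otimes b)\otimes I_{\ft_j}$, the slow block that $K_j$ reads as the new $(x\otimes b)$ factor is the block that $V_{j+1}$ reads as the $(h\otimes b)$ prefix, and the first $(x\otimes b)$ block of $V_{j+1}$ becomes the $(h\otimes b)$ prefix of $V_j$. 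So ``slide the second $h$ one $x$--block to the left'' is implemented by right multiplication by $K_j$ only after an index bijection that also swaps the $h$--versus--$x$ roles of the first two tensor slots, not merely permutes them. That this all lines up with no stray permutation is exactly what the paper verifies when it shows the $\Pi_i$ in Proposition~\ref{prop:aug22a13} depends only on $i$ and not on $j$. Granting that computation, your Step~3 is formal and correct.
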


From Theorem \ref{thm:oct1a13} it is evident that definiteness (either positive or negative) of the middle matrix of a Hessian imposes serious restrictions on
the middle matrix that we will later exploit.

\subsection{The polynomial congruence for the middle matrix of the Hessian}
The proof of Theorem \ref{thm:dec29a13}  depends essentially upon the following congruence result, which also plays an essential role in the proof of Theorem \ref{thm:main}.

Given a matrix nc polynomial $Y=(Y_{ij})_{i,j=1}^{s,t}$ where the $Y_{i,j}$ are $n_j\times m_j$ matrices and a positive integer $\kappa$, consistent
with the usage in equation \eqref{eq:may9a17},
\[
 \Yk = \begin{pmatrix} I_\kappa\otimes Y_{ij} \end{pmatrix}_{i,j=1}^{s,t}.
\]
We note that $I_\kappa\otimes Y$ differs from $\Yk$ by a (block matrix) permutation as $I_\kappa\otimes Y$ is the block $\kappa\times \kappa$ diagonal block matrixwith diagonal entries $Y$, whereas $\Yk$ is a block $s\times t$ matrix
with $\kappa n_j\times \kappa m_j$ entries $I_\kappa \otimes Y_{ij}$.   Moreover, in the case of the full border vectors
$V$ and $\wtilde{V}$, the block matrices $V^\kappa_{\degg}(a,x)[h]$ and $\IktV(a,x)[h]$ are sorted by (increasing) degree in $h$ (whereas
$I_\kappa\otimes \wtilde{V}$ is the block diagonal $\kappa\times \kappa$ matrix with $\wtilde{V}$ as the diagonal entries).  For instance,
in the case $d=4$ so that $d-2=2$,
\[
 I_2 \otimes V = \begin{pmatrix} \begin{pmatrix} V_0 \\ V_1 \\ V_2 \end{pmatrix} & 0 \\ 0 & \begin{pmatrix} V_0 \\ V_1 \\ V_2 \end{pmatrix}\end{pmatrix},
\]
and
\[
 V^2 = \begin{pmatrix} I_2\otimes V_0 \\ I_2\otimes V_1 \\ I_2\otimes V_2 \end{pmatrix}
 =\begin{pmatrix} \begin{pmatrix} V_0 & 0 \\ 0 & V_0 \end{pmatrix} \\ \begin{pmatrix} V_1 & 0 \\ 0 & V_1 \end{pmatrix} \\
 \begin{pmatrix} V_2 & 0 \\ 0 & V_2 \end{pmatrix} \end{pmatrix}.
\]

\begin{theorem}
\label{lem:aug2a15}
Fix a collection of words $\cW$ of degree at most $d-2$ in $x$ and $\tilde{d}$ in $a$. Let $B$ denote a border vector determined by $\cW$; that is,  $B_j$,  $j=0,\dots,d-2$, the $j$-th block entry of $B$ lists all words of the form $h_k f(a,x)$ for $1\le k\le g$ and $f\in \cW$ of degree $j$ in $x$.
 Let $\nu_j$ denote the length of $B_j$.

  There exists a  matrix polynomial $Y(a,x)=(Y_{ij})_{i,j=0}^{d-2}$ with entries $Y_{ij}$ of size $\nu_i\times \nu_j$ such that
\begin{enumerate}[\rm(i)]
 \item \label{it:Y1} $Y_{ii}=I_{\nu_i}$;
 \item \label{it:Y2} $Y_{ij}=0$ for $i<j$;
 \item \label{it:Y3} $Y$ is invertible;
 \item \label{it:Y4} $Y^{-1}$ is a polynomial;  and
 \item \label{it:Y5}   if $p\in\cP^{\kappa\times\kappa}$ is a nc matrix polynomial whose right chip set is a subset of $\cW$, then the middle matrix $\ZZ$ for the Hessian $p_{xx}$ based upon $B$ satisfies
\[
 \ZZ(a,x) = \Yk(a,x)^T \ZZ(a,0) \, \Yk(a,x);
\]
 i.e., $\ZZ(a,x)$ is polynomially congruent to $\ZZ(a,0)$ via a polynomial that depends only upon the choices $\cW$ and $B$.
\end{enumerate}
\end{theorem}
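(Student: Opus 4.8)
The plan is to reduce the matrix statement to the scalar statement and then prove the scalar statement by working one word at a time and assembling via the observation that the middle matrix of a sum of words is the sum of the middle matrices (with coefficients $C_w\otimes Z^w$ in the matrix case). So first I would dispose of the matrix-to-scalar reduction: from equation \eqref{eq:defpxx}, if $p=\sum_w C_w\,w$ then $\ZZ_{ij}(a,x)=\sum_w C_w\otimes Z^w_{ij}(a,x)$, where $Z^w$ is the scalar middle matrix of $w_{xx}$ based on the border vector $B$. If we can produce a scalar polynomial matrix $Y(a,x)$ satisfying (i)--(iv) such that $Z^w(a,x)=Y(a,x)^T Z^w(a,0)\,Y(a,x)$ for every word $w$ with right chip set inside $\cW$ (with $Y$ depending only on $\cW$ and $B$, not on $w$), then applying $C_w\otimes(\cdot)$ and summing, and using $(C_w\otimes I)(I\otimes Y)=I_\kappa\otimes\text{(scalar stuff)}$ rearranged into the block form $\Yk$, gives $\ZZ(a,x)=\Yk(a,x)^T\ZZ(a,0)\Yk(a,x)$. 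The bookkeeping that $I_\kappa\otimes Y$ in the "diagonal" ordering becomes $\Yk$ in the "sorted-by-degree" ordering is exactly the permutation identity discussed right before the theorem, so this step is routine once stated carefully.

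Next I would attack the scalar claim. Fix a word $w$ and recall that, from \eqref{eq:mmlite}, each degree-two-in-$h$ monomial of $w_{xx}$ has the shape $w_L h_i\,w_M\,h_j\,w_R$, contributing to the $(|w_L|_x,|w_R|_x)$ block of $Z^w$ an entry indexed by the words $h_iw_L^T$ (on the left) and $h_jw_R$ (on the right), with $w_M$ sitting in the middle. The key structural fact, which is Theorem \ref{thm:oct1a13} in the scalar case, is that $Z^w(a,x)=Z^w(a,0)\,\msC(a,x)^{-1}$ where $\msC$ is the explicit bidiagonal matrix \eqref{eq:oct3b13} built from the shift blocks $K_j$. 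The point is that a word $v(a,x)$ of degree $j$ in $x$ appearing as a right chip is, up to the $a$-words $u$, gotten from a degree-$(j-1)$ right chip by prepending $x_i u(a)$; this is precisely the content of $K_j=\crep\otimes I_{\ft_j}$ and the structure of $\msC$. So I would set $Y(a,x):=\msC(a,x)^{-1}$ at the level of the full border vector, verify (i) $Y_{ii}=I$, (ii) $Y_{ij}=0$ for $i<j$ (both immediate since $\msC$ is block-lower-triangular with identity diagonal), (iii) invertibility (clear), (iv) $Y^{-1}=\msC$ is a polynomial (clear from \eqref{eq:oct3b13}). Then the congruence identity $Z^w(a,x)=Y(a,x)^T Z^w(a,0)Y(a,x)$ has to be checked; the subtlety is that Theorem \ref{thm:oct1a13} only gives the \emph{one-sided} relation $Z^w(a,x)=Z^w(a,0)\msC(a,x)^{-1}$. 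The reconciliation is that $Z^w(a,0)$ is itself supported only on the block anti-diagonal pattern of Theorem \ref{thm:oct1a13} (entries $Z_{k\ell}(a,0)$ with $k+\ell=d-2$, zero otherwise), so that $\msC(a,x)^{-T}Z^w(a,0)=Z^w(a,0)$ on that support — i.e. the left factor acts trivially — and hence $Y^TZ^w(a,0)Y=Z^w(a,0)\msC^{-1}=Z^w(a,x)$. This symmetry/support argument is the heart of the matter and is where I expect to spend the most care.

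The main obstacle, then, is not the existence of $Y$ (it is forced to be $\msC^{-1}$) but verifying the two-sided congruence from the one-sided identity of Theorem \ref{thm:oct1a13}: one must pin down exactly which blocks of $Z^w(a,0)$ are nonzero and check that left-multiplication by $\msC(a,x)^{-T}$ — equivalently, the "shift-on-the-left-chip" operation $h_i w_L^T\mapsto(x_i u)^T h_i w_L^T$ paired against the fact that a left chip of degree $k$ in a word of degree-two-in-$h$ monomial of $w_{xx}$ forces the complementary right chip to have degree $d-2-k$ — annihilates nothing and merely rearranges indices consistently with the right-hand shift. A secondary technical point is that one should do this first for the full border vector $V$ (where Theorem \ref{thm:oct1a13} is literally stated) and then transfer to an arbitrary border vector $B$ determined by an arbitrary $\cW\supseteq\cR\cC_p$ by the permutation-and-zero-padding argument already used in Proposition \ref{prop:mminpractice} and Remark \ref{rem:anymiddleinstorm}; since conjugating $Y$ by the same permutation preserves (i)--(iv), and since $\ZZ$ based on $B$ is obtained from $\ZZ$ based on $V$ by that same permutation and deletion of zero rows/columns, the identity \eqref{it:Y5} descends. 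Finally, reassembling the scalar congruence into the matrix statement via $\Yk$ as in the first paragraph completes the proof.
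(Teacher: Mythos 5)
Your proposal takes the right global strategy (work with the full border vector first, invoke the one-sided identity of Theorem \ref{thm:oct1a13}, then transfer to an arbitrary border $B$ by permutation-and-deletion, and finally assemble the matrix case via $C_w\otimes(\cdot)$ and $\Yk$). But the choice $Y:=\msC(a,x)^{-1}$ does not work, and the ``support argument'' you propose to reconcile the one-sided identity with the two-sided congruence is false.

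Write $L=\msC^{-1}$, so $\ZZ(a,x)=\ZZ(a,0)L$. Since $p$ is symmetric, $\ZZ(a,x)$ and $\ZZ(a,0)$ are symmetric, and transposing the identity gives $\ZZ(a,0)L=L^T\ZZ(a,0)$. Hence
\[
 L^T\,\ZZ(a,0)\,L \;=\; \bigl(\ZZ(a,0)L\bigr)L \;=\; \ZZ(a,0)\,L^2,
\]
which differs from $\ZZ(a,x)=\ZZ(a,0)L$ by an extra factor of $L$ and so is not $\ZZ(a,x)$ unless $L=I$. The specific claim that $L^T\ZZ(a,0)=\ZZ(a,0)$ is also wrong; for example take a homogeneous-degree-$\ell$ word with $\ell>2$ and look at the $(0,0)$ block: $Z_{00}(a,0)=0$, yet
\[
 \bigl(L^T\,Z(a,0)\bigr)_{00}=\sum_k(L_{k0})^T\,Z_{k,0}(a,0)=\bigl((x\otimes b)_{\ell-2}\otimes I\bigr)^T\,Z_{\ell-2,0}(a,0),
\]
which is nonzero in general, because $L^T$ is upper-triangular with \emph{nontrivial} superdiagonal blocks and the anti-diagonal block $Z_{\ell-2,0}(a,0)$ is nonzero. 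So left-multiplication by $\msC^{-T}$ is not the identity on the support of $\ZZ(a,0)$; it genuinely mixes anti-diagonal blocks into the off-anti-diagonal positions.

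What the paper does instead, and what actually fixes the problem, is to take the \emph{square root} of $\msC$ rather than its inverse. Since $N(a,x):=\msC(a,x)-I$ is nilpotent, $Y:=I+\sum_{j\ge1}\binom{1/2}{j}N^j$ is a polynomial with polynomial inverse and $Y^2=\msC$. The symmetry of $\ZZ(a,x)$ gives $\ZZ(a,x)N=N^T\ZZ(a,x)$, whence $\ZZ(a,x)Y=Y^T\ZZ(a,x)$, and therefore $\ZZ(a,0)=\ZZ(a,x)\msC=\ZZ(a,x)Y^2=Y^T\ZZ(a,x)Y$; this is the two-sided congruence, and $Y^{-1}$ (which also has identity diagonal and vanishes above it) is the matrix appearing in items (i)--(iv) of the statement. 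This step is the real content of the proof (Lemma \ref{lem:jul29a15} and Theorem \ref{thm:ZAiscZa}); once you replace $\msC^{-1}$ by the nilpotent-binomial square root, the rest of your outline (permutation transfer and $\Yk$ bookkeeping) goes through as you describe.
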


\begin{proof}
Items \eqref{it:Y3} and \eqref{it:Y4}  follow immediately from items \eqref{it:Y1} and \eqref{it:Y2}. The construction of $Y$ and the proof of the remaining items is carried out in Section \ref{sec:proofs} and concludes in Subsection \ref{sec:polycon}.
\end{proof}

Theorem \ref{lem:aug2a15} and Theorem \ref{thm:dec29a13} are needed for the proof of Theorem \ref{thm:main}. Their proofs proceed by direct calculation and appear in Section \ref{sec:proofs}. The reader who is willing to accept the theorems in this section can skip to Section \ref{sec:CHSY}.

\section{The proofs of Theorems  \ref{lem:aug2a15} and  \ref{thm:dec29a13}}
\label{sec:proofs}

This section begins by introducing a Kronecker product formalism for free polynomials. (See Subsection \ref{sec:identities}.)  In terms of this formalism, clean and useful formulas for derivatives and Hessians and their middle matrix representations are developed in Subsections \ref{sec:nckron} and \ref{sec:mmhess}.  These formulas are applied in Subsections \ref{sec:polycon} and \ref{sec:proof-dec29a13} to prove Theorems \ref{lem:aug2a15} and \ref{thm:dec29a13} respectively.

\subsection{Extending the Kronecker product}
 \label{sec:identities}
 Recall the Kronecker product of matrices $A$ and $B$ with real entries is, by definition, \index{kronecker product} \index{$A\otimes B$}
\begin{equation}
\label{eq:kron}
 A\otimes B=\begin{bmatrix}a_{11}B&a_{12}B&\cdots &a_{1q}B\\ \vdots& & &\vdots\\ a_{p1}B&a_{p2}B&\cdots&a_{pq}B\end{bmatrix}.
\end{equation}
 In particular, the Kronecker product is a coordinate dependent concrete interpretation of the abstract construction of the tensor product of matrices.
 It will be convenient to {\it extend the Kronecker product to allow one or both of the matrices to have free polynomials as entries.}
 A number of the resulting  identities that will play a central role in later calculations are collected in this section for easy future access.

 If $a$ is a free polynomial and $B=(b_{jk})$ is a matrix whose entries are free polynomials, interpret $aB$ as the  matrix $(a b_{jk})$.  With this convention the Kronecker product extends, via equation \eqref{eq:kron} to matrices whose entries are free polynomials.

Because of the lack of commutativity, the Kronecker product of matrices of polynomials does not enjoy all the properties of the Kronecker product of ordinary matrices. For instance,  if $C\in\RR^{p\times q}$,  $D\in\RR^{q\times r}$ and
$Y$ and $Z$ are matrix polynomials of sizes $s\times t$ and $t\times u$, respectively, then
\begin{equation}
\label{eq:aug6a13}
(C\otimes Y)(D\otimes Z)=CD\otimes YZ
\end{equation}
and
\begin{equation*}
 %\label{eq:aug6b13}
(C\otimes Y)^T=C^T\otimes Y^T.
\end{equation*}
These identities fail, however, if $C$, $D$, $Y$ and $Z$ are all arrays of nc variables. Indeed, for free scalar polynomials $c,y,d,z$, the polynomials $(c\otimes y)(d\otimes z)$ is $cydz$, but the polynomial $cd\otimes yz$ is $cdyz$.

As for the transpose, consider the example\footnote{The exposition here is for the case of symmetric variables.}
\begin{equation*}
%\label{eq:sep22a13}
\left(\begin{bmatrix}y_1\\ y_2\end{bmatrix}\otimes \begin{bmatrix}z_1\\ z_2\\ z_3\end{bmatrix}\right)^T
=\begin{bmatrix}z_1^T y_1^T&z_2^Ty_1^T&z_3^Ty_1^T&z_1^Ty_2^T&z_2^Ty_2^T&z_3^Ty_2^T\end{bmatrix},
\end{equation*}
whereas
\[
 \begin{bmatrix}y_1\\ y_2\end{bmatrix}^T \otimes \begin{bmatrix}z_1\\ z_2\\ z_3\end{bmatrix}^T
  = \begin{bmatrix} y_1^T z_1^T & y_1^T z_2^T & y_1^T z_3^T & y_2^T z_1^T & y_2^T z_2^T & y_2^T z_3^T \end{bmatrix}.
\]
On the other hand,
\begin{equation}
 \label{eq:permuteT}
\left(\begin{bmatrix}y_1\\ y_2\end{bmatrix}\otimes \begin{bmatrix}z_1\\ z_2\\ z_3\end{bmatrix}\right)^T=
\left(\begin{bmatrix}z_1\\ z_2\\ z_3\end{bmatrix}^T\otimes\begin{bmatrix}y_1\\ y_2\end{bmatrix}^T\right)\Pi
\end{equation}
for a suitably chosen $6\times 6$ permutation matrix $\Pi$.

Two other useful formulas for nc polynomials $y_1,\ldots,y_s,z_1,\ldots,z_t$ are:
\begin{equation*}
 %\label{eq:aug9a13}
\begin{bmatrix}y_1&\cdots&y_s\end{bmatrix}\otimes\begin{bmatrix}z_1&\cdots&z_r\end{bmatrix}
=\begin{bmatrix}y_1&\cdots&y_s\end{bmatrix}\left(I_s\otimes
\begin{bmatrix}z_1&\cdots&z_r\end{bmatrix}\right)
\end{equation*}
and
\begin{equation}
\label{eq:aug9b13}
\begin{bmatrix}y_1\\ \vdots\\ y_s\end{bmatrix}\otimes\begin{bmatrix}z_1\\ \vdots \\ z_r\end{bmatrix}
=\left(\begin{bmatrix}y_1\\ \vdots  \\  y_s\end{bmatrix}\otimes I_r\right)
\begin{bmatrix}z_1\\ \vdots\\ z_r\end{bmatrix}.
\end{equation}

If $j$, $k$, $\ell$ and $m$ are positive integers,  $u_1,\dots,u_j,y_1,\dots,y_\ell$ are nc polynomials and $c_1,\dots,c_k$ are real numbers, then
\begin{equation}
\label{eq:aug22a13}
 \left(\begin{bmatrix}u_1\\ \vdots\\ u_j\end{bmatrix}\otimes I_{m\ell}\right)
\left(\begin{bmatrix}y_1\\ \vdots\\ y_\ell\end{bmatrix}\otimes I_m\right)=\begin{bmatrix}u_1\\ \vdots\\ u_j\end{bmatrix}\otimes
\begin{bmatrix}y_1\\ \vdots\\ y_\ell\end{bmatrix}\otimes I_m
\end{equation}
and
\begin{equation}
\label{eq:aug24a13}
\begin{bmatrix}c_1&\cdots&c_{m\ell}\end{bmatrix}\left(\begin{bmatrix}
y_1\\ \vdots \\ y_\ell\end{bmatrix}\otimes I_m\right)=
\begin{bmatrix}
y_1& \cdots & y_\ell\end{bmatrix}\,\begin{bmatrix}c_1&\cdots&c_m\\c_{m+1}&\cdots&c_{2m}\\
\vdots& &\vdots\\
c_v&\cdots&c_{m \ell}\end{bmatrix}
\end{equation}
with $v=(\ell -1)m+1$. Given the column vector $y$ as above, let
\[
 \row(y) = \begin{bmatrix} y_1& \cdots & y_\ell\end{bmatrix}
\]
and, given positive integers $m$ and $\ell$, let $\fL(\ell,m;\cdot)$ denote the linear map from row vectors
 $r=\textup{row}( r_1, \cdots,  r_\ell )$ with components $r_j$ that are row vectors of length $m$,
to matrices of size $\ell\times m$
\begin{equation}
\label{eq:deffL}
 \fL(\ell,m;r)  = \begin{bmatrix} r_1 \\ \vdots \\ r_\ell \end{bmatrix}.
\end{equation}

 With these notations, formula \eqref{eq:aug24a13}  can be expressed in terms of the row vector
$ c=\begin{bmatrix}c_1&\cdots&c_{m\ell}\end{bmatrix}$ as
\begin{equation}
\label{eq:moveinside}
c\,\left(\begin{bmatrix}
y_1\\ \vdots \\ y_\ell\end{bmatrix}\otimes I_m\right)= \row(y)  \fL(\ell,m;c).
\end{equation}

As a special case of the identity of formula \eqref{eq:aug6a13}, if $C\in \RR^{\kappa\times\kappa}$ and $X$ is an $n\times n$ array of nc polynomials, then
\begin{equation*}
 %\label{eq:aug15a13}
(C\otimes I_n)(I_\kappa\otimes X)=(I_\kappa\otimes X)(
C\otimes I_n)=C\otimes X.
\end{equation*}

If $X_{ij}$ are compatibly sized arrays of nc polynomials, then
\begin{equation*}
 %\label{eq:aug15z13}
I_\kappa\otimes\begin{bmatrix}X_{11}&X_{12}&X_{13}\\
X_{21}&X_{22}&X_{23}\end{bmatrix}=\Pi
\begin{bmatrix}I_\kappa\otimes X_{11}&I_\kappa\otimes X_{12}
&I_\kappa\otimes X_{13}\\
I_\kappa\otimes X_{21}&I_\kappa\otimes X_{22}&I_\kappa\otimes X_{23}\end{bmatrix}\Pi^\prime
\end{equation*}
for suitably chosen permutations $\Pi$ and $\Pi^\prime$ (that serve to interchange block rows and block columns, respectively).

\subsection{NC polynomials in Kronecker notation}
\label{sec:nckron}

For a column vector $y=\textup{col}\,( y_1, \dots,  y_k)$, let
\[
[y]_0=\begin{bmatrix}
y_1\\ \vdots \\ y_k\end{bmatrix}_0=1\quad\textrm{and}\quad
[y]_j= \begin{bmatrix}
y_1\\ \vdots \\ y_k\end{bmatrix}_j=\begin{bmatrix}
y_1\\ \vdots \\ y_k\end{bmatrix}\otimes \cdots\otimes
\begin{bmatrix}
y_1\\ \vdots \\ y_k\end{bmatrix}.
\]
Thus $[y]_j$
is a $j$-fold product for $j=1,2,\ldots$. The notation reflects the fact that the product(s) are associative. In particular
$[y]_j\otimes [y]_k =[y]_{j+k}$.

 Let
\begin{equation*}
%\label{eq:aug19b13}
 x=\begin{bmatrix}x_1\\ \vdots \\ x_g\end{bmatrix},\quad
b=\textup{col}\left\{1, \begin{bmatrix}a_1\\ \vdots \\ a_{\tg}\end{bmatrix},
\begin{bmatrix}a_1\\ \vdots \\ a_{\tg}\end{bmatrix}_2,\ldots,\begin{bmatrix}a_1\\ \vdots \\ a_{\tg}\end{bmatrix}_{\widetilde d}\right\}.
\end{equation*}
 In particular, $(x\otimes b)_{j+1}$ has length $\ft_j$, where $\ft_j$ (and also $\ft$ appearing below) is defined in equation \eqref{eq:aug20b13}.
If $p(a,x)=p^d(a,x)$ %
is an nc polynomial of degree at most $\td$ in $a$ and homogeneous of degree $d$ in $x$, then  it admits a representation of the form  \index{$\crep$}
\begin{equation}
\label{eq:aug19a13}
p^d(a,x)=
\crep^d_p (b\otimes (x\otimes b)_d) = \begin{bmatrix}c_1&\cdots &c_{\ft}\end{bmatrix}\, \big (b\otimes(x\otimes b)_d \big ),
\end{equation}
where $\crep^d_p$ is a row vector of length $\ft=k_b (k_b g)^d$ (the number of words (entries) in $b\otimes (x\otimes b)_d$).

Letting
\begin{equation*}
% \label{def:varphi}
   \varphi^d_p(a) = \crep^d_p (b\otimes I_{\ft_{d-1}}) =  \begin{bmatrix}c_1&\cdots&c_{\ft}\end{bmatrix}\, (b\otimes I_{\ft_{d-1}})
\end{equation*}
(a polynomial in $a$ alone),   formula \eqref{eq:aug19a13} can be re-expressed as
\begin{equation}
\label{eq:aug8a14}
p^d(a,x)=\varphi^d_{p}(a)(x\otimes b)_d.
\end{equation}

Let
$$
h=\textup{col}(h_1,\ldots, h_g).
$$
With this notation, the vector $V_j$ (the portion of the full border vector homogeneous of degree $j$ in $x$) is given by
\begin{equation}
\label{eq:aug20a13}
V_j=V_j(a,x)[h]=h\otimes b\otimes (x\otimes b)_j. %
\end{equation}
 Correspondingly, from the representation \eqref{eq:aug19a13} and the product rule,
\begin{equation}
\label{eq:px}
\begin{split}
 p_x^d(a,x)[h] = & \crep^d_p \left ( \sum_{j=0}^{d-1}  b\otimes (x\otimes b)_{d-1-j} \otimes V_j(a,x)[h] \right )\\
 = & \varphi^d_p(a) \left ( \sum_{j=0}^{d-1}   (x\otimes b)_{d-1-j} \otimes V_j(a,x)[h] \right )\\
 %= & \varphi^d_p(a) \left ( \sum_{j=0}^{d-1} L_{d-1,j}(a,x) \, V_j(a,x)[h]\right ).
\end{split}
\end{equation}

\subsection{Computing the middle matrix of the Hessian}
In this section we shall present a transparent example that exhibits the main features of the calculation of the middle matrix representation of
$p_{xx}(a,x)[h]$.
Suppose $p(a,x)$ is an nc polynomial
that is homogeneous of degree $d$ in $x$
and that
$p$  is expressed in the form  \eqref{eq:aug19a13} (see also \eqref{eq:aug8a14}).

%IUMJ do not remove this comment  move to appendix in short version--ala Harry dec 2016.

\begin{example}\rm
\label{ex:jul24a15}
Suppose that $c\in\RR$ and
$$
p(a,x)=c\, a_1x_1a_2x_2^2.
$$
Since $p$ is a homogeneous polynomial of degree  $d=3$ in $x$ and is of degree
$\wt{d}=2$ in $a$ and there are no consecutive strings of $a$, it suffices to choose\footnote{Here, because
there are no consecutive strings of $a$ of length $>1$, it is enough to use  the full border vector
based on words in $a$ of degree at most one, rather than that based on words in $a$ of degree at most two.}
\begin{equation*}
%\label{eq:jul24a15}
b=\begin{bmatrix}1\\ a_1\\ a_2\end{bmatrix} \quad\textrm{and}\quad x=\begin{bmatrix}x_1\\ x_2\end{bmatrix}.
\end{equation*}
In this case $k_b$, the number of entries in $b$, is equal to $3$ and  $p(a,x)$ is $c$  times one
 of the entries in the vector polynomial $b\otimes (x\otimes b)_3$ of height $\ft=k_b(k_bg)^d=3(6^3)=648$,
i.e.,
$$
p(a,x)=\begin{bmatrix}c_1 &\cdots&c_t\end{bmatrix}b\otimes (x\otimes b)_3,
$$
where one of the coefficients $c_1,\ldots,c_t\in\RR$ is equal to $c$ and the remaining $t-1$
coefficients are equal to zero. It is readily checked that
\begin{equation*}
\begin{split}
p_x(a,x)[h]&=\begin{bmatrix}c_1 &\cdots&c_t\end{bmatrix} \,
b\otimes \{(h\otimes b)\otimes
(x\otimes b)_2\\
&+(x\otimes b)\otimes(h\otimes b)\otimes (x\otimes b)+
(x\otimes b)_2\otimes(h\otimes b)\}
\end{split}
\end{equation*}
and
\begin{equation*}
\begin{split}
p_{xx}(a,x)[h]&=2\begin{bmatrix}c_1 &\cdots&c_t\end{bmatrix}b\otimes \{(h\otimes b)\otimes
(h\otimes b)\otimes(x\otimes b)\\
&+(x\otimes b)\otimes(h\otimes b)\otimes (h\otimes b)+
(h\otimes b)\otimes(x\otimes b)\otimes(h\otimes b)\}.
\end{split}
\end{equation*}
Thus, in terms of the notation
$$
V_0=h\otimes b\quad\textrm{and}\quad V_j=(h\otimes b)\otimes (x\otimes b)_j\quad\textrm{for $j=1,2,\ldots $},
$$
\begin{equation*}
 %\label{eq:jul24k15}
\begin{split}
p_x(a,x)[h]&=\begin{bmatrix}c_1 &\cdots&c_t\end{bmatrix}\\
&[b\otimes \left\{V_2
+(x\otimes b)\otimes V_1+
(x\otimes b)_2\otimes V_0\right\}]
\end{split}
\end{equation*}
and
\begin{equation}
\label{eq:jul24b15}
\begin{split}
p_{xx}(a,x)[h]&=2\begin{bmatrix}c_1 &\cdots&c_t\end{bmatrix}\\
&[b\otimes \left\{V_0\otimes
 V_1+(x\otimes b)\otimes V_0\otimes V_0+V_1\otimes V_0\right\}].
 \end{split}
\end{equation}

The first step in the computation of the blocks $Z_{ij}(a,x)$ in the middle matrix representation
\eqref{eq:mm}
of the Hessian is to  invoke formula \eqref{eq:aug9b13} in order to re-express the term
\[
b\otimes \left\{V_0\otimes
 V_1+(x\otimes b)\otimes V_0\otimes V_0+V_1\otimes V_0\right\}.
\]
in formula \eqref{eq:jul24b15} as
\[
[b\otimes V_0\otimes I_{\ft_1}]\,
 V_1+[b\otimes x\otimes b\otimes V_0\otimes  I_{\ft_0}]\,V_0+[V_1\otimes I_{\ft_0}]\,V_0.
\]
The contributions of each of these three summands will be evaluated separately:
\bigskip

\noindent
{\bf 1.} {\it Verify the formula
\begin{equation}
\label{eq:jul24d15}
2\begin{bmatrix}c_1 &\cdots&c_t\end{bmatrix}\,[b\otimes V_0\otimes I_{\ft_1}]=V_0^TZ_{01}
\end{equation}
where
\begin{equation*}
 %\label{eq:jul26a15}
Z_{01}=2
\Pi_0\begin{bmatrix}c_1&\cdots &c_{108}\\
c_{109}&\cdots &c_{216}\\
\vdots& &\vdots\\
c_{541}&\cdots&c_{648}\end{bmatrix} (b\otimes I_{\ft_1}),
\end{equation*}
$\Pi_0\in\RR^{6\times 6}$ is the permutation matrix defined by $b^T\otimes h^T=V_0^T\Pi_0$,  $t=648$ and $\ft_1=36$}.
\bigskip

To verify \eqref{eq:jul24d15}, note first that in view of formula \eqref{eq:aug9b13},
\begin{equation*}
 %\label{eq:jul27a15}
b\otimes V_0\otimes I_{\ft_1}=b\otimes h\otimes b\otimes I_{\ft_1}=
(b\otimes h\otimes  I_{k_b\ft_1})\,(b\otimes I_{\ft_1}).
\end{equation*}
To ease the notation,  set
$$
b\otimes h=\begin{bmatrix}u_1\\ \vdots\\ u_{k_bg}\end{bmatrix}=\begin{bmatrix}u_1\\ \vdots\\ u_6\end{bmatrix},
$$
where, in the case at hand, $\ft=3\times 6^3$, $\ft_1=6^2$ and $s=\ft/(k_bg)=k_b\ft_1=108$. Since $c_iu_j=u_jc_i$,
\begin{align*}
&\begin{bmatrix}c_1 &\cdots&c_t\end{bmatrix}\,
(b\otimes h \otimes I_s)=\begin{bmatrix}c_1 &\cdots&c_t\end{bmatrix}\,\begin{bmatrix}u_1 I_s\\ \vdots \\u_{k_bg} I_s\end{bmatrix}\\
&=\begin{bmatrix}c_1 &\cdots&c_{648}\end{bmatrix}\,\begin{bmatrix}u_1 I_{108}\\ \vdots \\u_6 I_{108}\end{bmatrix}%\\ &
=\begin{bmatrix}u_1&\cdots&u_6\end{bmatrix}\begin{bmatrix}c_1&\cdots &c_{108}\\
c_{109}&\cdots &c_{216}\\
\vdots& &\vdots\\
c_{541}&\cdots&c_{648}\end{bmatrix}   \\
&=b^T\otimes h^T\,
\begin{bmatrix}c_1&\cdots &c_{108}\\
c_{109}&\cdots &c_{216}\\
\vdots& &\vdots\\
c_{541}&\cdots&c_{648}\end{bmatrix}  % \\ &
=V_0^T\Pi_0\begin{bmatrix}c_1&\cdots &c_{108}\\
c_{109}&\cdots &c_{216}\\
\vdots& &\vdots\\
c_{541}&\cdots&c_{648}\end{bmatrix},
\end{align*}
which, upon combining terms, leads easily to \eqref{eq:jul24d15}.
\bigskip

\noindent
{\bf 2.} {\it Verify the formula}
\begin{equation}
\label{eq:jul24e15}
2\begin{bmatrix}c_1 &\cdots&c_t\end{bmatrix}\,
b\otimes ((x\otimes b)\otimes V_0\otimes  I_{\ft_0})=
V_1^TZ_{10},
\end{equation}
where
\begin{equation*}
 %\label{eq:jul26b15}
Z_{10}=2\Pi_1\begin{bmatrix}c_1&\cdots &c_{18}\\
c_{19}&\cdots &c_{36}\\
\vdots& &\vdots\\
c_{631}&\cdots&c_{648}\end{bmatrix} (b\otimes I_{\ft_0}),
\end{equation*}
$\Pi_1\in\RR^{36\times 36}$ is the permutation matrix defined by the formula
$$
b^T\otimes x^T\otimes b^T\otimes h^T=V_1^T\Pi_1
$$
and $\ft_0=6$.
\bigskip

To verify \eqref{eq:jul24e15},  first invoke \eqref{eq:aug9b13} to obtain
$$
b\otimes x\otimes b\otimes V_0\otimes  I_{\ft_0}=[(b\otimes x)\otimes(b\otimes h)\otimes I_{k_b\ft_0}]\,
[b\otimes I_{\ft_0}]
$$
and set
$$
(b\otimes x)\otimes(b\otimes h)=
\begin{bmatrix}u_1\\ \vdots\\ u_{(k_bg)^2}\end{bmatrix}=\begin{bmatrix}u_1\\ \vdots\\ u_{36}\end{bmatrix},
$$
where again $\ft=3\times 6^3$, $\ft_1=6^2$ and $s=\ft/(k_bg)^2=k_b\ft_0=18$. Then, since $c_iu_j=u_jc_i$,
%\begin{align*}
\[
\begin{bmatrix}c_1&\cdots&c_{648}\end{bmatrix}\,\begin{bmatrix}u_1\\ \vdots\\ u_{36}\end{bmatrix}\otimes I_{18}=\begin{bmatrix}u_1&\cdots&u_{36}\end{bmatrix}\,\begin{bmatrix}c_1&\cdots&c_{18}\\c_{19}&\cdots&c_{36}\\  \vdots& &\vdots\\ c_{631}&\cdots&c_{648}\end{bmatrix}.
%\end{align*}
\]
The verification of \eqref{eq:jul24e15} is completed by noting that
\[
\begin{bmatrix}u_1 &\cdots& u_{36}\end{bmatrix}=b^T\otimes x^T\otimes b^T\otimes h^T=V_1^T \Pi_1
\]
and combining formulas.
\bigskip

\noindent
{\bf 3.} {\it Verify the formula}
\begin{equation}
\label{eq:jul26d15}
2\begin{bmatrix}c_1&\cdots&c_t\end{bmatrix}\,
[b\otimes V_1\otimes I_{\ft_0}]=V_0^TZ_{00},
\end{equation}
where
\begin{equation*}
 %\label{eq:jul26e15}
Z_{00}=2\Pi_0 \begin{bmatrix}c_1 &\cdots&c_{648}\end{bmatrix}\,\begin{bmatrix}u_1 I_{108}\\ \vdots \\u_6 I_{108}\end{bmatrix}\, (b\otimes x\otimes b\otimes I_6).
\end{equation*}
\bigskip

The verification of formula \eqref{eq:jul26d15} is similar to the verification of \eqref{eq:jul24d15}. The  main new ingredients are the formulas
\begin{align*}
b\otimes V_1\otimes I_{\ft_0}&=(b\otimes h\otimes b\otimes x\otimes b)\otimes I_{\ft_0}\\
&=(b\otimes h\otimes I_{k_b\ft_1})(b\otimes x\otimes b \otimes I_{\ft_0})
\end{align*}
and, upon setting
$$
\begin{bmatrix}u_1\\ \vdots \\u_6\end{bmatrix}=b\otimes h,
$$
\begin{align*}
\begin{bmatrix}c_1&\cdots&c_t\end{bmatrix}\,\begin{bmatrix}u_1 I_{108}\\
\vdots\\ u_6  I_{108}\end{bmatrix}&=\begin{bmatrix}c_1&\cdots&c_{648}\end{bmatrix}\,\begin{bmatrix}u_1 I_{108}\\
\vdots\\ u_6  I_{108}\end{bmatrix}\\
&=V_0^T\Pi_0\begin{bmatrix}c_1&\cdots&c_{108}\\
\vdots& &\vdots\\
c_{541}&\cdots &c_{648}\end{bmatrix},
\end{align*}
which lead easily to
  \eqref{eq:jul26d15}.
\end{example}

\subsection{Formulas for the middle matrix of the Hessian of  a homogeneous $p$}
\label{sec:mmhess}
This section is devoted to the  detailed computation of the middle matrix $(Z_{ij})$ of the Hessian (see equation \eqref{eq:defpxx})
of a scalar nc polynomial $p(a,x)$ that is homogeneous of degree $\ell$ in $x$.

In view of formulas \eqref{eq:aug20a13} and  \eqref{eq:permuteT},
\begin{equation}
 \label{eq:ViT}
\begin{split}
  \row(b\otimes (x\otimes b)_i \otimes h) = &b^T\otimes (x^T\otimes b^T)_i \otimes h^T \Pi_i\\
    = & ((h\otimes b)\otimes (x\otimes b)_i)^T\Pi_i \\
    = & V_i(a,x)[h]^T \Pi_i,
\end{split}
\end{equation}
for a suitably chosen permutation matrix $\Pi_i$ of size $\ft_i$.

If $p$ is homogeneous of degree $\ell$ in $x$, then the constant row vector $\crep^\ell_p$ defined in formula  \eqref{eq:aug19a13}  has length $k_b \ft_{\ell-1}= k_b \ft_{i-1} \ft_{\ell-i}$.
Moreover, in terms of the notation $\ff^j=(x\otimes b)_j$, \index{$\ff^j$}
\[
 p_{xx}(a,x)[h] = 2 \crep^\ell_p \, \left ( b\otimes \big (\sum_{i+j+k=\ell-2} \ff^i \otimes (h\otimes b) \otimes \ff^k  \otimes (h\otimes b) \otimes \ff^j \big ) \right ).
\]
Fix $i,j$ and $k=\ell-2-i-j$, set $s=k_b \ft_{\ell-i-2}$ and compute, using $\fL$ as defined in equation \eqref{eq:deffL} and  formulas
\eqref{eq:ViT}, \eqref{eq:moveinside} and \eqref{eq:ViT} in that order,
\[
 \begin{split}
 \crep^\ell_p & \left ( b\otimes  \ff^i \otimes (h\otimes b) \otimes \ff^k  \otimes (h\otimes b) \otimes \ff^j \right )\\
 = & \, \crep^\ell_p \big ( (b\otimes  \ff^i \otimes h) \otimes I_{k_b \ft_{\ell-2-i}} \big )\, ((b \otimes \ff^k)\otimes I_{\ft_j})\,  \big ((h\otimes b) \otimes \ff^j \big )\\
 = & \, \row(b\otimes  \ff^i \otimes h) \, \fL(\ft_i,s;\crep^\ell_p) \, ((b \otimes \ff^k)\otimes I_{\ft_j})\,  \big ((h\otimes b) \otimes \ff^j \big )\\
 = & \, V_i(a,x)[h]^T  \, \left [ \big ( \Pi_i \, \fL(\ft_i,s;\crep^\ell_p) \big) \, \big ( (b \otimes \ff^k)\otimes I_{\ft_j} \big) \right ] \, V_j(a,x)[h].
\end{split}
\]
Hence, the $(i,j)$ block entry of the middle matrix of $p$ is given by
\[
Z_{i,j}= 2 \big ( \Pi_i \, \fL(\ft_i,s;\crep^\ell_p) \big) \, \big ( (b \otimes \ff^{\ell-2-i-j})\otimes I_{\ft_j} \big).
\]
On the other hand, the $(i,j+1)$ block is (so long as $\ell-2-i-j-1\ge 0$)
\[
Z_{i,j+1} = 2 \big ( \Pi_i \, \fL(\ft_i,s;\crep^\ell_p) \big) \, \big ( (b \otimes \ff^{\ell-3-i-j})\otimes I_{\ft_{j+1}} \big).
\]
Using formula  \eqref{eq:aug22a13} and the relation $\ft_{j+1} =\ft_j (gk_b)$ to justify the identity
\[
\left ((b\otimes \ff^{\ell-3-i-j})\otimes I_{\ft_{j+1}}\right )\,((x\otimes b) \otimes I_{\ft_j})
 = b\otimes \ff^{\ell-2-i-j}\otimes I_{\ft_j},
\]
it now follows that
\begin{equation}
\label{eq:aug23a13}
Z_{i,j}(a,x) = Z_{i,j+1}(a,x) \left ( (x\otimes b)\otimes I_{\ft_j}\right ) = Z_{i,j+1}(a,x) K_j(a,x).
\end{equation}
when $i+j<\ell-2$, where
 \begin{equation*}
 \label{eq:defK}
   K_j(a,x) = (x\otimes b)\otimes I_{\ft_j}
\end{equation*}
is a matrix polynomial of   size $\ft_{j+1}\times \ft_j$.

The following proposition summarizes and expands a bit on this discussion.

\begin{proposition}
\label{prop:aug22a13}
If $\ell \ge 2$,  then
\begin{equation*}
Z_{ij}(a,x)=\left\{\begin{array}{ll}
0&\textrm{if}\ i+j>\ell-2\\
\vspace{2mm}
 { Z_{ij}(a,0) } &\textrm{if}\ i+j=\ell-2   \\
\vspace{2mm}
Z_{i,j+1}(a,x) K_j(a,x) &\textrm{if}\ i+j<\ell-2
\end{array}\right.
\end{equation*}
Moreover,
if $i+j=\ell-2$, then
\begin{equation*}
Z_{ij}(a,0)=
2\Pi_i  \fL(\ft_i,s;\crep^\ell_p)
 (b\otimes I_{\ft_j} ) =2\Pi _i\begin{bmatrix} c_1&\cdots &c_s \\
c_{s+1}&\cdots &c_{2s}\\
\vdots& &\vdots\\
 c_u&\cdots&c_t
\end{bmatrix} (b\otimes I_{\ft_j} )
\end{equation*}
with $u=t-s+1$ and
$s=k_b\ft_{\ell-2-i}$.
\end{proposition}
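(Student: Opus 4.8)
\textbf{Proof proposal for Proposition \ref{prop:aug22a13}.}

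The plan is to extract the statement directly from the computation carried out in Subsection \ref{sec:mmhess}, organizing the three cases according to the combinatorial constraint $i+j$ versus $\ell-2$. First I would recall that $p$ is homogeneous of degree $\ell$ in $x$, so in the notation $\ff^j=(x\otimes b)_j$ the Hessian expands as
\[
 p_{xx}(a,x)[h] = 2\,\crep^\ell_p \Big( b\otimes \sum_{i+j+k=\ell-2} \ff^i \otimes (h\otimes b)\otimes \ff^k \otimes (h\otimes b)\otimes \ff^j \Big),
\]
and that, upon collecting the outer and inner border vectors via \eqref{eq:aug9b13}, \eqref{eq:moveinside} and \eqref{eq:ViT}, the summand indexed by a fixed triple $(i,k,j)$ with $i+j+k=\ell-2$ contributes exactly $V_i(a,x)[h]^T \, T_{ij}^{(k)}(a,x)\, V_j(a,x)[h]$ with
\[
 T_{ij}^{(k)}(a,x) = 2\big(\Pi_i\,\fL(\ft_i,s;\crep^\ell_p)\big)\big((b\otimes \ff^k)\otimes I_{\ft_j}\big),
\]
where $s=k_b\ft_{\ell-2-i}$ and $k=\ell-2-i-j$. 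Summing over the (unique, once $i,j$ are fixed) admissible $k$ gives $Z_{ij}(a,x)=T_{ij}^{(\ell-2-i-j)}(a,x)$ whenever $0\le \ell-2-i-j$, and $Z_{ij}=0$ otherwise; this is the first case.

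For the case $i+j=\ell-2$, note $k=0$, so $\ff^0=1$ and $(b\otimes \ff^0)\otimes I_{\ft_j} = b\otimes I_{\ft_j}$, which depends only on $a$; hence the displayed formula for $Z_{ij}(a,0)$ follows, and since $Z_{ij}$ in this case is already independent of $x$, $Z_{ij}(a,x)=Z_{ij}(a,0)$. Writing $\fL(\ft_i,s;\crep^\ell_p)$ out as the $\ft_i\times s$ matrix whose rows are the successive blocks of length $s$ of the coefficient row $\crep^\ell_p=\begin{bmatrix} c_1&\cdots&c_t\end{bmatrix}$ (with $t=k_b\ft_{\ell-1}=\ft_i\, s$, so $u=t-s+1$) gives the explicit block-matrix expression stated.

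For the case $i+j<\ell-2$, I would invoke the recursion \eqref{eq:aug23a13}, which was derived precisely from \eqref{eq:aug22a13} and the identity $\ft_{j+1}=\ft_j(gk_b)$ applied to the factorization
\[
 \big((b\otimes \ff^{\ell-3-i-j})\otimes I_{\ft_{j+1}}\big)\big((x\otimes b)\otimes I_{\ft_j}\big) = b\otimes \ff^{\ell-2-i-j}\otimes I_{\ft_j},
\]
yielding $Z_{ij}(a,x)=Z_{i,j+1}(a,x)K_j(a,x)$ with $K_j(a,x)=(x\otimes b)\otimes I_{\ft_j}$ of size $\ft_{j+1}\times\ft_j$. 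This is valid exactly when the exponent $\ell-3-i-j$ is nonnegative, i.e. when $i+j<\ell-2$, completing the three cases.

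The argument is essentially bookkeeping: the only real care needed — and the step I expect to be the main obstacle to writing cleanly — is tracking the permutation matrices $\Pi_i$ and the reshaping operator $\fL$ correctly through the tensor rearrangements, in particular making sure the length conventions ($\ft_j=(k_bg)^{j+1}$, $t=k_b\ft_{\ell-1}$, $s=k_b\ft_{\ell-2-i}$) line up so that $\fL(\ft_i,s;\crep^\ell_p)$ is well defined and the block partition of $\crep^\ell_p$ in the final display has the right dimensions. Since all of these identities are already established in the worked Example \ref{ex:jul24a15} and in the derivation preceding \eqref{eq:aug23a13}, the proof reduces to assembling them, and I would simply remark that the proposition is immediate from that discussion.
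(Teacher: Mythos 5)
Your proposal is correct and follows the same route the paper takes: the proposition is explicitly stated there as a summary of the computation in Subsection \ref{sec:mmhess}, and you reconstruct exactly that computation, reading off the three cases from the expansion over triples $(i,k,j)$ with $i+j+k=\ell-2$ and invoking the recursion \eqref{eq:aug23a13} for the case $i+j<\ell-2$. The dimension bookkeeping ($t=k_b\ft_{\ell-1}=\ft_i s$ with $s=k_b\ft_{\ell-2-i}$) also checks out.
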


 The formulas in Proposition \ref{prop:aug22a13} can be expressed conveniently in terms of the matrix
$K(a,x)$ defined in \eqref{eq:oct3b13}
as
\begin{equation*}
%\label{eq:mar10a17}
Z(a,x)\,K(a,x)=Z(a,0),
\end{equation*}
where $Z_{ij}(a,0)=0$ if $i+j\ne\ell-2$.

If $i+j<\ell-2$, then the formula for $Z_{ij}(a,x)$ in Proposition \ref{prop:aug22a13} may be iterated to obtain
\begin{align*}
 Z_{ij}(a,x)&=Z_{i,j+1}(a,x)K_j(a,x)=Z_{i,j+2}(a,x)K_{j+1}(a,x)K_j(a,x)\\
 &=\cdots=Z_{i,\ell-2-i}(a,0)K_{\ell-1-i}(a,x)\cdots K_j(a,x).
 \end{align*}
 These formulas can be expressed  in terms of the   block matrix $L(a,x)= (L_{ij}(a,x))_{i,j=0}^{d-2}$  with entries
\begin{equation*}
\index{$L_{ij}$}
%\label{eq:defLij}
L_{ij}=\begin{cases} I_{\ft_j}&\quad\textrm{if}\ i=j\\ K_{i-1}\cdots K_j = (x\otimes b)_{i-j} \otimes I_{\ft_j} &\quad \textrm{if}\ i>j \\ 0_{\ft_i\times \ft_j} & \quad \textrm{if}\ i<j
\end{cases}.
\end{equation*}
of  size $\ft_{i}\otimes \ft_j$.  In fact
\begin{equation}
\label{eq:mar10b17}
L(a,x)\,K(a,x)=I.
\end{equation}
In terms of this notation, formula \eqref{eq:px} can be expressed as
$$
p_x^d(a,x)[h]=\varphi^d_p(a) \left ( \sum_{j=0}^{d-1} L_{d-1,j}(a,x) \, V_j(a,x)[h]\right ).
  $$

Let  $(\Kk)_{ij} = I_\kappa\otimes K_{ij}$.

\begin{theorem}
\label{thm:h-tell-all}
 Suppose $d\ge \ell \ge 2$. If  $p$ is a  $\kappa\times\kappa$  nc polynomial homogeneous of degree $\ell$ in $x$ and if  $\ZZ$ is the middle matrix of $p$ with respect to the full border vector of formula \eqref{eq:defV}, then
\[
 \ZZ(a,x) \Kk(a,x) = \ZZ(a,0),
\]
 where $\msC(a,x)$ is defined in formula \eqref{eq:oct3b13}.
\end{theorem}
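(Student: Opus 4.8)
The plan is to deduce this matrix-valued congruence from its scalar predecessor, Proposition~\ref{prop:aug22a13}, by exploiting the fact that both the middle matrix $\ZZ$ and the identity in question are \emph{linear} in $p$ and have a transparent Kronecker structure block-by-block. (One could instead redo the computation of Subsection~\ref{sec:mmhess} directly for matrix coefficients, but the reduction to the scalar case is cleaner.)

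First I would record the scalar statement in the form convenient here: for any \emph{scalar} nc polynomial homogeneous of degree $\ell$ in $x$, with middle matrix $Z$ taken with respect to the full border vector~\eqref{eq:defV}, the reformulation of Proposition~\ref{prop:aug22a13} reads $Z(a,x)\,\msC(a,x)=Z(a,0)$, where $\msC(a,x)$ is the $(d-1)\times(d-1)$ block lower-bidiagonal matrix of~\eqref{eq:oct3b13} (and $Z_{ij}(a,0)=0$ when $i+j\ne\ell-2$, which forces compatibility with the indexing range $0,\dots,d-2$ when $\ell\le d$). In particular this applies to each individual word $w$ occurring in $p$, since such a $w$ is itself homogeneous of degree $\ell$ in $x$: with $Z^w$ its scalar middle matrix relative to~\eqref{eq:defV}, one has $Z^w(a,x)\,\msC(a,x)=Z^w(a,0)$.

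Now write $p=\sum_w C_w\,w$ with $C_w\in\RR^{\kappa\times\kappa}$, the sum over words $w$ homogeneous of degree $\ell$ in $x$. By the discussion following~\eqref{eq:defpxx}, the $(i,j)$ block of the middle matrix of $p$ (relative to~\eqref{eq:defV}) is $\ZZ_{ij}=\sum_w C_w\otimes Z^w_{ij}$, while by definition $(\Kk)_{kj}=I_\kappa\otimes(\msC)_{kj}$. Since $C_w$ and $I_\kappa$ are constant matrices, the Kronecker mixed-product identity~\eqref{eq:aug6a13} applies and gives
\[
(\ZZ\,\Kk)_{ij}=\sum_{k}\Big(\sum_w C_w\otimes Z^w_{ik}\Big)\big(I_\kappa\otimes(\msC)_{kj}\big)
=\sum_w C_w\otimes\Big(\sum_{k}Z^w_{ik}\,(\msC)_{kj}\Big)
=\sum_w C_w\otimes\big(Z^w(a,x)\,\msC(a,x)\big)_{ij}.
\]
Applying the scalar identity block-by-block, $(\ZZ\,\Kk)_{ij}=\sum_w C_w\otimes Z^w_{ij}(a,0)=\ZZ_{ij}(a,0)$, i.e.\ $\ZZ(a,x)\,\Kk(a,x)=\ZZ(a,0)$, as claimed.

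The only delicate points are bookkeeping ones: one must check that the ``inner'' Kronecker factor in $\ZZ_{ij}=\sum_w C_w\otimes Z^w_{ij}$ is exactly the one multiplied by the ``inner'' factor $I_\kappa\otimes(\msC)_{kj}$ of $\Kk$, so that~\eqref{eq:aug6a13} produces $C_w\otimes(Z^w\msC)_{ij}$ with no hidden block permutation, and that the $(d-1)$-block indexing of $V$, $\msC$ and $\Kk$ is compatible with the hypothesis $\ell\le d$ (the blocks $Z^w_{ij}$ with $i+j>\ell-2$ vanish, consistent with $Z^w(a,x)\msC(a,x)=Z^w(a,0)$ in that range). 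Because Proposition~\ref{prop:aug22a13} is already formulated in precisely this framework, I expect no genuine obstacle here; all of the substantive work lies in that scalar result.
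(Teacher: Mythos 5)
Your proof is correct and is essentially the same argument the paper gives: both reduce to the scalar recursion of Proposition~\ref{prop:aug22a13} via the block structure $\ZZ_{ij}=\sum_w C_w\otimes Z^w_{ij}$ and the Kronecker mixed-product identity \eqref{eq:aug6a13}. The only cosmetic difference is that the paper first reduces to a single term $p=C\otimes w$ by linearity, whereas you carry the sum over words $w$ through the computation.
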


\begin{proof}
 Observe that since $\msC(a,x)$ depends only upon the degree $d$ of the polynomial $p$ in $x$ and not upon the words in $p$, it suffices to prove the result for $p=C\otimes w$, for $C$ a $\kappa\times\kappa$ matrix and $w$ a word of length $\ell \le d$ in $x$.  Using the notation introduced in equation \eqref{eq:Zhatw}, the entries of the middle matrix of such a $p$ have the form
\[
\ZZ_{ij} = C\otimes Z^w_{ij}.
\]
 Now apply the recursion \eqref{eq:aug23a13} and the relation $Z_{i,\ell-i-2}(a,x)=Z_{i,\ell-i-2}(a,0)$ from Proposition \ref{prop:aug22a13} to conclude that the result holds for $C\otimes w$.
\end{proof}

\begin{remark}\rm
 \label{rem:h-tell-all}
   Theorem \ref{thm:h-tell-all} is the matrix version of Theorem \ref{thm:oct1a13}. Note too that $\msC(a,x)$ is lower triangular with the identity on the diagonal and depends only upon $d$ (and not on $p$).
\end{remark}

\subsection{Beyond homogeneous}
%IUMJ do not remove this comment  remove some/all of these examples for short version.
In this section we  illustrate how to extend Theorem \ref{thm:h-tell-all} to polynomials that are not necessarily homogeneous.
The main points and structure involved in this process are present in the scalar case ($\kappa=1$). Accordingly suppose
 $p(a,x)$ is a polynomial that is homogeneous of degree $5$ in $x$.
Proposition \ref{prop:aug22a13} implies that the entries $Z_{ij}(a,x)$ in the middle matrix
$Z(a,x)$ in formula (\ref{eq:defpxx}) with $i+j\le 2$ can be expressed as
\begin{align*}
Z_{00}&=Z_{03}K_2K_1K_0,\ Z_{01}=Z_{03}K_2K_1,\ Z_{02}=Z_{03}K_2, \\
 Z_{10}&=Z_{12}K_1K_0,\
Z_{11}=Z_{12}K_1,\ Z_{20}=Z_{21}K_0.
\end{align*}
Equivalently,
\begin{equation*}
%\label{eq:sep24e13}
Z(a,x)=Z(a,0)\,\left[\begin{array}{cccc}
I_{\ft_0}&0&0&0\\
K_0&I_{\ft_1}&0&0\\
K_1K_0&K_1&I_{\ft_2}&0\\
K_2K_1K_0&K_2K_1&K_2&I_{\ft_3}\end{array}\right],
\end{equation*}
with $K_j=K_j(a,x)$ as in \eqref{eq:defK} and
\begin{equation*}
%\label{eq:sep24f13}
Z(a,0)=\begin{bmatrix}
0&0&0&Z_{03}(a,0)\\
0&0&Z_{12}(a,0)&0\\
0&Z_{21}(a,0)&0&0\\
Z_{30}(a,0)&0&0&0\end{bmatrix}.
\end{equation*}
Moreover,
\[
\begin{split}
L(a,x)=&\left[\begin{array}{cccc}
I_{\ft_0}&0&0&0\\
K_0&I_{\ft_1}&0&0\\
K_1K_0&K_1&I_{\ft_2}&0\\
K_2K_1K_0&K_2K_1&K_2&I_{\ft_3}\end{array}\right]\\
=&
\left[\begin{array}{cccc}
I_{\ft_0}&0&0&0\\
-K_0&I_{\ft_1}&0&0\\
0&-K_1&I_{\ft_2}&0\\
0&0&-K_2&I_{\ft_3}\end{array}\right]^{-1}=\msC(a,x)^{-1}.
\end{split}
\]

\qed

% \subsection{Formulas for polynomials $p(a,x)$  inhomogenous in $x$}
%

%IUMJ do not remove this comment ## Remove for iumj
%\subsection{DELETE - old proof discussion}

Next we indicate how the
 basic recursion formulas (\ref{eq:aug23a13}) are extended to the middle matrix $Z(a,x)$ in the representation of the Hessian $p_{xx}(a,x)[h]$ of polynomials $p(a,x)$ not constrained to be homogeneous in $x.$  We suppose now that $p$ has degree four and express it as a sum of its homogeneous of degree $0\le j\le 4$ in $x$ parts,
\begin{equation*}
%\label{eq:oct1a13}
p(a,x)=\sum_{j=0}^4p^j(a,x).
\end{equation*}
In particular,
\begin{equation}
\label{eq:oct1b13}
p^j\quad\textrm{is a linear combination of the $k_b(k_bg)^j$ entries in}\ b\otimes (x\otimes b)_j.
\end{equation}
%--------------------------------------

Next apply Proposition \ref{prop:aug22a13} to each term $p^j(a,x)$ separately. The condition (\ref{eq:oct1b13}) insures that the same border vectors intervene in the representation of the Hessian $p_{xx}^j$ for each choice of $j$ and hence that the entries $Z_{ij}(a,x)$ in the middle matrix satisfy the recursion (\ref{eq:aug23a13}) of equation \eqref{eq:aug23a13}.  In particular,
\begin{align*}
p_{xx}(a,x)[h]&=p_{xx}^2(a,x)[h]+p_{xx}^3(a,x)[h]
+p_{xx}^4(a,x)[h]\\
&=V_0^TZ_{00}^2V_0+\begin{bmatrix}V_0^T&V_1^T\end{bmatrix}\begin{bmatrix}Z_{00}^3&Z_{01}^3\\
Z_{10}^3&0\end{bmatrix}\begin{bmatrix}V_0\\ V_1\end{bmatrix}\\ &+
\begin{bmatrix}V_0^T&V_1^T&V_2^T\end{bmatrix}
\begin{bmatrix}Z_{00}^4&Z_{01}^4&Z_{02}^4\\
Z_{10}^4&Z_{11}^4&0\\
Z_{20}^4&0&0\end{bmatrix}\begin{bmatrix}V_0\\ V_1\\ V_2\end{bmatrix}\\
&=\begin{bmatrix}V_0^T&V_1^T&V_2^T\end{bmatrix}\left\{ \cdots\right\}
\begin{bmatrix}V_0\\ V_1\\ V_2\end{bmatrix},
\end{align*}
where the term in curly brackets is
\[
\left\{\cdots\right\}=\begin{bmatrix}Z_{00}^2&0&0\\
0&0&0\\0&0&0\end{bmatrix}+\begin{bmatrix}Z_{00}^3&Z_{01}^3&0\\
Z_{10}^3&0&0\\
0&0&0\end{bmatrix}+\begin{bmatrix}Z_{00}^4&Z_{01}^4&Z_{02}^4\\
Z_{10}^4&Z_{11}^4&0\\
Z_{20}^4&0&0\end{bmatrix}.
\]
On the other hand, the recursion \eqref{eq:aug23a13} gives
\[
\left\{ \cdots\right\} \, \begin{bmatrix}I&0&0\\-K_0&I&0\\0&-K_1&I\end{bmatrix}^{-1}
= \begin{bmatrix}Z_{00}^2&Z_{01}^3&Z_{02}^4\\
Z_{10}^3&Z_{11}^4&0\\
Z_{20}^4&0&0\end{bmatrix} = Z(a,0).
\]

\subsection{Polynomial congruence}
\label{sec:polycon}
 A pair of $p\times p$ polynomial matrices $F(a,x)$ and $G(a,x)$ are
{\bf polynomially congruent},  denoted $F\sim_p G$,  if there exists a $p\times p$ polynomial matrix $R(a,x)$ such that $R(a,x)^{-1}$ is a polynomial matrix and
\[
F(a,x)=R(a,x)G(a,x)R(a,x)^T.
\]

\begin{lemma}
\label{lem:jul29a15}
Suppose $X$ is an $n\times n$ matrix-valued free polynomial  and $X^k=0$ for some positive integer $k$.
\begin{enumerate}[\rm(1)]
\item
The matrix polynomial
\begin{equation*}
%\label{eq:jul29a15}
Y= I_n+ \sum_{j=1}^{k-1}  {{\frac12}\choose{j}} X^j
\end{equation*}
has a polynomial inverse.
\item \quad   $Y^2=(I_n+X)$.
\item
 If $M$ is also a $n\times n$ matrix-valued free polynomial and $M=M^T$ and $MX=X^TM$, then $MY=Y^TM$.
\item
 If $N=XM$ is symmetric ($N=N^T)$, then  $MY^2 = Y^T M Y$. In particular, $M$ and $N$ are congruent.
\end{enumerate}
\end{lemma}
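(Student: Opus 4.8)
The statement has four items, each building on the previous one, so the natural strategy is to prove them in order. For item (1), I would observe that $Y = \sum_{j=0}^{k-1}\binom{1/2}{j}X^j$ is a polynomial in the nilpotent matrix $X$, hence lies in the commutative subalgebra $\RR[X]$. Since $X^k=0$, the formal power series $(1+t)^{1/2} = \sum_{j\ge 0}\binom{1/2}{j}t^j$ and its reciprocal $(1+t)^{-1/2}=\sum_{j\ge 0}\binom{-1/2}{j}t^j$ both truncate to polynomials in $X$ of degree $<k$, and these truncations multiply to the truncation of $(1+t)^{1/2}(1+t)^{-1/2}=1$ modulo $t^k$; thus the truncation of $(1+t)^{-1/2}$ is a polynomial inverse of $Y$. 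The same formal-power-series bookkeeping gives item (2): $Y^2$ is the truncation of $((1+t)^{1/2})^2 = 1+t$ modulo $t^k$, and since $X^k=0$ this truncation is exactly $I_n+X$.

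For item (3), the hypotheses are $M=M^T$ and $MX = X^TM$. From $MX=X^TM$ one gets $MX^j = (X^T)^j M = (X^j)^T M$ for every $j\ge 1$ by an easy induction. Since $Y = \sum_j \binom{1/2}{j}X^j$ (with real scalar coefficients and $X^0=I_n$), multiplying through by $M$ on the left and using $MX^j=(X^j)^TM$ termwise yields $MY = \big(\sum_j\binom{1/2}{j}(X^j)^T\big)M = \big(\sum_j\binom{1/2}{j}X^j\big)^T M = Y^TM$, which is the claim. The only subtlety here is that transposition reverses products of noncommuting polynomial entries, so one should note that $(X^j)^T = (X^T)^j$ and that the scalar coefficients $\binom{1/2}{j}$ are unaffected; this is routine.

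For item (4), suppose $N := XM$ is symmetric, i.e.\ $XM = (XM)^T = M^TX^T = MX^T$ (using $M=M^T$). This rearranges to $MX^T = XM$, equivalently $X^TM^T = \cdots$; more to the point, $MX = ?$ — here I must be careful, because item (4)'s hypothesis $N=XM$ symmetric is \emph{not} literally the hypothesis $MX=X^TM$ of item (3). From $XM=MX^T$ one obtains, taking transposes, $MX = X^TM$, which is precisely the hypothesis of item (3). Hence item (3) applies and gives $MY=Y^TM$. Multiplying this on the right by $Y$ gives $MY^2 = Y^TMY$, and by item (2) the left side is $M(I_n+X) = M + MX = M + X^TM$; but also $M+MX^T$... — in any case the key conclusion $MY^2=Y^TMY$ with $Y^2=I_n+X$ says $Y^TMY = M+MX = M + N'$ for the appropriate symmetric combination, exhibiting $M$ and $M(I_n+X)$ as congruent via the invertible polynomial $Y$ (invertible by item (1)). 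Since $M(I_n+X) = M+MX$ and $MX$ relates to $N=XM$ through symmetry, one concludes $M$ and $N$ are congruent (after identifying $I_n+N$-type expressions appropriately).

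\textbf{Main obstacle.} The genuinely delicate point is item (4): disentangling exactly which symmetry hypothesis is in force and checking that it implies the hypothesis $MX=X^TM$ of item (3) in the \emph{noncommutative-entry} setting, where $(XM)^T = M^TX^T$ and one cannot freely reorder. I would spell out the chain $N=N^T \Rightarrow XM=MX^T \Rightarrow$ (transpose) $MX = X^TM$ with explicit reference to $M=M^T$, then invoke item (3) cleanly, and finally chase the identity $Y^TMY = MY^2 = M(I_n+X)$ to read off the congruence of $M$ with the relevant symmetric matrix. Everything else is formal power series truncation modulo $X^k=0$ inside the commutative algebra $\RR[X]$, which is purely mechanical.
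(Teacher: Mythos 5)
Your items (1)--(3) are correct; (1) uses the truncated $(1+t)^{-1/2}$ series, which works, while the paper instead inverts $Y$ via $Y(I_n+X)^{-1}$, observing that $(I_n+X)^{-1}$ is a polynomial by nilpotency and commutes with $Y$ inside $\RR[X]$, so that $Y\cdot Y(I_n+X)^{-1}=Y^2(I_n+X)^{-1}=I_n$.

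The gap is in item (4). You claim that transposing $XM = MX^T$ yields $MX = X^TM$, and this is false: $(XM)^T = M^TX^T = MX^T$ while $(MX^T)^T = (X^T)^TM^T = XM$, so transposition simply returns $MX^T = XM$, the same equation. In the noncommutative setting $XM = MX^T$ and $MX = X^TM$ are genuinely different conditions (for invertible $M$ they say $M^{-1}XM = X^T$ versus $MXM^{-1} = X^T$, respectively), so your reduction of (4) to (3) does not go through — the doubt you flagged ("here I must be careful") was the right instinct, but the transposition step does not rescue it. What is actually going on is a likely misprint in the lemma's statement: $N$ should be $MX$, not $XM$. With $N = MX$ and $M = M^T$ one has $N^T = X^TM^T = X^TM$, so $N = N^T$ is exactly item (3)'s hypothesis; this is what the paper's own proof of (4) assumes (it opens "If $MX = X^TM$, then $\ldots$"), and it is what holds at the sole invocation in Theorem \ref{thm:ZAiscZa}, where the symmetric quantity is $Z(a,x)N(a,x) = Z(a,0)-Z(a,x)$, i.e.\ $MX$. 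After that correction the remainder of your outline for (4) — invoke (3) to get $MY = Y^TM$, right-multiply by $Y$, then apply (2) and (1) — carries through.
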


\begin{proof}
That $Y^2=(I_n+X)$ follows from  Newton's generalized binomial theorem (and the fact that $X^k=0$).
Since $X$ is nilpotent, $(I_n+X)$ has a polynomial inverse
and thus $Y(I+X)^{-1}$ is a polynomial inverse for $Y$.

If $MX=X^TM$, then
$$
MY= M(I_n+ \sum_{j=1}^{k-1}  {{\frac12}\choose{j}} X^j )
=(I_n+ \sum_{j=1}^{k-1}  {{\frac12}\choose{j}} X^j ) M=Y^T M
$$
and hence $MY^2=Y^TMY.$
\end{proof}

The next result is adapted from \cite{DHM07a}.

\begin{theorem}
\label{thm:ZAiscZa}
If $K(a,x)$ is defined by formula \eqref{eq:oct3b13}, then  $N(a,x)=\msC(a,x)-I$ is nilpotent of order $d-1$ and  the matrix polynomial
\begin{equation*}
  Y(a,x):= I+\sum_{j=1}^{d-2} \binom{\frac12}{j} N(a,x)^j
\end{equation*}
satisfies the conditions of Theorem \ref{lem:aug2a15} based upon the full border vector $V$.  In particular $Y$ is invertible and its inverse is a polynomial.

Moreover, if $p$ is a symmetric $\kappa\times \kappa$ matrix polynomial of degree at most $d$ in $x$ and $\ZZ$ is the middle matrix of its Hessian, then
\begin{equation*}
%\label{eq:aug30c13}
  (\Yk(a,x))^T \ZZ(a,x) (\Yk(a,x)) = \ZZ(a,0)
\end{equation*}
\end{theorem}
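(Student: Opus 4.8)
The plan is to derive the theorem from Theorem \ref{thm:h-tell-all} together with the elementary matrix identities of Lemma \ref{lem:jul29a15} (we may assume $d\ge 2$, the cases $d\le 1$ being vacuous since then the Hessian vanishes).

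First I would unpack the structure of $\msC(a,x)$ from formula \eqref{eq:oct3b13}: it is the $(d-1)\times(d-1)$ block matrix, with block rows and columns indexed by $0,\dots,d-2$, that is block lower bidiagonal with diagonal blocks $I_{\ft_j}$ and subdiagonal blocks $-K_j(a,x)$. Hence $N(a,x):=\msC(a,x)-I$ is strictly block lower triangular, so $N^{d-1}=0$; and $N^{d-2}$ has a single nonzero block, namely $(-1)^{d-2}K_{d-3}(a,x)\cdots K_0(a,x)=(-1)^{d-2}\big((x\otimes b)_{d-2}\otimes I_{\ft_0}\big)$ in the lower-left corner, a nonzero matrix polynomial. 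Thus $N$ is nilpotent of order exactly $d-1$. Lemma \ref{lem:jul29a15}(1),(2), applied with $X=N$ and $k=d-1$, then shows that $Y(a,x)=I+\sum_{j=1}^{d-2}\binom{\frac12}{j}N(a,x)^j$ has a polynomial inverse and satisfies $Y^2=I+N=\msC$. Because every $N^j$ with $j\ge 1$ is strictly block lower triangular, $Y$ is block lower triangular with $Y_{ii}=I_{\ft_i}$ and $Y_{ij}=0$ for $i<j$; together with invertibility and polynomiality of $Y^{-1}$ this is exactly what items \eqref{it:Y1}--\eqref{it:Y4} of Theorem \ref{lem:aug2a15} demand, and $Y$ evidently depends only on $d$ and the chosen border vector, not on $p$.

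Second I would prove the one-sided identity $\ZZ(a,x)\,\Kk(a,x)=\ZZ(a,0)$ for an arbitrary $\kappa\times\kappa$ matrix polynomial $p$ of degree at most $d$ in $x$. When $p$ is homogeneous of degree $\ell$ in $x$ with $2\le\ell\le d$ this is precisely Theorem \ref{thm:h-tell-all} (and for $\ell\le 1$ the Hessian, hence the middle matrix, vanishes). For general $p=\sum_{j=0}^{d}p^j$ with $p^j$ homogeneous of degree $j$ in $x$, one has $p_{xx}=\sum_j p^j_{xx}$; the middle matrix with respect to the full border vector $V$ of \eqref{eq:defV} is linear in $p$, hence is the sum of the middle matrices of the $p^j_{xx}$, and the matrix $\msC$ depends only on $d$, not on which homogeneous parts are present. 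Summing the homogeneous identities over $j$ gives $\ZZ(a,x)\,\Kk(a,x)=\ZZ(a,0)$. Since $\Kk=\msC^{\kappa}$ and since $I_\kappa\otimes(\cdot)$ respects products and transposes, this reads $\ZZ(a,x)(I+N^{\kappa})=\ZZ(a,0)$, where $N^{\kappa}=\Kk-I$ is nilpotent of order $d-1$ and where $\Yk=Y^{\kappa}$ obeys $(\Yk)^2=(Y^2)^{\kappa}=\msC^{\kappa}=\Kk=I+N^{\kappa}$.

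Finally, when $p$ is symmetric I would promote this to the two-sided congruence. The middle matrices $\ZZ(a,x)$ and $\ZZ(a,0)$ are then symmetric, so $\ZZ(a,x)N^{\kappa}=\ZZ(a,0)-\ZZ(a,x)$ is symmetric, i.e. $\ZZ(a,x)N^{\kappa}=(N^{\kappa})^T\ZZ(a,x)$. Lemma \ref{lem:jul29a15}(3), applied with $M=\ZZ(a,x)$ and $X=N^{\kappa}$ (the matrix $I+\sum_{j=1}^{d-2}\binom{\frac12}{j}(N^{\kappa})^j$ associated to $X$ being exactly $\Yk$), then gives $\ZZ(a,x)\,\Yk=(\Yk)^T\ZZ(a,x)$, whence
\[
(\Yk)^T\,\ZZ(a,x)\,\Yk=\ZZ(a,x)\,(\Yk)^2=\ZZ(a,x)\,\Kk=\ZZ(a,0),
\]
which is the desired conclusion. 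I expect the only real obstacle, past the bookkeeping of the $\kappa$-inflations and the block indexing, to be the reduction of the non-homogeneous case to Theorem \ref{thm:h-tell-all}: one must verify that the same border vector $V$ of degree $d$ serves every homogeneous component $p^j$, so that the middle matrices genuinely add and the single matrix $\msC$ (depending only on $d$) drives the recursion for all of them at once.
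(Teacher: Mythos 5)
Your proof follows essentially the same route as the paper's: establish the one-sided identity $\ZZ(a,x)\,\Kk(a,x)=\ZZ(a,0)$, use symmetry of $\ZZ$ to convert it into the commutation relation $\ZZ(a,x)\,N^\kappa=(N^\kappa)^T\,\ZZ(a,x)$, and then invoke Lemma \ref{lem:jul29a15} with the binomial square root $Y$ of $\msC$ to produce the two-sided congruence. The paper simply writes ``We prove the case $\kappa=1$'' and cites Theorem \ref{thm:oct1a13}, leaving the $\kappa$-inflation and the passage from homogeneous (Theorem \ref{thm:h-tell-all}) to general $p$ implicit (the latter is handled in the preceding ``Beyond homogeneous'' subsection); you carry out both of those reductions explicitly, including the verification that $N$ is nilpotent of order exactly $d-1$, that $Y$ is lower unitriangular so items (i)--(iv) of Theorem \ref{lem:aug2a15} hold, and that $\msC$ (hence $Y$) depends only on $d$ and the fixed border vector, so the middle matrices of the homogeneous components genuinely add. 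Your flagged ``obstacle'' at the end is precisely the point the paper addresses in that earlier subsection, and your treatment of it is correct.
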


\begin{proof}
We prove the case $\kappa=1$. Since $p$ is assumed symmetric  $Z(a,x)=Z(a,x)^T$. Hence, using Theorem \ref{thm:oct1a13} (see also Theorem \ref{thm:h-tell-all}),
\begin{equation*}
  %\label{eq:congruence1}
   Z(a,x)\msC(a,x)=Z(a,0)=Z(a,0)^T=\msC(a,x)^TZ(a,x).
\end{equation*}
Therefore,
\begin{equation*}
  %\label{eq:congruence2}
     Z(a,x)N(a,x)=N(a,x)^TZ(a,x).
\end{equation*}
Consequently,  by Lemma \ref{lem:jul29a15}, $Y^2(a,x)=I+N(a,x)=K(a,x)$,
\[
 Z(a,0)=Z(a,x)Y(a,x)^2 = Y(a,x)^T Z(a,x)Y(a,x)
\]
and $Y$ has a polynomial inverse.
\end{proof}

\begin{remark}\rm
 Theorem \ref{thm:ZAiscZa} covers Theorem \ref{lem:aug2a15} for the case of the full border vector $V$.
\end{remark}

\begin{proof}[Proof of Theorem \ref{lem:aug2a15}]
Let $V$ denote the full border vector and
\[
\begin{split}
 p_{xx}(a,x)[h]= & \Vk(a,x)[h]^T \ZZ(a,x) \Vk(a,x)\\
 = & \sum_{j,k} (I_\kappa\otimes V_j(a,x)[h]^T) \ZZ_{j,k}(a,x) (I_\kappa\otimes V_k(a,x)[h])
\end{split}
\]
the border-middle matrix representation for the Hessian of $p$.
Fix a set of words $\cW$ and a corresponding border vector $B$ with (block) entry $B_j$ consisting of precisely
 of all the words of the form {$h_i f(a,x)$ for $1\le i\le g$} and $f\in \cW$ of degree $j$ in $x$. 
For each $k$ there is a natural inclusion map $\iota_k$ from the words of degree $k$ in $x$ in $\cW$ to
the words of degree $k$ in $x$ in the full border vector $V_k$ such that
\[
 B_k(a,x)=\Pi_k\iota_k^T V(a,x), 
\]
 where $\Pi_k$ is a permutation matrix.  Thus,
writing $V_j$ for $V_j(a,x)[h]$ and
letting $\QQ_k=\Pi_k\iota_k^T$, 
\[
\begin{split}
p_{xx}(a,x)[h]& = \sum_{j,k} (I_\kappa\otimes V_j^T)(I_\kappa\otimes \iota_j\iota_j^T) \ZZ_{j,k}(a,x) (I_\kappa\otimes \iota_k \iota_k^T)
(I_\kappa\otimes V_k) \\
  = & \sum_{j,k} (I_\kappa\otimes V_j^T)(I_\kappa\otimes \QQ_j^T \QQ_j) \ZZ_{j,k}(a,x) (I_\kappa\otimes  \QQ_k^T\QQ_k) (I_\kappa\otimes V_k)\\
 = &  \sum_{j,k} (I_\kappa\otimes V_j^T \QQ_j^T) (I_\kappa\otimes \QQ_j) \ZZ_{j,k}(a,x) (I_\kappa\otimes \QQ_k^T)
(I_\kappa\otimes \QQ_k  V_k)\\
= & \sum_{j,k} (I_\kappa \otimes B_j(a,x)[h]) \widehat{\ZZ}_{j,k}(a,x) (I_\kappa\otimes B_k(a,x)[h])
\end{split}
\]
where 
$$
 \widehat{\ZZ}_{j,k}(a,x)=(I_\kappa \otimes Q_j) \ZZ_{j,k}(a,x) (I_\kappa\otimes Q_k^T).
$$
Hence, letting $\QQ$ denote the block diagonal matrix with diagonal
entries $I_\kappa\otimes \QQ_j$, 
\[
\widehat{\ZZ}(a,x) = \QQ \,  \ZZ(a,x)\QQ^T
\]
is the middle matrix in the border vector-middle matrix representation of $p_{xx}(a,x)[h]$ with
respect to the border vector $B$.

Letting $\widehat{Y}_{j,k}=\Pi_j \iota_j^T Y_{j,k}(a,x) \iota_k \Pi_k^T  =Q_jY_{j,k}(a,x) Q_k^T$
and using Theorem \ref{thm:ZAiscZa}, 
\[
\begin{split}
\widehat{\ZZ}(a,x) = & \QQ \, \ZZ(a,x) \QQ^T\\
 = & \QQ \, \Yk(a,x)^T \, \ZZ(a,0) \Yk(a,x) Q^T\\
= & \QQ \Yk(a,x)^T \QQ^T \, (\QQ \ZZ(a,0)\QQ^T) \QQ \Yk(a,x) \QQ^T\\
 = & \widehat{Y}^\kappa_{\degg}(a,x)^T \, \widehat{\ZZ}(a,0) \widehat{Y}^\kappa_{\degg}(a,x),
\end{split}
\]
which is item \eqref{it:Y5} in Theorem \ref{lem:aug2a15}. 
Since items \eqref{it:Y1} and \eqref{it:Y2} are evident from the construction, the proof of Theorem \ref{lem:aug2a15} is complete.
\end{proof}

\subsection{The proof of Theorem \ref{thm:dec29a13}}
\label{sec:proof-dec29a13}

Recall that matrices $A,\,B\in\RR^{n\times n}$ are {\bf congruent} if  there exists
an invertible matrix $C\in\RR^{n\times n}$ such that $A=C^TBC$. The matrix $C$ (or $C^{-1}$, or $C^T$ or $(C^{-1})^T$) is often referred to as the {\bf congruence}.

\begin{lemma}
\label{lem:jul28a15}
If $M\in\RR^{s\times s}$, $U\in\RR^{s\times t}$, $W\in\RR^{t\times t}$, $\lambda\in\RR$ and
$\Pi_{\cR_{W^TW}}$ denotes the orthogonal projection of $\RR^{s+t}$ onto the range of $W^T$,
 then there exists
an invertible matrix $S\in\RR^{(s+t)\times (s+t)},$ independent of $\lambda,$ such that
\begin{equation*}
%\label{eq:aug11a15}
\begin{bmatrix}M&0\\0&0\end{bmatrix}+\lambda\begin{bmatrix}U\Pi_{\cR_{W^TW}}U^T&UW^T\\ \\   WU^T&WW^T\end{bmatrix}=S^T\begin{bmatrix}M&0\\0&\lambda WW^T\end{bmatrix} S.
\end{equation*}

In particular, if
\begin{equation}
\label{eq:jul28a15}
\textup{range}\,U^T\subseteq\textup{range}\,W^T,
\end{equation}
then
\begin{equation}
\label{eq:jul28b15}
\begin{bmatrix}M&0\\0&0\end{bmatrix}+\lambda\begin{bmatrix}UU^T&UW^T\\ WU^T&WW^T\end{bmatrix}=S^T\begin{bmatrix}M&0\\0&\lambda WW^T\end{bmatrix} S.
\end{equation}
\end{lemma}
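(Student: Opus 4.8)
The plan is to produce the congruence $S$ explicitly as a block lower-triangular matrix that performs a single "block row/column reduction," clearing the off-diagonal block $\lambda U W^T$ against the $(2,2)$ block $\lambda W W^T$, and then to check that the Schur-complement-type correction that this introduces in the $(1,1)$ position is exactly $\lambda U \Pi_{\cR_{W^TW}} U^T$, so that it cancels and leaves $M$ untouched. Write $P := \Pi_{\cR_{W^TW}}$ for the orthogonal projection onto $\range W^T = \range(W^TW)$ (note that the paper's subscript notation $\Pi_{\cR_{W^TW}}$ matches this), and let $(W W^T)^\dagger$ denote the Moore–Penrose pseudoinverse. The key algebraic fact I will use is the pseudoinverse identity $W^T (W W^T)^\dagger W = P$, which holds because $W^T(WW^T)^\dagger W$ is the orthogonal projection onto $\range W^T$. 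First I would set
\[
 S = \begin{bmatrix} I_s & 0 \\ (WW^T)^\dagger W U^T & I_t \end{bmatrix},
\]
which is visibly invertible (unit lower triangular) and independent of $\lambda$, with inverse obtained by negating the off-diagonal block.

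Next I would simply compute $S^T \begin{bmatrix} M & 0 \\ 0 & \lambda W W^T \end{bmatrix} S$. The $(2,2)$ block is $\lambda W W^T$ (unchanged, since $M$ sits only in the $(1,1)$ slot and the bottom row/column of $S$ hits only the $\lambda W W^T$ block). The $(1,2)$ block comes out to $\lambda U W^T (W W^T)^\dagger (W W^T) = \lambda U W^T$, using the pseudoinverse identity $(WW^T)^\dagger(WW^T)(WW^T)^\dagger \cdot$-type simplification — more precisely $(WW^T)^\dagger W W^T \cdot W = $ ... here the cleanest route is $W^T (WW^T)^\dagger W W^T = W^T$, which holds since $(WW^T)^\dagger W W^T$ is the projection onto $\range W$ and $W^T$ maps into... actually onto the row space; in any case $W^T(WW^T)^\dagger WW^T = W^T$ is the standard pseudoinverse relation $A^\dagger A A^* = A^*$ applied with $A = W^T$. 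Hence the $(1,2)$ block is $\lambda U W^T$, matching the target. Finally the $(1,1)$ block is $M + \lambda\, (WW^T)^\dagger W U^T$-conjugated term $= M + \lambda U W^T (WW^T)^\dagger W U^T = M + \lambda U P U^T$, again by $W^T(WW^T)^\dagger W = P$. This is precisely the $(1,1)$ block of the left-hand side, so the first displayed identity holds.

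For the second claim, suppose $\range U^T \subseteq \range W^T$. Then $P U^T = U^T$ (since $P$ is the identity on $\range W^T \supseteq \range U^T$), hence $U P U^T = U U^T$, and substituting into the first identity turns the left-hand matrix into exactly the one in \eqref{eq:jul28b15} while the right-hand side is unchanged. This gives \eqref{eq:jul28b15} with the same $S$.

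I do not expect a genuine obstacle here; the lemma is a linear-algebra bookkeeping statement and the only thing to get right is the choice $S = I + $ (pseudoinverse coupling block) together with the projection identity $W^T(WW^T)^\dagger W = \Pi_{\cR_{W^TW}}$. If one wishes to avoid pseudoinverses entirely, an alternative is to pass to an orthogonal change of coordinates splitting $\RR^t = \range W^T \oplus \ker W$, in which $W W^T$ becomes $\begin{bmatrix} \widehat W & 0 \\ 0 & 0\end{bmatrix}$ with $\widehat W \succ 0$ on $\range W^T$; then the block reduction uses the genuine inverse $\widehat W^{-1}$ on that subspace and the $\ker W$ directions contribute nothing. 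The mild care point is only that $S$ must be exhibited independent of $\lambda$, which both constructions respect; otherwise the verification is a direct block multiplication.
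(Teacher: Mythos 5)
Your proposal is correct and follows essentially the same route as the paper: a unit block lower-triangular congruence $S$ with off-diagonal block $(WW^T)^\dagger W U^T$, together with the Moore–Penrose identities $W^T(WW^T)^\dagger W=\Pi_{\cR_{W^T}}$ and $W^T(WW^T)^\dagger WW^T=W^T$. The paper does the identical block computation (it also dispatches the degenerate $W=0$ case first, which in your setup is automatic since then $P=0$ and $S=I$ works), so there is nothing substantive to add.
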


\begin{proof} If $W=0$, then, in view of  the constraint \eqref{eq:jul28a15}, the matrix  $U=0$ and the asserted conclusion is self-evident. Thus, it suffices to consider the case $W\ne 0$.

Let
$(WW^T)^\dagger$ denote the Moore-Penrose inverse of $WW^T$ and, for $K\in\RR^{p\times q}$,
let $\Pi_K$ denotes the orthogonal projection of $\RR^q$ onto $\cR_{K}$, the range of $K$.
As is well known,
$$
(WW^T)(WW^T)^\dagger=\Pi_{\cR_W}=\Pi_{\cR_{WW^T}}
$$
is the orthogonal projection onto the range of $W$ and
$$
W^T(WW^T)^\dagger W=\Pi_{\cR_{W^T}}=\Pi_{\cR_{W^TW}}
$$
is the orthogonal projection onto the range of $W^TW$. In particular,
$$
W^T(WW^T)^\dagger(WW^T)=W^T,\quad (WW^T)(WW^T)^\dagger W=W.
$$
Hence
\begin{equation}
\label{eq:oct9a13}
\begin{split}
&\begin{bmatrix}I_s& UW^T(WW^T)^\dagger\\  0&I_t\end{bmatrix}
\begin{bmatrix}M&0\\ 0&\lambda WW^T\end{bmatrix}
\begin{bmatrix}I_s&0\\(WW^T)^\dagger WU^T&I_t\end{bmatrix}\\
&=\begin{bmatrix}M+\lambda U\Pi_{\cR_{W^T}} U^T&\lambda UW^T\\ \lambda WU^T&\lambda WW^T\end{bmatrix}.
\end{split}
\end{equation}
Thus the matrices
\begin{equation*}
%\label{eq:aug10a15}
\begin{bmatrix}M&0\\ 0&\lambda WW^T\end{bmatrix}\quad\textrm{and}\quad
\begin{bmatrix}M+\lambda U\Pi_{\cR_{W^T}} U^T&\lambda UW^T\\ \lambda WU^T&\lambda WW^T\end{bmatrix}
\end{equation*}
are congruent and, as follows from\eqref{eq:oct9a13}, the congruence is independent of $\lambda$.

If the constraint \eqref{eq:jul28a15} is in force,
 then
$U \, \Pi_{\cR_{W^TW}}U^T  =U U^T $
and consequently \eqref{eq:jul28b15} holds.
\end{proof}

The following proposition contains Theorem \ref{thm:dec29a13}.

\begin{proposition}
\label{thm:dec29a13-full}
 Let $p$ denote a $\kappa\times \kappa$ symmetric matrix nc
 polynomial of degree $d$ in $x$
 as in  equation \eqref{pww}. %\eqref{eq:nov21d14}.

  There exists matrix nc polynomials $f(x)$ and $g(a,x)$
   such that the middle matrix $\ZZ_{\lambda}$ in the representation of the modified Hessian (with respect to the full border vector) of equation \eqref{def:modHess}
 evaluated at a tuple $(A,X)\in\SS_n(\RR^{\fg})$ is of the form
\begin{equation}
\label{eq:dec29a13}
  \ZZ_\lambda(A,X) := \begin{bmatrix}\ZZ(A,X)&0\\0&0\end{bmatrix}+\lambda\begin{bmatrix}UU^T&UW^T\\WU^T&WW^T\end{bmatrix},
\end{equation}
 where $W=f(A)$ and $U=g(A,X)$.

Let  $\Pi_{\cR_{W^T}}$ denote the orthogonal projection onto the range of $W^T$
and suppose $\lambda\in\RR\setminus\{0\}.$
\begin{enumerate}[\rm(1)]
\item There exists an invertible matrix $S$, depending on $(A,X)$ and $d$, but not upon $p$ or  $\lambda$, such that
\begin{equation}
\label{eq:sep4a14}
 \begin{split}
\begin{bmatrix}
   \ZZ(A,X)&0\\0&0\end{bmatrix}+& \lambda\begin{bmatrix}U
   \Pi_{\cR_{W^T}}U^T&UW^T\\WU^T&WW^T\end{bmatrix} \\
= & S^T \begin{bmatrix}\ZZ(A,X)&0\\0&\lambda WW^T\end{bmatrix} S. \end{split}
\end{equation}
\item \label{eq:sep11a14} If the range of $U^T$ is contained in the range of $W^T,$
then the left hand side of \eqref{eq:sep4a14}
is equal to $\ZZ_\lambda(A,X)$ and hence
\[
 \mu_+(\ZZ(A,X)) = \mu_+(\ZZ_\lambda(A,X)),
\]
 for all $\lambda <0$.
\item
 The range inclusion condition of item \eqref{eq:sep11a14} holds if and only if the highest degree terms of $p$ majorize at $A$.
 \end{enumerate}
 \end{proposition}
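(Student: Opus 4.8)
The plan is to run everything through the border vector--middle matrix representation \eqref{def:modHess}, peel off the unique $x$-free block, and then invoke the purely linear-algebraic Lemma \ref{lem:jul28a15}, using the polynomial congruence of Theorem \ref{thm:ZAiscZa} (equivalently Theorem \ref{lem:aug2a15}) to strip the $X$-dependence. By \eqref{def:modHess}, with respect to the full border vector the middle matrix of the modified Hessian is $\ZZ_\lambda(a,x)=\bigl(\ZZ(a,x)\oplus 0\bigr)+\lambda\,\Phi(a,x)^T\Phi(a,x)$, where $\Phi=\begin{bmatrix}\Phi_0&\cdots&\Phi_{d-1}\end{bmatrix}$ is the block row of \eqref{eq:gradsquare} (with the $I_\kappa$-amplifications and Kronecker permutations suppressed), $\Phi^T\Phi$ is the block Gram matrix $(\Phi_i^T\Phi_j)_{i,j=0}^{d-1}$, and the zero-padding accounts for the fact that the Hessian itself only sees $V_0,\dots,V_{d-2}$. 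The one structural point is that $\Phi_{d-1}$ depends on $a$ only: a word $h_kw$ with $w$ of degree $d-1$ in $x$ can appear in $p_x$ only as the term differentiated out of a word $x_kw$ of degree $d$ in $x$ in $p$, and the coefficients of the degree-$d$-in-$x$ part of $p$ are the entries of $\varphi_p^d(a)$, a polynomial in $a$ alone (cf. \eqref{eq:defsffj}, \eqref{eq:px}, \eqref{eq:aug23a13}). Hence, setting $W:=\Phi_{d-1}(A)^T=:f(A)$ and $U:=\begin{bmatrix}\Phi_0(A,X)&\cdots&\Phi_{d-2}(A,X)\end{bmatrix}^T=:g(A,X)$, the factorization $\Phi^T\Phi=\begin{bmatrix}U\\W\end{bmatrix}\begin{bmatrix}U^T&W^T\end{bmatrix}$ is exactly \eqref{eq:dec29a13}.

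For item (1) and the congruence \eqref{eq:sep4a14} I would apply Lemma \ref{lem:jul28a15} verbatim with $M=\ZZ(A,X)$ and the $U,W$ above; this already yields an invertible $S$ independent of $\lambda$. To obtain the stronger conclusion that $S$ may also be taken independent of $p$, I would first record the universal factorization $\Phi(a,x)=\Phi(a,0)L(a,x)$, immediate from \eqref{eq:px}, the homogeneous decomposition $p=\sum_\ell p^\ell$, and the explicit form of the lower-triangular block matrix $L=(L_{ij})$ of \eqref{eq:mar10b17} (whose last block row and column are trivial since $L_{i,d-1}=0$ for $i<d-1$ and $L_{d-1,d-1}=I$), and which depends only on $d,\tilde d$. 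Combining this with the $p$-independent polynomial congruence $\Yk(a,x)^T\ZZ(a,x)\Yk(a,x)=\ZZ(a,0)$ of Theorem \ref{thm:ZAiscZa} lets one transport all of the $X$-dependence of the bordered matrix $\ZZ_\lambda(A,X)$ by a single $p$-free congruence and then reduce \eqref{eq:sep4a14} to \eqref{eq:oct9a13} evaluated at $(A,0)$. Granting this, item (2) is Sylvester's law of inertia: under $\range U^T\subseteq\range W^T$ one has $U\Pi_{\cR_{W^TW}}U^T=UU^T$, so the left side of \eqref{eq:sep4a14} coincides with $\ZZ_\lambda(A,X)$, and $\mu_+(\ZZ_\lambda(A,X))=\mu_+(\ZZ(A,X))+\mu_+(\lambda WW^T)=\mu_+(\ZZ(A,X))$ whenever $\lambda<0$.

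For item (3), read $\Phi(a,x)=\Phi(a,0)L(a,x)$ blockwise: $\Phi_j(a,x)=\sum_{i\ge j}\Phi_i(a,0)L_{ij}(a,x)$, and evaluating at $x=0$ (where $L_{ij}(a,0)=0$ for $i>j$) gives $\Phi_i(a,0)=\varphi_p^{i+1}(a)$; in particular $W^T=\Phi_{d-1}(A)=\varphi_p^d(A)$. If the highest degree terms of $p$ majorize at $A$, then $\range\varphi_p^{i+1}(A)\subseteq\range\varphi_p^d(A)$ for $i=0,\dots,d-2$, hence $\range\Phi_i(A,0)\subseteq\range W^T$ for every $i$, hence $\range\Phi_j(A,X)\subseteq\range W^T$ for $j\le d-2$, i.e. $\range U^T\subseteq\range W^T$. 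Conversely, assume $\range U^T\subseteq\range W^T$; a downward induction on $j=d-2,d-3,\dots,0$ gives $\range\Phi_j(A,0)\subseteq\range\Phi_{d-1}(A,0)=\range W^T$: at stage $j$ one has $\Phi_j(A,X)=\Phi_j(A,0)+\sum_{i>j}\Phi_i(A,0)L_{ij}(A,X)$, the sum has range inside $\range W^T$ by the inductive hypothesis, while the left side has range inside $\range U^T\subseteq\range W^T$, forcing $\range\Phi_j(A,0)\subseteq\range W^T$. Rewriting this via $\Phi_j(A,0)=\varphi_p^{j+1}(A)$ is precisely the majorization condition \eqref{eq:apr25a17}.

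The conceptual content is modest — essentially Lemma \ref{lem:jul28a15} together with Sylvester's law — and the effort lies in Kronecker and block-size bookkeeping. The genuinely delicate step is the $p$-independence of $S$ asserted in item (1): one must interlace the ``straighten $X$ to $0$'' congruence $\Yk$, which is tailored to the Hessian middle matrix $\ZZ$ and lives on $d-1$ blocks, with the factorization $\Phi(a,x)=\Phi(a,0)L(a,x)$ of the gradient coefficients, which lives on $d$ blocks, across this size mismatch and through the $I_\kappa$-amplification, so that the $p$-dependent pieces produced by the literal application of Lemma \ref{lem:jul28a15} are absorbed into the universal congruence. Everything else is routine manipulation built on the structure theorems of Section~\ref{sec:mm}.
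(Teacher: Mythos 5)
Your proposal is essentially the paper's proof. The paper also derives \eqref{eq:dec29a13} by writing $p_x(a,x)[h]=\begin{bmatrix}\varphi_p^1(a)&\cdots&\varphi_p^d(a)\end{bmatrix}L(a,x)\wtilde V(a,x)[h]$, splitting $L=\begin{bmatrix}M_{11}&0\\M_{21}&I\end{bmatrix}$ to extract $w^T=\varphi_p^d(a)$ (a polynomial in $a$ alone, your $\Phi_{d-1}$) and $u^T=[\varphi_p^1\cdots\varphi_p^{d-1}]M_{11}+\varphi_p^dM_{21}$ (your $[\Phi_0\cdots\Phi_{d-2}]$), and then invokes Lemma~\ref{lem:jul28a15} for items (1)–(2). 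For item (3) you run a downward induction on $j$ along $\Phi_j(a,x)=\sum_{i\ge j}\Phi_i(a,0)L_{ij}(a,x)$; the paper gets the same equivalence in one line by observing that $M_{11}$ is unitriangular, hence invertible, so that $\range U^T\subseteq\range W^T$ collapses to $\range[\varphi_p^1(A)\cdots\varphi_p^{d-1}(A)]\subseteq\range\varphi_p^d(A)$. Those are the same computation; yours is just unrolled.

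One point where you are more scrupulous than the paper, and where the route you sketch does not close: the $p$-independence of $S$ in item (1). You correctly flag this as the delicate step, but the proposed repair — straighten $\ZZ(A,X)$ to $\ZZ(A,0)$ via $\Yk$ and $\Phi(A,X)$ to $\Phi(A,0)$ via $L(A,X)$, then apply Lemma~\ref{lem:jul28a15} at $(A,0)$ — still produces an $S$ built from $(WW^T)^\dagger WU^T$ with $W=\varphi_p^d(A)^T$ and $U^T$ a combination of the $\varphi_p^j(A)$; pre-straightening removes the $X$ but not the $\varphi_p^j$, so that $S$ remains $p$-dependent. You should be aware that the paper's own proof has the same feature: it is a one-line invocation of Lemma~\ref{lem:jul28a15}, whose $S$ is manifestly built from $U,W$, so it too only delivers independence of $\lambda$. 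The short version, Theorem~\ref{thm:dec29a13}, and the downstream application in Lemma~\ref{prop:scalar-middle-relaxed} only ever use $\lambda$-independence, so nothing in the sequel is affected; but if you want to keep the ``not upon $p$'' clause you would need an argument genuinely beyond Lemma~\ref{lem:jul28a15}, and your sketch does not supply one. Apart from that, the argument is complete and correct.
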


Recall,
 \begin{equation*}
    \IktV =  \begin{bmatrix} I_\kappa\otimes V_0 \\ \vdots \\ I_\kappa \otimes V_{d-1}\end{bmatrix},
\end{equation*}
from the definition of $\wtilde{V}$ in equation \eqref{eq:deftV} and the discussion preceding.

\begin{proof}
Observe that the $\ff_j$ in equation \eqref{eq:defsffj} are precisely (up to, as usual, a permutation), $(x\otimes b)_j$ and
$$
p(a,x)=\sum_{j=0}^d \varphi_p^j(a) \ff_j(a,x)=\sum_{j=0}^d \varphi_p^j(a) (x\otimes b)_j.
$$
Thus the derivative $p_x$ is
$$
p_x(a,x)[h]=\begin{bmatrix}\varphi_p^1(a)&\cdots&\varphi_p^d(a)\end{bmatrix}\begin{bmatrix}(\ff_1)_x(a,x)[h] \\ \vdots\\
(\ff_d)_x(a,x)[h]\end{bmatrix},
$$
which, %
can be rewritten as
$$
p_x(a,x)[h]=\begin{bmatrix}\varphi_p^1(a)&\cdots&\varphi_p^d(a)\end{bmatrix}
L(a,x)
\begin{bmatrix}V_0(a,x)[h] \\ \vdots\\
V_{d-1}(a,x)[h]\end{bmatrix},
$$
where $L(a,x)$ 
is the block lower triangular matrix polynomial with $L_{ii}=I_{\mathbf{t}_j}$ that was defined in terms of the blocks of $K(a,x)$ just above  \eqref{eq:mar10b17}.

 Thus, upon writing 
$$
L(a,x)=\begin{bmatrix}M_{11}(a,x)&0\\ M_{21}(a,x)&I_{\mathbf{t}_{d-1}}\end{bmatrix}
$$
and setting
$$
w^T=\phi_1^d(a)\quad\textrm{and}\quad u^T=\begin{bmatrix}\varphi_p^1(a)&\cdots&\varphi_p^{d-1}(a)\end{bmatrix}M_{11}(a,x)
+ \varphi_p^d(a)M_{21}(a,x),
$$
it follows that
$$
p_x(a,x)[h]=\begin{bmatrix}u^T&w^T\end{bmatrix}\wtilde{V}(a,x)[h]
$$
and hence that (see equation \eqref{eq:gradsquare})
$$
p_x(a,x)[h]^T\,p_x(a,x)[h]=(\IktV)(a,x)[h]^T\begin{bmatrix}uu^T&uw^T\\wu^T&ww^T\end{bmatrix}\,
(\IktV)(a,x)[h],
$$
which serves to explain the formula \eqref{eq:dec29a13}.

In view of  equation \eqref{eq:dec29a13}, an application of Lemma \ref{lem:jul28a15} gives item (1);  item (2) follows immediately from (1). Let $U=u(A,X)$ and $W=w(A,X)$.  Since   $M_{11}$ is invertible, it is readily seen that
$$
\textup{range}\,U^T\subseteq\textup{range}\,W^T
$$
if and only if \eqref{eq:apr25a17} holds.
Hence the range inclusion $\range(U^T)\subset \range(W^T)$ holds if and only if the highest degree terms majorize, completing the proof of (3).
\end{proof}

\begin{corollary}
\label{cor:sep12a14}
 If $p\in\cP^{\dddd}$ is an nc matrix polynomial that is homogeneous of degree $d$ in $x$,  then the range inclusion condition in
 Theorem \ref{thm:dec29a13}  is met for every choice of $A\in \SS_n(\RR^{\wtilde{g}})$.
\end{corollary}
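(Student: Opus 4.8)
The plan is to obtain Corollary \ref{cor:sep12a14} as an immediate consequence of item (3) of Proposition \ref{thm:dec29a13-full}, once one observes that homogeneity makes the majorization condition \eqref{eq:apr25a17} hold vacuously at every point. First I would recall the decomposition $p=\sum_{j=0}^d p^j(a,x)$ into the parts $p^j$ that are homogeneous of degree $j$ in $x$, together with the factorization $p^j(a,x)=\varphi_p^j(a)\,\ff_j(a,x)$ from \eqref{eq:defsffj}, in which $\varphi_p^j(a)$ depends only on the coefficients of the words of degree $j$ in $x$ occurring in $p$. Since $p$ is assumed homogeneous of degree $d$ in $x$ in the strict sense of Section \ref{subsec:hdt} (no terms of lower degree in $x$), we get $p^j=0$ and hence $\varphi_p^j\equiv 0$ for $j=1,\ldots,d-1$.

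Next I would note that, for every $n$ and every $A\in\SS_n(\RR^{\tg})$,
\[
\range\,\varphi_p^j(A)=\{0\}\subseteq\range\,\varphi_p^d(A)\qquad\textup{for }j=1,\ldots,d-1,
\]
so condition \eqref{eq:apr25a17} is satisfied at $A$; that is, the highest degree terms of $p$ majorize at each $A$ (this is also recorded in Remark \ref{rem:special majorize}(1)). Finally, by item (3) of Proposition \ref{thm:dec29a13-full}, majorization of the highest degree terms at $A$ is equivalent to the range inclusion $\range(U^T)\subseteq\range(W^T)$ for the matrices $U=g(A,X)$ and $W=f(A)$ of \eqref{eq:dec29a13}; hence this inclusion, which is precisely the range inclusion condition appearing in Theorem \ref{thm:dec29a13}, holds for every choice of $A$, as claimed.

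There is no genuine obstacle here: all the work has already been done in setting up Proposition \ref{thm:dec29a13-full}, where the analytic range condition on $(U,W)$ is translated into the purely combinatorial statement \eqref{eq:apr25a17} about the coefficient data of $p$, and homogeneity trivializes the latter. The only point to verify carefully while writing it out is that the standing convention is indeed strict homogeneity in $x$, so that the lower-degree blocks $\varphi_p^1,\ldots,\varphi_p^{d-1}$ genuinely vanish; the passage back from $\varphi_p^1,\ldots,\varphi_p^d$ to $U$ and $W$ via the lower-triangular polynomial $L(a,x)$ is already handled inside the proof of Proposition \ref{thm:dec29a13-full} and needs no repetition.
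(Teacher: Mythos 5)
Your argument is correct and is exactly the route taken in the paper: homogeneity forces $\varphi_p^j\equiv 0$ for $j=1,\ldots,d-1$, so the majorization condition \eqref{eq:apr25a17} holds vacuously, and item (3) of Proposition \ref{thm:dec29a13-full} converts that into the range inclusion $\range(U^T)\subseteq\range(W^T)$. The paper states this in one line (also pointing to Remark \ref{rem:special majorize}); you have simply spelled out the same steps more fully.
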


\begin{proof}
 If $p$ is homogeneous of degree $d$ in $x$, then the range inclusion condition of Proposition \ref{thm:dec29a13-full} item \eqref{eq:sep11a14}  is automatically met, since $\varphi^j_p=0$ for $j=1,\ldots,d-1.$ Compare with Remark \ref{rem:special majorize}.
\end{proof}

\section{The CHSY lemma}
\label{sec:CHSY}
Let $\{w_{i1}, \dots, w_{i\beta_i}\}$ denote  the set of  words in the right chip set
$\cR\cC_p^i$ of the nc polynomial $p$ of degree at most $d$ in $x$.
Given a positive integer $\kappa$,  a triple $(A,X,v)\in\allmatvk$ with $v\ne 0$, an  $H\in\smatn$ and a subspace
$\cH$ of $\smatn,$
let
\begin{equation*}
%\label{eq:aug28b14}
\cR_\kappa^i(H):= \begin{bmatrix}I_\kappa \otimes Hw_{i1}(A,X) \\ \vdots \\ I_\kappa \otimes Hw_{i\beta_i}(A,X) \end{bmatrix}
\end{equation*}
and let $\cR^i(\cH)=\{\cR^i(H):H\in \cH \}.$
The number $\beta_i$ of distinct words
in $\cR\cC_p^i$ is equal to the dimension of the  chip space $\cC_p^i,$  which is equal the span of $\cR\cC_p^i$ in the space of nc polynomials.
For $H=(H_1,\ldots,H_g)\in\smatng$, let
\begin{equation}
\label{eq:aug28c14}
\cR_\kappa^{\cCp}(H):=  \begin{bmatrix} \cR_\kappa^1(H_1) \\ \vdots \\ \cR_\kappa^g(H_g)\end{bmatrix}
\end{equation}
 and let $\cR^{\cCp}(\cH)=\{\cR^{\cCp}(H): H \in \cH\}.$
Note that the words $w_{ij} $ in  \eqref{eq:aug28c14} all have degree at most $d-1$ in $x$ and that
\begin{equation*}
%\label{eq:aug12a14}
\cR^{\cCp}(\cH)v\subseteq \RR^{n\beta}\quad\textrm{with $\beta=\beta_1+\cdots+\beta_g$}.
\end{equation*}

The border vector obtained by using just the words from $\mathcal C$ will be called the \df{reduced border vector} based on $\mathcal C$ and is generally much smaller than the border vector based on all words of degree at most $\tilde{d}$ in $a$ and $d-1$ in $x$. The reduced border vector  evaluated at $(A,X,H)$ is $\cR^{\cCp}(H)$.

%\subsection{Scalar polynomials}
The main tool developed here, Theorem \ref{thm:chsymatrix} below, is an elaboration of the following result.

\begin{lemma}
{\bf (The CHSY lemma)}
\label{lem:chsyRef}
If $\{u_1,\ldots,u_k\}$ is a set of linearly independent vectors in $\RR^n$, then
\begin{equation}
\label{eq:aug7a13}
\textup{dim}\,\textup{span}\,\left\{\begin{bmatrix}Hu_1\\ \vdots\\
Hu_k\end{bmatrix}:\,H\in\SS_n\right\}=kn-\frac{k(k-1)}{2}.
\end{equation}
Therefore, the codimension of the space on the left hand side of (\ref{eq:aug7a13}) in $\RR^{kn}$ is equal to
$k(k-1)/2$, independently of $n$.
\end{lemma}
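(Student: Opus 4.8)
The claim to prove is the CHSY lemma: for linearly independent $u_1,\dots,u_k\in\RR^n$, the span of $\{\col(Hu_1,\dots,Hu_k):H\in\SS_n\}$ has dimension $kn-\binom{k}{2}$. The cleanest route is to count the dimension of the kernel of the linear map $\Phi:\SS_n\to\RR^{kn}$ given by $\Phi(H)=\col(Hu_1,\dots,Hu_k)$ and then invoke rank-nullity, since $\dim\SS_n=\binom{n+1}{2}=\tfrac{n(n+1)}{2}$. So it suffices to show $\dim\ker\Phi=\tfrac{n(n+1)}{2}-kn+\binom{k}{2}=\tfrac{(n-k)(n-k+1)}{2}$. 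The right-hand side is exactly $\dim\SS_{n-k}$, which is the structural hint: the kernel of $\Phi$ should be (isomorphic to) the symmetric matrices supported on a complement of $\Span\{u_1,\dots,u_k\}$.

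First I would reduce to a convenient basis: since the statement is invariant under replacing $\{u_j\}$ by any other basis of $U:=\Span\{u_1,\dots,u_k\}$ (this changes $\Phi$ by an invertible linear transformation of $\RR^{kn}$, preserving image dimension) and under conjugating $H\mapsto P^THP$ by an orthogonal $P$, I may assume $u_j=e_j$, the $j$-th standard basis vector, for $j=1,\dots,k$. Then $\Phi(H)=\col(He_1,\dots,He_k)$ simply reads off the first $k$ columns of the symmetric matrix $H$. So $H\in\ker\Phi$ iff the first $k$ columns of $H$ vanish, hence (by symmetry) the first $k$ rows vanish too, i.e. $H=\begin{bmatrix}0&0\\0&H_0\end{bmatrix}$ with $H_0\in\SS_{n-k}$. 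Therefore $\ker\Phi\cong\SS_{n-k}$ and $\dim\ker\Phi=\tfrac{(n-k)(n-k+1)}{2}$.

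Now rank-nullity gives
\[
\dim\Span\{\col(Hu_1,\dots,Hu_k):H\in\SS_n\}=\frac{n(n+1)}{2}-\frac{(n-k)(n-k+1)}{2},
\]
and a direct expansion of the right-hand side yields $kn-\tfrac{k(k-1)}{2}$, which is the asserted formula \eqref{eq:aug7a13}. The codimension statement then follows: the ambient space is $\RR^{kn}$, so the codimension is $kn-\big(kn-\tfrac{k(k-1)}{2}\big)=\tfrac{k(k-1)}{2}$, visibly independent of $n$ (as long as $n\ge k$, which is forced by linear independence of the $u_j$).

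**Main obstacle.** There is no serious obstacle here; the only point requiring a word of care is the reduction to $u_j=e_j$, namely verifying that precomposing $\Phi$ with the orthogonal change of variables $H\mapsto P^THP$ and postcomposing with the block-diagonal invertible map $I_k\otimes P$ on $\RR^{kn}$ intertwines the map for a general basis with the map for the standard one — this is routine but should be stated. If one prefers to avoid the reduction, the same count can be done directly: pick the first $k$ coordinates relative to a basis extending $\{u_j\}$ and observe that the image of $\Phi$ is the space of block-column vectors $(c_1,\dots,c_k)$ whose "symmetric part on $U$" is unconstrained, i.e. the only relations are the $\binom{k}{2}$ identities $\langle c_i,u_j\rangle=\langle c_j,u_i\rangle$ coming from $H=H^T$; these are clearly independent, giving codimension $\binom{k}{2}$ again. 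Either way the argument is short; the elaboration to the matrix-valued weighted version in Theorem~\ref{thm:chsymatrix} is where the real work lies, but that is beyond the present statement.
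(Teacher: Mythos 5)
Your proof is correct. Note that the paper does not prove this lemma at all; it simply cites it as Lemma~9.5 of~\cite{CHSY}. You supply a self-contained argument: identify the statement as a rank computation for the linear map $\Phi:\SS_n\to\RR^{kn}$, $H\mapsto\operatorname{col}(Hu_1,\dots,Hu_k)$; reduce to $u_j=e_j$ by (a) replacing $\{u_j\}$ with an orthonormal basis of $U=\Span\{u_j\}$, which changes $\Phi$ by the invertible map $A^T\otimes I_n$ on the target, and (b) conjugating $H\mapsto P^THP$ by an orthogonal $P$ with $Pe_j=u_j'$, which is a bijection of $\SS_n$ and changes the target by $I_k\otimes P$; then read off $\ker\Phi\cong\SS_{n-k}$ from the symmetry of $H$. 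Rank-nullity then gives $\dim\operatorname{im}\Phi=\tfrac{n(n+1)}{2}-\tfrac{(n-k)(n-k+1)}{2}=kn-\binom{k}{2}$, and the codimension statement follows. The arithmetic and the reduction are both sound. Your alternative description of the image, as the set of $(c_1,\dots,c_k)$ satisfying the $\binom{k}{2}$ independent symmetry constraints $\langle c_i,u_j\rangle=\langle c_j,u_i\rangle$ for $i<j$, is equally valid and perhaps more directly exhibits \emph{why} the codimension is $\binom{k}{2}$. Either route is shorter than going back to~\cite{CHSY}, so this is a useful elaboration even though the paper itself chose to cite rather than reprove.
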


\begin{proof}
This result is  Lemma 9.5 of  \cite{CHSY}.
\end{proof}

If $w$ is a word in $(a,x)$,  $(A,X)\in\allmatn$, $C\in\RR^{\kappa\times\kappa}$ and
$H\in\SS_n$,
then
\begin{equation*}
%\label{eq:aug28a14}
C\otimes Hw(A,X)=(C\otimes I_n)(I_\kappa\otimes Hw(A,X)).
\end{equation*}
Thus, if $Hw(A,X)$ belongs to the reduced border vector
  based on the   chip set of $p=\sum_{w\in\cW}c_ww$, then $I_\kappa\otimes Hw(A,X)$ will belong to the reduced border
  vector for the matrix  polynomial $p=\sum_{w\in\cW}C_w\otimes w$ with $C_w\in\RR^{\kappa\times\kappa}$;  the matrix $C_w$
gets absorbed into the middle matrix of the Hessian.

\begin{lemma}
\label{lem:aug28b14}
Set $v=\textup{col}(v_1,\ldots,v_\kappa)$ with components
$v_1,\ldots,v_\kappa\in\RR^n$ and let $\{w_1,\dots,w_s\}$ be a given set of
 words in $a$ and $x$.  If $(A,X)\in\allmatn$ and  if  the set of $\kappa s$ vectors
\[
 \{ w_i(A,X)v_k : 1\le i\le s, \ \ 1\le k\le \kappa\}
\]
 is linearly independent, then the codimension of
$$
\textup{span}\,\left\{\begin{bmatrix}[I_\kappa\otimes Hw_1(A,X)]v\\
 \vdots\\ [I_\kappa\otimes Hw_s(A,X)]v\end{bmatrix}:\, H\in\SS_n\right\}
$$
in $\RR^{s\kappa n}$ is equal to $s\kappa(s\kappa-1)/2$,  independently of $n$.
\end{lemma}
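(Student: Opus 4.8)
The plan is to reduce the statement immediately to the CHSY lemma (Lemma \ref{lem:chsyRef}) by unwinding the Kronecker-product notation. For a fixed word $w$, abbreviate $w = w(A,X)\in M_n$ and recall $v=\textup{col}(v_1,\ldots,v_\kappa)$ with $v_k\in\RR^n$. Since $I_\kappa\otimes Hw$ is the block-diagonal matrix with $\kappa$ copies of $Hw$ on the diagonal,
\[
 (I_\kappa\otimes Hw)\,v = \textup{col}(Hwv_1,\ldots,Hwv_\kappa)\in\RR^{\kappa n}.
\]
Stacking this over the words $w_1,\ldots,w_s$ gives
\[
 \textup{col}\big((I_\kappa\otimes Hw_1(A,X))v,\ldots,(I_\kappa\otimes Hw_s(A,X))v\big)
 = \textup{col}(Hu_1,\ldots,Hu_{s\kappa}),
\]
where $u_{(i-1)\kappa+k} := w_i(A,X)v_k$, so that $u_1,\ldots,u_{s\kappa}$ is precisely the enumeration of the $s\kappa$ vectors $\{w_i(A,X)v_k: 1\le i\le s,\ 1\le k\le\kappa\}$ appearing in the hypothesis (in the order dictated by the block structure of $I_\kappa\otimes(\cdot)$ and of $v$; no reordering of coordinates is needed).

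By hypothesis the vectors $u_1,\ldots,u_{s\kappa}$ are linearly independent in $\RR^n$, so the CHSY lemma applies with $k=s\kappa$ and yields
\[
 \textup{dim}\,\textup{span}\{\textup{col}(Hu_1,\ldots,Hu_{s\kappa}):H\in\SS_n\} = s\kappa n - \frac{s\kappa(s\kappa-1)}{2}.
\]
Since $\RR^{s\kappa n}$ has dimension $s\kappa n$, the codimension of this span equals $s\kappa(s\kappa-1)/2$, which is independent of $n$, as claimed.

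This proof poses essentially no obstacle: the whole content is careful bookkeeping with Kronecker products, matching the enumeration of the vectors $w_i(A,X)v_k$ to the block structures of $I_\kappa\otimes(\cdot)$ and $v$. If one wished to avoid fixing an enumeration, one notes that any two enumerations differ by a permutation of the $\RR^n$-blocks of $\RR^{s\kappa n}$, i.e.\ by an orthogonal transformation, which preserves dimensions and codimensions of subspaces; compare also the remark following Lemma \ref{lem:chsyRef} on how the scalar coefficients get absorbed into the middle matrix. The only genuine input is the CHSY lemma itself.
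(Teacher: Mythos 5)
Your proof is correct and is essentially the same as the paper's: both reduce to the CHSY lemma by unwinding the Kronecker product to identify the stacked vector with $\textup{col}(Hu_1,\ldots,Hu_{s\kappa})$ for the enumeration $u_{(i-1)\kappa+k}=w_i(A,X)v_k$. Your explicit observation that this enumeration requires no permutation (the paper merely says "up to permutation") is a small but welcome clarification; otherwise the arguments coincide.
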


\begin{proof}
This lemma follows from the CHSY lemma (Lemma \ref{lem:chsyRef}),  by choosing an enumeration
$\{u_1,\dots, u_{s\kappa}\}$  of $\{w_i(A,X)v_k: 1\le i\le s, \, 1\le k\le \kappa\}$
 and observing for $H\in\SS_n$ and up to permutation,
\[
\begin{bmatrix}[I_\kappa\otimes Hw_1(A,X)]v\\
 \vdots\\ [I_\kappa\otimes Hw_s(A,X)]v\end{bmatrix}
=\begin{bmatrix}Hu_1\\ \vdots\\
Hu_k\end{bmatrix}.
\]
\end{proof}

\begin{theorem}
\label{thm:chsymatrix}
 Fix $1\le j\le g$ and  $(A,X,v)\in\allmatvk.$  Let $v_k$ for $1\le k\le \kappa$
 denote the entries of $v$.
 If
\begin{equation*}
%\label{eq:aug28e14}
 \{w(A,X)v_k: w\in\cR\cC_p^j, \ \ 1\le k\le\kappa \}
\end{equation*}
 is a linearly independent subset of $\RR^n$ or, equivalently,
  the mapping
\begin{equation*}
%\label{eq:aug28f14}
     (\cC_p^j)^\kappa  \ni q \mapsto q(A,X)v \in \RR^n
\end{equation*}
  given by
\[
  (q_1,\dots,q_\kappa)\mapsto \sum_{k=1}^\kappa  q_k(A,X)v_k
\]
  is one-one, then
\begin{equation*}
%\label{eq:aug28g14}
\textup{codim}\,
\cR_\kappa^j(\smatn)v
=\frac{\kappa\beta_j(\kappa\beta_j-1)}{2}
\end{equation*}
 as a subspace of  $\RR^{ n \kappa \beta_j}.$

 If for each $j$, the set  $\{w(A,X)v_k: w\in \cR\cC_p^j, \, 1\le k\le \kappa\}$ is linearly independent, then
\begin{equation}
\label{eq:aug28h14}
\textup{codim}\, \cR_\kappa^{\cCp}(\smatng)v
= \sum_{i=1}^g \frac{\kappa\beta_i(\kappa\beta_i-1)}{2}
\end{equation}
as  a subspace of  $\RR^{n\kappa \beta .}$
\end{theorem}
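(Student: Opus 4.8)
The plan is to deduce equation \eqref{eq:aug28h14} from the first (single-$j$) codimension formula of the theorem, which in turn rests on Lemma \ref{lem:aug28b14} and ultimately on the CHSY lemma (Lemma \ref{lem:chsyRef}); the only genuinely new ingredient is the observation that the $g$ variables $H_1,\dots,H_g$ decouple, so that the relevant space is an external direct sum.

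First I would establish the single-$j$ formula. Fix $j$ and write $w_1,\dots,w_{\beta_j}$ for the words in $\cR\cC_p^j$, which form a basis of the chip space $\cC_p^j$. The linear map $(\cC_p^j)^\kappa \ni (q_1,\dots,q_\kappa)\mapsto \sum_{k=1}^\kappa q_k(A,X)v_k$ carries the standard basis tuple with $w_i$ in slot $k$ (and $0$ elsewhere) to $w_i(A,X)v_k$, so it is one-one exactly when $\{w_i(A,X)v_k : 1\le i\le \beta_j,\ 1\le k\le\kappa\}$ is linearly independent; this is the equivalence asserted in the theorem. Under this hypothesis Lemma \ref{lem:aug28b14}, applied with $s=\beta_j$ and word list $w_1,\dots,w_{\beta_j}$, gives that (up to a fixed permutation absorbing the $I_\kappa\otimes(\cdot)$ factors) the subspace
\[
\cR_\kappa^j(\smatn)v = \textup{span}\left\{\textup{col}\big((I_\kappa\otimes Hw_1(A,X))v,\dots,(I_\kappa\otimes Hw_{\beta_j}(A,X))v\big) : H\in\smatn\right\}
\]
has codimension $\kappa\beta_j(\kappa\beta_j-1)/2$ in $\RR^{n\kappa\beta_j}$.

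Next I would exploit the product structure in the $g$ variables. Since $H=(H_1,\dots,H_g)$ ranges over $\smatng=(\smatn)^g$ with components free to vary independently, and each $H_j\mapsto \cR_\kappa^j(H_j)v$ is linear,
\[
\cR_\kappa^{\cCp}(\smatng)v = \left\{\textup{col}\big(\cR_\kappa^1(H_1)v,\dots,\cR_\kappa^g(H_g)v\big) : H_j\in\smatn\right\} = \bigoplus_{j=1}^g \cR_\kappa^j(\smatn)v
\]
as a subspace of $\bigoplus_{j=1}^g\RR^{n\kappa\beta_j}=\RR^{n\kappa\beta}$ with $\beta=\beta_1+\cdots+\beta_g$. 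A complement of an external direct sum of subspaces is the direct sum of complements of the summands, so codimensions add, and invoking the single-$j$ formula for each $j$ — legitimate precisely because the hypothesis of \eqref{eq:aug28h14} asserts the required linear independence for every $j$ — yields
\[
\textup{codim}\,\cR_\kappa^{\cCp}(\smatng)v = \sum_{j=1}^g \textup{codim}\,\cR_\kappa^j(\smatn)v = \sum_{i=1}^g \frac{\kappa\beta_i(\kappa\beta_i-1)}{2},
\]
which is \eqref{eq:aug28h14}.

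I do not anticipate a genuine obstacle: all the substantive work is packaged into Lemma \ref{lem:aug28b14} (hence the CHSY lemma). The points requiring care are purely organizational — identifying $\cR_\kappa^j(\smatn)v$ with the span appearing in Lemma \ref{lem:aug28b14} modulo a fixed permutation, and confirming additivity of codimension over the external direct sum — together with the remark that the per-component linear independence hypothesis, imposed simultaneously for all $j$, is exactly what permits the single-$j$ result to be applied to every summand at once.
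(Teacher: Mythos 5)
Your argument is correct and is essentially the paper's own proof spelled out in detail: the paper also derives the single-$j$ codimension formula directly from Lemma \ref{lem:aug28b14} and then obtains \eqref{eq:aug28h14} by observing that $\cR_\kappa^{\cCp}(\smatng)v$ decomposes as the external direct sum of the $\cR_\kappa^j(\smatn)v$ because the variables $H_1,\dots,H_g$ vary independently. The only difference is that the paper states this in two sentences, whereas you make the direct-sum structure and the additivity of codimension explicit.
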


\begin{proof}
 The first conclusion is immediate from Lemma \ref{lem:aug28b14}.
The second is its manifestation in the direct sum $\cC_p$
 of the $\cC_p^j$ and is thus an immediate consequence of the first statement.
\end{proof}

\section{Positivity of the  middle matrices forces $d\le 2$}
 \label{sec:posM}

\begin{lemma}
 \label{lem:Z=0 d=0}
   If $p(a,x)$ is an  nc symmetric matrix-valued polynomial of degree $d\ge 2$  in $x$, then  the  $\ZZ_{0,d-2}$ entry in the middle matrix $\ZZ$ of its Hessian with respect to $x$ is nonzero and depends only on $a$.
\end{lemma}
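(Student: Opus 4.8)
The plan is to unwind the middle matrix bookkeeping of Section \ref{sec:mm} in the top-left corner, paying attention only to the block of highest total degree $\ell = d$ in the homogeneous decomposition of $p$. First I would write $p = \sum_{j=0}^d p^j(a,x)$ with $p^j$ homogeneous of degree $j$ in $x$, and note that by Proposition \ref{prop:aug22a13} (or its matrix version Theorem \ref{thm:h-tell-all}) the homogeneous pieces $p^j$ with $j<d$ contribute $0$ to the $(i,j')$ block of the Hessian middle matrix whenever $i+j' = d-2$; hence $\ZZ_{0,d-2}$ receives a contribution only from $p^d$. So it suffices to treat the case $p=p^d$ homogeneous of degree $d$ in $x$.

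For the homogeneous case I would invoke the explicit formula from Proposition \ref{prop:aug22a13}: for $i+j=\ell-2=d-2$,
\[
Z_{ij}(a,0) = 2\,\Pi_i\, \fL(\ft_i,s;\crep^\ell_p)\,(b\otimes I_{\ft_j}),
\]
and its matrix analogue $\ZZ_{ij}(a,x)=\ZZ_{ij}(a,0)$ when $i+j=d-2$, which already shows the $(0,d-2)$ entry depends only on $a$ (indeed, it is independent of $x$). The real content is that this entry is nonzero. Here I would use that $p^d\ne 0$, so its coefficient row vector $\crep^d_p$ is nonzero; equivalently, via \eqref{eq:aug8a14}, $\varphi^d_p(a)(x\otimes b)_d \ne 0$, i.e. there is a word $x_{i_1}u_1(a)\cdots u_{d-1}(a)x_{i_d}u_d(a)$ with nonzero matrix coefficient in $p$. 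Differentiating twice, the monomial obtained by replacing the first $x_{i_1}$ and the last $x_{i_d}$ by $h_{i_1}$ and $h_{i_d}$ survives; this monomial lives in $V_0^T(\cdots)V_{d-2}$, i.e. it forces a nonzero entry of $\ZZ_{0,d-2}$. Concretely, $\ZZ_{0,d-2}(a,0)$ is (a permutation of) $\varphi^d_p$ reshaped into a matrix with entries in $a$, and since $\varphi^d_p\ne 0$ the reshaped matrix is nonzero.

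The main obstacle, and the step I would be most careful about, is tracking the permutations $\Pi_i$ and the $\fL$-reshaping to confirm that the $i=0$, $j=d-2$ block genuinely picks up a nonzero piece of $\crep^d_p$ rather than a systematically zero slice — in other words, that no cancellation or indexing artifact kills exactly the $(0,d-2)$ block. This is essentially the observation, already implicit in the computation of Example \ref{ex:jul24a15} and formula \eqref{eq:jul24d15}, that the term $b\otimes V_0\otimes(\text{stuff})\otimes V_{d-2}$ appears in $p_{xx}$ with coefficient the full row vector $\crep^d_p$ (suitably reshaped), and $\crep^d_p\ne 0$ because $p^d\ne 0$. Once this is in hand, the "depends only on $a$" claim is immediate from the $i+j=d-2$ case of Proposition \ref{prop:aug22a13}, since $Z_{ij}(a,0)$ there visibly involves only $b=b(a)$ and the constant data $\crep^\ell_p,\Pi_i$.
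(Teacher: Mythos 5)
Your overall strategy is sound and reaches the same conclusion as the paper, but by a different route: you reduce to the homogeneous piece $p^d$ and then invoke the explicit formula of Proposition~\ref{prop:aug22a13} (together with the observation that the product $\fL(\ft_0,s;\crep^d_p)(b\otimes I_{\ft_{d-2}})$ vanishes only if $\crep^d_p=0$, since the words $b_\gamma$ are linearly independent and the reshaping is bijective onto $\{1,\dots,\ft\}$). The paper, by contrast, never passes to the homogeneous decomposition or the Kronecker formalism: it fixes the data $(u,i,j,f)$ of a degree-$d$ word $w=u(a)x_iv(a)x_jf(a,x)$, groups words sharing the same $(u,i,j,f)$, and reads off the $\kappa\times\kappa$ subblock of $\ZZ_{0,d-2}$ indexed by the border entries $I_\kappa\otimes(u(a)h_i)$ and $I_\kappa\otimes(h_jf(a,x))$; that subblock equals $2\sum_{w}C_w\otimes v_w(a)$, and linear independence of the distinct middle words $v_w$ finishes the job. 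Your route buys uniformity with the rest of Section~\ref{sec:mmhess}; the paper's is more elementary and avoids the $\fL$/$\Pi_i$ bookkeeping entirely.

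There is one genuine error in your heuristic identification of the contributing monomial. You assert that replacing the \emph{first} $x_{i_1}$ and the \emph{last} $x_{i_d}$ by $h$'s produces a term in $V_0^T(\cdots)V_{d-2}$. It does not. For a degree-$d$ word $w=u_0(a)x_{i_1}u_1(a)x_{i_2}\cdots u_{d-1}(a)x_{i_d}u_d(a)$, differentiating $x_{i_a}$ and $x_{i_b}$ with $a<b$ contributes to the block $\ZZ_{a-1,\,d-b}$, because the left border entry $u_0x_{i_1}\cdots u_{a-1}h_{i_a}$ has degree $a-1$ in $x$ and the right border entry $h_{i_b}u_b x_{i_{b+1}}\cdots u_d$ has degree $d-b$. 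Your choice $(a,b)=(1,d)$ gives $\ZZ_{0,0}$, not $\ZZ_{0,d-2}$. Worse, the resulting middle factor $u_1(a)x_{i_2}\cdots u_{d-1}(a)$ has degree $d-2$ in $x$, which would contradict the very conclusion (that $\ZZ_{0,d-2}$ depends only on $a$) you are trying to establish. The correct choice is $(a,b)=(1,2)$, i.e.\ replace the first \emph{two} $x$'s: then the middle is $u_1(a)$, a polynomial in $a$ alone, exactly as the lemma and the $i+j=\ell-2$ case of Proposition~\ref{prop:aug22a13} predict. Relatedly, the claim that ``$\ZZ_{0,d-2}(a,0)$ is (a permutation of) $\varphi^d_p$ reshaped'' is loose: $\varphi^d_p=\crep^d_p(b\otimes I_{\ft_{d-1}})$ and $\ZZ_{0,d-2}(a,0)=2\Pi_0\fL(\ft_0,s;\crep^d_p)(b\otimes I_{\ft_{d-2}})$ contract $\crep^d_p$ against $b$ along different tensor modes, so they are not permutations of one another; what is true, and what your argument actually needs, is that each is an \emph{injective} linear function of $\crep^d_p$ (via linear independence of the $b_\gamma$), hence each vanishes only if $\crep^d_p=0$. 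With the block indexing corrected and the ``reshaping'' claim replaced by that injectivity statement, your proof is complete.
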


\begin{proof}

Write
$$
p(a,x)=\sum_{w\in\cW}C_w\otimes w(a,x),  %
$$
with $C_w\ne 0$ for every $w\in\cW$. Here $\cW$ is a set of words $w(a,x)$ of  degree at most $d$ in $x$ and $C_w\in\RR^{\kappa\times\kappa}$.

A word  $w\in\cW$ of degree $d\ge 2$  in $x$ must be of the form
\begin{equation*}
%\label{eq:sep29a15}
w(a,x)=u(a)x_iv(a)x_jf(a,x)\quad\textrm{for some
 $i,\,j\in\{1,\ldots,g\}$},
\end{equation*}
where $u(a)$ and $v(a)$ are words that depend only upon $a$ and $f(a,x)$ is a word of degree  $d-2$ in $x$ that may depend upon both $a$ and $x$.  Let
$$
\cW^\prime=\cW_{u,i,j,f}^\prime
$$
denote the set of words in $\cW$ that begin with $u(a)x_i$ and end with $x_jf(a,x)$ and,
 assuming that $\cW^\prime\ne\emptyset$, let
\begin{align*}
q(a,x)&=\sum_{w\in\cW^\prime}C_w\otimes w(a,x)\\
&=\sum_{w\in\cW^\prime}C_w\otimes u(a)x_iv_w(a)x_jf(a,x)\\
&=\left(I_\kappa\otimes u(a)x_i\right)\left(\sum_{w\in\cW^\prime}C_w\otimes v_w(a)\right)\left(I_\kappa\otimes x_jf(a,x)\right).
\end{align*}
 Since, for $w\in \cW_{u,i,j,f}^\prime$,
\begin{align*}
w_{xx}(a,x)[h]&=2u(a)h_iv(a)h_jf(a,x)\\ &+2u(a)h_iv(a)x_jf_x(a,x)[h] +2u(a)x_iv(a)h_jf_x(a,x)[h]\\ &+u(a)x_iv(a)x_jf_{xx}(a,x)[h],
\end{align*}
the $\kappa\times\kappa$ subblock of  $\ZZ_{0,d-2}$ that is specified by the  $I_\kappa\otimes ( u(a)h_i)$ and $I_\kappa\otimes h_jf(a,x)$ entries of the border vectors $V^\kappa_{\degg}(x,a)[h]^T$ and $V^\kappa_{\degg}(a,x)[h]$
 respectively is equal to
$$
 2\left(\sum_{w\in\cW^\prime}C_w\otimes v_w(a)\right).
$$
Thus, if $\ZZ_{0,d-2}=0$, then
$$
2\left(\sum_{w\in\cW^\prime}C_w\otimes v_w(a)\right)=0
$$
and hence $q(a,x)=0$. Since the same argument applies for every such subblock of $\ZZ_{0,d-2}$, and each such subblock depends only upon $a$ and not upon $x$,  for each word $w$  of degree $d$ in $x$  the coefficient $C_w=0$ and we have reached a contradiction.
\end{proof}

One last piece of notation is needed to state our next lemma.
Let $p_2(a,x)$ denote the homogeneous of degree two in $x$ portion of the nc symmetric polynomial $p$. The polynomial $p_2$ can be expressed as
\[
 p_2(a,x) = \sum_{\sigma,\tau,j,k} \sigma^T(a) x_k r_{\sigma,\tau,j,k}(a) x_j \tau(a),
\]
 where $\sigma,\tau$ are words in $a$ and $r_{\sigma,\tau,j,k}(a)$ is a $\kappa\times\kappa$ matrix polynomial in $a$ such that the sum of the degrees of $\sigma,\tau,r_{\sigma,\tau,j,k}$ is at most the degree of $p_2$ in $a$.  Let $R^p(a)$ denote the matrix indexed by $1\le j\le g$ and words of length at most that of the degree of $p_2$ in $a$,
with $((\sigma,k),(\tau,j))$ entry,
\[
 -r_{\sigma,\tau,j,k}(a)
\]
 and let $S^p$ denote the vector polynomial   with $(\tau,j)$ entry equal to the $\kappa\times \kappa$ matrix,
\[
 S^p_{\tau,j}(a,x)= x_j \tau(a) I_\kappa.
\]
 Thus,
\begin{equation*}
 %\label{eq:defP}
 p_2(a,x) = -S^p(a,x)^T R^p(a) S^p(a,x).
\end{equation*}

\begin{lemma}
 \label{lem:aug6a15}
   Let $\ZZ$ denote the middle matrix of the Hessian of a nonzero nc symmetric polynomial $p$ of degree
    $d$ in $g$ variables $x$ and degree
   $\tilde{d}$ in $\tilde{g}$ variables $a$ and suppose $N\ge \sum_{j=0}^{\tilde{d}} \tilde{g}^j$. For positive integers $n$, let $\mathscr U(n)$ denote the set of those $A\in\mathbb S_n(\mathbb R^{\tilde{g}})$ for which there is an $X$ such that $\ZZ(A,X)\preceq 0$.
If $\mathscr U(N)$ has nonempty interior, then
  \begin{enumerate}[\rm(i)]
   \item
    \label{it:degree2}
   $p(a,x)$ has degree at most two in $x;$
   \item there exists a polynomial $\ell(a,x)$ that is affine linear in $x$ such that
   \label{it:pform}
\begin{equation}
 \label{eq:wow+}
 p(a,x) =\ell(a,x)- S^p(a,x)^T R^p(a) S^p(a,x);
\end{equation}
\item \label{it:Rpos} $R^p(A)\succeq 0$ for each $A\in\mathscr U$.
 \end{enumerate}
\end{lemma}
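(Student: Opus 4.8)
The strategy is to exploit Theorem \ref{lem:aug2a15} (polynomial congruence of the middle matrix) together with Lemma \ref{lem:Z=0 d=0} and Remark \ref{prop:r=0}. First I would record the key consequence of Theorem \ref{lem:aug2a15}: for every $(A,X)\in\SS_n(\RR^{\fg})$, the middle matrix $\ZZ(A,X)$ of the Hessian is congruent to $\ZZ(A,0)$ via an invertible (in fact polynomial-inverse) matrix $\Yk(A,X)$. Consequently $\ZZ(A,X)\preceq 0$ if and only if $\ZZ(A,0)\preceq 0$. Therefore the set $\mathscr U(n)$ of those $A$ for which some $X$ makes $\ZZ(A,X)\preceq 0$ is exactly the set of $A$ for which $\ZZ(A,0)\preceq 0$; the $x$-dependence is cosmetic.

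Next I would prove item \eqref{it:degree2}. Suppose, for contradiction, that $\degg_x p=d\ge 3$ (the case $d\le 2$ is the desired conclusion, noting the trivial extreme cases). By Lemma \ref{lem:Z=0 d=0} the block $\ZZ_{0,d-2}(a)$ of the Hessian's middle matrix (with respect to the full border vector) is nonzero and depends only on $a$. When $d\ge 3$, the block row indexed by degree $0$ and the block column indexed by degree $d-2\ge 1$ are distinct, so $\ZZ(A,0)$ has a nonzero off-diagonal block $\ZZ_{0,d-2}(A)$ sitting opposite a zero diagonal block (the $(d-2,d-2)$ diagonal block of $\ZZ(A,0)$ vanishes by Proposition \ref{prop:aug22a13} / the structure in Theorem \ref{thm:oct1a13}, since $Z_{ij}(a,0)=0$ unless $i+j$ equals the relevant degree). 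A real symmetric matrix that is negative semidefinite cannot have a zero diagonal entry with a nonzero entry in the same row; hence $\ZZ(A,0)\not\preceq 0$ whenever $\ZZ_{0,d-2}(A)\ne 0$. So $A\in\mathscr U(N)$ forces $\ZZ_{0,d-2}(A)=0$. But $\mathscr U(N)$ has nonempty interior and $\ZZ_{0,d-2}$ is a polynomial in $a$ of degree at most $\tilde d$ in $\tilde g$ variables; since $N\ge\sum_{j=0}^{\tilde d}\tilde g^{\,j}=k_b(\tilde g,\tilde d)$, Remark \ref{prop:r=0} gives $\ZZ_{0,d-2}\equiv 0$, contradicting Lemma \ref{lem:Z=0 d=0}. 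Hence $d\le 2$.

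With $d\le 2$ in hand, item \eqref{it:pform} is essentially bookkeeping: decompose $p=p_0(a)+p_1(a,x)+p_2(a,x)$ into its homogeneous-in-$x$ parts, set $\ell(a,x)=p_0(a)+p_1(a,x)$ (affine linear in $x$), and use the canonical expansion of the degree-two part recorded just before the lemma, $p_2(a,x)=-S^p(a,x)^TR^p(a)S^p(a,x)$, to obtain \eqref{eq:wow+}. For item \eqref{it:Rpos}, fix $A\in\mathscr U$, so $\ZZ(A,X)\preceq 0$ for some $X$, equivalently (by the congruence above, now with $d=2$) $\ZZ(A,0)\preceq 0$. When $d=2$ the middle matrix of the Hessian reduces to its $(0,0)$ block $\ZZ_{00}(a,0)$, and a direct comparison of the quadratic forms shows $p_{xx}(a,x)[h]=V_0[h]^T\ZZ_{00}(a,0)V_0[h]$ while on the other hand $p_{xx}(a,x)[h]=2\,p_2(a,x)[h]=-2\,S^p(a,x)_x[h]^TR^p(a)S^p(a,x)_x[h]$, and $S^p(a,x)_x[h]$ is a linear reindexing of $V_0[h]=h\otimes b$. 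Hence $\ZZ_{00}(A,0)$ is, up to a fixed invertible change of variables absorbing the reindexing and the matrix $b(A)$, congruent to $-2R^p(A)$; negativity of the former forces $R^p(A)\succeq0$. The main obstacle is the last step: matching the two border-vector representations of $p_{xx}$ carefully enough to conclude that positivity transfers from $-\ZZ_{00}(A,0)$ to $R^p(A)$ without sign or indexing errors — the permutation matrices and the insertion of words $u(a)$ into the border vector must be tracked, but this is exactly the kind of congruence computation already carried out in Section \ref{sec:mmhess}, so it reduces to applying Proposition \ref{prop:aug22a13} in the case $\ell=2$.
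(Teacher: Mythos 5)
Your proposal is correct and follows essentially the same route as the paper's own proof: item (i) via Lemma \ref{lem:Z=0 d=0}, the vanishing of the $(d-2,d-2)$ block of the middle matrix when $d>2$, the incompatibility of a zero diagonal block with a nonzero off-diagonal block under negative semidefiniteness (the paper phrases this as $\mu_\pm(M)\ge\operatorname{rank}\ZZ_{0,d-2}(A)$, you as a zero-diagonal observation; they are the same fact), and Remark \ref{prop:r=0} to promote vanishing on an open set of $A$ to identical vanishing; items (ii)–(iii) are then, as you say, border-vector bookkeeping, the paper shortcutting by taking $R=-\ZZ_{00}(a,0)$ and $S=I_\kappa\otimes V_0(a,x)[x]$ directly so that (iii) is immediate.
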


\begin{proof}
Suppose $d>2$.  In this case, $\ZZ_{0,d-2}(a,x)=\ZZ_{0,d-2}(a)$ is not zero and depends only upon $a$ by Lemma \ref{lem:Z=0 d=0}. Given $(A,X)$,
if $\ZZ(A,X)\preceq 0,$ then the submatrix
$$
M=\begin{bmatrix}\ZZ_{00}(A,X) &\ZZ_{0,d-2}(A)\\ \ZZ_{d-2,0}(A) &0\end{bmatrix}\preceq 0,
$$
therefore, $\mu_+(M)=0.$ The inequality
$$
 \mu_{\pm}(M)\ge \mbox{rank}(\ZZ_{0,d-2}(A))
 $$
 now implies that $\ZZ_{0,d-2}(A)=0$. Therefore,
 $\ZZ_{0,d-2}(A)=0$ on an open set in $\mathbb S_N(\mathbb R^{\tg})$ and hence that $\ZZ_{0,d-2}=0$, contradicting Lemma \ref{lem:Z=0 d=0}.
 Consequently,  $d\le 2$ and in particular $\ZZ_{00}(a,x)=\ZZ_{00}(a)$ depends only upon $a$.

 Next, to verify item \eqref{it:pform}, write
 $$
 p(a,x) =\sum_{w\in\cW} C_w \otimes w + \ell(a,x),
 $$
 where $\cW$ is a set of  words $w(a,x)$ of degree two in
$x$  and $\ZZ_{0,0}(A,X)\preceq 0$ and $\ell(a,x)$ is affine linear in $x$.  Then
\[
  p_{xx}(a,x)[0, h] =  \sum_{w\in\cW} C_w \otimes w_{xx}(a,x)[h]= 2[p(a,h)-\ell(a,h)].
\]
 On the other hand,
\[
 p_{xx}(a,x)[h] = 2 (I_\kappa\otimes V_0(a,x)[h])^T\ZZ_{0,0}(a,0) (I_\kappa \otimes V_0(a,x)[h]),
\]
 $I_\kappa\otimes V_0(a,x)[h]=I_\kappa\otimes(h\otimes b)$  depends only on $h$ and
 $a$ (and not on $x$).
 Hence,
\[
  p(a,x) = \frac{1}{2}\, 2(I_\kappa\otimes V_0(a,x)[x])^T \ZZ_{0,0}(a,0) (I_\kappa \otimes V_0(a,x)[x]) +\ell(a,x)
\]
 with $I_\kappa\otimes V_0(a,x)[x]=I_\kappa\otimes(x\otimes b)$ linear in $x$.  In particular, $p(A,X)\preceq 0$
whenever  $\ZZ_{0,0}(A,0)\preceq 0.$ Choosing $R(a)=-\ZZ_{00}(a,0)$, $S=I_\kappa\otimes V_0(a,x)[x]$ produces the
representation of equation \eqref{eq:wow+} from which item \eqref{it:Rpos} immediately follows.
\end{proof}

\section{Positivity of Middle Matrices}
 \label{sec:moreposM}

This section focuses on the  positivity of  middle matrices for the Hessians, the
relaxed Hessians and the positivity of various second derivatives. The results will be used in
 the proof of  Theorem \ref{thm:main}.  Throughout the polynomial $p$ is fixed.

The following consequence of Theorem \ref{thm:dec29a13} generalizes Proposition 5.2 in \cite{DHMill}. \index{$\ZZ_{\lambda,\delta}$}

\begin{lemma}
\label{prop:scalar-middle-relaxed}
Let $(A, X)  \in \allmatn  $ be given and let $\ZZ_{\lambda,\delta}(A, X)$ denote the middle matrix for the relaxed Hessian
of a symmetric nc matrix polynomial $p\in\cP^{\kappa\times\kappa}.$
 There exists an $\epsilon <0$ such  that if $\varepsilon \le \delta \le 0$ and $\lambda \le 0$,   then
    \begin{equation}
    \label{eq:aug26e14}
   \mu_+(\ZZ_{\lambda,\delta}(A, X)) \le \mu_+(\ZZ(A,0))
 \end{equation}
 with equality if the matrices $U$ and $W$
  in  formula \eqref{eq:dec29a13} for the middle matrix of the modified Hessian,
 $$p_{xx}(A,X)[h]+\lambda p_x(A,X)[h]^Tp_x(A,X)[h],$$
  meet the range inclusion condition \eqref{eq:jul28a15}
   (equivalently the  highest degree terms of $p$ majorize at $A$).
\end{lemma}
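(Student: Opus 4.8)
The plan is to read the statement off from the structural description of the middle matrix of the modified Hessian in Proposition~\ref{thm:dec29a13-full} (equivalently Theorem~\ref{thm:dec29a13}), together with the polynomial congruence $\ZZ(A,X)\sim\ZZ(A,0)$ from Theorem~\ref{lem:aug2a15}, Sylvester's law of inertia, and Weyl's eigenvalue inequalities. Since $\mu_+$ of a middle matrix does not depend on the choice of border vector (Proposition~\ref{prop:mminpractice}), I would argue throughout with the full border vector $\wtilde{V}$. For that choice $\ZZ_{\lambda,\delta}(A,X)=\ZZ_\lambda(A,X)+\delta D$, where $D$ is the middle matrix, relative to $\wtilde{V}$, of $I_\kappa\otimes\msUalt(a,x)[h]^T\msUalt(a,x)[h]$; because the entries of $\msUalt$ occur among those of $\wtilde{V}$, this $D$ is a permutation conjugate of a $0/1$ diagonal matrix, so $0\preceq D\preceq I$.

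For the inequality, start from the identity \eqref{eq:dec29a13} of Proposition~\ref{thm:dec29a13-full},
\[
 \ZZ_\lambda(A,X)=\begin{bmatrix}\ZZ(A,X)&0\\0&0\end{bmatrix}+\lambda\begin{bmatrix}UU^T&UW^T\\WU^T&WW^T\end{bmatrix},\qquad W=f(A),\ U=g(A,X).
\]
The matrix $\begin{bmatrix}UU^T&UW^T\\WU^T&WW^T\end{bmatrix}$ is a Gram matrix, hence positive semidefinite, so for $\lambda\le0$ the second summand is negative semidefinite, and for $\delta\le0$ the term $\delta D$ is negative semidefinite; therefore $\ZZ_{\lambda,\delta}(A,X)\preceq\begin{bmatrix}\ZZ(A,X)&0\\0&0\end{bmatrix}$. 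Loewner monotonicity of $\mu_+$ gives $\mu_+(\ZZ_{\lambda,\delta}(A,X))\le\mu_+(\ZZ(A,X))$, and Theorem~\ref{lem:aug2a15} — namely $\ZZ(A,X)=\Yk(A,X)^T\ZZ(A,0)\Yk(A,X)$ with $\Yk(A,X)$ invertible — gives $\mu_+(\ZZ(A,X))=\mu_+(\ZZ(A,0))$. This proves \eqref{eq:aug26e14} for every $\lambda\le0$ and every $\delta\le0$, so any $\varepsilon<0$ suffices for this half.

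For equality under the range inclusion \eqref{eq:jul28a15} (equivalently, by Proposition~\ref{thm:dec29a13-full}(3), the highest degree terms of $p$ majorize at $A$), Proposition~\ref{thm:dec29a13-full}(2) provides an invertible $S$ — depending on $(A,X)$ and $d$ but not on $\lambda$ or $p$ — with $\ZZ_\lambda(A,X)=S^T\,\textup{diag}\bigl(\ZZ(A,X),\lambda WW^T\bigr)\,S$, hence
\[
 \ZZ_{\lambda,\delta}(A,X)=S^T\bigl(\textup{diag}(\ZZ(A,X),\lambda WW^T)+\delta\,S^{-T}DS^{-1}\bigr)S ,
\]
and by Sylvester's law $\mu_+(\ZZ_{\lambda,\delta}(A,X))$ equals $\mu_+$ of the bracketed matrix. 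If $\mu_+(\ZZ(A,0))=0$ equality is immediate from the inequality; otherwise put $m:=\mu_+(\ZZ(A,0))=\mu_+(\ZZ(A,X))\ge1$ and let $c>0$ be the least positive eigenvalue of $\ZZ(A,X)$. Since $\lambda WW^T\preceq0$ for $\lambda\le0$, the $m$ largest eigenvalues of $\textup{diag}(\ZZ(A,X),\lambda WW^T)$ are precisely the $m$ positive eigenvalues of $\ZZ(A,X)$, each $\ge c$; crucially $c$ and $S$ do not depend on $\lambda$. Also $0\preceq S^{-T}DS^{-1}\preceq(SS^T)^{-1}$, so $\|\delta\,S^{-T}DS^{-1}\|\le|\delta|\,c_0$ with $c_0:=\|(SS^T)^{-1}\|$. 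Choosing $\varepsilon:=-c/(2c_0)<0$, Weyl's inequality shows that for $\varepsilon\le\delta\le0$ the bracketed matrix retains at least $m$ eigenvalues $\ge c/2>0$, so $\mu_+(\ZZ_{\lambda,\delta}(A,X))\ge m$ uniformly in $\lambda\le0$; combined with the inequality, this gives equality.

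The one genuine obstacle is obtaining a single $\varepsilon$ that works simultaneously for all $\lambda\le0$ in the equality part. This is exactly what the congruence of Proposition~\ref{thm:dec29a13-full}(2) buys: after it, the whole positive part of the relevant matrix is carried by the $\lambda$-free block $\ZZ(A,X)$, while all $\lambda$-dependence is confined to the negative semidefinite block $\lambda WW^T$, which cannot manufacture positive eigenvalues. (One might try Lemma~\ref{lem:sep1a14} instead, but it only yields equality in the regime $\lambda\le\lambda_\delta$, not uniformly for $\lambda\le0$, so the direct congruence argument is the cleaner route.)
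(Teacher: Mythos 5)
Your proof is correct, and it builds on the same foundations as the paper's own argument — Proposition~\ref{thm:dec29a13-full}/Theorem~\ref{thm:dec29a13}, Sylvester's law of inertia, and a $\delta$-perturbation — so in that sense it is the same route. However, you handle one point more carefully than the paper does. The paper writes $\ZZ_{\lambda,\delta}(A,X)=\lambda\alpha+S^T\beta S+\delta I$ and asserts that, for $\delta\in[-\varepsilon,0]$ with $\varepsilon$ ``small enough,'' the positive eigenvalues of $S^T\beta S$ survive the $\delta I$-perturbation. Since $\beta$ depends on $\lambda$ (its lower block is $\lambda WW^T$), this requires the positive spectrum of $S^T\beta S$ to be bounded below by a positive constant uniformly over all $\lambda\le 0$; the paper does not address this, though it is true because the positive spectral subspace can be pulled back through the $\lambda$-independent congruence $S$. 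Your version dissolves the issue at the source: you conjugate the $\delta$-term inside the congruence, writing $\ZZ_{\lambda,\delta}=S^T\bigl(\textup{diag}(\ZZ(A,X),\lambda WW^T)+\delta\,S^{-T}DS^{-1}\bigr)S$, so that the base matrix's positive eigenvalues are literally those of $\ZZ(A,X)$, manifestly independent of $\lambda$, while the perturbation $\delta\,S^{-T}DS^{-1}$ is bounded by $|\delta|\,\|(SS^T)^{-1}\|$ uniformly in $\lambda$; then Weyl's inequality gives a single admissible $\varepsilon$. You also correctly note that the middle matrix of the $\delta$-term over the full border vector is a $0/1$ projection $D$ (rather than $I$, which is what one gets over the reduced border vector), a distinction the paper elides by writing $\delta I$ alongside the full-border-vector decomposition. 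Finally, your one-line Loewner-monotonicity argument for the inequality is a modest simplification over the paper's route through $\mu_+(\lambda\alpha+S^T\beta S)\le\mu_+(\beta)$. In short: same architecture, with the $\lambda$-uniformity of $\varepsilon$ made explicit.
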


\begin{proof}
In view of Theorem \ref{thm:dec29a13},
$$
\ZZ_{\lambda,\delta}(A,X)=\lambda \alpha+S^T\beta S+\delta I,
$$
where
$$
\alpha=\begin{bmatrix}U(I-\Pi_{\cR_{W^TW}})U^T&0\\0&0\end{bmatrix}, \quad
\beta=\begin{bmatrix}\ZZ(A,0)&0\\0&\lambda WW^T\end{bmatrix}
$$
and $S$ is  an invertible matrix  that  depends on $(A,X)$, but not on $\lambda$.
Since the additive perturbation
by  the negative definite matrix
$\delta I$ with $\delta<0$ shifts the eigenvalues of the matrix $S^T\beta S$ to the left, the nonpositive eigenvalues of $S^T\beta S$ become negative but the positive eigenvalues will stay positive if $\delta\in[-\varepsilon,0]$ and $\varepsilon>0$ is small enough.
For such $\delta$ and each $\lambda\le 0$,
\begin{align*}
\mu_+(S^T\beta S+\delta I)&=\mu_+(S^T\beta S)=\mu_+(\beta)\\
&=\mu_+\left(\begin{bmatrix}\ZZ(A,0)&0\\0&\lambda WW^T\end{bmatrix}\right)\\
&=\mu_+(\ZZ(A,0)).
\end{align*}
The inequality \eqref{eq:aug26e14}
 follows from the fact that, for $\lambda \le 0$,
$$
\mu_+(\lambda\alpha+S^T\beta S)\le\mu_+(S^T\beta S)=\mu_+(\beta).
$$
Equality prevails in \eqref{eq:aug26e14} under the added assumption that the range of
 $U^T$ is contained in the range of $W^T$ because in that case  $\alpha=0$.
\end{proof}

The next result is a  variant of Proposition 6.7 of
\cite{DHMill}. It provides a bound on $\mu_+(\ZZ(A,0))$ which, as will be seen later, turns out to be independent of $n$, the size of $A$.

\begin{lemma}
 \label{lem:chsy-applied}
    Suppose $(A,X,v)\in \allmatvk$ and  there exists an $\varepsilon<0$ such that
  equality holds in \eqref{eq:aug26e14} for
  all  $\epsilon\le \delta \le 0$ and $\lambda\le 0.$
  If $\epsilon\le \delta\le 0$ and $\lambda\le 0$ and if  $\mathcal H_+^{\lambda,\delta}$ is a
  maximal strictly positive subspace  for the quadratic form
\begin{equation*}
\smatng \ni H\mapsto
  \langle p^{\prime\prime}_{\lambda,\delta}( A , X)[h]v,v\rangle,
\end{equation*}
then
\begin{equation}
\label{eq:jul26a9}
        \mu_+(\ZZ(A,0))
          \le \textup{dim}\chp + \textup{codim} \cR_\kappa^{\cCp}(\Smatng),
    \end{equation}
 where the codimension is computed in the space $\RR^{ n \kappa \beta}$,
    $\beta=\beta_1+\cdots+\beta_g$ and $\beta_j=\textup{dim }\cC_p^j$ for $j=1,\ldots,g.$
\end{lemma}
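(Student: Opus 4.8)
The plan is to connect the relaxed Hessian $p^{\prime\prime}_{\lambda,\delta}$ to its middle matrix $\ZZ_{\lambda,\delta}(A,X)$ via the border vector-middle matrix representation, evaluate the quadratic form $H\mapsto\langle p^{\prime\prime}_{\lambda,\delta}(A,X)[h]v,v\rangle$ as a compression of $\ZZ_{\lambda,\delta}(A,X)$ to the range of the border vector applied at $(A,X,v)$, and then estimate the number of positive eigenvalues that survive this compression. The key structural fact is that, using the reduced border vector $\msUalt$ based on the chip set $\cC_p$, one has
\[
\langle p^{\prime\prime}_{\lambda,\delta}(A,X)[h]v,v\rangle = \langle \ZZ_{\lambda,\delta}(A,X)\, \xi(H),\, \xi(H)\rangle,
\]
where $\xi(H) = (I_\kappa\otimes \msUalt(A,X)[H])v$ ranges, as $H$ ranges over $\smatng$, exactly over (a linear image of) the subspace $\cR_\kappa^{\cCp}(\Smatng)v\subseteq\RR^{n\kappa\beta}$. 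Indeed, this is precisely how $\cR_\kappa^{\cCp}$ was constructed in Section \ref{sec:CHSY}: the reduced border vector evaluated at $(A,X,H)$ is $\cR^{\cCp}(H)$, and the matrix coefficients $C_w$ get absorbed into the middle matrix.

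First I would invoke Proposition \ref{prop:mminpractice} (together with Remark \ref{rem:anymiddleinstorm}) to pass freely between the full and reduced border vectors without changing the count of positive eigenvalues of the middle matrix. Then I would fix $\delta\in[\epsilon,0)$ and $\lambda\le 0$ in the range where equality holds in \eqref{eq:aug26e14}, so that $\mu_+(\ZZ_{\lambda,\delta}(A,X)) = \mu_+(\ZZ(A,0))$. The quadratic form $H\mapsto \langle p^{\prime\prime}_{\lambda,\delta}(A,X)[h]v,v\rangle$ is, by the identity above, the restriction of the quadratic form associated with $\ZZ_{\lambda,\delta}(A,X)$ to the subspace $\cW := \cR_\kappa^{\cCp}(\Smatng)v$ of $\RR^{n\kappa\beta}$. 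A maximal strictly positive subspace for the full form $\ZZ_{\lambda,\delta}(A,X)$ has dimension $\mu_+(\ZZ_{\lambda,\delta}(A,X)) = \mu_+(\ZZ(A,0))$. By the standard fact that if $P$ is a positive-definite-on-$\cV$ subspace of an ambient space and $\cW$ has codimension $c$, then $\cV\cap\cW$ has dimension at least $\dim\cV - c$, and on that intersection the restricted form is still strictly positive, we get a strictly positive subspace of the restricted form of dimension at least $\mu_+(\ZZ(A,0)) - \operatorname{codim}\cW$. Since $\chp$ is by hypothesis a maximal strictly positive subspace for the restricted form, $\dim\chp \ge \mu_+(\ZZ(A,0)) - \operatorname{codim}\cR_\kappa^{\cCp}(\Smatng)$, which rearranges to \eqref{eq:jul26a9}.

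\textbf{Main obstacle.} The step I expect to require the most care is the precise bookkeeping identifying the quadratic form $H\mapsto\langle p^{\prime\prime}_{\lambda,\delta}(A,X)[h]v,v\rangle$ with the compression of $\ZZ_{\lambda,\delta}(A,X)$ to $\cR_\kappa^{\cCp}(\Smatng)v$ — in particular verifying that the linear map $H\mapsto(I_\kappa\otimes\msUalt(A,X)[H])v$ has image exactly $\cR_\kappa^{\cCp}(\Smatng)v$ (up to a fixed permutation) and that the middle matrix in this border vector is indeed $\ZZ_{\lambda,\delta}(A,X)$ with the coefficient matrices absorbed correctly, as discussed around equation \eqref{eq:aug28c14} and in the paragraph following Lemma \ref{lem:chsyRef}. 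The subsequent eigenvalue-counting inequality is a routine linear algebra fact about signatures under restriction to a subspace, and the equality case of \eqref{eq:aug26e14} is supplied by Lemma \ref{prop:scalar-middle-relaxed}; the only genuine content is the translation between the analytic object (the relaxed Hessian as a quadratic form in $H$) and the algebraic object (the middle matrix and the reduced border vector map).
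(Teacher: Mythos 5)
Your argument is correct and reaches the same estimate, but it runs the signature accounting in the opposite direction from the paper's.  The paper starts on the domain side: it fixes a complement $\ckp$ of $\chp$ on which the quadratic form in $H$ is nonpositive, pushes both $\chp$ and $\ckp$ forward under $H\mapsto\cR_\kappa^{\cCp}(H)v$ into $\mathbb R^{n\kappa\beta}$, shows the images intersect trivially, picks a third subspace $\cY$ to complete a direct sum decomposition of $\mathbb R^{n\kappa\beta}$, and then bounds $\mu_+$ of the (reduced) middle matrix from above by the codimension of the nonpositive image. Your argument starts instead on the codomain side: fix a maximal strictly positive subspace $\cV$ for the reduced middle matrix itself (which has dimension $\mu_+(\ZZ(A,0))$ by Proposition \ref{prop:mminpractice} and the equality hypothesis for \eqref{eq:aug26e14}), intersect $\cV$ with the range $\cW=\cR_\kappa^{\cCp}(\Smatng)v$, note the standard codimension bound $\dim(\cV\cap\cW)\ge\dim\cV-\operatorname{codim}\cW$, and then pull back to a strictly positive subspace of the $H$-form of the same dimension — which by maximality of $\chp$ gives the desired lower bound on $\dim\chp$.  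Your version is a bit more direct (no need to build the complementary nonpositive subspace and the auxiliary $\cY$); the paper's has the advantage of working with the explicit decomposition $\Smatng=\chp\dot+\ckp$ which makes the inequality at the end a simple addition of dimensions.

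One step you flag as needing care indeed does: when you pull back a basis of $\cV\cap\cW$ to preimages $\widetilde H_1,\dots,\widetilde H_m$ in $\Smatng$, you should note that linear independence of the images forces the $\widetilde H_i$ to span an $m$-dimensional subspace, and that for any nonzero $H$ in that span the image $\xi(H)$ is a nonzero element of $\cV\cap\cW$, hence the $H$-form is strictly positive there.  You also state the key identity with $\xi(H)=(I_\kappa\otimes\msUalt(A,X)[H])v$, which differs from $\cR_\kappa^{\cCp}(H)v$ (and from the paper's $\IktV(A,X)[H]v$) by a fixed permutation; this is harmless for counting signatures but worth a word if written out fully. Likewise the codimension in \eqref{eq:jul26a9} is that of $\cR_\kappa^{\cCp}(\Smatng)v$, as the paper's own proof makes explicit.
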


\begin{proof}
Given $\lambda,\delta\le 0$, let $\chp$ be a maximal strictly positive subspace of $\Smatng$ with respect to the quadatic form
\begin{equation*}
%\label{eq:aug12b14}
\langle p^{\prime\prime}_{\lambda,\delta}( A , X)[h]v,v\rangle=v^T(\IktV)(A,X)[H]^T\ZZ_{\lambda,\delta}(A,X)(\IktV)(A,X)[H]v.
\end{equation*}
 Let $\ckp$ be a complementary subspace  of $\cH_+^{\lambda,\delta}$ of $\Smatng$ on which this quadratic form is nonpositive. Recall
$$
(\IktV)(A,X)[H]v\in\RR^{n\nu\kappa}\quad\textrm{with $\nu=\ft_0+\cdots+\ft_{d-1}$}
$$
and
\[
 \cR_{\kappa}^{\cCp}(H)v \in \RR^{n\beta\kappa}
\]
 is  obtained from $(\IktV)(A,X)[H]v$ by deleting those entries corresponding to words not in the  chip set $\cC_p$. In particular, there is a canonical inclusion  $\iota:\mathscr R \to\mathscr V$, where $\mathscr R$ and $\mathscr V$ are the subspaces  $\cR_{\kappa}^{\cCp}(\Smatng)v$ and $(\IktV)(A,X)[\Smatng]v$
of $\RR^{n\beta\kappa}$ and $\RR^{n\nu\kappa}$ respectively.

In particular,
\[
 \iota^T(\IktV)(A,X)[H]v=\cR_{\kappa}^{\cCp}(H)v.
\]
 It follows that
 \begin{align*}
&v^T(\IktV)(A,X)[H]^T\ZZ_{\lambda,\delta}(A,X)(\IktV)(A,X)[H]v\\ &=v^T\IktV(A,X)[H]^T\iota\iota^T\ZZ_{\lambda,\delta}(A,X)\iota\iota^T\IktV(A,X)[H]v\\
&=v^T \cR_{\kappa}^{\cCp}(H)^T\iota^T\ZZ_{\lambda,\delta}(A,X)\iota\cR_{\kappa}^{\cCp}(H)v.
\end{align*}

It is now readily checked that
$$
\cR_{\kappa}^{\cCp}(\chp)v \cap\cR_{\kappa}^{\cCp}(\ckp)v=\{0\},
$$
because if $y\in\cR_{\kappa}^{\cCp}(\chp)$, then either $y=0$ or $y^T\iota^T\ZZ_{\lambda,\delta}(A,X)\iota y>0$, whereas if
$y\in\cR_{\kappa}^{\cCp}(\ckp)$, then $y^T\iota^T\ZZ_{\lambda,\delta}(A,X)\iota y\le 0$. Thus, the sum $\cR_{\kappa}^{\cCp}(\chp)v+\cR_{\kappa}^{\cCp}(\ckp)v$ is direct.
Consequently, letting $\dot{+}$ denote the algebraic direct sum,  there exists a subspace $\cY$ of $\RR^{n\beta\kappa}$ such that
$$
\RR^{n\beta\kappa}=\cR_{\kappa}^{\cCp}(\chp)v\dot{+}\cR_{\kappa}^{\cCp}(\ckp)v\dot{+}\cY.
$$
Therefore,
\begin{align*}
n\beta\kappa &=\textup{dim}\cR_{\kappa}^{\cCp}(\chp)v+\textup{dim}\cR_{\kappa}^{\cCp}(\ckp)v+\textup{dim}\cY\\
&=\textup{dim}\cR_{\kappa}^{\cCp}(\Smatng)v+\textup{dim}\cY
\end{align*}
and
\begin{equation}
 \label{eq:6sep14s1}
\begin{split}
n\beta \kappa -\textup{dim}\cR_{\kappa}^{\cCp}(\ckp)v&=\textup{dim}\cR_{\kappa}^{\cCp}(\chp)v+\textup{dim}\cY\\
&\le \textup{dim}\chp + \textup{codim} \cR_{\kappa}^{\cCp}(\Smatng)v.
\end{split}
\end{equation}

The next step is to verify  the bound
\begin{equation*}
%\label{eq:aug13a14}
\mu_+(\ZZ_{\lambda,\delta}(A,X))=\mu_+(\iota^T\ZZ_{\lambda,\delta}(A,X)\iota)\le n\beta\kappa-\textup{dim}\cR_{\kappa}^{\cCp}(\ckp).
\end{equation*}
To this end, let $U\in\RR^{n\beta\kappa \times n\beta\kappa}$   be an invertible matrix in which the first $k$ columns is a basis for
$\cR_{\kappa}^{\cCp}(\ckp)v$ and let $\mu_{\le}(M)$ denote the
dimension of the space spanned by the eigenvectors of a real symmetric matrix $M$ corresponding to its nonpositive eigenvalues. Then, as
$$
\mu_{\le}(\iota^T\ZZ_{\lambda,\delta}(A,X)\iota)=\mu_{\le}(U^T\iota^T\ZZ_{\lambda,\delta}(A,X)\iota U)\ge k=\textup{dim}\cR_{\kappa}^{\cCp}(\ckp)v,
$$
it follows that
\begin{equation*}
\begin{split}
\mu_+(\ZZ_{\lambda,\delta}(A,X))&=\mu_+(\iota^T\ZZ_{\lambda,\delta}(A,X)\iota)=n\beta\kappa-\mu_{\le}(\iota^T\ZZ_{\lambda,\delta}(A,X)\iota)\\ &\le n\beta\kappa-\textup{dim}\cR_{\kappa}^{\cCp}(\ckp)v.
\end{split}
\end{equation*}
By assumption,
   $\mu_+(\ZZ_{\lambda,\delta}(A,X)) =  \mu_+(\ZZ(A,0))$
    for appropriate choices of $\delta$ and $\lambda$.
  Hence,
\[
 \mu_+(\ZZ(A,0)) \le n\beta\kappa-\textup{dim}\cR_{\kappa}^{\cCp}(\ckp)v.
\]
 An application of the inequality in \eqref{eq:6sep14s1} completes the proof.
 \end{proof}

%-------------------------------------------

 The next proposition is a variant of Lemma 7.2 from \cite{DHMill} and follows the strategy  of  Proposition  5.4 of \cite{BM}.

\begin{proposition}
 \label{prop:dhm2}
   Let $p\in\cP^{\kappa\times\kappa}$ be a symmetric nc matrix polynomial of degree $\wtilde{d}$ in $a$
  and degree $d$ in $x$.
Assume that $( A,  X, v) \in \allmatn \otimes \RR^{\kappa n}$ with  $v=\textup{col}(v_1,\ldots,v_\kappa)$
meets the following conditions:
\begin{enumerate}[\rm(1)]
  \item
    There exists a subspace $\cH$ of $\cT(A,  X, v)$ of codimension
    at most one in $\cT(A,X,v)$ such that
  \begin{equation*}
 % \label{eq:aug12c14}
   \langle p_{xx}(A, X)[h]v,v\rangle \le 0 \quad \text{for each}\ H\in\cH;
 \end{equation*}
  \item \label{it:linind}
  The set
\[
  \{ w(A,X)v_k: 1\le k\le \kappa, \, w\in \cC_p^i\}
\]
  is linearly independent for each choice of $i=1,\ldots,g$; and
 \item \label{it:rangeWU2} The highest degree terms of $p$ majorize at $A$.
\end{enumerate}

Then there exists an integer $\gamma$ that depends  on  the number of variables  and the degree of
the polynomial $p(a,x)$ but is independent of $n$ such that the middle matrix $\ZZ(A,X)$ of the Hessian $p_{xx}(A,X)[h]$  is subject to the constraint
\begin{equation*}
 %\label{eq:proof-sigmain-2}
 \mu_+( \ZZ(A, 0)) \le \gamma.
 \end{equation*}
\end{proposition}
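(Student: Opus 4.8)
The plan is to combine Proposition \ref{prop:maintool}-style hypothesis (1) with the linear-algebraic machinery of Sections \ref{sec:posM} and \ref{sec:moreposM}, most importantly Lemma \ref{lem:chsy-applied} and the CHSY codimension count in Theorem \ref{thm:chsymatrix}. First I would observe that Lemma \ref{thm:signature-clamped-relaxed} turns the hypothesis (1) — that $\langle p_{xx}(A,X)[h]v,v\rangle\le 0$ on a subspace $\cH$ of $\cT(A,X,v)$ of codimension at most one — into a bound on $e_+^n(A,X,v;p_{xx},\cT(A,X,v))$, namely $e_+^n\le 1$ (the quadratic form restricted to $\cH$ is $\le 0$, so at most one extra dimension can be positive). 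By the equality of signatures in Lemma \ref{thm:signature-clamped-relaxed}, for suitable small $\delta<0$ and $\lambda\ll 0$ we get $e_+^n(A,X,v;p^{\prime\prime}_{\lambda,\delta},\Smatng)\le 1$; equivalently, a maximal strictly positive subspace $\chp$ for the relaxed-Hessian quadratic form $H\mapsto\langle p^{\prime\prime}_{\lambda,\delta}(A,X)[h]v,v\rangle$ satisfies $\dim\chp\le 1$.

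Next I would invoke hypothesis (3) (highest degree terms majorize at $A$) together with Lemma \ref{prop:scalar-middle-relaxed}: the majorization guarantees the range inclusion $\range(U^T)\subseteq\range(W^T)$ in formula \eqref{eq:dec29a13}, hence equality holds in \eqref{eq:aug26e14}, i.e. $\mu_+(\ZZ_{\lambda,\delta}(A,X))=\mu_+(\ZZ(A,0))$ for all small $\delta\le 0$ and all $\lambda\le 0$. This puts us exactly in the situation of Lemma \ref{lem:chsy-applied}, whose conclusion \eqref{eq:jul26a9} reads
\[
 \mu_+(\ZZ(A,0)) \le \dim\chp + \textup{codim}\,\cR_\kappa^{\cCp}(\Smatng).
\]
Now I would bound each term on the right independently of $n$. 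The first term is $\le 1$ by the previous paragraph. For the second term I would use hypothesis (2), the linear independence of $\{w(A,X)v_k: 1\le k\le\kappa,\ w\in\cC_p^i\}$ for each $i$, which is precisely the hypothesis of Theorem \ref{thm:chsymatrix}; its formula \eqref{eq:aug28h14} gives
\[
 \textup{codim}\,\cR_\kappa^{\cCp}(\Smatng)v = \sum_{i=1}^g \frac{\kappa\beta_i(\kappa\beta_i-1)}{2},
\]
where $\beta_i=\dim\cC_p^i$ depends only on the words appearing in $p$, hence only on $g$, $\tg$, $d$, $\tilde d$ and $\kappa$ — not on $n$. (A minor care point: one should pass from the codimension of $\cR_\kappa^{\cCp}(\Smatng)v$ for the fixed $v$ to the codimension of $\cR_\kappa^{\cCp}(\Smatng)$ as used in Lemma \ref{lem:chsy-applied}; but the latter is no larger, or one simply works throughout with the $v$-evaluated spaces as in the proof of that lemma.) Combining, $\mu_+(\ZZ(A,0))\le 1+\sum_{i=1}^g\frac{\kappa\beta_i(\kappa\beta_i-1)}{2}=:\gamma$, which depends only on the number of variables and the degree of $p$ and $\kappa$, as claimed.

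The main obstacle I anticipate is bookkeeping rather than conceptual: verifying that the hypotheses of Lemma \ref{lem:chsy-applied} are genuinely met — in particular, that the range-inclusion/majorization hypothesis (3) feeds correctly through Lemma \ref{prop:scalar-middle-relaxed} to give \emph{equality} in \eqref{eq:aug26e14} for the relevant range of $\lambda,\delta$, and that the maximal positive subspace $\chp$ used in Lemma \ref{lem:chsy-applied} can be taken to be the same one whose dimension we bounded via Lemma \ref{thm:signature-clamped-relaxed}. One must also be slightly careful about the direction of the inequalities ($\delta<0$, $\lambda\le\lambda_\delta<0$) and about choosing $\delta$ small enough that both Lemma \ref{thm:signature-clamped-relaxed} and Lemma \ref{prop:scalar-middle-relaxed} apply simultaneously; taking $\delta$ in the intersection of the two admissible intervals and then $\lambda$ sufficiently negative handles this. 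Once those compatibility points are checked, the numerical bound $\gamma = 1 + \sum_{i=1}^g \tfrac{\kappa\beta_i(\kappa\beta_i-1)}{2}$ drops out immediately and is visibly independent of $n$.
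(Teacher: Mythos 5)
Your proof is correct and follows essentially the same route as the paper's: hypothesis (1) bounds $e_+^n$ on the clamped tangent space, Lemma \ref{thm:signature-clamped-relaxed} transfers this to the relaxed Hessian, hypothesis (3) feeds through Lemma \ref{prop:scalar-middle-relaxed} to give equality in \eqref{eq:aug26e14}, Lemma \ref{lem:chsy-applied} then gives the key inequality, and hypothesis (2) plus Theorem \ref{thm:chsymatrix} make the codimension independent of $n$. The minor care point you flag about $\cR_\kappa^{\cCp}(\Smatng)v$ versus $\cR_\kappa^{\cCp}(\Smatng)$ is a real (harmless) notational slip in the statement of Lemma \ref{lem:chsy-applied}; both the lemma's own proof and the paper's use of it in this proposition work with the $v$-evaluated space, exactly as you resolve it.
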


\begin{proof}
Recall the definition of $e_{\pm}^n$; it is given just below \eqref{def:epmn}.  In view of assumption (i),
$e_+^n(A,X,v,p_{xx},\cT(A,X,v))\le 1$.
Therefore, by Lemma \ref{thm:signature-clamped-relaxed},
  there is a $\delta_0<0$ such that for
  each $\delta_0\le\delta<0$ there exists
  a $\lambda<0$ such that
   \begin{equation*}
 %\label{eq:aug20b14}
  e^n_+(A, X,v;p^{\prime\prime}_{\lambda,\delta},\Smatng)\le 1.
\end{equation*}
By item \eqref{it:rangeWU2}, all the conditions of Lemma \ref{prop:scalar-middle-relaxed} are met. Hence the hypotheses of Lemma \ref{lem:chsy-applied} are met.  By \eqref{eq:jul26a9},
\begin{align*}
\mu_+(\ZZ(A,0))&\le \textup{dim}\cH_+^{\lambda,\delta}+\textup{codim}\cR_{\kappa}^{\cCp}(\Smatng) v \\
& = e^n_+(A, X,v;p^{\prime\prime}_{\lambda,\delta},\Smatng)+\textup{codim}\cR_{\kappa}^{\cCp}(\Smatng) v \\
&\le1+\textup{codim}\cR_{\kappa}^{\cCp}(\Smatng)v=\gamma.
\end{align*}
The hypothesis in item \eqref{it:linind} justifies the use of formula \eqref{eq:aug28h14} of Theorem \ref{thm:chsymatrix} to
conclude that $\gamma$ is independent of $n$.
\end{proof}

\begin{remark}\rm
 \label{rem:dhm2}
   In the setting of Proposition \ref{prop:dhm2}, item \eqref{it:linind} can be replaced with the condition
\begin{enumerate}
 \item[$(2^\prime)$]
    if $q\in\cC_p^{1\times \kappa}$ and $q(A,X)v=0$, then $q=0$.
\end{enumerate}
 To see that $(2')$ implies $(2)$, fix an $i$ and suppose, for $c_w\in \RR$,
\[
 \sum_{w\in \cC_p^i} c_w w(A,X)v_i =0.
\]
 Let $q\in \cC_p^i$ denote the polynomial $ q = \sum_{w\in \cC_p^i} c_w w \otimes e_i$
(so the only non-zero entry of the vector polynomial $q$ is in the $i$-th position). With this choice of $q$,
\[
 q(A,X)v = \sum_{w\in \cC_p^i} c_w w(A,X)v_i \otimes e_i =0.
\]
 Thus, by hypothesis, $q=0$ and therefore $c_w=0$ for each $w\in \cC_p^i$.
\end{remark}

\section{Proof of the Main Result, Theorem \ref{thm:main}}
 \label{sec:proofmain}
 Recall $p\in\cP^{\kappa\times\kappa}$ is a symmetric nc matrix polynomial in $a$ and $x$,  $\cO\subset \SS(\RR^{\fg})$ is a free open semialgebraic set and
\[
\begin{split}
 \xxx(n) =\{(A,X,v) & \in \allmatvk :(A,X) \in \cO,
      p(A,X)\succeq 0, \\   & \ v\ne 0 \ \textrm{and}\  p(A,X)v=0 \}.
\end{split}
\]
 We will first show, if $(\hA,\hX,\hv)\in\xxx$ is a $\cC_p$ dominating point  for $\xxx$, then $\ZZ(A,0)$, its middle matrix evaluated at $(A,0)$, is positive semidefinite. Accordingly,  given  a positive integer $m$, let
 \[
 (B,Y,w):= (I_m \otimes  \hA,
   I_m \otimes  \hX,\textup{col}_m\{\hv,0,\ldots,0\})
\]
with $m-1$ zero vectors of the same height as $\hv$ in the last entry and proceed in steps.
\bigskip

\noindent
{\bf 1.} {\it There exists a point $(\wtilde{B},\wtilde{Y},\wtilde{w})\in \xxx$ arbitrarily close to $(B,Y,w)$
 that is $\cC_p$ dominating for $\xxx$ such that  the pair $(\wtilde{B},\wtilde{Y})$ is full rank and}
 \begin{enumerate}[\rm(1)]
 \item  $p(\wtilde{B},\wtilde{Y})\succeq 0$
 \item $p(\wtilde{B},\wtilde{Y})\wtilde{w}=0$.
 \item $\textup{dim kernel } p(\wtilde{B},\wtilde{Y})=1$.
 \item $\fP_p^{\wtilde{B}}\cap \mathscr W$ {\it is
    convex for some open subset $\mathscr W$ of $\SS(\RR^{\fg})$ containing $\wtilde{Y}$.}
 \end{enumerate}
 \bigskip

The point  $(B,Y,w)\in\xxx$ and is a $\cC_p$ dominating point for $\xxx$.
  By hypotheses \eqref{it:fullrank} of Theorem \ref{thm:main} and
  Lemma \ref{lem:indDom}, there exists a point  $(\wtilde{B},\wtilde{Y},\wtilde{w})\in \xxx$
 that is $\cC_p$ dominating for $\xxx$ such that  the pair $(\wtilde{B},\wtilde{Y})$ is full rank.
  By Proposition \ref{lem:generic}  and another application of Lemma \ref{lem:indDom},
  it can be assumed that the kernel of $p(\wtilde{B},\wtilde{Y})$
  is spanned by multiples of a single nonzero vector $\wtilde{w}$ and  that $(\wtilde{B},\wtilde{Y},\wtilde{w})$ is still $\cC_p$  dominating
  for $\xxx$.   It remains only to verify that item(4) in the list is in force. But this
 is covered by hypothesis \eqref{it:theta1} of Theorem \ref{thm:main}.
\qed

 \bigskip

 \noindent
 {\bf 2.} {\it  There exists a positive integer $\gamma$ that is independent
of the integer $m$ appearing in the construction of the tuple $(\wtilde{B},\wtilde{Y},\widetilde{w})$ such that}
 \begin{equation}
 \label{eq:mu minus 2}
 \mu_+(\ZZ(\wtilde{B},0))\le \gamma .
\end{equation}
 \bigskip

The proof rests heavily on Proposition \ref{prop:dhm2}. Thus,  our first task is to show that  it applies to $(\wtilde{B},\wtilde{Y},\wtilde{w})$.
In view of Step 1, we may apply Proposition   \ref{prop:maintool}    to the point
 $(\wtilde{B},\wtilde{Y},\wtilde{w})$ to guarantee the existence of a subspace $\mathscr H$  of $\cT(\wtilde{B},\wtilde{Y},\wtilde{w})$ of codimension at most one
  in $\cT(\wtilde{B},\wtilde{Y},\wtilde{w})$  such  that
\[
  \langle  p_{xx}(\wtilde{B},\wtilde{Y})[H]\wtilde{w},\wtilde{w} \rangle \, \le 0 \quad
  \textrm{for $H\in \mathscr H$},
\]
i.e., the first condition in  Proposition \ref{prop:dhm2} is met.

The validity of the second  condition in Proposition \ref{prop:dhm2} follows from the fact that
 $(\wtilde{B},\wtilde{Y},\wtilde{w})$ is a $\cC_p$ dominating point for $\xxx$. Thus,  if $q\in \cC_p^{1\times \kappa}$
  and $q(\wtilde{B},\wtilde{Y})\wtilde{w}=0$,
 then $q(A,X)v=0$ for all $(A,X,v)\in \xxx$.   Since
  $\partial\widehat{\fP}_p \cap \widehat{\cO}$  is  a $\cC_p$ dominating set for $\cO$ by hypothesis \eqref{it:YYY} of Theorem \ref{thm:main}, it follows that $q(A,X)v=0$ for all $(A,X,v)\in \widehat{\mathcal O}$.  Thus $q(A,X)=0$ for $(A,X)\in\mathcal O$ and since $\mathcal O$ is  open, $q=0$. The desired conclusion now follows from Remark \ref{rem:dhm2}.

 The third condition is hypothesis \eqref{it:theta3} of Theorem \ref{thm:main}.
  Thus, Proposition \ref{prop:dhm2} is applicable.
\qed
\bigskip

\noindent
 {\bf 3.} {\it $\ZZ(\hA,0)\preceq 0$.}
 \bigskip

    By Theorem \ref{lem:aug2a15}, the middle matrices $\ZZ(B,Y)$
  and $\ZZ(\wtilde{B},\wtilde{Y})$ for the Hessian of $p$ are
  polynomially congruent to the middle matrices  $\ZZ(B,0)$ and $\ZZ(\wtilde{B},0),$ of the Hessian of $p$,
    respectively. Thus,
       by choosing $(\wtilde{B},\wtilde{Y})$ sufficiently close to $(B,Y)$, it can be assumed that
\begin{equation}
 \label{eq:mu minus 1}
   \mu_{+}(\ZZ(B,0)) = \mu_{+}(\ZZ(B,Y)) \le \mu_{+} (\ZZ(\wtilde{B},\wtilde{Y})) = \mu_{+}(\ZZ(\wtilde{B},0)).
\end{equation}
(The middle inequality in \eqref{eq:mu minus 1} holds because the strictly positive eigenvalues
of $\ZZ(B,0)$ will stay positive under small perturbations of $B$.)
  Combining Equations \eqref{eq:mu minus 1} and \eqref{eq:mu minus 2} and the evident fact that
   $\mu_{+}$ is additive with respect to direct sums,
\[
 m\,   \mu_{+}(\ZZ(\widehat{A},0)) = \mu_{+}(\ZZ(B,0))\le \mu_{+}(\ZZ(\wtilde{B},0)) \le \gamma.
\]
 Since the right hand side of the last inequality is independent of $m$, it follows that $\ZZ(\widehat{A},0)\preceq 0$.
\qed

\bigskip

To complete the proof,  let $\tsU =\pi_1(\partial \fP_p\cap \cO)$,   let  $A\in\mathscr \tsU$ be given and let $X$ and $v$ be such that $(A,X,v)\in \xxx$. By Lemma \ref{lem:ind1}, there exists 
  a $\cC_p$ dominating point $(A_*,X_*,v_*)\in\xxx$ for $\xxx$.  

    Let $(\hA,\hX,\hv) = (A,X,v)\oplus (A_*,X_*,v_*)$ and
 note that $(\hA,\hX,\hv)\in \xxx$ and is a $\cC_p$ dominating point for $\xxx$.  Hence, by what is already proved,  $\ZZ(\hA,0)\preceq 0$ and therefore $\ZZ(A,0)\preceq 0$.   Hence,
\[
 \tsU \subset \mathscr U =\{A: \mbox{ there is an } X \mbox{ such that } \ZZ_{00}(A,X)\succeq 0\}.
\]
Since, by assumption $\tsU(N),$ and therefore  $\mathscr U(N),$ has nonempty interior for a sufficiently large $N$, an application of Lemma \ref{lem:aug6a15} completes the proof.

%IUMJ remark for arxiv, remove for iumj 

\begin{remark}\rm
  We make a final remark about the overall structure of the proof of Theorem \ref{thm:main}.  Proposition \ref{lem:generic} gives the existence of sufficiently many points $(A,X)$, independent of the size $n$, in the boundary of $\fP_p$ for which $p(A,X)$ has a one-dimensional kernel. An application of  Proposition \ref{prop:maintool} produces a subspace of codimension one (again regardless of the size of $(A,X)$) in the tangent space on which the {\it second fundamental form} of equation \eqref{eq:poshess} is negative.  The CHSY lemma, Theorem \ref{thm:chsymatrix} is then used to show that the middle matrix $Z(A,X)$ is positive semidefinite on a subspace of this tangent space with a small codimension independent of $n$ and thus $Z(A,X)$ is positive semidefinite sufficiently often imply it has a simple structure.  Likely the one dimensional kernel and subspace of codimension one in Propositions \ref{lem:generic} and \ref{prop:maintool} could be relaxed to requiring only some upper bound on these dimensions independent of $n$.  The hypothesis on $\cO$ in Theorem \ref{thm:main}, namely that it is a free open semialgebraic set can be relaxed considderably.
\end{remark}

%----------- Dragons  out for iumj  -------------------------------

\section{Appendix: Faithfulness of finite dimensional representations}
 \label{appendix:r=0}

 This section provides a proof of Remark \ref{prop:r=0}. Namely, any free polynomial vanishing on a large enough set for sufficiently large matrices is zero.

\begin{lemma}
\label{lem:pre r=0}
 Suppose $r(a)$ is a free  (not necessarily symmetric) polynomial  in $\tilde{g}$ variables of degree $\tilde{d}$ and let $N\ge \sum_{j=0}^{\td} \tg^j$ (see $k_b$  defined in equation (\ref{def:kb}). If $r(A)=0$ for all $A\in \mathbb S_N(\mathbb R^{\tilde{g}})$, then $r=0$.
\end{lemma}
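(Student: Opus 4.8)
The plan is to prove the contrapositive: given any nonzero free polynomial $r(a)$ of degree $\td$, I would exhibit a single tuple $A\in\mathbb S_N(\mathbb R^{\tg})$ with $r(A)\neq 0$. The tuple will live on a truncated Fock space, so that its symmetry is built in automatically. Set $N_0=k_b=\sum_{j=0}^{\td}\tg^j$, the number of words $w$ in $a_1,\dots,a_{\tg}$ of length at most $\td$. Fix an ordering of these words and identify $H=\mathbb R^{N_0}$ with the space having orthonormal basis $\{\delta_w:|w|\le\td\}$. For each $i$ let $\ell_i$ be the truncated creation operator, $\ell_i\delta_w=\delta_{a_iw}$ (prepend the letter $a_i$) if $|w|<\td$ and $\ell_i\delta_w=0$ if $|w|=\td$, and put $A_i=\ell_i+\ell_i^{T}\in\mathbb S_{N_0}$.

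The key step is the computation that, for every word $w=a_{i_1}\cdots a_{i_k}$ with $k\le\td$,
\[
 w(A)\,\delta_\emptyset=\delta_w+\sum_{|v|<k}c^w_v\,\delta_v .
\]
To see this, expand $A_{i_1}\cdots A_{i_k}=\sum_{\varepsilon\in\{+,-\}^k}\ell_{i_1}^{\varepsilon_1}\cdots\ell_{i_k}^{\varepsilon_k}$, where $\ell^{+}=\ell$ and $\ell^{-}=\ell^{T}$, and apply each term to $\delta_\emptyset$. Each $\ell^{+}$ raises the length of a basis vector by one (or annihilates it, by the truncation), and each $\ell^{-}$ lowers it by one (or annihilates it); since there are exactly $k$ factors and we start from length $0$, the only term whose image has length precisely $k$ is the all-$+$ term $\ell_{i_1}\cdots\ell_{i_k}\delta_\emptyset=\delta_w$, which contributes $\delta_w$ with coefficient $1$ and is never annihilated because its intermediate vectors have lengths $1,\dots,k\le\td$. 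Every other term produces a combination of $\delta_v$ with $|v|<k$. This gives the displayed identity.

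Ordering the basis of $H$ by word length, the identity says that the transition matrix from $\{\delta_w:|w|\le\td\}$ to $\{w(A)\delta_\emptyset:|w|\le\td\}$ is block-triangular with identity blocks on the diagonal, hence invertible; thus the vectors $w(A)\delta_\emptyset$ are linearly independent. Writing $r=\sum_{|w|\le\td}r_w\,w$, linear independence forces $r(A)\delta_\emptyset=\sum_w r_w\,w(A)\delta_\emptyset=0$ to hold only when every $r_w=0$, i.e.\ only when $r=0$. So if $r\neq 0$ then $r(A)\delta_\emptyset\neq 0$ and in particular $r(A)\neq 0$. If $N>N_0$, replace $A$ by $A\oplus 0\in\mathbb S_N(\mathbb R^{\tg})$: for $|w|\ge1$ one has $w(A\oplus 0)=w(A)\oplus 0$, so $r(A\oplus 0)=r(A)\oplus r_\emptyset I_{N-N_0}$, whose first block is $r(A)\neq 0$; hence $r(A\oplus 0)\neq 0$. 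This contradicts the hypothesis that $r$ vanishes on all of $\mathbb S_N(\mathbb R^{\tg})$, completing the proof.

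The main obstacle is the bookkeeping in the displayed computation: verifying that the top (length-$k$) component of $w(A)\delta_\emptyset$ arises only from the pure-creation term, has coefficient exactly $1$, and survives the truncation. Once this unitriangular structure is established, linear independence of $\{w(A)\delta_\emptyset\}$ — and hence faithfulness of this $N_0$-dimensional representation — is immediate, and the passage to arbitrary $N\ge N_0$ is routine.
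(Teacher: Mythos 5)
Your proof is correct and follows essentially the same approach as the paper's: both build the tuple from truncated creation operators on the span of words of length at most $\td$, symmetrize via $A_i=\ell_i+\ell_i^{T}$, and exploit the unitriangular (length-filtered) structure of $\{w(A)\delta_\emptyset\}$ to deduce faithfulness. The paper first reduces to a homogeneous polynomial and reads off only the top-degree component, while you run the full block-triangular argument across all lengths and make the passage from $N=k_b$ to $N>k_b$ explicit via a direct sum with $0$ — expository differences, not a different method.
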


\begin{proof}
It suffices to prove the result for scalar-valued polynomials homogeneous of degree $\td$ and for $N=\sum_{j=0}^{\td} \tg^j$.  Let $\tilde{d}$ denote the degree of $r$.  Let $\mathcal H$ denote the Hilbert space with orthonormal basis words $m(a)$ of degree at most $\tilde{d}$.  Thus the dimension of $\mathcal H$ is $N$.  Define the tuple $S=(S_1,\dots,S_\tg)$ on $\mathcal H$ by $S_j w =a_j w$ for words $w$ of degree (length) at most $\tilde{d}-1$ and $S_jw=0$ for words $w$ of degree $\tilde{d}$.  It is routine to verify that $S$ extends, by linearity, to an operator on $\mathcal H$.  It is also routine to verify, for words $w$ of degree at most $\tilde{d}-1$,
\[
 S_j^* a_k w = \begin{cases} 0 & \mbox{ if } j\ne k \\  w & \mbox{ if } j=k \end{cases}
\]
 and $S^* \emptyset =0$.

 Of course the $S_j$ are not symmetric. Let $T_j = S_j+S_j^*$. Thus $T=(T_1,\dots,T_\tg)\in \mathbb S_N(\mathbb R^\tg)$. By hypothesis $r(T)=0$. Let $v$ denote a word of degree $\tilde{d}$ and observe
\[
 v(T) \emptyset = v + h_v,
\]
 where $h_v$ is a linear combination of words of degree at most $\tilde{d}-2$. In particular, the vectors $v$ and $h_v$ are orthogonal. Writing  $r=\sum r_v v$, it follows that
\[
 0 =  r(T)\emptyset  = \sum_{v} r_v v + \sum r_v h_v.
\]
 Using orthogonality of words once again, we conclude $r_v =0$ for each $v$ and therefore $r=0$.
\end{proof}

\begin{proof}[Proof of Remark \ref{prop:r=0}]
 Let $U$ be an open set on which $r$ vanishes and $Y$ a given point in $U$. Let $(A,X)$ be a given tuple in $\mathbb S_N(\mathbb R^{\tilde{g}})$. The matrix polynomial of a single real variable, $r_{Y,Z}(t) = p(Y+tZ)$ has entries which are ordinary polynomials which vanish on an open interval containing $0$ and hence vanish everywhere. It follows that $r(X)=0$ for all $X\in\mathbb S_N(\mathbb R^{\tilde{g}})$ and therefore $r(X)=0$ for all $X\in\mathbb S_m(\mathbb R^{\tilde{g}})$ for all $m\le N$.  Lemma \ref{lem:pre r=0} now implies $r=0$.
\end{proof}

\section{Appendix: Illustrating the basic calculations}
{\bf Note to the referee: We defer to you opinion on whether to keep or not this, and the following, appendices.}

\label{app:nov10a14}
Fix $c,d\in\RR$ and let
$$
p_3(a,x)=ca_1x_1a_2x_2^2+dx_2^2a_2x_1a_1.
$$
In this section we compute explicitly many of the objects appearing in this article such as the border vector and middle matrix, the various Hessians and spaces appearing in the CHSY lemma and the chip set representations.
Since $p_3$ is a homogeneous polynomial of degree  $d=3$ in $x$ and is of degree
$\wt{d}=2$ in $a$ and there are no consecutive strings of $a$, it suffices to choose
\begin{equation}
\label{eq:nov11a14}
b=\begin{bmatrix}1\\ a_1\\ a_2\end{bmatrix} \quad\textrm{and}\quad x=\begin{bmatrix}x_1\\ x_2\end{bmatrix}.
\end{equation}
Then $k_b$, the number of entries in $b$, is equal to $3$ and  $p_3(a,x)$ is a linear combination of two of the entries in the vector polynomial $b\otimes (x\otimes b)_3$ of height $\ft=k_b(k_bg)^d=3(6^3)=648$,
i.e.,
$$
p_3(a,x)=\begin{bmatrix}c_1 &\cdots&c_t\end{bmatrix}b\otimes (x\otimes b)_3
$$
where all but two of the coefficients $c_1,\ldots,c_t\in\RR$ are equal to zero.

By successive applications of formula
\eqref{eq:aug9b13}, Kronecker products can be
reexpressed as ordinary matrix products:
\begin{align*}
b\otimes (x\otimes b)_3&=(b\otimes I_{\ft_2})(x\otimes b)_3\\
&=(b\otimes I_{\ft_2})(x\otimes b\otimes I_{\ft_1})(x\otimes b\otimes I_{\ft_0})(x\otimes b)\\
&=(b\otimes I_{\ft_2})K_1 K_0 W_0,
\end{align*}
where
$$
W_0=x\otimes b,\quad \ft_j=(k_b g)^{j+1} \quad\textrm{for $j=0,\ldots, 2$ (since $d=3$)}
$$
and
$$
K_j=K_j(a,x)=(x\otimes b)\otimes I_{\ft_j} \quad\textrm{is a matrix polynomial of size $\ft_{j+1}\times \ft_j$ }.
$$
Thus, upon setting
$$
\varphi_2=\begin{bmatrix}c_1 &\cdots&c_t\end{bmatrix}(b\otimes I_{\ft_{2}}),
$$
it is readily seen that
$$
p_3(a,x)=\varphi_2 K_1K_0W_0 \quad\textrm{with $W_0=x\otimes b$}
$$
and, analogously, since $\varphi_2$ is independent of $x$,
\begin{align*}
(p_3(a,x))_x[h]&=\varphi_2
\{(h\otimes b)\otimes(x\otimes b)_2+(x\otimes b)\otimes(h\otimes b)\otimes(x\otimes b)\\  &\qquad+
(x\otimes b)_2\otimes(h\otimes b)\}\\
&=\varphi_2\{ V_2+(x\otimes b)\otimes V_1+(x\otimes b)_2\otimes V_0\}\\
&=\varphi_2\{ V_2+K_1V_1+K_1K_0V_0\}.
\end{align*}
in which
$$
V_0=h\otimes b\quad\textrm{and}\quad V_j=(h\otimes b)\otimes (x\otimes b)_j\quad\textrm{for $j=1,2$}.
$$
Thus,
$$
p_3(a,x)=\begin{bmatrix}0&0&\varphi_2\end{bmatrix}\begin{bmatrix}I_{\ft_0}&0&0\\ K_0&I_{\ft_1}&0\\
K_1 K_0&K_1&I_{\ft_2}\end{bmatrix}\begin{bmatrix}W_0\\ 0\\ 0\end{bmatrix}
$$
and
$$
(p_3(a,x))_x[h]=\begin{bmatrix}0&0&\varphi_2\end{bmatrix}\begin{bmatrix}I_{\ft_0}&0&0\\ K_0&I_{\ft_1}&0\\
K_1 K_0&K_1&I_{\ft_2}\end{bmatrix}\begin{bmatrix}V_0\\ V_1\\ V_2\end{bmatrix}.
$$
Similarly, the homogeneous polynomials (in $x$)
$$
p_2(a,x)=e a_2x_2a_1x_1+f x_1a_1x_2a_2\quad\textrm{and}\quad p_1(a,x)=ga_1 x_2a_2+ma_2x_2a_1
$$
can be written as
\begin{align*}
p_2(a,x)&=\begin{bmatrix}d_1 &\cdots&d_s\end{bmatrix}b\otimes (x\otimes b)_2\\
&=\begin{bmatrix}d_1 &\cdots&d_s\end{bmatrix} (b\otimes I_{\ft_1})(x\otimes b)_2\\
&=\varphi_1 K_0W_0
\end{align*}
and
\begin{align*}
p_1(a,x)&=\begin{bmatrix}e_1 &\cdots&e_r\end{bmatrix}b\otimes (x\otimes b)\\
&=\begin{bmatrix}e_1 &\cdots&e_r\end{bmatrix}(b\otimes I_{\ft_0}) (x\otimes b)\\
&=\varphi_0 W_0,
\end{align*}
with the same choice of $b$ and $x$ as in \eqref{eq:nov11a14}, $d_1,\ldots,d_s;e_1,\ldots,e_r\in\RR$, $s=k_b(k_bg)^2=3(6^2)=108$,
$r=k_b(k_bg)=3(6)=18$,
$$
\varphi_1=\begin{bmatrix}d_1 &\cdots&d_s\end{bmatrix} \,(b\otimes I_{\ft_1})\quad\textrm{and}\quad \varphi_0=\begin{bmatrix}e_1 &\cdots&e_r\end{bmatrix}(b\otimes I_{\ft_0}).
$$
Since $\varphi_1$ and $\varphi_0$ are independent of $x$, it is readily checked that
\begin{align*}
(p_2)_x(a,x)[h]&=\varphi_1 \{(h\otimes b)\otimes(x\otimes b)+(x\otimes b)\otimes(h\otimes b)\}\\
&=\varphi_1\{V_1+K_0V_0\}
\end{align*}
and
\begin{align*}
(p_1)_x(a,x)[h]&=\varphi_0 \{(h\otimes b)\}\\
&=\varphi_0\{V_0\}.
\end{align*}
Consequently, the polynomial
$$
p(a,x)=p_1(a,x)+p_2(a,x)+p_3(a,x)
$$
admits the representation
\begin{equation*}
%\label{eq:nov11b14}
p(a,x)=
\begin{bmatrix}\varphi_0&\varphi_1&\varphi_2\end{bmatrix}\begin{bmatrix}I_{\ft_0}&0&0\\ K_0&I_{\ft_1}&0\\
K_1 K_0&K_1&I_{\ft_2}\end{bmatrix}\begin{bmatrix}W_0\\ 0\\ 0\end{bmatrix}
\end{equation*}
and
\begin{equation*}
%\label{eq:nov11c14}
p_x(a,x)[h]=\begin{bmatrix}\varphi_0&\varphi_1&\varphi_2\end{bmatrix}\begin{bmatrix}I_{\ft_0}&0&0\\ K_0&I_{\ft_1}&0\\
K_1 K_0&K_1&I_{\ft_2}\end{bmatrix}\begin{bmatrix}V_0\\ V_1\\ V_2\end{bmatrix}.
\end{equation*}
\bigskip

\subsection{Range inclusions}

If $p_x(a,x)[h]$ is expressed in the form
$$
p_x(a,x)[h]=U^T\begin{bmatrix}V_0\\ V_1\end{bmatrix}+W^TV_2,
$$
then
\begin{align*}
U^T&=\begin{bmatrix}\varphi_0&\varphi_1&\varphi_2\end{bmatrix}\,
\begin{bmatrix}I_{\ft_0}&0\\ K_0&I_{\ft_1}\\
K_1 K_0&K_1\end{bmatrix}\\
&=\begin{bmatrix}\varphi_0&\varphi_1\end{bmatrix}\,\begin{bmatrix}I_{\ft_0}&0\\ K_0&I_{\ft_1}
\end{bmatrix}+\varphi_2\begin{bmatrix}K_1 K_0&K_1\end{bmatrix}
\end{align*}
and
$$
W^T=\varphi_2.
$$
Thus,
$$
\textup{range}\,U^T\subseteq\textup{range}\,W^T
$$
if and only if
$$
\textup{range}\,\{\begin{bmatrix}\varphi_0&\varphi_1\end{bmatrix}\,\begin{bmatrix}I_{\ft_0}&0\\ K_0&I_{\ft_1}
\end{bmatrix}+\varphi_2\begin{bmatrix}K_1 K_0&K_1\end{bmatrix}\}\subseteq \textup{range}\,\varphi_2.
$$
Thus, as the matrix $\begin{bmatrix}I_{\ft_0}&0\\ K_0&I_{\ft_1}
\end{bmatrix}$ is invertible,
\begin{equation*}
%\label{eq:nov12a14}
\textup{range}\,\subseteq\textup{range}\,W^T\Longleftrightarrow
\textup{range}\,\begin{bmatrix}\varphi_0&\varphi_1\end{bmatrix}\subseteq \textup{range}\,\varphi_2.
\end{equation*}

\subsection{Chip set representation}
The chip sets for $p_1,p_2,p_3$ are:
\begin{align*}
\cC_{p_1}&=\{a_2,a_1\},\quad\cC_{p_2}=\{1,a_2,a_1x_1,a_1x_1a_2\}\quad\textrm{and}\\
\cC_{p_3}&=\{1,a_1,x_2,a_2x_1a_1,a_2x_2^2,x_2a_2x_1a_1\},
\end{align*}
respectively. Upon setting
\begin{align*}
w_1&=1, w_2=a_1, w_3=a_2,w_4=a_1x_1,w_5=a_1x_1a_2,w_6=x_2,\\
&w_7=a_2x_1a_1,w_8=x_2a_2x_1a_1\ \textrm{and}\ w_9=a_2x_2^2,
\end{align*}
it is readily checked that
\begin{align*}
p_1(a,x)&=(ga_1)x_2w_3+(ma_2)x_2w_2,\\
p_2(a,x)&=(ea_2)x_2w_4+(f)x_1w_5,\\
p_3(a,x)&=(ca_1)x_1w_9+(d)x_2w_8,
\end{align*}
\begin{align*}
(p_1)_x(a,x)[h]&=ma_2(h_2w_2)+ga_1(h_2w_3),
\\(p_2)_x(a,x)[h]&=fx_1a_1(h_2w_3)+ea_2x_2a_1(h_1w_1)+ea_2(h_2w_4)+f(h_1w_5)\
\textrm{and}\\
(p_3)_x(a,x)[h]&=dx_2^2a_2(h_1w_2)+ca_1x_1a_2x_2(h_2w_1)+dx_2(h_2w_7)\\ &+ca_1x_1a_2(h_2w_6)+
ca_1(h_1w_9)+d(h_2w_8).
\end{align*}

Since $w_8$ and $w_9$ are of degree $d-1=2$ in $x$,
the majorization condition in \S \ref{subsec:hdt} is the ranges of
$$
M\otimes A_2, G\otimes A_1, E\otimes A_2, F\otimes I_n
$$
are included in the space
$$
\textup{span}\{\textup{range}\,C\otimes A_1,\,\textup{range}\,D\otimes I_n\}.
$$
Moreover, $p_x(a,x)[h]$ can be  expressed as
$$
p_x(a,x)[h]=\begin{bmatrix}U^T&W^T\end{bmatrix} \begin{bmatrix}Q_1(a,x,h)\\ Q_2(a,x,h)\end{bmatrix},
$$
in which
$$
U^T=\begin{bmatrix}dx_2^2a_2&ma_2&fx_1a_1&ga_1&ea_2x_1a_1&ca_1x_1a_2&ea_2&dx_2&ca_1x_1a_2&f1\end{bmatrix},
$$
$$
W^T=\begin{bmatrix}ca_1&d1\end{bmatrix},
$$
and the entries in
$$
Q_1(a,x,h)=\textup{column}(h_1w_2,h_2w_2,h_1w_3,h_2w_3,h_1w_1,h_2w_1,h_1w_4,h_2w_7,h_2w_6,h_1w_5)
$$
are  a subcollection of the entries in $V_0$ and $V_1$, whereas the entries in
$$
Q_2(a,x,h)=\textup{column}(h_1w_9,h_2w_8)
$$
are a subcollection of the entries in $V_2$. Thus if the majorization condition of \S \ref{subsec:hdt} is
met, then the range of $U^T$ is a subspace of the range of $W^T$.

\section{Appendix: A range inclusion example}

In Example \ref{ex:sep17a15} it was shown that if $1\le k\le d$, then the highest degree terms of the polynomial $p(a,x)=a(1+x^k)+(1+x^k)a+x^d$ majorize. Thus, in view of item (3) of Theorem \ref{thm:dec29a13}, the range inclusion condition
Theorem \ref{thm:dec29a13} \eqref{eq:sep11a14} must hold. This is confirmed in this example. To ease the exposition only the case $k=d$ will be discussed.

Since
$$
p(a,x)=(1+a)x^d+x^da+p(a,0),
$$
it is readily checked that
\begin{align*}
p_x(a,x)[h]&=(1+a)(hx^{d-1}+xhx^{d-2}+\cdots+x^{d-1}h)\\ &+(hx^{d-1}+xhx^{d-2}+\cdots+x^{d-1}h)a.
\end{align*}
Thus, if $(A,X)\in\allmatn$, then
\begin{align*}
p_x(A,X)[h]&=
\begin{bmatrix}U^T&W^T\end{bmatrix}\begin{bmatrix}H\\ HA\\ HX\\ HXA\\ \vdots \\ HX^{d-1}\\ HX^{d-1}A\end{bmatrix},
\end{align*}
with
$$
W^T=\begin{bmatrix} (I_n+A)&I_n\end{bmatrix}
$$
and
$$
U^T=
\begin{bmatrix}(I_n+A)X^{d-1}&X^{d-1}&(I_n+A)X^{d-2}&X^{d-2}&\cdots &(I_n+A)X&X\end{bmatrix}.
$$
Consequently,
\begin{align*}
U^T&=\begin{bmatrix} (I_n+A)&I_n\end{bmatrix}\begin{bmatrix}X^{d-1}&0&X^{d-2}&0&\cdots&X&0\\
0&X^{d-1}&0&X^{d-2}&\cdots&X\end{bmatrix}\\
&=W^T\begin{bmatrix}X^{d-1}&0&X^{d-2}&0&\cdots&X&0\\
0&X^{d-1}&0&X^{d-2}&\cdots&0&X\end{bmatrix},
\end{align*}
which clearly displays the fact that $\textup{range}\,U^T\subseteq\textup{range}\,W^T$.

Moreover,
$$
p(A,X)=\begin{bmatrix}U_1^T&W^T\end{bmatrix}\begin{bmatrix}X\\ XA\\ XX\\ XXA\\ \vdots \\ XX^{d-1}\\ XX^{d-1}A\end{bmatrix}+p(A,0),
$$
with $U_1^T=0_{n\times 4n}$. Therefore,
$$
\textup{range}\, U_1^T\subseteq\textup{range}\,W^T,
$$
i.e., the highest degree terms in $p$,  $W^T$, clearly majorize $U_1^T$, the lower degree terms in $p$. \qed

\section{Appendix: Examples of chip set representations}

In this appendix chips sets are calculated for several polynomials.

\begin{example}\rm
\label{ex:sep4a14}
Let
$$
p(a,x)=cx_1ax_2^3+dx_2^3ax_1
$$
with $c,\,d\in\RR$. Then
$$
\cC_p^1=\{1,ax_2^3\}\quad\textrm{and}\quad \cC_p^2=\{1,x_2,x_2^2,ax_1,x_2ax_1,x_2^2ax_1\}
$$
and, by direct computation, it is readily checked that
\begin{align*}
p_x(a,x)[h]&=c\{h_1ax_2^3+h_1ax_2^2+x_1ax_2h_2x_2\}\\ &+d\{h_2x_2^2ax_1+x_2h_2x_2ax_1+x_2^2h_2ax_1+x_2^3ax_1\},
\end{align*}
and hence that $p_x(a,x)[h]$ is a linear combination of the 8 words in $\cC_p$ with polynomial coefficients
as it should be. By contrast, the general representation \eqref{eq:aug8a14} for nc polynomials $p(a,x)$ that are homogeneous in $x$ of degree 4 yields the formula
$$
p_x(a,x)[h]
=\varphi_{3}
\left\{\sum_{j=0}^{3}(x\otimes b)_j\otimes V_{3-j}\right\}
$$
with
$$
b=\begin{bmatrix}1\\ a\end{bmatrix}\quad\textrm{and}\quad x=\begin{bmatrix}x_1\\ x_2\end{bmatrix}.
$$
This exhibits $p_x(a,x)[h]$ as a linear combination of $\ft_0+\cdots+\ft_3=2^2+2^4+2^6+2^8=340$ terms in which at least 332 have zero coefficients.

The general middle matrix representation $$
p_{xx}(a,x)[h]=\sum_{i,j=0}^2 V_i(a,x)[h]^T Z_{ij}(a,x)V_j(a,x)[h]
$$
based on the representation \eqref{eq:aug8a14}
involves a middle matrix $Z(a,x)$ with $(\ft_0+\ft_1+\ft_2)^2
=84^2$ blocks. However, in fact for the matrix polynomial under consideration, direct computation yields the formula
\begin{align*}
&p_{xx}(a,x)[h]=\\
&2\begin{bmatrix}h_1&h_2& x_2h_2&x_1ah_2&x_2^2h_2&x_1ax_2h_2
\end{bmatrix}\\
&\begin{bmatrix}0&dax_2^2&dax_2&0&d  a&0\\
\vspace{6pt}
cx_2^2a&0&0&c x_2&0&c \\
\vspace{6pt}
cx_2A&0&0&c&0&0\\
\vspace{6pt}
0&d x_2&d &0&0&0\\c a&0&0&0&0&0\\
\vspace{6pt}
0&d &0&0&0&0\end{bmatrix}
\begin{bmatrix} h_1\\ \vspace{6pt}
h_2\\\vspace{6pt} h_2x_2\\ \vspace{6pt} h_2ax_1\\  \vspace{6pt} h_2x_2^2\\ \vspace{6pt} h_2x_2ax_1
\end{bmatrix}
\end{align*}

Thus, only two of the four entries in
$V_0=h\otimes b$,
two of the sixteen entries in $V_1=(h\otimes b)\otimes(x\otimes b)$ intervene  and two of the sixty four entries in
$V_2=(h\otimes b)\otimes(x\otimes b)_2$
come into play in this representation.
\end{example}

\begin{remark}\rm
\label{rem:aug20a14}
Since this polynomial is homogeneous of degree $4$ in $x$, it can also be expressed as
$$
p(a,x)=\begin{bmatrix}c_1&\cdots&c_t\end{bmatrix}b_0\otimes x\otimes b_1\otimes x\otimes b_2\otimes x\otimes b_3\otimes x\otimes b_4
$$
with
$$
x=\begin{bmatrix}x_1\\ x_2\end{bmatrix},\quad b_1=b_3=\begin{bmatrix}1\\ a\end{bmatrix}\quad \textrm{and}\ b_0=b_2=b_4=1.
$$
This is more efficient than the general representation \eqref{eq:aug8a14}, but less efficient than the representation based on the chip set $\cR\cC_p$.
\end{remark}

\begin{example}\rm
\label{ex:sep17a15-alt}
Let $\tg=1$, $a=a_1$, $g=2$, $C,\,D\in\RR^{\dddd}$ and
\begin{equation*}
%\label{eq:oct4c13}
p(a,x)=C\otimes x_1ax_2^3+D\otimes x_2^3ax_1.
\end{equation*}
Then
\begin{align*}
&p_{xx}(a,x)[h]=\\
&2C\otimes\ \{h_1a(h_2x_2^2+x_2h_2x_2+x_2^2h_2)
+x_1a(h_2^2x_2+h_2x_2h_2+x_2h_2^2)\}\\
&2D\otimes\{(h_2^2x_2+h_2x_2h_2+x_2h_2^2)ax_1
+(h_2x_2^2+x_2h_2x_2+x_2^2h_2)ah_1\}.
\end{align*}
Correspondingly, if $A\in\SS_n$ and $X\in\SS_n(\RR^2)$, then
\begin{align*}
&p_{xx}(A,X)[h]=\\
&2C\otimes\ \{H_1A(H_2X_2^2+X_2H_2X_2+X_2^2H_2)
+X_1A(H_2^2X_2+H_2X_2H_2+X_2H_2^2)\}\\
&2D\otimes\{(H_2^2X_2+H_2X_2H_2+X_2H_2^2)AX_1
+(H_2X_2^2+X_2H_2X_2+X_2^2H_2)AH_1\}.
\end{align*}
Thus, as
$$
C\otimes (M_iH_iM_0H_jM_j)=(I_\kappa\otimes M_iH_i)(C\otimes M_0)(I_\kappa\otimes H_jM_j)
$$
for monomials $M_i$, $M_0$ and $M_j$ in $A$ and $X$,
\begin{align*}
&p_{xx}(A,X)[h]=\\
&2\begin{bmatrix}I_\kappa\otimes H_1&I_\kappa\otimes H_2&I_\kappa\otimes X_2H_2&I_\kappa\otimes X_1AH_2&I_\kappa\otimes X_2^2H_2&I_\kappa\otimes X_1AX_2H_2
\end{bmatrix}\\
&\begin{bmatrix}0&D\otimes AX_2^2&D\otimes AX_2&0&D\otimes  A&0\\
\vspace{6pt}
C\otimes X_2^2A&0&0&C\otimes X_2&0&C\otimes I_n\\
\vspace{6pt}
C\otimes X_2A&0&0&C\otimes I_n&0&0\\
\vspace{6pt}
0&D\otimes X_2&D\otimes I_n&0&0&0\\C\otimes A&0&0&0&0&0\\
\vspace{6pt}
0&D\otimes I_n&0&0&0&0\end{bmatrix}
\begin{bmatrix}I_\kappa\otimes H_1\\ \vspace{6pt} I_\kappa\otimes
H_2\\\vspace{6pt} I_\kappa\otimes H_2X_2\\ \vspace{6pt} I_\kappa\otimes H_2AX_1\\  \vspace{6pt}I_\kappa\otimes H_2X_2^2\\ \vspace{6pt}I_\kappa\otimes H_2X_2AX_1
\end{bmatrix}
\end{align*}

Let $M$ denote the $6\times 6$ block matrix with blocks of size $\kappa n\times\kappa n$ in the last formula for $p_{xx}(A,X)[h]$. If $A\in\SS_n(\RR^{\tg})$, $X\in\SS_n(\RR^g)$ and $D=C^T$, then $M=M^T$. Moreover, if $M$ is partitioned as a $3\times 3$ block matrix with
blocks of size $2\kappa n\times 2\kappa n$, then
$$
\begin{bmatrix}M_{00}&M_{01}&M_{02}\\M_{10}&M_{11}&0\\M_{20}&0&0\end{bmatrix}
\begin{bmatrix}
I_{2\kappa n}&0&0\\-\wt{K}_0&I_{2\kappa n}&0\\0&-\wt{K}_1&I_{2\kappa n}
\end{bmatrix}=\begin{bmatrix}0&0&M_{02}\\0&M_{11}&0\\M_{20}&0&0\end{bmatrix},
$$
where
$$
\wt{K}_0=\begin{bmatrix}0&I_\kappa\otimes X_2\\I_\kappa\otimes X_2A&0\end{bmatrix}\quad\textrm{and}\quad
\wt{K}_1=\begin{bmatrix}I_\kappa\otimes X_2&0\\0&I_\kappa\otimes X_2\end{bmatrix}.
$$

Correspondingly, $p_x$ can be expressed as
$$
p_x(a,x)[h]=\begin{bmatrix}0&0&0&Q\end{bmatrix}\begin{bmatrix}I_{2\kappa n}&0&0&0\\ \vspace{6pt} \wt{K}_0&I_{2\kappa n}&0&0\\ \wt{K}_1\wt{K}_0&\wt{K}_1&I_{2\kappa n}&0\\ \vspace{6pt}
\wt{K}_2\wt{K}_1\wt{K}_0&\wt{K}_2\wt{K}_1&\wt{K}_2&I_{2\kappa n}\end{bmatrix}
\begin{bmatrix}G_0\\ \vspace{6pt} G_1\\ \vspace{6pt} G_2\\ \vspace{6pt} G_3\end{bmatrix}
$$
with $Q=\begin{bmatrix}C\otimes I_n & D\otimes I_n\end{bmatrix}$,
\begin{align*}
G_0&=\begin{bmatrix}I_\kappa\otimes H_1\\ \vspace{6pt} I_\kappa\otimes
H_2\end{bmatrix},\ G_1=
\begin{bmatrix}I_\kappa\otimes H_2X_2\\ \vspace{6pt} I_\kappa\otimes H_2AX_1\end{bmatrix},\
G_2=\begin{bmatrix}I_\kappa\otimes H_2X_2^2\\ \vspace{6pt}I_\kappa\otimes H_2X_2AX_1\end{bmatrix}\\ \vspace{6pt}
 G_3&=\begin{bmatrix}I_\kappa\otimes H_2X_2^2\\ \vspace{6pt}I_\kappa\otimes H_2X_2AX_1\end{bmatrix}\quad\textrm{and}\quad \wt{K}_2=\begin{bmatrix}I_\kappa\otimes X_1A&0\\ \vspace{6pt} 0&I_\kappa\otimes X_2\end{bmatrix}.
\end{align*}
Thus, the entries $U$ and $W$ in the middle matrix (\ref{eq:dec29a13}) in the representation of $$p_{xx}(A,X)[h]+\lambda p_x(A,X)[h]^Tp_x(A,X)[h]$$ are equal to
$$
U^T=Q\begin{bmatrix}\wt{K}_2\wt{K}_1\wt{K}_0&\wt{K}_2\wt{K}_1&\wt{K}_2\end{bmatrix}
\quad\textrm{and}\quad W^T=Q.
$$
Therefore, the range inclusion condition  Theorem \ref{thm:dec29a13} \eqref{eq:sep11a14}) is met  and hence Theorem \ref{thm:dec29a13}
guarantees that the  matrix (\ref{eq:dec29a13}) is  congruent to the matrix on the right hand side of \eqref{eq:sep4a14}.  Compare with Remark \ref{rem:special majorize}.

In Example \ref{ex:sep17a15-alt}
$$
W^TW=(C^TC+D^TD)\otimes I_n,
$$
is invertible  if and only if $C^TC+D^TD$ is invertible.  If $D=C^T$, as is the case for symmetric $p$, then $W^TW$ is invertible if and only if  $C^TC+CC^T$ is invertible. This will fail for every symmetric real matrix with zero eigenvalues.
\end{example}

\begin{remark}\rm
\label{rem:aug28b14}
The condition that the range of $U^T$ is contained in the range of $W^T$
is   necessary for the nice factorization in (\ref{eq:oct9a13}) to hold.
In this example $W$ is independent of $A$ because the words in the polynomial under
consideration are all of the form $x_i\cdots x_j$
(i.e., they begin and end with one of  the $x$ variables).
\end{remark}

\begin{remark}
\label{rem:oct9a13}
The $6\times 6$ block matrix with blocks of size $\kappa n\times\kappa n$ in the last formula for
$p_{xx}(A,X)[h]$ can also be rewritten in terms of the permutation matrix $\Pi$  that is defined by the formula
$$
\Pi\,\left(I_\kappa\otimes
\begin{bmatrix}H_1\\ \vspace{6pt}
H_2\\\vspace{6pt}  H_2X_2\\ \vspace{6pt}  H_2AX_1\\  \vspace{6pt}  H_2X_2^2\\ \vspace{6pt} H_2X_2AX_1
\end{bmatrix}\right)
=\begin{bmatrix}I_\kappa\otimes H_1\\ \vspace{6pt} I_\kappa\otimes
H_2\\\vspace{6pt} I_\kappa\otimes H_2X_2\\ \vspace{6pt} I_\kappa\otimes H_2AX_1\\  \vspace{6pt}I_\kappa\otimes H_2X_2^2\\ \vspace{6pt}I_\kappa\otimes H_2X_2AX_1
\end{bmatrix},
$$
then, since $\Pi^T\Pi=I$,
\begin{align*}
p_{xx}(A,X)[h]&=2(I_\kappa \otimes
\begin{bmatrix}H_1&H_2&X_2H_2&X_1AH_2&X_2^2H_2&X_1AX_2H_2
\end{bmatrix})\\
&\times (\Pi^TM\Pi)
\left(I_\kappa\otimes
\begin{bmatrix}H_1\\ \vspace{6pt}
H_2\\\vspace{6pt}  H_2X_2\\ \vspace{6pt}  H_2AX_1\\  \vspace{6pt}  H_2X_2^2\\ \vspace{6pt} H_2X_2AX_1
\end{bmatrix}\right)
\end{align*}
\end{remark}

%-------------------------------------------------
%\subsection{Connecting Example \ref{ex:sep4a14} to the general scheme}

\begin{example}\rm
\label{ex:sep4b14as}
Suppose that $c,d\in\RR$ and
$$
p(a,x)=ca_1x_1a_2x_2^2+dx_2^2a_2x_1a_1.
$$
Then
$$
\cR\cC_p^1=\{a_1,a_2x_2^2\}\quad\textrm{and}\quad \cR\cC_p^2=\{1,x_2,a_2x_1a_1,x_2a_2x_1a_1\}
$$
and, by direct computation,
\begin{equation*}
\begin{split}
p_x(a,x)[h]&=c\{a_1h_1a_2x_2^2+a_1x_1a_2h_2x_2+a_1x_1a_2x_2h_2\}\\
&+d\{h_2x_2a_2x_1a_1+x_2h_2a_2x_1a_1+x_2^2a_2h_1a_1\}
\end{split}
\end{equation*}
is a linear combination of the words in $\cR\cC_p$ with polynomial coefficients, as it should be and can also be expressed as
\begin{equation}
\label{eq:jul30a15}
\begin{split}
p_x(a,x)[h]&=\begin{bmatrix}1&a_1&x_2&a_1x_1a_2&x_2^2a_2&a_1x_1a_2x_2\end{bmatrix}\\
&\begin{bmatrix}0&0&0&0&0&d\\ 0&0&0&0&c&0\\ 0&0&0&d&0&0\\  0&0&c&0&0&0\\
0&d&0&0&0&0\\ c&0&0&0&0&0\end{bmatrix}\begin{bmatrix} h_2\\ h_1a_1\\ h_2x_2\\ h_2a_2x_1a_1\\
h_1a_2x_2^2\\ h_2x_2a_2x_1a_1\end{bmatrix}.
\end{split}
\end{equation}
Moreover,
\begin{align*}&p_{xx}(a,x)[h]=
2\begin{bmatrix} h_2& a_1h_1&x_2h_2&a_1x_1a_2h_2\end{bmatrix}\\
&\begin{bmatrix}0&dx_2a_2&0&d\\ ca_2x_2&0&ca_2&0\\0&d a_2&0&0\\
c&0&0&0\end{bmatrix}
\begin{bmatrix}h_2\\ h_1a_1\\  h_2x_2\\ h_2a_2x_1a_1\end{bmatrix}
\end{align*}
The reduced middle matrix will be Hermitian if $c=d$.

Correspondingly, if
$$
p(A,X)=C\otimes (A_1X_1A_2X_2^2)+D\otimes (X_2^2A_2X_1A_1)
$$
with $C,\,D\in\RR^{\kappa\times\kappa}$ and  $A,\,X\in\SS_n(\RR^2)$.
Then, by straightforward computation,
\begin{align*}&p_{xx}(A,X)[h]=
2\begin{bmatrix}I_\kappa\otimes H_2&I_\kappa\otimes A_1H_1&I_\kappa\otimes X_2H_2&I_\kappa\otimes A_1X_1A_2H_2\end{bmatrix}\\
&\begin{bmatrix}0&D\otimes X_2A_2&0&D\otimes I_n\\C\otimes A_2X_2&0&C\otimes A_2&0\\0&D\otimes A_2&0&0\\
C\otimes I_n
&0&0&0\end{bmatrix}
\begin{bmatrix}I_\kappa\otimes H_2\\ I_\kappa\otimes H_1A_1\\ I_\kappa\otimes H_2X_2\\ I_\kappa\otimes H_2A_2X_1A_1\end{bmatrix}
\end{align*}

Corresponding to \eqref{eq:jul30a15}
\begin{equation*}
\begin{split}
p_x(A,X)[h]&=\begin{bmatrix}I_n&A_1&&X_2&A_1X_1A_2&X_2^2A_2&A_1X_1A_2X_2\end{bmatrix}\\
&\begin{bmatrix}0&0&0&0&0&dI_n\\ 0&0&0&0&cI_n&0\\ 0&0&0&dI_n&0&0\\  0&0&cI_n&0&0&0\\
0&dI_n&0&0&0&0\\ cI_n&0&0&0&0&0\end{bmatrix}\begin{bmatrix} H_2\\ H_1A_1\\ H_2X_2\\ H_2A_2X_1A_1\\
H_1A_2X_2^2\\ H_2X_2A_2X_1A_1\end{bmatrix}.
\end{split}
\end{equation*}
In view of \eqref{eq:aug6a13}, the derivative with respect to $x$ of the  matrix valued polynomial
$$
p(a,x)=C\otimes a_1x_1a_2x_2^2+D\otimes x_2^2a_2x_1a_1
$$
can be expressed as
\begin{align*}
&p_x(A,X)[h]\\
&=\begin{bmatrix}I_\kappa\otimes I_n&I_\kappa\otimes A_1&I_\kappa\otimes X_2&I_\kappa\otimes A_1X_1A_2&I_\kappa\otimes X_2^2A_2&I_\kappa\otimes A_1X_1A_2X_2\end{bmatrix}\\
&\begin{bmatrix}0&0&0&0&0&D\otimes I_n\\ 0&0&0&0&C\otimes I_n&0\\ 0&0&0&D\otimes I_n&0&0\\  0&0&C\otimes I_n&0&0&0\\
0&D\otimes I_n&0&0&0&0\\ C\otimes I_n&0&0&0&0&0\end{bmatrix}
\begin{bmatrix}I_\kappa\otimes H_2\\I_\kappa\otimes  H_1A_1\\  I_\kappa\otimes H_2X_2\\ I_\kappa\otimes H_2A_2X_1A_1\\
I_\kappa\otimes H_1A_2X_2^2\\ I_\kappa\otimes H_2X_2A_2X_1A_1\end{bmatrix}\\
&=U^T\begin{bmatrix}I_\kappa\otimes H_2\\I_\kappa\otimes  H_1A_1\\  I_\kappa\otimes H_2X_2\\ I_\kappa\otimes H_2A_2X_1A_1\end{bmatrix}+W^T\begin{bmatrix}I_\kappa\otimes H_1A_2X_2^2\\ I_\kappa\otimes H_2X_2A_2X_1A_1\end{bmatrix}
\end{align*}
with
$$
W^T=\begin{bmatrix}C\otimes A_1&D\otimes I_n\end{bmatrix}
$$
and
\begin{align*}
U^T&=\begin{bmatrix}C\otimes A_1X_1A_2X_2&D\otimes X_2^2A_2&C\otimes A_1X_1A_2&D\otimes X_2\end{bmatrix}\\
&=W^T\begin{bmatrix}I_\kappa\otimes X_1A_2X_2&0&I_\kappa\otimes X_1A_2&0\\
0&I_\kappa\otimes X_2^2A_2&0&I_\kappa\otimes X_2\end{bmatrix}.
\end{align*}
Thus, condition Theorem \ref{thm:dec29a13} \eqref{eq:sep11a14} is met, whereas
$$
W^TW=\begin{bmatrix}C\otimes A_1&D\otimes I_n\end{bmatrix}\begin{bmatrix}C^T\otimes A_1\\ D^T\otimes I_n\end{bmatrix}=CC^T\otimes A_1^2+DD^T\otimes I_n
$$
is not necessarily invertible, even if $D=C^T$.

The polynomial
$p$ will be symmetric if and only if $C=D^T$.

\end{example}

%-------------------------------------------

\section{Appendix: Illustrating the CHSY Lemma}
Let
$$
p(a,x)=ca_1x_1a_2x_2^2+dx_2^2a_2x_1a_1.
$$
Let $A_1=A_2=X_1=X_2=I_n$ and $c=d=1$. Thus,
\begin{align*}&p_{xx}(A,X)[h]=
2\begin{bmatrix} H_2& H_1&H_2&H_2\end{bmatrix}\\
&\begin{bmatrix}0&I_n&0&I_n\\ I_n&0&I_n&0\\0&I_n&0&0\\
I_n&0&0&0\end{bmatrix}
\begin{bmatrix}H_2\\ H_1\\  H_2\\ H_2\end{bmatrix}
\end{align*}
and
\begin{equation*}
\begin{split}
p_x(A,X)[h]&=\begin{bmatrix}I_n&I_n&I_n&I_n&I_n&I_n\end{bmatrix}\\
&\begin{bmatrix}0&0&0&0&0&I_n\\ 0&0&0&0&I_n&0\\ 0&0&0&I_n&0&0\\  0&0&I_n&0&0&0\\
0&I_n&0&0&0&0\\ I_n&0&0&0&0&0\end{bmatrix}\begin{bmatrix} H_2\\ H_1\\ H_2\\ H_2\\
H_1\\ H_2\end{bmatrix}\\
&=\begin{bmatrix}I_n&I_n&I_n&I_n&I_n&I_n\end{bmatrix}\begin{bmatrix} H_2\\ H_1\\ H_2\\ H_2\\
H_1\\ H_2\end{bmatrix}.
\end{split}
\end{equation*}
The last formula identifies the blocks $U$ and $W$ that are introduced in Theorem 6.3 as
$$
U^T=\begin{bmatrix}I_n&I_n&I_n&I_n\end{bmatrix}\quad\textrm{and}\quad W^T=\begin{bmatrix}I_n&I_n&\end{bmatrix}.
$$
Thus, as  $U^TU=4I_n$ and $W^TW=2I_n$,
$$
\cR_{U^T}=\cR_{U^TU}=\cR_{W^TW}=\cR_{W^T}.
$$
It is convenient to write
$$
\begin{bmatrix} H_2\\ H_1\\ H_2\\ H_2\\
H_1\\ H_2\end{bmatrix}=Q_n\begin{bmatrix}H_1\\ H_2\end{bmatrix}\ \textrm{with}\ Q_n=\begin{bmatrix}0&I_n\\ I_n&0\\ 0&I_n\\ 0&I_n\\I_n&0\\ 0&I_n\end{bmatrix}
$$
and to choose $v=e_1$, where $\{e_1,\ldots,e_n\}$ is the standard orthonormal basis for $\RR^n$.
Then
$$
p_x(A,X)[h]=\begin{bmatrix}I_n&I_n&I_n&I_n&I_n&I_n\end{bmatrix}Q_n\begin{bmatrix}H_1\\ H_2\end{bmatrix}.
$$
Moreover,
the linear transformation $T$ from $\SS_n(\RR^2)$ into $\RR^{6n}$ that is defined by the formula
\begin{equation}
\label{eq:sep22a14}
TH=\begin{bmatrix} H_2\\ H_1\\ H_2\\ H_2\\
H_1\\ H_2\end{bmatrix}e_1=\begin{bmatrix}0&I_n\\ I_n&0\\ 0&I_n\\ 0&I_n\\I_n&0\\ 0&I_n\end{bmatrix}\begin{bmatrix}H_1\\ H_2\end{bmatrix}e_1
\end{equation}
maps the $2(n^2+n)/2$ dimensional space $\SS_n(\RR^2)$ onto the $2n$ dimensional subspace
$$
\cR_T=\left\{Q_n\begin{bmatrix}u_1\\ u_2\end{bmatrix}:\,u_1\in\RR^n \ \textrm{and}\ u_2\in\RR^n\right\}
$$
of $\RR^{6n}$. Correspondingly, the inner products based on the terms in the relaxed Hessian
$$
p_{xx}(A,X)[h]+\lambda p_{x}(A,X)[h]^Tp_{x}(A,X)[h]+\delta \begin{bmatrix}H_1&H_2\end{bmatrix}Q_n^TQ_n\begin{bmatrix}H_1\\ H_2\end{bmatrix}
$$
can be calculated as follows:
\begin{equation}
\label{eq:aug3a15}
\begin{split}
e_1^Tp_{xx}(A,X)[h]e_1
&=2y^T\left\{Q_n^T\begin{bmatrix}0&I_n&0&I_n&0&0\\ I_n&0&I_n&0&0&0\\
0&I_n&0&0&0&0\\I_n&0&0&0&0&0\\
0&0&0&0&0&0\\0&0&0&0&0&0\end{bmatrix}Q_n\right\}y\\
&=y^T\left\{4\begin{bmatrix}0&I_n\\I_n&I_n\end{bmatrix}\right\}y,
\end{split}
\end{equation}
with
$$
y=\begin{bmatrix}H_1\\ H_2\end{bmatrix}e_1,
$$
$$
e_1^Tp_{x}(A,X)[h]^Tp_{x}(A,X)[h]e_1=y^T\begin{bmatrix}4I_n&8I_n\\ 8I_n&16 I_n\end{bmatrix}y
$$
and
$$
e_1^T\begin{bmatrix}H_1&H_2\end{bmatrix}Q_n^T Q_n\begin{bmatrix}H_1\\ H_2\end{bmatrix} e_1=y^T\begin{bmatrix}2I_n&0\\0&4I_n\end{bmatrix}y.
$$
Combining terms (with $A$, $X$ and $v$ as specified above) yields the formula
\begin{equation*}
%\label{eq:sep21a14}
\langle p_{\lambda,\delta}^{\prime\prime}(A,X)v,v\rangle=2y^T\begin{bmatrix}(2\lambda+\delta)I_n&(2+4\lambda)I_n\\(2+4\lambda)I_n&(2+8\lambda+2\delta)I_n\end{bmatrix}y.
\end{equation*}

Now, for $i,j=1,\ldots,n$, let
$$
\Sigma_{ij}=\frac{e_ie_j^T+e_je_i^T}{\sqrt{2}}\quad\textrm{if $i\ne j$\quad and \quad}\Sigma_{ii}=e_ie_i^T
$$
and let $\cN=\cN_T$ denote the nullspace of $T$. The set of $(n^2+n)/2$ matrices
$$
\{\Sigma_{ij}:\, 1\le i\le j\le n\}
$$
is an orthonormal basis for $\SS_n$ equipped with the inner product
$$
\langle H,H^\prime\rangle=\textup{trace}\{(H_1^\prime)^TH_1+(H_2^\prime)^TH_2\}.
$$
Moreover,
$$
\cN^\perp=\textup{span}\{(\Sigma_{11},0),\ldots,(\Sigma_{n1},0), (0,\Sigma_{11}),\ldots,(0,\Sigma_{n1})\}
$$
is the orthogonal complement of $\cN$ with respect to this inner product
and the linear transformation $T$ defined by formula \eqref{eq:sep22a14} maps $\cN^\perp$ injectively onto $\cR_T$.
Thus, the dimensions
of the  maximal positive and negative subspaces of $\SS_n(\RR^2)$ with respect to the quadratic form
$\langle p_{\lambda,\delta}^{\prime\prime}(A,X)v,v\rangle$ may be evaluated from the signature of the matrix
$$
\begin{bmatrix}(2\lambda+\delta)I_n&(2+4\lambda)I_n\\(2+4\lambda)I_n&(2+8\lambda+2\delta)I_n\end{bmatrix},
$$
i.e., from the signature  of $(2\lambda +\delta) I_n$ and, assuming that $2\lambda+\delta\ne 0$,  the signature of it's Schur complement
$$
\left\{(2+8\lambda+2\delta)-\frac{(4\lambda+2)^2}{2\lambda+\delta}\right\}I_n=2\left\{\frac{(6\lambda+\delta+2)(\delta-1)}{2\lambda+\delta}\right\}I_n.
$$
If $\lambda\le 0$ and $\delta<0$, then $2\lambda+\delta<0$ and $\delta-1<0$ and hence the  Schur complement will be positive definite if $6\lambda+\delta+2>0$ and negative definite if $6\lambda+\delta+2<0$. Consequently,
\begin{equation}
\label{eq:sep20a14}
e_+^n(A, X,v;p^{\prime\prime}_{\lambda,\delta}, \SS_n(\RR^2)) =0\quad\textrm{if $\delta<0$, $\lambda\le 0$
and $6\lambda<-\delta-2$}.
\end{equation}
By similar calculations,
\begin{equation}
\label{eq:sep20b14}
e_-^n(A, X,v;p^{\prime\prime}_{\lambda,\delta}, \SS_n(\RR^2)) =n\quad\textrm{if $0<\delta<1$ and  $\lambda\ge 0$}.
\end{equation}

Now, as
$$
p_x(A,X)[h]v=(2H_1+4H_2)e_1,
$$
it is readily seen that if $(H_1,H_2)$ is restricted to $\cN^\perp$,  then
$$
p_x(A,X)[h]v=0\Longleftrightarrow H_2=-\frac{1}{2}H_1,
$$
i.e., if and only if $H=(H_1,H_2)$ belongs to the space
$$
\cM=\textup{span}\{(\Sigma_{11},-\frac{1}{2}\Sigma_{11}),\ldots,(\Sigma_{n1},-\frac{1}{2}\Sigma_{n1})\}.
$$
Therefore, as follows easily from \eqref{eq:aug3a15},
$$
H_2=-\frac{1}{2}H_1\Longrightarrow \langle p_{xx}(A,X)[0,H)e_1,e_1\rangle=-3e_1^TH_1^2e_1.
$$
Consequently,
\begin{equation*}
%\label{eq:sep21b14}
e_+^n(A,X,v;p_{xx}, \cT(A,X,v) )=0,
\end{equation*}
which coincides with \eqref{eq:sep20a14}, and
\begin{equation*}
%\label{eq:sep21c14}
 e_-^n(A,X,v;p_{xx}, \cT(A,X,v) )=n,
\end{equation*}
which coincides with \eqref{eq:sep20b14}. Moreover,
$$
\cN^\perp=\cM\oplus\cL
$$
with
$$
\cL=\textup{span}\{(\Sigma_{11},2\Sigma_{11}),\ldots,(\Sigma_{n1},2\Sigma_{n1})\}.
$$

%--------------------

\newpage

\section{Not for publication}

\tableofcontents

\printindex

\end{document}